\definecolor{rouge}{rgb}{0.85,0.1,.4}
\definecolor{bleu}{rgb}{0.1,0.2,0.9}
\definecolor{violet}{rgb}{0.7,0,0.8}
\newcommand{\tss}{\hspace{1pt}}
\newcommand{\ts}{\,}
\newcommand{\Caff}{{C}^{\kappa}}
\newcommand{\Fock}{\mc{F}}
\newcommand{\ch}{\on{ch}}
\newcommand{\Mi}{\Upupsilon}
\newcommand{\affQ}{\hat Q_{(0)}}
\newcommand{\affn}{\widehat{\mf{n}}}
\newcommand{\affh}{\widehat{\mf{h}}}
\newcommand{\finn}{\mf{n}}
\newcommand{\finb}{\mf{b}}
\newcommand{\finh}{\mf{h}}
\newcommand{\KL}{\mathbf{KL}}
\newcommand{\bra}{{\langle}}
\newcommand{\ket}{{\rangle}}
\newcommand{\Lam}{\Lambda}
\newcommand{\cprime}{$'$}
\newcommand{\on}{\operatorname}
\newcommand{\+}{\mathop{\oplus}}
\renewcommand{\*}{{\otimes}}
\newcommand{\mc}{\mathcal}
\newcommand{\mf}{\mathfrak}
\newcommand{\mb}{\mathbb}
\newcommand{\fing}{\mf{g}}
\newcommand{\affg}{\widehat{\mf{g}}}
\newcommand{\isomap}{{\;\stackrel{_\sim}{\to}\;}}
\newcommand{\Z}{\mathbb{Z}}
\newcommand{\C}{\mathbb{C}}
\newcommand{\N}{\mathbb{N}}
\newcommand{\Q}{\mathbb{Q}}
\newcommand{\W}{\mathscr{W}}
\newcommand{\ra}{\rightarrow}
\newcommand{\lam}{\lambda}
\newcommand{\Lie}{{\rm Lie}}
\newcommand{\vac}{{|0\rangle}}
\newcommand{\haru}{\operatorname{span}}
\newcommand{\Zhu}{{\rm Zhu}}
\newcommand{\im}{{\rm im}}
\newcommand{\id}{{\rm id}}
\newcommand{\HC}{\mc{HC}}
\def\leq{\leqslant}
\def\geq{\geqslant}
\DeclareMathOperator{\End}{End}
\DeclareMathOperator{\Spec}{Spec}
\DeclareMathOperator{\gr}{gr}
\DeclareMathOperator{\Ad}{Ad}
\DeclareMathOperator{\ad}{ad}
\DeclareMathOperator{\Hom}{Hom}
\numberwithin{equation}{section}
\newcommand{\inv}{^{-1}}
\newcounter{exercise}
\theoremstyle{theorem}
\newtheorem{thm}{Theorem}[section]
\newtheorem{Th}[thm]{Theorem}
\newtheorem{prp}[thm]{Proposition}
\newtheorem{lem}[thm]{Lemma}
\newtheorem{Lem}[thm]{Lemma}
\theoremstyle{definition}
\newtheorem{defn}[thm]{Definition}
\newtheorem{Def}[thm]{Definition}
\newtheorem{rmk}[thm]{Remark}
\newtheorem{Rem}[thm]{Remark}
\newtheorem{cor}[thm]{Corollary}
\newtheorem{Co}[thm]{Corollary}
\newtheorem{exm}[thm]{Example}
\newtheorem{exr}[exercise]{Exercise}
\theoremstyle{remark}
\title{Introduction to W-algebras and their representation theory}
\author{Tomoyuki Arakawa}
\address{Research Institute for Mathematical Sciences, Kyoto University,
 Kyoto 606-8502 JAPAN}
\email{arakawa@kurims.kyoto-u.ac.jp}
\begin{document}
\maketitle

\begin{abstract}
These are lecture notes from  author's mini-course during
 Session 1: ``Vertex algebras,
W-algebras, and application'' of INdAM Intensive research period
``Perspectives in Lie Theory'', at the  Centro di Ricerca Matematica Ennio De Giorgi,  Pisa, Italy. December 9, 2014 -- February 28, 2015.

\end{abstract}

\section{Introduction}
This note is based on lectures given at 
the  Centro di Ricerca Matematica Ennio De Giorgi,  Pisa, in Winter of 2014--2015.
They are aimed as an introduction to $W$-algebras and their representation theory.
Since $W$-algebras appear in many areas  of mathematics and physics
 there are certainly
 many other important topics untouched in the note, partly due to the limitation of the space and partly due to the author's incapability.
 
 The $W$-algebras 
 can be regarded as 
generalizations  of  affine Kac-Moody algebras and the Virasoro algebra.
They appeared  \cite{Zam85,FatLyk88,LukFat89} in the study of
the classification of  two-dimensional rational conformal field theories.
There are several ways to define $W$-algebras, but
it was Feigin and Frenkel \cite{FF90} 
who found
the most conceptual  definition  of principal $W$-algebras  that uses  the {\em quantized  Drinfeld-Sokolov reduction},
which is a version of Hamiltonian reduction.
There are a lot of works on $W$-algebras (see \cite{Bou95} and references therein)
mostly by physicists in 1980's and 1990's,
 but they were mostly on 
 principal $W$-algebras, that is, the $W$-algebras associated with
 principal nilpotent elements.
It was quite recent that Kac, Roan and Wakimoto \cite{KacRoaWak03} 
  defined the $W$-algebra $\W^k(\fing,f)$ associated with 
  a simple Lie algebra
  and its arbitrary nilpotent element  $f$
  by generalizing the method of quantized  Drinfeld-Sokolov reduction.

The advantage of the method of quantized  Drinfeld-Sokolov reduction
is its functoriality, in the sense that 
it gives rise to a functor from the category of representations of affine Kac-Moody algebras
and to the category of representations of $W$-algebras.
Since it is difficult to study $W$-algebras directly (as no presentation 
by generators and relations (OPE's) is known for a  general $W$-algebra),
in this note
we spend the most of our efforts in understanding this functor.

 Although our methods apply to much more general settings (\cite{Ara05,Ara08-a,Ara09b,A2012Dec,ArationalII})
we focus on
 the $W$-algebras associated with 
 Lie algebras $\fing$ of type $A$ and its principal nilpotent element that were originally defined by Fateev and Lykyanov \cite{FatLyk88}.
 They can be
 regarded as affinization of the center of the universal enveloping algebra of $\fing$
 via Konstant's Whittaker model \cite{Kos78} and Kostant-Sternberg's description \cite{KosSte87} of 
 Hamiltonian reduction via BRST cohomolgy, as explained in \cite{FF90}.
For this reason we start with a review of Kostant's results
and proceed to the
construction of BRST complex in the finite-dimensional setting
in \S \ref{sec:Review of Kostant's results}.
$W$-algebras are {\em not} Lie algebras, not even associated algebras in general, but {\em vertex algebras}.
In many cases a vertex algebra can be considered as a quantization of arc spaces of an affine Poisson scheme.
In \S \ref{sec:Arc spaces} we study this view point  that is useful in understanding $W$-algebras
and their representation theory.
In \S \ref{sec:Zhu} we study Zhu's algebras of vertex algebras
that connects 
$W$-algebras with {\em finite $W$-algebras} \cite{BoeTji93,Pre02}.
In \S \ref{sec:W-algbeas} we introduce $W$-algebras 
and study their basic properties.
In \S \ref{sec:rep-theory} we start studying representation theory of $W$-algebras.
In \S \ref{sec:irrep}
 we quickly review some fundamental results on  irreducible representations  of $W$-algebras obtained in \cite{Ara07}.
 One of the fundamental problems (at least mathematically) on $W$-algebras
was  the conjecture of Frenkel, Kac and Wakimoto \cite{FKW92} on the existence 
and construction of so called the {\em minimal models} of $W$-algebras,
which give rive to rational conformal field theories
as in the case of
 the integrable representations of affine Kac-Moody algebras and the minimal models of the Virasoro algebra.
  In \S \ref{sec:KW}
 we give an outline of the proof \cite{A2012Dec} of this conjecture.
 
 \subsection*{Acknowledgments}
 The author is grateful to the organizers 
 of ``Perspectives in Lie Theory''.
 He thanks 
  Naoki Genra and
 Xiao He 
 who wrote the first version of this 
 note. He would also like to thank Anne Moreau and Alberto De Sole for useful comments on the preliminary version of this note.
 His research is supported by JSPS KAKENHI Grant Numbers 25287004 and 26610006.
 



\section{Review of Kostant's results}
\label{sec:Review of Kostant's results}
\subsection{Companion matrices  and invariant polynomials}
\label{sec:1}
Let $G=GL_n(\C)$ be the general linear  group,
and 
let
$\fing=\mf{gl}_n(\C)$ be
the general linear Lie algebra consisting of $n\times n$ matrices.
The group $G$ acts on $\fing$ by the adjoint action:
$x\mapsto \Ad(g)x=gxg^{-1}$, $g\in G$.
 Let
 $\C[\fing]^G$ be the
 subring of the ring $\C[\fing]$ of polynomial functions 
 on $\fing$ consisting of $G$-invariant polynomials.

Recall that 
a matrix
 \begin{align}
A=\begin{pmatrix} 
0 & 0& \cdots  &0 & -a_1\\ 
  1  & 0  & \cdots & 0&-a_2\\
  0  & 1  & \cdot & 0&-a_3\\
 \vdots  & \vdots   & \ddots &\vdots   & \vdots \\
0 &0 &  & 1    & - a_n 
  \end{pmatrix}
  \label{eq:companion-matrix}
 \end{align}is called the
{\em companion matrix} of the polynomial $a_1+a_2 t+a_3 t^2+\dots +a_{n} t^n
\in \C[t]$ since
\begin{align}\label{eq:companion}
 \on{det}(t I-A)=
 a_1+a_2 t+a_3 t^2+\dots +a_{n} t^n
 .
\end{align}
Let
$\mc{S}$ be the affine subspace
of $\fing$ consisting of companion matrices
of the form
\eqref{eq:companion-matrix}.


 \begin{lem}\label{lem;characterization-of-elements-in-S}
For $A\in \fing$
the following conditions are equivalent.
\begin{enumerate}
 \item $A\in G\cdot \mc{S}$.
\item There exists a vector $v\in \C^n$ such that 
$v,Av,A^2v,\dots, A^{n-1}v$ are  linearly independent.
\end{enumerate}
 \end{lem}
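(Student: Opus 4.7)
The plan is to prove the two implications directly by exploiting the defining shift property of companion matrices, namely that if $S$ has the form \eqref{eq:companion-matrix} and $e_1,\dots,e_n$ is the standard basis, then $S e_i = e_{i+1}$ for $i<n$ and $S e_n = -\sum_{j=1}^{n} a_j e_j$.

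For the implication $(2)\Rightarrow(1)$, I would take a vector $v$ satisfying the independence condition and use $(v,Av,\dots,A^{n-1}v)$ as a new ordered basis of $\C^n$. In this basis the first $n-1$ columns of the matrix of $A$ are forced to be the shift pattern $e_i\mapsto e_{i+1}$, so only the last column is free. Expressing $A^n v$ as $-\sum_{j=1}^{n} a_j A^{j-1}v$ (possible because the $A^{j-1}v$ are a basis) reads off the numbers $a_1,\dots,a_n$, and the matrix of $A$ in this basis is exactly the companion matrix \eqref{eq:companion-matrix}. Thus $A$ is conjugate to an element of $\mc{S}$ via the change-of-basis matrix.

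For the implication $(1)\Rightarrow(2)$, I would first observe that every companion matrix $S\in\mc{S}$ itself satisfies condition (2) with $v=e_1$: by the shift property $S^{i-1}e_1=e_i$ for $1\le i\le n$, so these are trivially linearly independent. Then, given $A=g S g^{-1}$ with $g\in G$, the vector $w:=g e_1$ is cyclic for $A$ because $A^i w = g S^i g^{-1} g e_1 = g\, e_{i+1}$ for $0\le i\le n-1$, and these $n$ vectors are linearly independent since $g$ is invertible.

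There is no real obstacle here; the only thing to be careful about is matching the sign convention and column ordering in \eqref{eq:companion-matrix} precisely when reading off the $a_j$ from the expansion of $A^n v$, so that the resulting matrix lies in $\mc{S}$ as defined and not merely in some cosmetic variant of it.
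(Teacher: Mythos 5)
Your proof is correct and complete; the paper in fact states this lemma without proof, and your argument is precisely the standard cyclic-vector computation that is being left implicit. Both implications are handled properly, including the only delicate point you flag (reading off the $a_j$ from $A^n v=-\sum_j a_j A^{j-1}v$ so that the resulting matrix matches the convention of \eqref{eq:companion-matrix}).
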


  \begin{thm}\label{thm:invariant-polynomials}
The restriction map gives the isomorphism
\begin{align*}
 \C[\fing]^G\isomap \C[\mc{S}].
\end{align*}
 \end{thm}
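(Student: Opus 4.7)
The plan is to recognize both sides as polynomial rings in $n$ variables and to check that the restriction map matches natural systems of generators. The ring $\C[\mc{S}]$ is transparent: $\mc{S}$ is an $n$-dimensional affine subspace of $\fing$ with linear coordinates $a_1,\dots,a_n$, so $\C[\mc{S}] = \C[a_1,\dots,a_n]$ is freely generated by them. For $\C[\fing]^G$ I invoke the classical fact that it is a polynomial algebra freely generated by the coefficients $\phi_1,\dots,\phi_n$ of the characteristic polynomial
\[
\det(tI - X) \;=\; t^n + \sum_{i=1}^n (-1)^i\,\phi_i(X)\,t^{n-i},\qquad X\in\fing.
\]
This is the classical invariant theory of $GL_n$ acting on $\mf{gl}_n$: one restricts to the Cartan subalgebra $\finh$ of diagonal matrices, observes that restriction is injective on $G$-invariants because the regular semisimple locus is Zariski dense in $\fing$, and then applies the fundamental theorem of symmetric polynomials to identify the image with $\C[\finh]^{S_n}$ generated by the elementary symmetric functions, which are exactly the restrictions of the $\phi_i$.

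With both rings identified as polynomial algebras in $n$ generators, equation \eqref{eq:companion} completes the proof: it says precisely that for $A\in\mc{S}$ one has $\phi_i(A) = \pm\, a_i$, so the restriction map carries the free generators $\phi_1,\dots,\phi_n$ of $\C[\fing]^G$ to (signed) free generators of $\C[\mc{S}]$. A map of polynomial rings taking a set of free generators to another such set is an isomorphism, which gives the theorem.

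If one wishes to bypass the full Chevalley-type description of $\C[\fing]^G$ for the injectivity half, Lemma \ref{lem;characterization-of-elements-in-S} furnishes a clean alternative: it identifies $G\cdot\mc{S}$ with the set of matrices admitting a cyclic vector, which is defined by the non-vanishing of a certain determinant and is therefore a non-empty Zariski open, hence dense, subset of the irreducible variety $\fing$. Any $G$-invariant polynomial vanishing on $\mc{S}$ must then vanish on $G\cdot\mc{S}$, hence on all of $\fing$, so the restriction map is injective. The main non-trivial input in either route is the classical identification of $\C[\fing]^G$ with the polynomial algebra in the characteristic-polynomial coefficients; granting this input, the theorem is essentially a restatement of \eqref{eq:companion}.
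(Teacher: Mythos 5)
Your proof is correct, but your primary route is not the one the paper takes, and the difference is worth noting. You import as a black box the Chevalley-type theorem that $\C[\fing]^G$ is a \emph{free} polynomial algebra on the characteristic-polynomial coefficients (proved by restricting to $\finh$ and invoking symmetric-function theory), and then simply observe via \eqref{eq:companion} that these free generators restrict to coordinates on $\mc{S}$, so free generators go to free generators. The paper avoids this input entirely: it proves injectivity exactly as in your ``alternative'' paragraph (a $G$-invariant vanishing on $\mc{S}$ vanishes on $G\cdot\mc{S}$, which is Zariski dense in $\fing$ by Lemma \ref{lem;characterization-of-elements-in-S}), and proves surjectivity by noting that \eqref{eq:companion} already shows the restrictions $p_i|_{\mc{S}}$ generate $\C[\mc{S}]$ --- no knowledge of the structure of $\C[\fing]^G$ is needed. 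What each route buys: yours situates the statement inside classical invariant theory and makes it ``a restatement of \eqref{eq:companion}''; the paper's is self-contained and logically lighter, and indeed, once the isomorphism is established this way, the freeness of $\C[\fing]^G$ on $p_1,\dots,p_n$ drops out as a corollary rather than serving as a hypothesis. Since your alternative injectivity argument coincides with the paper's and your surjectivity claim is exactly the paper's observation about $p_i|_{\mc{S}}$, your write-up in fact contains the paper's proof as a special case of its fallback; only the emphasis differs.
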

  \begin{proof}
   Let
   $f\in \C[\fing]$
be  a $G$-invariant polynomial  
   such that
   $f|_{\mc{S}}=0$.
   Then clearly $f|_{G.\mc{S}}=0$.
On the other hand 
it follows from  Lemma \ref{lem;characterization-of-elements-in-S} that
   $G . \mc{S}$ is  a Zariski
   open  subset in $\fing$.
Therefore $f=0$.
   To see the surjectiveness define $p_1,\dots,p_n\in \C[\fing]^G$ by
\begin{align*}
 \det(tI-A)=t^n+p_1(A)t^{n-1}-\dots +p_n(A),
 \quad A\in \fing.
\end{align*}
By \eqref{eq:companion},
we have
 $\C[\mc{S}]=\C[p_1|_{\mc{S}}
,\dots , p_n|_{\mc{S}}]$.
This completes the proof.
  \end{proof}

Put \begin{align}
 f:=
\begin{pmatrix} 
  0 & & & \\ 
  1  & \ddots & &\\
 & \ddots   &  \ddots &  \\
 &  &   1    & 0 
\end{pmatrix} 
\in 
     \mc{S}.
     \label{eq:f}
\end{align}
Note that $f$ is a {\em nilpotent element} of  $\fing$, that is,
$(\ad f)^r=0$ for a sufficiently large $r$.
We have
\begin{align*}
 \mc{S}=f+\mf{a},
\end{align*}
where 
\begin{align*}
 \mf{a}= \Bigg\{ 
\begin{pmatrix} 
  0 &   \cdots &    0& \ast\\ 
\vdots      & & \vdots  &\ast\\
\vdots    &   &\vdots   & \ast \\
0 &    \cdots    &0 &\ast 
\end{pmatrix}  
\Bigg\}.
\end{align*}

Let $\mf{b}$, $\mf{n}$ be the subalgebras of $\fing$ defined by
\begin{align*}
 \mf{b}= \Bigg\{ 
\begin{pmatrix} 
  \ast &  &   & \\ 
    & \ddots & \ast &  \\
  &   0 &  \ddots  &  \\
 &  &   & \ast
\end{pmatrix}  
\Bigg\},\quad
\mf{n}= \Bigg\{ 
\begin{pmatrix} 
  0 &  &   & \\ 
    & \ddots & \ast &  \\
  &   0 &  \ddots  &  \\
 &  &   & 0
\end{pmatrix}  
\Bigg\} \subset \mf{b},
\end{align*}
and let  $N$ be the unipotent subgroup of $G$ corresponding to $\mf{n}$, i.e., 
\begin{equation}
N = \Bigg\{ 
\begin{pmatrix} 
  1 &  &   & \\ 
    & \ddots & \ast &  \\
  &   0 &  \ddots  &  \\
 &  &   & 1
\end{pmatrix}  
\Bigg\}. 
\end{equation}

Let $(~|~)$ be the invariant inner product of $\fing$ defined by
 $(x|y)=\on{tr}(xy)$.
This gives a $G$-equivariant isomorphism
$\fing\isomap \fing^*$.

Define $\chi \in \mf{n}^\ast$ by \begin{align*}
				  \chi(x)=( f|x)\quad\text{for $x\in \mf{n}$}.
				 \end{align*}
Note that $\chi$  is  a character of $\mf{n}$, that is, $\chi([\mf{n},
\mf{n}])=0$.
Hence $\chi$ defines a one-dimensional representation of $N$.

Consider the restriction map
\begin{align*}
\mu:\fing^*\ra \mf{n}^*.
\end{align*}
Then
\begin{align*}
 \mu^{-1}(\chi)=\chi+\finn^\bot\cong f+\mf{b}.
\end{align*}
Here $\fing$ is identified with $\fing^*$ via $(~|~)$.
Since 
$\mu$ is $N$-equivariant and $\chi$ is a one-point orbit of $N$,
it follows that $f+\mf{b}$ is stable under the action of $N$.

 \begin{thm}[Kostant \cite{Kos78}]\label{them:Kostant}
The adjoint action gives the isomorphism
\begin{align*}
 N\times \mathcal{S}\isomap f+\mf{b},\quad (g,x)\mapsto \Ad(g)x
\end{align*}
of affine varieties.
 \end{thm}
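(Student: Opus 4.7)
The plan is to construct an explicit polynomial inverse $\psi: f+\mf{b} \to N \times \mc{S}$ to the adjoint action map $\phi: N \times \mc{S} \to f+\mf{b}$. Note first that the dimensions match: $\dim N + \dim \mc{S} = \binom{n}{2} + n = \binom{n+1}{2} = \dim(f+\mf{b})$, and $f+\mf{b}$ is stable under $N$ (as observed just before the theorem) and contains $\mc{S}$, so $\phi$ is well-defined.

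The key observation is the following triangularity property: for any $A \in f+\mf{b}$ and any $i = 1, \dots, n$, one has
\begin{align*}
A^{i-1} e_1 \in e_i + \haru(e_1, \dots, e_{i-1}),
\end{align*}
which follows by induction on $i$, using that $f e_j = e_{j+1}$ for $j<n$ and that $\mf{b}$ preserves each $\haru(e_1, \dots, e_j)$. In particular $e_1, Ae_1, \dots, A^{n-1} e_1$ is a basis of $\C^n$, and the change-of-basis matrix $g_A$ defined by $g_A e_i = A^{i-1} e_1$ is upper unitriangular, hence lies in $N$. I would then define $\psi(A) = (g_A, g_A^{-1} A g_A)$.

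To check that this lands in $N \times \mc{S}$, compute $g_A^{-1} A g_A \cdot e_i = g_A^{-1} A^i e_1 = e_{i+1}$ for $i<n$, while $g_A^{-1} A^n e_1$ is some vector $\sum_j \alpha_j e_j$; thus $g_A^{-1} A g_A$ is a companion matrix (the last column encoding the coefficients $\alpha_j$). Verifying that $\psi \circ \phi = \id$ uses that any $x \in \mc{S}$ satisfies $x^{i-1} e_1 = e_i$ together with $g^{-1} e_1 = e_1$ for $g \in N$: setting $A = gxg^{-1}$ gives $A^{i-1} e_1 = g x^{i-1} e_1 = g e_i$, so $g_A = g$. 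The identity $\phi \circ \psi = \id$ is automatic from the definition. Finally, $\psi$ is a morphism of affine varieties because $g_A$ is manifestly polynomial in the entries of $A$, and inverting a unipotent matrix is polynomial via the truncated Neumann series $g_A^{-1} = \sum_{k=0}^{n-1} (-1)^k (g_A - I)^k$.

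The only subtle point, which is essentially the content of the theorem, is the observation that $e_1$ is a cyclic vector for every $A \in f + \mf{b}$ with the precise triangular form above; once this is in hand the construction of $g_A$ and the verification of its properties are mechanical. The rest amounts to bookkeeping with companion matrices and the fact that $N$ consists of unipotent (hence polynomially invertible) matrices.
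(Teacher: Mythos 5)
Your proof is correct, but it takes a genuinely different route from the paper. The paper's argument is in two steps: first it asserts (leaving the details to the reader) that the adjoint action is a \emph{bijection} $N\times\mc{S}\to f+\mf{b}$, and then it upgrades this set-theoretic bijection to an isomorphism of varieties by invoking Zariski's Main Theorem, which requires knowing that $f+\mf{b}$ is normal (it is an affine space) and irreducible. You instead construct an explicit polynomial inverse $A\mapsto(g_A,\,g_A^{-1}Ag_A)$ out of the cyclic vector $e_1$ and the unitriangular change-of-basis matrix $g_A$, which proves the scheme-theoretic isomorphism directly and makes any appeal to ZMT or normality unnecessary; this matters because a bijective morphism of varieties need not be an isomorphism in general, so your construction actually closes a gap that the paper delegates to general theory. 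The triangularity computation $A^{i-1}e_1\in e_i+\haru(e_1,\dots,e_{i-1})$, the verification that $g_A^{-1}Ag_A$ is a companion matrix, and the identity $g_{gxg^{-1}}=g$ for $g\in N$, $x\in\mc{S}$ (using $x^{i-1}e_1=e_i$ and $g^{-1}e_1=e_1$) are all correct, as is the remark that inversion in $N$ is polynomial via the truncated Neumann series. The trade-off is that your argument is specific to $\mf{gl}_n$ and the principal nilpotent in its companion-matrix normal form, whereas the paper's ZMT argument is the one that generalizes verbatim to the Kostant and Slodowy slices for an arbitrary simple Lie algebra, as used later in the text.
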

 \begin{proof}
It is not
 difficult to see that
 the adjoint action
  gives the bijection
$
 N\times \mathcal{S}\isomap f+\mf{b}
$.
  Since it is  a morphism of
irreducible  varieties and $f+\mf{b}$ is normal,
the assertion follows from Zariski's Main Theorem (see e.g.,
\cite[Corollary 17.4.8]{TauYu05}).
 \end{proof}

  \begin{cor}\label{cor:iso-first}
   The restriction map gives the isomorphisms
 \begin{align*}
\C[\fing]^G
   {\isomap} \C[f+\mf{b}]^N
   {\isomap} \C[\mc{S}].
 \end{align*}
  \end{cor}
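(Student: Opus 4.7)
The plan is to combine Theorem \ref{thm:invariant-polynomials} with Kostant's theorem (Theorem \ref{them:Kostant}), and extract the second isomorphism from an $N$-invariants calculation on the product $N \times \mc{S}$.

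First I would verify that the restriction map $\C[\fing]^G \to \C[f+\mf{b}]^N$ is well-defined: since $N \subset G$, a $G$-invariant polynomial is in particular $N$-invariant, and the preceding remark shows $f+\mf{b}$ is stable under the adjoint action of $N$. Next I would obtain the isomorphism $\C[f+\mf{b}]^N \isomap \C[\mc{S}]$ as follows. By Theorem \ref{them:Kostant}, the adjoint action gives an $N$-equivariant isomorphism of affine varieties $N \times \mc{S} \isomap f+\mf{b}$, where $N$ acts on the left-hand side by left translation on the first factor. Hence
\begin{align*}
\C[f+\mf{b}]^N \cong (\C[N] \otimes \C[\mc{S}])^N = \C[N]^N \otimes \C[\mc{S}] = \C[\mc{S}],
\end{align*}
the last equality because $N$ acts freely and transitively on itself by left translation, so its only invariant regular functions are the constants. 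Tracing through the identifications, this isomorphism is precisely the restriction map (evaluation at $(e,x)$ corresponds to evaluation at $x \in \mc{S} \subset f+\mf{b}$).

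Finally, the composition
\begin{align*}
\C[\fing]^G \longrightarrow \C[f+\mf{b}]^N \longrightarrow \C[\mc{S}]
\end{align*}
is the restriction map $\C[\fing]^G \to \C[\mc{S}]$, which is an isomorphism by Theorem \ref{thm:invariant-polynomials}. Since the second arrow is an isomorphism by the previous step, so is the first. This yields both claimed isomorphisms.

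There is essentially no hard step here; the only subtlety is identifying the action on $N \times \mc{S}$ correctly so that the $N$-invariants calculation collapses to $\C[\mc{S}]$. Once Kostant's theorem is in hand, the corollary is formal.
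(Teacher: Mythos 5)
Your proof is correct and follows essentially the same route as the paper: apply Kostant's isomorphism $N\times\mc{S}\isomap f+\mf{b}$ to compute $\C[f+\mf{b}]^N\cong\C[N]^N\otimes\C[\mc{S}]\cong\C[\mc{S}]$, then conclude via Theorem \ref{thm:invariant-polynomials}. The only difference is that you spell out the details (well-definedness, the identification of the $N$-action on $N\times\mc{S}$ as left translation on the first factor, and the two-out-of-three argument for the composition) that the paper leaves implicit.
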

 \begin{proof}By Theorem \ref{them:Kostant},
we have
\begin{align*}
\C[f+\mf{b}]^N \cong \C[N]^N\otimes \C[\mc{S}] \cong \C[\mc{S}].
\end{align*}Hence the assertion follows from 
Theorem \ref{thm:invariant-polynomials}.
 \end{proof}

\subsection{Transversality of $\mc{S}$ to $G$-orbits}
\begin{lem}\label{lem:tras-at-f}
The affine spaces $\mc{S}$ and $f+\mf{b}$ intersect transversely at $f$ to $\Ad G\cdot
 f$.
\end{lem}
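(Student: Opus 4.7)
Since $\mathcal{S}=f+\mathfrak{a}$ and $f+\mathfrak{b}$ are affine subspaces, their tangent spaces at $f$ are $\mathfrak{a}$ and $\mathfrak{b}$, respectively, while the tangent space to the orbit $\operatorname{Ad}G\cdot f$ at $f$ is $[\mathfrak{g},f]$. My plan is therefore to reduce the claim to the two linear identities
\begin{align*}
\mathfrak{a}+[\mathfrak{g},f]=\mathfrak{g}, \qquad \mathfrak{b}+[\mathfrak{g},f]=\mathfrak{g},
\end{align*}
and to establish the second directly, then deduce the first from it using Theorem \ref{them:Kostant}.

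For the identity $\mathfrak{b}+[\mathfrak{g},f]=\mathfrak{g}$, I would introduce the principal $\mathfrak{sl}_2$-triple $(e,h,f)$ in which $e$ is the transpose of $f$ (the matrix with $1$'s on the superdiagonal) and $h=[e,f]$ is the diagonal matrix $\operatorname{diag}(n-1,n-3,\ldots,-(n-1))$. Decomposing $\mathfrak{g}$ into irreducible $\mathfrak{sl}_2$-modules under the adjoint action and using the standard fact that on a finite-dimensional $\mathfrak{sl}_2$-representation $V$ one has $V=V^e\oplus f(V)$, we get $\mathfrak{g}=\mathfrak{g}^e\oplus[\mathfrak{g},f]$. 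Since $\mathfrak{g}^e=\mathbb{C}[e]=\operatorname{span}(I,e,e^2,\ldots,e^{n-1})$ consists of upper-triangular matrices, $\mathfrak{g}^e\subset\mathfrak{b}$, and the identity follows.

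To deduce $\mathfrak{a}+[\mathfrak{g},f]=\mathfrak{g}$, I would differentiate the isomorphism $N\times\mathcal{S}\isomap f+\mathfrak{b}$ of Theorem \ref{them:Kostant} at $(1,f)$; the derivative is the linear map $\mathfrak{n}\oplus\mathfrak{a}\to\mathfrak{b}$, $(\xi,v)\mapsto[\xi,f]+v$, which must be a linear isomorphism. In particular, $\mathfrak{b}=[\mathfrak{n},f]+\mathfrak{a}$, and substituting this into the identity just proved gives
\begin{align*}
\mathfrak{g}=\mathfrak{b}+[\mathfrak{g},f]=\bigl([\mathfrak{n},f]+\mathfrak{a}\bigr)+[\mathfrak{g},f]=\mathfrak{a}+[\mathfrak{g},f],
\end{align*}
since $[\mathfrak{n},f]\subset[\mathfrak{g},f]$. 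The only nontrivial input is the $\mathfrak{sl}_2$-theoretic decomposition $\mathfrak{g}=\mathfrak{g}^e\oplus[\mathfrak{g},f]$ together with the inclusion $\mathfrak{g}^e\subset\mathfrak{b}$; the rest is formal manipulation of tangent spaces and a restatement of Kostant's theorem at the infinitesimal level.
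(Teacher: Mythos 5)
Your overall strategy is sound and not circular: Theorem \ref{them:Kostant} is proved in the paper before this lemma and without appealing to it, so you may legitimately differentiate the isomorphism $N\times\mc{S}\isomap f+\mf{b}$ at $(1,f)$ to get $\mf{b}=[\finn,f]\oplus\mf{a}$, and combining this with $\mf{b}+[\fing,f]=\fing$ does yield $\mf{a}+[\fing,f]=\fing$. The paper's own proof is much terser: it records the three tangent spaces $\fing$, $\mf{a}$, $[\fing,f]$ and simply asserts the identity $\fing=\mf{a}+[\fing,f]$ (the $f+\mf{b}$ case then being immediate from $\mf{a}\subset\mf{b}$), whereas you actually derive it, at the cost of routing the argument through Kostant's theorem. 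If you want a self-contained derivation that avoids that detour, note that with respect to the trace form $[\fing,f]^{\perp}=\fing^{f}=\C[f]$, while $\mf{a}^{\perp}$ consists of matrices with vanishing last row; a polynomial in $f$ with vanishing last row is zero, so $\mf{a}^{\perp}\cap[\fing,f]^{\perp}=0$ and hence $\mf{a}+[\fing,f]=\fing$ directly.

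The one genuine error is in your explicit $\mf{sl}_2$-triple. With $e=f^{T}=\sum_i e_{i,i+1}$ one computes $[e,f]=e_{11}-e_{nn}$, not $\operatorname{diag}(n-1,n-3,\dots,-(n-1))$, and $(e,[e,f],f)$ is not an $\mf{sl}_2$-triple for $n\geq 3$; consequently the standard decomposition $V=V^{e}\oplus f(V)$ of a finite-dimensional $\mf{sl}_2$-module cannot be invoked as you state it. The repair is to take the correctly normalized $e=\sum_i i(n-i)e_{i,i+1}$ of \eqref{eq:sl2-triple}: then $(e,h,f)$ is an $\mf{sl}_2$-triple, $\fing=\fing^{e}\oplus[\fing,f]$ holds, and $\fing^{e}=\C[e]$ is still spanned by powers of an upper-triangular matrix, so $\fing^{e}\subset\mf{b}$ and the rest of your argument goes through unchanged.
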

\begin{proof}
We need to show that 
\begin{equation}
T_f\mf{g} =T_f\mc{S} + T_f (\Ad G\cdot f)
\end{equation}
But $T_f\mf{g} \cong \mf{g}$, $ T_f\mc{S}\cong \mf{a}$,  $T_f (\Ad
 G\cdot f)\cong [\mf{g}, f]$.
The assertion follows since  $\mf{g}=\mf{a}+[\mf{g}, f]$.
\end{proof}

Using the Jacobson-Morozov theorem, we can embed $f$ into an
$\mf{sl}_2$-triple $\{e, f, h\}$ in $\mf{g}$. 
Explicitly, 
we can choose the following elements for $e$  and $h$:
\begin{align}
 e=\sum_{i=1}^{n-1}i(n-i)e_{i,i+1},\quad 
h=\sum_{i=1}^n (n+1-2i)e_{i,i},
\label{eq:sl2-triple}
\end{align}
where
$e_{i,j}$ denotes the standard basis element of $\fing=\on{Mat}_n(\C)$.

The embedding
$\mf{sl}_2=\haru_{\C}\{e,h,f\}\ra \fing$
exponents to a homomorphism $SL_2\ra G$.
Restricting it to the torus $\C^*$ consisting of diagonal matrices 
we obtain a one-parameter subgroup
 $\gamma:\C^*\ra G$.
Set
\begin{align}
 \rho :\C^*\ni t \longmapsto t^2 \Ad \gamma(t)\in GL(\fing).
 \label{eq:C-star-action}
\end{align}
Then
\begin{align*}
 \rho(t)(f+\sum_{i\leq j}c_{ij}e_{i,j})=f+\sum_{i\leq j}t^{2(i-j+1)}c_{ij}e_{i,j}.
\end{align*}
Thus it 
define a $\C^\ast$-action on $\mf{g}$
that preserves $f+\mf{b}$  and
 $\mc{S}$.
This action on $f+\mf{b}$  and
 $\mc{S}$
 contracts to $f$, that is, $\rho(t)x \rightarrow f$ when $t \rightarrow 0$.

\begin{prp}\label{prp:transversality}
The affine space $f+\mf{b}$  (resp.\ $\mc{S}$) intersects $\Ad G\cdot
 x$ transversely at any point $x\in f+\mf{b}$ (resp. $x\in \mc{S}$).
\end{prp}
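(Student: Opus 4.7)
The plan is to propagate the transversality at $f$ given by Lemma \ref{lem:tras-at-f} to every point of $f+\mf{b}$ (resp.\ $\mc{S}$) via the contracting $\C^\ast$-action $\rho$ from \eqref{eq:C-star-action}. The two essential inputs are that $\rho(t)$ preserves both affine subspaces and contracts them to $f$, and that $\rho(t)$ is compatible with the adjoint $G$-action in the sense that it carries $G$-orbits to $G$-orbits.

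For the latter, for $g \in G$ and $x \in \fing$ a direct computation yields
\[
\rho(t)\cdot \Ad(g)\ts x \;=\; t^2\,\Ad\!\bigl(\gamma(t)\ts g\ts \gamma(t)^{-1}\bigr)\Ad(\gamma(t))\ts x \;=\; \Ad\!\bigl(\gamma(t)\ts g\ts \gamma(t)^{-1}\bigr)\ts \rho(t)\ts x,
\]
so $\rho(t)(\Ad G \cdot x) = \Ad G \cdot \rho(t) x$. Since $\rho(t)$ is a linear automorphism of $\fing$ preserving $f+\mf{b}$ (resp.\ $\mc{S}$), its differential identifies $T_x(f+\mf{b})$ with $T_{\rho(t)x}(f+\mf{b})$ (resp.\ $T_x\mc{S}$ with $T_{\rho(t)x}\mc{S}$) and $T_x(\Ad G\cdot x)$ with $T_{\rho(t)x}(\Ad G \cdot \rho(t) x)$. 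Hence transversality at $x$ is equivalent to transversality at $\rho(t)x$ for every $t\in \C^\ast$.

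The locus $Z\subset f+\mf{b}$ where transversality fails is the set of $x$ at which the linear map $\fing \to \fing/\mf{b}$, $y \mapsto [y,x]\bmod \mf{b}$, fails to be surjective; this is a rank-drop condition depending polynomially on $x$, so $Z$ is Zariski closed. By the preceding paragraph $Z$ is $\rho(\C^\ast)$-stable, and because $\rho(t)x\to f$ as $t\to 0$ for every $x\in f+\mf{b}$, a nonempty $Z$ would force $f \in \overline{Z} = Z$, contradicting Lemma \ref{lem:tras-at-f}. Thus $Z = \emptyset$. The same argument, with $\mf{b}$ replaced by $\mf{a}$, handles $\mc{S}$. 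There is no serious obstacle here: the real content is the closedness of the non-transversality locus together with the $\C^\ast$-equivariance, both of which are immediate from the setup already in place.
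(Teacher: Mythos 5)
Your proof is correct and follows the same strategy as the paper: Lemma \ref{lem:tras-at-f} gives transversality at $f$, and the contracting $\C^*$-action $\rho$ propagates it to all points; you have merely spelled out the details the paper leaves implicit (the $\Ad G$-equivariance of $\rho(t)$, the Zariski-closedness of the non-transversality locus, and the limit argument $\rho(t)x\to f$). All of these steps check out.
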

 \begin{proof}
By Lemma  \ref{lem:tras-at-f}
the intersection of $f+\mf{b}$ with $\Ad G$-orbits is trasversal 
at each point in some open neighborhood of $f$ in $f+\mf{b}$.
By the contracting $\C^*$-action $\rho$,
it follows that the same is true for all points of $f+\mf{b}$.
 \end{proof}

 \subsection{The trasversal slice $\mc{S}$ as a reduced Poisson variety }
 \label{subs:trasversal-slice}
The affine variety
$\fing^*$ 
is equipped with the Kirillov-Kostant Poisson
structure:
the Poisson algebra structure of 
$\C[\fing^*]$ is given by
\begin{align*}
\{x,y\}=[x,y]\quad\text{for } x,y\in \fing\subset \C[\fing^*].
\end{align*}

Consider the restriction map $\mu:\fing^*\ra \mf{n}^*$,
which is a {\em moment map} for the $N$-action on
$\fing^*$.
That is,
$\mu$ is a regular $N$-equivariant morphism
that gives the following commutative diagram of Lie algebras:
\begin{align*}
 \xymatrix{
  & \mf{n}\ar[d]^{} \ar[dl]_-{\mu^*} \\
  \C[\fing^*] \ar[r]^{} &\on{Der}\C[\fing^*]
}&
\end{align*}
Here $\mu^*:\mf{n}\ra \fing\subset \C[\fing^*]$ is the pullback  map,
the map $\C[\fing^*]\ra \on{Der}\C[\fing^*]$ is given by 
$\phi\mapsto \{\phi,?\}$,
and 
$\mf{n}\ra\on{Der}\C[\fing^*]$ is the Lie algebra homomorphism
induced by the coadjoint action of $G$ on $\fing^*$.

The transversality statement  of
Proposition
\ref{prp:transversality} for $f+\mf{b}$
is equivalent to that
$\chi$ is 
 a regular value of $\mu$.
 By Theorem \ref{them:Kostant},
the action of $N$  on $\mu^{-1}(\chi)=\chi+\finn^{\bot}$ is free 
and
\begin{align*}
 \mc{S}\cong \mu^{-1}(\chi)/N.
\end{align*}
Therefore $\mc{S}$
has
the structure of the {\em reduced Poisson variety},
obtained from $\fing^*$ by the Hamiltonian reduction.

The Poisson structure of $\mc{S}$ is described as follows.
Let
\begin{align*}
 I_{\chi}=\C[\fing^*]\sum_{x\in \finn}(x-\chi(x)),
\end{align*}
so that
 \begin{align*}
\C[\mu^{-1}(\chi)]=\C[\fing^*]/I_{\chi}.
 \end{align*}
 Then
 $\C[\mc{S}]$ can be identified as  the subspace of
 $\C[\fing^*]/I_{\chi}$ consisting of all cosets $\phi+\C[\fing^*]I_{\chi}$ such that
 $\{x,\phi\}\in \C[\fing^*]I_{\chi}$ for all $x\in \finn$.
 In this realization,
 the Poisson structure on $\C[\mc{S}]$ is defined by the formula
 \begin{align*}
  \{\phi+\C[\fing^*]I_{\chi}, \phi'+\C[\fing^*]I_{\chi}\}=\{\phi,\phi'\}+\C[\fing^*]I_{\chi}
 \end{align*}
 for $\phi,\phi'$ such that
 $\{x,\phi\}, \{x,\phi'\}\in \C[\fing^*]I_{\chi}$ for all $x\in \finn$.

\begin{prp}
We have  the isomorphism
 $\C[\fing^*]^G\isomap \C[\mc{S}]$ as Poisson algebras.
 In particular the Poisson structure of $\mc{S}$ is trivial.
\end{prp}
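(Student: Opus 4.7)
The plan is to bootstrap the Poisson claim from the algebra isomorphism already established in Corollary~\ref{cor:iso-first} (transported from $\fing$ to $\fing^*$ via the $G$-equivariant identification induced by $(~|~)$), together with one standard fact: the $G$-invariants form the Poisson center of $\C[\fing^*]$ with respect to the Kirillov--Kostant bracket.

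First, I recall why $\C[\fing^*]^G$ is Poisson-central. By construction of the Kirillov--Kostant structure, the derivation $\{x,-\}$ on $\C[\fing^*]$ for $x\in\fing$ is precisely the infinitesimal coadjoint action of $x$ on $\C[\fing^*]$. Hence $\phi\in\C[\fing^*]^G$ is annihilated by $\{x,-\}$ for every $x\in\fing$, and in particular for every $x\in\finn$. This has two consequences. First, the condition $\{x,\phi\}\in\C[\fing^*]I_{\chi}$ for all $x\in\finn$ is automatically satisfied (the bracket is actually zero), so the image of $\phi$ in $\C[\fing^*]/I_\chi$ lies in the Poisson subalgebra $\C[\mc{S}]$ described in \S\ref{subs:trasversal-slice}. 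Second, for any two elements $\phi,\phi'\in\C[\fing^*]^G$, we have $\{\phi,\phi'\}=0$ in $\C[\fing^*]$, and consequently its coset vanishes in $\C[\mc{S}]$.

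Next I combine this with the algebra isomorphism. Corollary~\ref{cor:iso-first} gives $\C[\fing]^G\isomap \C[\mc{S}]$, which under the $G$-equivariant isomorphism $\fing\isomap\fing^*$ becomes an isomorphism $\C[\fing^*]^G\isomap \C[\mc{S}]$ of commutative algebras. The previous paragraph shows that this map is moreover a Poisson homomorphism, and that the bracket of any two elements in the image vanishes. Since the map is surjective, every pair of elements in $\C[\mc{S}]$ has vanishing Poisson bracket; that is, the reduced Poisson structure on $\mc{S}$ is identically zero.

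There is no real obstacle; the only conceptual step is the identification of the Hamiltonian $\{x,-\}$ with the infinitesimal coadjoint action, which is intrinsic to the Kirillov--Kostant structure. Everything else is formal consequence of the algebra isomorphism from Corollary~\ref{cor:iso-first} and of $G$-invariance implying $N$-invariance.
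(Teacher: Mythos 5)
Your proof is correct and follows the same route as the paper, which simply asserts that the restriction map of Corollary~\ref{cor:iso-first} is ``obviously a homomorphism of Poisson algebras''; you have merely unpacked that obviousness via the observation that $\C[\fing^*]^G$ lies in the Poisson center of $\C[\fing^*]$, so that invariants automatically descend to the reduced Poisson algebra $\C[\mc{S}]$ and their brackets vanish there. Nothing is missing.
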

\begin{proof}
The restriction map
 $\C[\fing^*]^G\isomap \C[\mc{S}]$
 (see Corollary  \ref{cor:iso-first}) is obviously a homomorphism of  Poisson algebras.
\end{proof}
 
In the next subsection we shall describe
the above Hamiltonian reduction
in more factorial way,
in terms of the {\em BRST cohomology}
(where  BRST refers to the physicists Becchi, Rouet, Stora and Tyutin) 
for later purpose.

\subsection{BRST reduction}
Let
$Cl$ be the 
{\em Clifford algebra} associated with
the vector space $\mf{n}\oplus \mf{n^\ast}$ and its  non-degenerate
bilinear form $(\cdot|\cdot)$ defined by $(f+x|g+y)=f(y)+g(x)$ for $f,
g\in \mf{n^\ast}, x, y \in \mf{n}$. 
Namely,
$Cl$ is the unital $\C$-superalgebra
that is
isomorphic to
$\Lambda(\mf{n})\otimes \Lambda(\mf{n^\ast})$ 
as $\C$-vector spaces,
the natural embeddings
$\Lambda(\mf{n})\hookrightarrow Cl$,
$\Lambda(\mf{n}^*)\hookrightarrow Cl$ are 
homogeneous homomorphism of superalgebras,
and
\begin{align*}
[x,f]
=f(x)\quad x\in\mf{n}\subset \Lam(\finn),\ f\in
 \mf{n}^*\subset \Lam(\finn^*).
\end{align*}
(Note that $[x,f]=xf+fx$ since $x,f$ are odd.)


Let $\{x_\alpha\}_{\alpha\in \Delta_+}$ be  a basis of $\mf{n}$,   $\{x_\alpha^\ast\}_{\alpha\in \Delta_+}$ the dual basis of $\mf{n^\ast}$, and $c_{\alpha, \beta}^{\gamma}$  the structure
 constants of $\mf{n}$,
that is,
$[x_\alpha,x_\beta]=\sum_{\alpha,\beta}^\gamma c_{\alpha\beta}^\gamma x_\gamma$.

\begin{lem}\label{lem:Lie-alg-hom-Clifford}
The following map gives a Lie algebra homomorphism.
 \begin{align*}
\begin{split}
\rho : \mf{n} &\longrightarrow Cl \\
x_\alpha &\longmapsto \sum_{
\beta, \gamma\in \Delta_+}c_{\alpha, \beta}^\gamma x_\gamma x_\beta^\ast.
\end{split}
\end{align*}
We have
\begin{align*}
 [\rho(x),y]=[x,y]\in \mf{n}\subset Cl\quad\quad\text{for }x,y\in \mf{n}.
\end{align*}
\end{lem}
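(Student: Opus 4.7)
The plan is to verify both assertions by direct expansion in the Clifford algebra, using the super-Leibniz rule. The key structural fact is that $\rho(x_\alpha)$ is a sum of products of pairs of odd generators, hence lies in the even part of $Cl$; consequently $[\rho(x_\alpha),-]$ is a (super-)derivation of degree zero, and on even elements its action is just the ordinary commutator.

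I would handle the second formula first, since it also motivates the choice of $\rho$. From the defining Clifford relations, $\{x_\gamma, x_\delta\} = 0$ (the form vanishes on $\mf{n}\times\mf{n}$) and $\{x_\beta^*, x_\delta\} = \delta_{\beta\delta}$, so applying super-Leibniz once gives
\begin{align*}
[x_\gamma x_\beta^{*},\, x_\delta] \;=\; x_\gamma [x_\beta^{*}, x_\delta] - [x_\gamma, x_\delta]\, x_\beta^{*} \;=\; \delta_{\beta\delta}\, x_\gamma.
\end{align*}
Summing against the structure constants immediately yields $[\rho(x_\alpha), x_\delta] = \sum_\gamma c_{\alpha,\delta}^\gamma\, x_\gamma = [x_\alpha, x_\delta]$, which is the second claim.

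For the Lie algebra homomorphism property, I would apply super-Leibniz twice to compute
\begin{align*}
[x_\gamma x_\delta^{*},\, x_\nu x_\mu^{*}] \;=\; \delta_{\delta\nu}\, x_\gamma x_\mu^{*} \;-\; \delta_{\mu\gamma}\, x_\nu x_\delta^{*}.
\end{align*}
Here the potential degree-$4$ terms cancel because $x_\gamma, x_\nu$ anticommute and so do $x_\delta^{*}, x_\mu^{*}$, while no degree-$0$ piece can appear since the bilinear form pairs only $\mf{n}$ with $\mf{n}^{*}$. Plugging this into $[\rho(x_\alpha), \rho(x_\beta)]$ and collecting, the coefficient of $x_\gamma x_\mu^{*}$ comes out to $\sum_\delta\bigl(c_{\alpha,\delta}^\gamma\, c_{\beta,\mu}^\delta - c_{\alpha,\mu}^\delta\, c_{\beta,\delta}^\gamma\bigr)$, whereas in $\rho([x_\alpha, x_\beta]) = \sum_\sigma c_{\alpha,\beta}^\sigma\, \rho(x_\sigma)$ the same coefficient is $\sum_\sigma c_{\alpha,\beta}^\sigma\, c_{\sigma,\mu}^\gamma$. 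The equality of the two expressions is precisely the $x_\gamma$-component of the Jacobi identity $[[x_\alpha,x_\beta],x_\mu] = [x_\alpha,[x_\beta,x_\mu]] - [x_\beta,[x_\alpha,x_\mu]]$ in $\mf{n}$.

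The main obstacle is just the bookkeeping of super signs and index relabelings; there is no deeper difficulty, essentially because the super-bracket of two quadratic elements of $Cl$ produces neither a scalar anomaly nor a quartic residue, so the identity collapses cleanly onto the Jacobi identity. If one wanted a more conceptual finish, one could observe that $[\rho(x_\alpha),-]$ acts on the generators $\mf{n}\oplus\mf{n}^{*}$ via the natural $\ad$/$\ad^{*}$ action of $\mf{n}$, so the difference $[\rho(x_\alpha),\rho(x_\beta)] - \rho([x_\alpha,x_\beta])$ supercommutes with all of $\mf{n}\oplus\mf{n}^{*}$ and therefore lies in the center of $Cl$; by degree considerations it must be a scalar, and then vanishes by inspection of the leading term. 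Either route is routine.
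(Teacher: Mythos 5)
Your proof is correct. The paper states this lemma without proof, so there is nothing to compare against; your direct verification — computing $[x_\gamma x_\beta^{*},x_\delta]=\delta_{\beta\delta}x_\gamma$ and $[x_\gamma x_\delta^{*},x_\nu x_\mu^{*}]=\delta_{\delta\nu}x_\gamma x_\mu^{*}-\delta_{\mu\gamma}x_\nu x_\delta^{*}$ via the super-Leibniz rule and then matching coefficients of the linearly independent elements $x_\gamma x_\mu^{*}$ against the Jacobi identity of $\mf{n}$ — is exactly the standard argument the author leaves to the reader, and your sign bookkeeping checks out.
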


Define an increasing filtration on $Cl$ by setting $Cl_p:= \Lambda^{\leq
p}(\mf{n})\otimes \Lambda(\mf{n}^\ast)$.
We have 
\begin{align*}
0= Cl_{-1} \subset Cl_0 \subset Cl_1 \cdots 
\subset Cl_{N}=Cl,
\end{align*}
where $N=\dim \finn=\frac{n(n-1)}{2}$,
and
\begin{align}
Cl_p\cdot Cl_q \subset Cl_{p+q}, \quad  [Cl_p, Cl_q] \subset Cl_{p+q-1}.
\label{eq:filtration-of-Clifford}
\end{align}
Let $\overline{Cl}$ be
 its associated graded algebra:
\begin{align*}
\overline{Cl}
:= \gr Cl =\bigoplus_{p\geq 0} \dfrac{Cl_p}{Cl_{p-1}}.
\end{align*}
By \eqref{eq:filtration-of-Clifford},
$\overline{Cl}$ is naturally a graded Poisson superalgebra, called the {\em classical Clifford algebra}.

We have
$\overline{Cl}=\Lambda(\mf{n})\* \Lambda(\mf{n}^*)$
as a commutative superalgebra. 
Its  Poisson (super)bracket
is given by
\begin{align*}
& \{x,f\}=f(x),\quad x\in \mf{n}\subset \Lambda(\mf{n}),
\ f\in \mf{n}^*\subset \Lambda(\mf{n}^*),\\
&\{x,y\}=0, \quad x,y\in \mf{n}\subset \Lambda(\mf{n}),
\quad \{f,g\}=0, \ f,g\in \mf{n}^*\subset \Lambda(\mf{n}^*).
\end{align*}


 \begin{lem}\label{lem:invariant-of-classical-Poisson}
We have $\overline{Cl}^{\mf{n}}=\Lambda(\mf{n})$,
where
  $\overline{Cl}^{\mf{n}}
:=\{w\in \overline{Cl}\mid  \{x,w \}=0,
  \forall x\in \mf{n}\}$.
 \end{lem}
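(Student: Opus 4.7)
The plan is to compute the Poisson-bracket action of $\mf{n} \subset \overline{Cl}$ explicitly and reduce the claim to showing that the common kernel of interior products on $\Lambda(\mf{n}^*)$ is trivial.

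First, for any $x \in \mf{n}$, the operator $\{x, -\}$ is a super-derivation of $\overline{Cl}$. From the defining Poisson relations $\{x, y\} = 0$ for $y \in \mf{n}$ and $\{x, f\} = f(x)$ for $f \in \mf{n}^*$, together with the super-Leibniz rule, I see that $\{x, -\}$ annihilates $\Lambda(\mf{n})$ and acts on $\Lambda(\mf{n}^*)$ as the interior product $\iota_x$. On the tensor decomposition $\overline{Cl} = \Lambda(\mf{n}) \otimes \Lambda(\mf{n}^*)$ this gives $\{x, a \otimes b\} = \pm\, a \otimes \iota_x(b)$, with a sign depending only on the parity of $a$. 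In particular, every element of $\Lambda(\mf{n}) \otimes 1$ is killed by each $\{x,-\}$, giving the easy inclusion $\Lambda(\mf{n}) \subset \overline{Cl}^{\mf{n}}$.

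For the reverse inclusion, I would pick a homogeneous basis $\{e_I\}$ of $\Lambda(\mf{n})$ and write an arbitrary $w \in \overline{Cl}$ as $\sum_I e_I \otimes \omega_I$ with $\omega_I \in \Lambda(\mf{n}^*)$. Linear independence of the $e_I$ then forces $\iota_x(\omega_I) = 0$ for every $I$ and every $x \in \mf{n}$ (the overall signs are irrelevant for vanishing). The task reduces to verifying that $\bigcap_{x \in \mf{n}} \ker(\iota_x) = \C \cdot 1$ inside $\Lambda(\mf{n}^*)$. Taking the dual basis $\{x_\alpha^*\}$ of $\{x_\alpha\}$, any nonzero $\omega \in \Lambda^{\geq 1}(\mf{n}^*)$ must contain some monomial $x_{\alpha_1}^* \wedge \cdots \wedge x_{\alpha_k}^*$ with nonzero coefficient, and $\iota_{x_{\alpha_1}} \omega$ then contains $x_{\alpha_2}^* \wedge \cdots \wedge x_{\alpha_k}^*$ with the same nonzero coefficient, so $\iota_{x_{\alpha_1}} \omega \neq 0$. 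Hence each $\omega_I$ is a scalar multiple of $1$, and $w \in \Lambda(\mf{n})$.

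There is no substantive obstacle: the proof is largely formal. The one conceptual point worth flagging is that the $\mf{n}$-action used in the statement is the Poisson bracket with elements of $\mf{n} \subset \Lambda(\mf{n}) \subset \overline{Cl}$ \emph{itself}, not the adjoint action induced by the homomorphism $\rho$ of Lemma \ref{lem:Lie-alg-hom-Clifford}; the former is much simpler than the latter and is what makes the answer so clean.
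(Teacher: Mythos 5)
Your proof is correct. The paper states this lemma without proof, and your argument — identifying $\{x,-\}$ with $(-1)^{|a|}\,\mathrm{id}\otimes\iota_x$ on the decomposition $\Lambda(\mf{n})\otimes\Lambda(\mf{n}^*)$ and then checking that $\bigcap_{x\in\mf{n}}\ker\iota_x=\C\cdot 1$ in $\Lambda(\mf{n}^*)$ — is exactly the standard computation the author is implicitly relying on; your closing remark correctly identifies that the relevant action is the Poisson bracket with $x\in\mf{n}\subset\Lambda(\mf{n})$ itself, not the adjoint action via $\bar\rho$, which is what the lemma's later use (showing $\overline{Cl}^{\mf{n}}\cap\overline{Cl}^{1}=0$ in the uniqueness part of Lemma \ref{lem:barQ}) requires.
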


The Lie algebra homomorphism $\rho : \mf{n} \longrightarrow Cl_1\subset Cl$
 induces a 
Lie algebra homomorphism
\begin{align}
\bar{\rho}:=\sigma_1\circ \rho : \mf{n} \longrightarrow \overline{Cl},
\label{eq:Lie-alg-hom-Poisson}
\end{align}
where $\sigma_1$ is the projection
$Cl_1\ra Cl_1/Cl_{0}\subset \gr Cl$. 
We have
\begin{align*}
 \{\bar \rho(x),y\}=[x,y]\quad\text{for }x,y\in \mf{n}.
\end{align*}

Set
\begin{align*}
 \bar C(\fing)=\C[\fing^*]\* \overline{Cl}.
\end{align*}
Since it is a tensor product of Poisson superalgebras,
$\bar C(\fing)$ is  naturally a Poisson superalgebra.
\begin{lem}\label{lem:moment}
The following map gives  a Lie algebra homomorphism:
\begin{equation*}
\begin{split}
\bar{\theta}_{\chi}: \mf{n} &\longrightarrow \bar{C}(\mf{g})\\
x &\longmapsto (\mu^*(x)-\chi(x))\otimes 1+ 1\otimes \bar{\rho}(x),
\end{split}
\end{equation*}
that is,
$\{\bar\theta_{\chi}(x),\bar\theta_{\chi}(y)\}=\bar\theta_{\chi}([x,y])$ for $x,y\in \mf{n}$.
\end{lem}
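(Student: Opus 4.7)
The plan is to expand $\{\bar\theta_\chi(x),\bar\theta_\chi(y)\}$ using that $\bar C(\fing)=\C[\fing^*]\otimes\overline{Cl}$ is a tensor product of Poisson superalgebras, so the Poisson bracket decouples the two tensor factors. Writing $\bar\theta_\chi(x)=A(x)+B(x)$ with $A(x)=(\mu^*(x)-\chi(x))\otimes 1$ and $B(x)=1\otimes\bar\rho(x)$, the mixed brackets $\{A(x),B(y)\}$ and $\{B(x),A(y)\}$ vanish, so it suffices to compute the two pure pieces separately.

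For the $\C[\fing^*]$-piece, the scalars $\chi(x),\chi(y)$ are Poisson-central, so $\{A(x),A(y)\}=\{\mu^*(x),\mu^*(y)\}\otimes 1$. By the definition of the Kirillov--Kostant bracket on $\C[\fing^*]$ (which is exactly the Lie bracket on $\fing\subset\C[\fing^*]$), and since $\mu^*:\finn\hookrightarrow\fing$ is the inclusion, this equals $\mu^*([x,y])\otimes 1$.

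For the $\overline{Cl}$-piece, I want to verify that $\bar\rho$ is itself a Lie algebra homomorphism, i.e., $\{\bar\rho(x),\bar\rho(y)\}=\bar\rho([x,y])$. This is the main point and follows from the compatibility of the filtration on $Cl$ with the Poisson bracket on $\gr Cl$: by \eqref{eq:filtration-of-Clifford}, for $a\in Cl_p$ and $b\in Cl_q$ we have $[a,b]\in Cl_{p+q-1}$, and the induced Poisson bracket on $\overline{Cl}$ is $\{\sigma_p(a),\sigma_q(b)\}=\sigma_{p+q-1}([a,b])$. Applied to $\rho(x),\rho(y)\in Cl_1$, this gives $\{\bar\rho(x),\bar\rho(y)\}=\sigma_1([\rho(x),\rho(y)])=\sigma_1(\rho([x,y]))=\bar\rho([x,y])$, where the middle equality uses that $\rho$ is a Lie algebra homomorphism (Lemma \ref{lem:Lie-alg-hom-Clifford}).

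Combining the two pieces yields
\begin{equation*}
\{\bar\theta_\chi(x),\bar\theta_\chi(y)\}=\mu^*([x,y])\otimes 1+1\otimes\bar\rho([x,y])=\bar\theta_\chi([x,y])+\chi([x,y])\otimes 1,
\end{equation*}
and the last term vanishes because $\chi$ is a character of $\finn$, i.e.\ $\chi([\finn,\finn])=0$, as recorded right after the definition of $\chi$. The only genuine subtlety is the Poisson-graded step for $\bar\rho$; everything else is a bookkeeping computation in the tensor product. Note that the odd parities do not cause sign issues here since $\bar\rho(x)\in\Lambda^1(\finn)\otimes\Lambda^1(\finn^*)\subset\overline{Cl}$ is even.
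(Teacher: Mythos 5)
Your proof is correct; the paper states this lemma without proof, and your argument is exactly the intended one: the bracket decouples over the tensor factors $\C[\fing^*]\otimes\overline{Cl}$, the first factor reduces to the Kirillov--Kostant bracket via the inclusion $\mu^*:\finn\hookrightarrow\fing$, the second to the fact (recorded around \eqref{eq:Lie-alg-hom-Poisson}) that $\bar\rho=\sigma_1\circ\rho$ is a Lie algebra homomorphism, and the leftover term dies because $\chi([\finn,\finn])=0$. Your justification of the $\bar\rho$ step via $\{\sigma_1(a),\sigma_1(b)\}=\sigma_1([a,b])$ and Lemma \ref{lem:Lie-alg-hom-Clifford}, and the remark that $\bar\rho(x)$ is even so no Koszul signs intervene, are both sound.
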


Let $\bar C(\fing)=\bigoplus_{n\in \Z}\bar C^n(\fing)$ be the
$\Z$-grading defined by
 $\deg \phi \*1=0$ ($\phi\in \C[  \mf{g}^*]$), $\deg
 1\* f=1$ ($f\in \mf{n}^\ast$), $\deg 1\* x=-1$ ($x\in \mf{n}$).
We have
\begin{align*}
 \bar C^n(\fing)=
\C[\fing^*]\otimes (\bigoplus_{j-i=n}\Lambda^i (\mf{n})\otimes \Lambda^j(\mf{n}^\ast)).
\end{align*}

 \begin{lem}[{\cite[Lemma 7.13.3]{BeiDri96}}]
  \label{lem:barQ}
There exists a unique element
$ \bar{Q} \in \bar{C}^1(\mf{g})$ such that 
\begin{align*}
\{\bar{Q}, 1\otimes
 x\}=\bar{\theta}_{\chi}(x)\quad\text{for } x \in \mf{n}.
\end{align*}
 We have $\{\bar{Q}, \bar{Q}\}=0$.
\end{lem}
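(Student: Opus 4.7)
The plan is to (a) write down an explicit candidate for $\bar Q$, (b) verify the identity $\{\bar Q,1\otimes x\}=\bar\theta_\chi(x)$ by direct Leibniz-rule computation, (c) deduce uniqueness from an injectivity argument on the joint Koszul-type operator $\{-,1\otimes\finn\}$, and (d) prove nilpotency $\{\bar Q,\bar Q\}=0$ by reducing it to the same injectivity argument applied in degree $2$.

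\emph{Construction.} The natural candidate, familiar from BRST cohomology, is
\[
\bar Q \;=\; \sum_{\alpha\in\Delta_+}\bigl(\mu^{*}(x_\alpha)-\chi(x_\alpha)\bigr)\otimes x_\alpha^{*}\;-\;\tfrac12\sum_{\alpha,\beta,\gamma\in\Delta_+} c_{\alpha\beta}^{\gamma}\cdot 1\otimes x_\gamma\, x_\alpha^{*}\, x_\beta^{*},
\]
an element of $\bar C^1(\fing)$. The first sum encodes the moment map constraint $\mu=\chi$, while the second is (up to sign) the Chevalley--Eilenberg cochain of $\finn$. Checking $\{\bar Q,1\otimes x_\delta\}=\bar\theta_\chi(x_\delta)$ is a short exercise in the Leibniz rule using $\{x_\alpha^{*},x_\gamma\}=\delta_{\alpha\gamma}$, $\{x_\alpha^{*},x_\beta^{*}\}=0$, and the fact that $\C[\fing^*]$ and $\overline{Cl}$ Poisson-commute inside their tensor product: the linear summand yields $(\mu^{*}(x_\delta)-\chi(x_\delta))\otimes 1$, and the cubic summand yields $1\otimes\bar\rho(x_\delta)$ after antisymmetrising in $\alpha,\beta$.

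\emph{Uniqueness.} I claim that if $\bar Q'\in\bar C^1(\fing)$ satisfies $\{\bar Q',1\otimes x\}=0$ for every $x\in\finn$, then $\bar Q'=0$. Indeed, $\{-,1\otimes x\}$ acts only on the $\overline{Cl}$ factor (since $\C[\fing^*]$ Poisson-commutes with $1\otimes x$), where it coincides with the super-derivation extending the interior product $\iota_x$ on $\Lambda(\finn^{*})$. Decomposing $\bar C^1(\fing)=\C[\fing^*]\otimes\bigoplus_{i\geq 0}\Lambda^i(\finn)\otimes\Lambda^{i+1}(\finn^{*})$ and noting that the joint kernel of $\{\iota_x\}_{x\in\finn}$ on $\Lambda^{\geq 1}(\finn^{*})$ is zero (because $\finn^{*}$ is finite-dimensional), we obtain $\bar Q'=0$. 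Applied to the difference of two candidates for $\bar Q$ this gives uniqueness.

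\emph{Nilpotency.} By the super-Jacobi identity,
\[
\{\{\bar Q,\bar Q\},1\otimes x\}\;=\;2\{\bar Q,\{\bar Q,1\otimes x\}\}\;=\;2\{\bar Q,\bar\theta_\chi(x)\}.
\]
A direct calculation of $\{\bar Q,\bar\theta_\chi(x)\}$ splits into pieces: $\{\bar Q_1,\mu^{*}(x)\otimes 1\}$ (using the Kirillov--Kostant bracket $\{\mu^{*}(y),\mu^{*}(z)\}=\mu^{*}([y,z])$), $\{\bar Q_1,1\otimes\bar\rho(x)\}$ (Clifford pairing), and $\{\bar Q_2,1\otimes\bar\rho(x)\}$ (pure Clifford computation); the remaining cross-term $\{\bar Q_2,(\mu^{*}(x)-\chi(x))\otimes 1\}$ vanishes since $\bar Q_2$ sits in the Clifford factor. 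That these sum to zero is precisely the Lie-algebra-homomorphism property of $\bar\theta_\chi$ (Lemma \ref{lem:moment}) reorganised, combined with the Jacobi identity for $\finn$ and the character condition $\chi([\finn,\finn])=0$. Once this is known, the same injectivity argument as in \emph{Uniqueness}, now applied in $\bar C^2(\fing)$ where every summand sits in $\Lambda^i(\finn)\otimes\Lambda^{i+2}(\finn^{*})$ with $i+2\geq 2$, forces $\{\bar Q,\bar Q\}=0$.

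The main obstacle is the bookkeeping in showing $\{\bar Q,\bar\theta_\chi(x)\}=0$: one must track the super-Poisson signs and assemble the various structure-constant contributions into precisely the Jacobi identity of $\finn$. Everything else is either routine Leibniz-rule manipulation or a dimension-counting argument on exterior algebras.
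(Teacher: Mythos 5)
Your construction and uniqueness arguments coincide with the paper's: the same explicit $\bar Q$ (your $x_\gamma x_\alpha^* x_\beta^*$ equals the paper's $x_\alpha^* x_\beta^* x_\gamma$ in the supercommutative $\overline{Cl}$), and your ``joint kernel of the contractions $\iota_x$ meets $\Lambda^{\geq 1}(\finn^*)$ trivially'' is exactly the content of Lemma \ref{lem:invariant-of-classical-Poisson} combined with the degree count. The one place you diverge is nilpotency: you reduce $\{\bar Q,\bar Q\}=0$ to the identity $\{\bar Q,\bar\theta_\chi(x)\}=0$ and propose to verify the latter by a direct structure-constant computation, which you yourself flag as the main bookkeeping burden. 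The paper sidesteps that computation entirely by applying the kernel argument a second time: since
\begin{align*}
\{1\otimes y,\{\bar Q,\bar\theta_\chi(x)\}\}=\{\bar\theta_\chi(y),\bar\theta_\chi(x)\}-\{\bar Q,1\otimes[y,x]\}=\bar\theta_\chi([y,x])-\bar\theta_\chi([y,x])=0
\end{align*}
by the defining property of $\bar Q$ and Lemma \ref{lem:moment}, the degree-$1$ element $\{\bar Q,\bar\theta_\chi(x)\}$ lies in the joint kernel and hence vanishes for formal reasons, with no signs or structure constants to track. You may keep your computational route (it does go through), but the nested double-bracket trick buys you the cancellation for free from facts already established, and is worth adopting.
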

\begin{proof}
Existence. 
It is straightforward to see that the element
\begin{align*}
\bar{Q}=\sum_\alpha(x_\alpha-\chi(x_\alpha))\otimes x_\alpha^\ast-1\otimes
 \dfrac{1}{2}\sum_{\alpha,\beta,\gamma}c_{\alpha, \beta}^\gamma x_\alpha^\ast x_\beta^\ast x_\gamma
\end{align*}
satisfies the condition.

Uniqueness. 
Suppose that  $\bar Q_1, \bar Q_2\in \bar{C}^1(\fing)$ satisfy the
 condition. 
Set $R=Q_1-Q_2\in \bar C^1(\fing)$.
Then $\{R, 1\* x\}=0$,
and so,
$R\in \C[\fing^*]\* \overline{Cl}^{\mf{n}}$.
But by Lemma \ref{lem:invariant-of-classical-Poisson},
$\overline{Cl}^{\mf{n}}\cap \overline{Cl}^1=0$.
Thus  $R=0$ as required.

To show that 
$\{\bar Q,\bar Q\}=0$, observe that
\begin{align*}
\{1\otimes x, \{1\otimes y, \{\bar{Q}, \bar{Q}\}\}
\}=0, \quad \forall x, y \in \mf{n}
\end{align*}
(note that $\bar Q$ is odd).
Applying  Lemma \ref{lem:invariant-of-classical-Poisson} twice,
we get that 
$\{\bar Q,\bar Q\}=0$.
\end{proof}

 Since $\bar{Q}$ is odd, 
Lemma \ref{lem:barQ} implies that
\begin{align*}
 \{\bar Q,\{\bar Q,a\}\}=\frac{1}{2}\{\{\bar Q,\bar Q\},a\}=0
\end{align*}
for any $a\in \bar C(\fing)$.
That is,
 $\ad \bar{Q}:=\{\bar{Q}, \cdot\}$ satisfies that 
\begin{align*}
(\ad \bar{Q})^2=0.
\end{align*}
Thus, $(\bar{C}(\mf{g}), \ad \bar{Q})$ is a {\em differential graded Poisson superalgebra}.
Its cohomology $H^\bullet(\bar{C}(\mf{g}), \ad \bar{Q})
=\bigoplus\limits_{i\in \Z}H^i(\bar C(\mf{g}),\ad \bar Q)$
inherits
a graded Poisson superalgebra structure from $\bar C(\fing)$.

According to Kostant and Sternberg \cite{KosSte87}
the Poisson structure of $\C[\mc{S}]$ may be described through the following isomorphism:
 \begin{thm}[\cite{KosSte87}]\label{thm:KS-classical}
We have $H^i(\bar{C}(\mf{g}), \ad \bar{Q})=0$
 for $i\ne 0$
 and 
  \begin{align*}
  H^0(\bar{C}(\mf{g}), \ad \bar{Q})\cong \C[\mc{S}]
 \end{align*}
as Poisson algebras.
\end{thm}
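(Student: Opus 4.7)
The plan is to compute $H^{\bullet}(\bar C(\fing),\ad\bar Q)$ via a spectral sequence that separates the moment-map content and the Lie-algebra-cohomology content of $\bar Q$. Write $\bar Q=\bar Q_{1}+\bar Q_{2}$, where $\bar Q_{1}=\sum_{\alpha}(x_{\alpha}-\chi(x_{\alpha}))\otimes x_{\alpha}^{\ast}$ is the ``moment'' piece and $\bar Q_{2}=-\tfrac{1}{2}\sum c_{\alpha,\beta}^{\gamma}x_{\alpha}^{\ast}x_{\beta}^{\ast}x_{\gamma}$ is the ``Chevalley--Eilenberg'' piece. Bigrade $\bar C^{p,q}(\fing):=\C[\fing^{*}]\otimes\Lambda^{q}(\finn)\otimes\Lambda^{p}(\finn^{*})$ with total cohomological degree $p-q$, and consider the decreasing, bounded filtration $F^{p}\bar C:=\bigoplus_{p'\ge p}\bar C^{p',\bullet}$ by ghost number $p$.

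A direct application of the super-Leibniz rule for the Poisson bracket on $\bar C(\fing)$ shows that $d=\ad\bar Q$ preserves $F^{\bullet}$ and that the only component which stabilizes $\bar C^{p,\bullet}$ modulo $F^{p+1}$ is the odd derivation $d_{0}$ determined by $d_{0}(x_{\beta})=x_{\beta}-\chi(x_{\beta})$, and $d_{0}=0$ on $\C[\fing^{*}]$ and on $\finn^{*}$. This is exactly the Koszul differential on $\C[\fing^{*}]\otimes\Lambda(\finn)$ for the sequence $(x_{\alpha}-\chi(x_{\alpha}))_{\alpha\in\Delta_{+}}$. By Proposition \ref{prp:transversality}, $\chi$ is a regular value of $\mu$, so this sequence is regular and the Koszul complex resolves $\C[\mu^{-1}(\chi)]=\C[\fing^{*}]/I_{\chi}$. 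Consequently
\[
E_{1}^{p,q}=\begin{cases}\C[\mu^{-1}(\chi)]\otimes\Lambda^{p}(\finn^{*})&\text{if }q=0,\\ 0&\text{otherwise.}\end{cases}
\]

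I would then identify the induced differential $d_{1}\colon E_{1}^{p,0}\to E_{1}^{p+1,0}$ with the Chevalley--Eilenberg differential computing the Lie algebra cohomology $H^{\bullet}(\finn,\C[\mu^{-1}(\chi)])$, where $\finn$ acts on $\C[\mu^{-1}(\chi)]$ via the coadjoint action. The ``coefficient'' contribution $\phi\mapsto\sum_{\alpha}\{x_{\alpha},\phi\}\otimes x_{\alpha}^{\ast}$ comes from $\ad\bar Q_{1}$, while the ``coboundary'' part on $\Lambda(\finn^{\ast})$ comes from $\ad\bar Q_{2}$, and together they yield the standard CE formula. By Theorem \ref{them:Kostant}, the adjoint action gives an $N$-equivariant isomorphism $N\times\mc{S}\isomap\mu^{-1}(\chi)$, so $\C[\mu^{-1}(\chi)]\cong\C[N]\otimes\C[\mc{S}]$ as $\finn$-modules with $\finn$ acting only on the first factor by left-invariant derivations. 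Since $N\cong\C^{|\Delta_{+}|}$ via the exponential map, the algebraic Poincar\'e lemma for polynomial forms gives $H^{\bullet}(\finn,\C[N])=\C$ concentrated in degree $0$. Hence $E_{2}\cong\C[\mc{S}]$ is concentrated in bidegree $(0,0)$, the spectral sequence degenerates, and $H^{i}(\bar C(\fing),\ad\bar Q)=0$ for $i\ne 0$ while $H^{0}\cong\C[\mc{S}]$.

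For the Poisson statement I would use that $\ad\bar Q$ is an odd derivation of the Poisson bracket and that $F^{\bullet}$ is a Poisson filtration, so the isomorphism descends to an isomorphism of Poisson algebras. Unwinding the edge map, a class in $H^{0}$ is represented by some $\phi\in\C[\fing^{*}]$ (plus corrections of strictly positive ghost number) satisfying $\{x_{\alpha},\phi\}\in I_{\chi}$ for all $\alpha$, and its bracket with another such class is $\{\phi,\phi'\}+I_{\chi}$; this matches the reduced-Poisson description of $\C[\mc{S}]$ in \S\ref{subs:trasversal-slice}. The main obstacle I anticipate is the sign bookkeeping needed to identify $d_{1}$ with the Chevalley--Eilenberg differential---matching the induced coadjoint action on $\C[\mu^{-1}(\chi)]$ with the Kirillov--Kostant bracket, and the structure-constant term in $\bar Q_{2}$ with the CE coboundary---but once the bigrading and filtration are in place this is a mechanical calculation.
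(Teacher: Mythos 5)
Your proposal is correct and follows essentially the same route as the paper: the same bigrading by $\Lambda(\finn^*)$- and $\Lambda(\finn)$-degrees, the same splitting of $\ad\bar Q$ into a Koszul piece and a Chevalley--Eilenberg piece, the spectral sequence whose $E_1$-page is $\C[\mu^{-1}(\chi)]\otimes\Lambda(\finn^*)$ and whose $E_2$-page is $H^{\bullet}(\finn,\C[\mu^{-1}(\chi)])=\C[\mc{S}]$ via Theorem \ref{them:Kostant}, followed by degeneration. The only (harmless) differences are that you phrase the double complex as a filtration spectral sequence and spell out the factorization $\C[\mu^{-1}(\chi)]\cong\C[N]\otimes\C[\mc{S}]$ and the edge-map identification of the Poisson bracket, which the paper leaves implicit.
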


\begin{proof}
Give a bigrading on $\bar C:=\bar{C}(\mf{g})$ by setting 
$$\bar{C}^{i, j}=\C[\mf{g}^\ast]\otimes \Lambda^i(\mf{n}^\ast)\otimes
 \Lambda^{-j}(\mf{n}),$$
so that $\bar C=\bigoplus\limits_{i\geq 0,j\leq 0}\bar C^{i,j}$.

Observe that $\ad \bar Q$ decomposes as  $\ad \bar{Q} = d_++d_-$ 
 such that
 \begin{align}
d_-(\bar{C}^{i,j})\subset \bar{C}^{i, j+1},\quad
  d_+(\bar{C}^{i,j})\subset \bar{C}^{i+1, j}.
  \label{eq:dpm}
 \end{align}
Explicitly,
 we have
 \begin{align*}
  &  d_-=\sum_i(x_i-\chi(x_i))\otimes \ad x_i^\ast,\\
  & d_+=\sum_i \ad
 x_i\otimes x_i^\ast -1\otimes \dfrac{1}{2}\sum_{i, j, k}c_{i,
 j}^kx_i^\ast x_j^\ast \ad x_k +\sum_i 1\otimes \bar{\rho}(x_i)\ad
 x_i^\ast.
 \end{align*}
 Since $\ad \bar{Q}^2=0$, \eqref{eq:dpm}
 implies that
$$d_-^2=d_+^2=[d_-, d_+]=0.$$ 
It follows that there exists a spectral sequence 
$$E_r \Longrightarrow H^\bullet(\bar{C}(\mf{g}), \ad \bar{Q})$$ 
such that 
 \begin{align*}
  & E_1^{\bullet,q}=H^q(\bar{C}(\mf{g}), d_-)
 =H^q(\C[\fing^*]
 \* \Lam (\finn),d_- )\* \Lam^\bullet(\finn^*),\\
 & E_2^{p,q}=H^{p}(H^q(\bar C(\mf{g}), d_-), d_+).
 \end{align*}
Observe that  
 $(\bar{C}(\mf{g}), d_-)$ is identical to the Koszul complex $\C[\fing^*]$ associated with the sequence
$x_1-\chi(x_1),x_2-\chi(x_2.)\dots,
 x_{N}-\chi(x_{N})$ tensorized  with $\Lam(\finn^*)$.
 Since
 $\C[\mu^{-1}(\chi)]={\C[\mf{g}^\ast]}/{\sum_i\C[\mf{g}^\ast](x_i-\chi(x_i)})$,
 we get that
\begin{align*}
						       H^i(\bar{C}(\mf{g}), d_-)=\begin{cases}
										  \C[\mu^{-1}(\chi)]
						       \otimes \Lambda(\mf{n}^\ast),&\text{if}\quad i=0
										 \\0,&\text{if}\quad i\neq 0.\end{cases}
\end{align*}
Next, notice that
 $(H^{0}(C(\mf{g}), d_-), d_+)$ is identical to the Chevalley complex for the Lie algebra cohomology $H^\bullet(\mf{n}, \C[\mu^{-1}(\chi)])$.
Therefore Theorem \ref{them:Kostant} gives that
 $$H^i(H^{\bullet}(C(\mf{g}), d_-), d_+)= 
 \begin{cases}
\C[\mc{S}], & i=0\\
0, & i\neq 0.
 \end{cases}$$
Hence  the spectral sequence collapses at $E_2=E_{\infty}$ and
 we get that
 $H^i(\bar C(\fing), \ad \bar Q)
 =0$ for $i\ne 0$.
Moreover, there is an isomorphism
 \begin{align*}
  H^0(\bar C(\fing), \ad \bar Q)\isomap  H^0(H^0(\bar C(\fing),d_-),d_+)=\C[\mc{S}],\quad [c]\mapsto [c].
 \end{align*}
 This completes the proof.
\end{proof}

\begin{thm}
The natural map $\C[\mf{g}^*]^G \longrightarrow H^0(\bar{C}(\mf{g}), \ad \bar{Q})$ defined by sending $p$ to $p\otimes 1$ is an isomorphism of Poisson algebras.\end{thm}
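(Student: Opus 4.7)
The plan is to compose the natural map with the isomorphism $H^0(\bar C(\fing),\ad\bar Q)\isomap \C[\mc{S}]$ of Theorem \ref{thm:KS-classical} and recognize the result as the restriction map of Corollary \ref{cor:iso-first}, which is already known to be an isomorphism of Poisson algebras.

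First I would check that the map is well-defined, i.e.\ that for $p\in\C[\fing^*]^G$ the element $p\otimes 1\in\bar C^0(\fing)$ is $\ad\bar Q$-closed. Using the explicit formula for $\bar Q$, one computes
\begin{align*}
\{\bar Q,\,p\otimes 1\}=\sum_{\alpha}\{x_\alpha-\chi(x_\alpha),\,p\}\otimes x_\alpha^*=\sum_{\alpha}\{x_\alpha,p\}\otimes x_\alpha^*,
\end{align*}
and each bracket $\{x_\alpha,p\}$ vanishes because $p$ is $G$-invariant, hence $\mf{n}$-invariant, under the Kirillov--Kostant Poisson structure. So $p\otimes 1$ is a cocycle, and the map $p\mapsto[p\otimes 1]$ is defined. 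It is automatically a homomorphism of Poisson algebras, since $\C[\fing^*]\hookrightarrow\bar C(\fing)$, $\phi\mapsto\phi\otimes 1$, is a homomorphism of Poisson superalgebras and the Poisson structure on cohomology is induced from that of $\bar C(\fing)$.

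Next I would trace $[p\otimes 1]$ through the spectral sequence in the proof of Theorem \ref{thm:KS-classical}. The cocycle $p\otimes 1$ lives in bidegree $(0,0)$, so in $E_1^{0,0}=H^0(\bar C(\fing),d_-)=\C[\mu^{-1}(\chi)]\otimes\Lambda(\finn^*)$ it represents $\bar p\otimes 1$, where $\bar p$ is the class of $p$ in $\C[\mu^{-1}(\chi)]=\C[\fing^*]/I_\chi$. Passing to $E_2^{0,0}=H^0(\mf{n},\C[\mu^{-1}(\chi)])=\C[\mu^{-1}(\chi)]^{\mf{n}}$ and then, via Theorem \ref{them:Kostant}, to $\C[\mc{S}]$, the class $\bar p$ is sent to $p|_{\mc{S}}$. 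Thus the composition
\begin{align*}
\C[\fing^*]^G\longrightarrow H^0(\bar C(\fing),\ad\bar Q)\isomap \C[\mc{S}]
\end{align*}
is nothing but the restriction map, which by Corollary \ref{cor:iso-first} is an isomorphism. Therefore the natural map is an isomorphism of Poisson algebras.

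The only nontrivial point is the identification of the image of $[p\otimes 1]$ in $\C[\mc{S}]$; everything else is formal. I do not expect a genuine obstacle here, since the spectral sequence of Theorem \ref{thm:KS-classical} collapses at $E_2$ and the edge homomorphism $H^0(\bar C(\fing),\ad\bar Q)\to E_\infty^{0,0}\subset E_2^{0,0}$ is precisely the one used above, so no convergence or extension issues arise.
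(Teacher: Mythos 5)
Your proposal is correct and follows essentially the same route as the paper: the paper also establishes well-definedness from the fact that $\C[\fing^*]^G$ is the Poisson center of $\C[\fing^*]$, and then deduces the isomorphism from the commutativity of the triangle formed by the restriction map $\C[\fing^*]^G\isomap\C[\mc{S}]$ of Corollary \ref{cor:iso-first} and the isomorphism $H^0(\bar C(\fing),\ad\bar Q)\isomap\C[\mc{S}]$ of Theorem \ref{thm:KS-classical}. Your explicit tracing of $[p\otimes 1]$ through the spectral sequence just fills in the commutativity that the paper leaves implicit.
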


\begin{proof}
It is clear that
the map is a well-defined homomorphism of Poisson algebras
since $\C[\fing^*]^G$ is the Poisson center of $\C[\fing^*]$.
The assertion follows
from the commutativity of the following diagram.
\begin{align*}
 \xymatrix{
 & \C[\fing^*]^G\ar[d]^{} \ar[dl]_-{\cong } \\
 \C[\mc{S}]
 &  
H^0(\bar C(\fing),\ad \bar Q).
 \ar[l]_-{\cong}^{}
}
\end{align*}

\end{proof}

\subsection{Quantized Hamiltonian reduction}
\label{subsection:Quantized Hamiltonian reduction}
We shall now quantize the above construction
following \cite{KosSte87}.

Let $\{U_i(\fing)\}$ be the
PBW filtration of 
the universal enveloping algebra $U(\fing)$ of $\fing$,
that is,
$U_i(\fing)$ is the subspace of $U(\fing)$ spanned by the products
of at most $i$ elements of $\fing$.
Then
\begin{align*}
 0=U_{-1}(\fing)\subset U_0(\fing)\subset U_1(\fing)\subset  \dots,\quad
 U(\fing)=\bigcup_i U_i(\fing),\\
 U_i(\fing)\cdot U_j(\fing)\subset U_{i+j}(\fing),\quad
 [U_i(\fing),U_j(\fing)]\subset U_{i+j-1}(\fing).
\end{align*}
The  associated graded space
$\gr U(\fing)=\bigoplus_{i\geq 0}U_i(\fing)/U_{i-1}(\fing)$
is naturally a Poisson algebra, and
the PBW Theorem  states that
\begin{align*}
 \gr U(\fing)\cong \C[\fing^*]
\end{align*}
as Poisson algebras.
Thus,
$U(\fing)$ is a quantization of
$\C[\fing^*]$.

Define
\begin{align*}
 C(\fing)=U(\fing)\* Cl.
\end{align*}
It is naturally a $\C$-superalgebra,
where $U(\fing)$ is considered as a purely even subsuperalgebra.
The filtration of $U(\fing)$ and $Cl$ induces the filtration
of $C(\fing)$:
$C_p(\fing)=\sum_{i+j\leq p}U_i(\fing)\* Cl_j$,
and we have
\begin{align*}
 \gr C(\fing)\cong \bar C(\fing)
\end{align*}
as Poisson superalgebras.
Therefore, $C(\fing)$ is a quantization of $\bar C(\fing)$.

Define the $\Z$-grading $C(\fing)=\bigoplus\limits_{n\in \Z}C^n(\fing)$
by setting
$\deg (u\*1)=0$ ($u\in U(\fing)$),
$\deg (1\*f) =1$ ($f\in \finn^*$),
$\deg (1\*x) =-1$ ($x\in \finn$).
Then
\begin{align*}
 C^n(\fing)=U(\mf{g})\otimes (\bigoplus_{j-i=n}\Lambda^i (\mf{n})\otimes \Lambda^j(\mf{n}^\ast)).
\end{align*}
\begin{lem}\label{lem:quantized-moment-map}
The following map defines a Lie algebra homomorphism.
\begin{equation*}
\begin{split}
\theta_{\chi}: \mf{n} &\longrightarrow C(\mf{g})\\
x &\longmapsto (x-\chi(x))\otimes 1+ 1\otimes \rho(x)
\end{split}
\end{equation*}
 \end{lem}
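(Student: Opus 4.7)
The statement is a routine quantization of Lemma \ref{lem:moment} (which handled the Poisson-algebra analog $\bar\theta_\chi$), so I would proceed in direct parallel. The plan is to compute $[\theta_\chi(x), \theta_\chi(y)]$ in $C(\mf{g})$ for $x,y\in\mf{n}$ by splitting it into two commutators using the tensor product structure, and then verify that the sum equals $\theta_\chi([x,y])$.

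First I would exploit the fact that in $C(\mf{g}) = U(\mf{g})\otimes Cl$ the subalgebras $U(\mf{g})\otimes 1$ and $1\otimes Cl$ supercommute (since $U(\mf{g})$ is declared purely even). Expanding $\theta_\chi(x) = (x-\chi(x))\otimes 1 + 1\otimes\rho(x)$ and $\theta_\chi(y)$ analogously, all cross terms vanish and I am left with
\begin{align*}
[\theta_\chi(x),\theta_\chi(y)] = [(x-\chi(x))\otimes 1,\,(y-\chi(y))\otimes 1] + [1\otimes\rho(x),\,1\otimes\rho(y)].
\end{align*}
In the first summand the scalars $\chi(x),\chi(y)$ are central and drop out, giving $[x,y]\otimes 1$. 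For the second summand I invoke Lemma \ref{lem:Lie-alg-hom-Clifford}, which says $\rho:\mf{n}\to Cl$ is a Lie algebra homomorphism; hence this equals $1\otimes \rho([x,y])$.

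It remains to compare the sum $[x,y]\otimes 1 + 1\otimes\rho([x,y])$ with $\theta_\chi([x,y]) = ([x,y]-\chi([x,y]))\otimes 1 + 1\otimes\rho([x,y])$. The discrepancy is the scalar $\chi([x,y])\otimes 1$, and this vanishes because $\chi$ is a character of $\mf{n}$, i.e.\ $\chi([\mf{n},\mf{n}])=0$ as observed in the text right after the definition of $\chi$. Thus $[\theta_\chi(x),\theta_\chi(y)] = \theta_\chi([x,y])$, proving $\theta_\chi$ is a Lie algebra homomorphism.

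There is essentially no obstacle here; the only nontrivial input is the character property of $\chi$, which handles the constant term, together with Lemma \ref{lem:Lie-alg-hom-Clifford} for the Clifford piece. One could alternatively deduce this directly from Lemma \ref{lem:moment} via the principal symbol map $\gr C(\mf{g})\cong \bar C(\mf{g})$, but since the quantum correction lives in $\chi([x,y])$ which vanishes, the direct computation is the cleanest route.
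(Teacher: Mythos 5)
Your computation is correct, and it is exactly the routine verification the paper has in mind: the paper states Lemma \ref{lem:quantized-moment-map} without proof, implicitly relying on the same three ingredients you use (the tensor factors commute, $\rho$ is a Lie algebra homomorphism by Lemma \ref{lem:Lie-alg-hom-Clifford}, and $\chi([\mf{n},\mf{n}])=0$). Nothing to add.
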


\begin{lem}[{\cite[Lemma 7.13.7]{BeiDri96}}]\label{lem:Q}
There exists a unique element
 $Q \in C^1(\mf{g})$
such that
 \begin{align*}
[Q, 1\otimes x]=\theta_{\chi}(x), \quad \forall x \in \mf{n}.
 \end{align*}
We have $Q^2=0$.
 \end{lem}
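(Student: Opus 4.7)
The plan is to mirror the classical proof of Lemma \ref{lem:barQ}, lifting each step from the graded Poisson setting $\bar C(\fing)$ to the filtered associative setting $C(\fing)$ using the quantization $\gr C(\fing)\cong\bar C(\fing)$.

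For \emph{existence}, I would take exactly the same formula as in the classical case, interpreted now in the noncommutative algebra $C(\fing)=U(\fing)\*Cl$:
\begin{align*}
 Q=\sum_\alpha(x_\alpha-\chi(x_\alpha))\otimes x_\alpha^*-1\otimes\tfrac{1}{2}\sum_{\alpha,\beta,\gamma}c_{\alpha\beta}^\gamma\,x_\alpha^* x_\beta^* x_\gamma.
\end{align*}
A direct computation using only the Clifford relations $[x_\alpha,x_\beta^*]=\delta_{\alpha\beta}$ and the formula for $\rho$ from Lemma \ref{lem:Lie-alg-hom-Clifford} shows that $[Q,1\otimes x]=\theta_\chi(x)$; the defining relation has no quantum anomaly because all commutator corrections land in $U(\fing)$ in a way that reproduces the shift $\rho(x)$ already present in $\theta_\chi$.

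For \emph{uniqueness}, if $Q_1,Q_2\in C^1(\fing)$ both satisfy the defining relation, set $R=Q_1-Q_2$, so $[R,1\otimes x]=0$ for all $x\in\finn$. Passing to the associated graded, $\sigma(R)\in\bar C^1(\fing)$ satisfies $\{\sigma(R),1\otimes x\}=0$, so by Lemma \ref{lem:invariant-of-classical-Poisson} it lies in $\C[\fing^*]\otimes\overline{Cl}^{\finn}=\C[\fing^*]\otimes\Lambda(\finn)$. But $\Lambda(\finn)$ only has non-positive $\Z$-grading while $\sigma(R)$ is in degree $+1$, forcing $\sigma(R)=0$. Thus $R$ lies in strictly smaller filtration, and induction on the filtration degree yields $R=0$.

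For $Q^2=0$, note that $2Q^2=[Q,Q]\in C^2(\fing)$ since $Q$ is odd. I would combine two ingredients. First, by the super Jacobi identity,
\begin{align*}
 [[Q,Q],1\otimes x]=2[Q,\theta_\chi(x)],
\end{align*}
and a second application of super Jacobi, together with the Clifford identity $[1\otimes y,\theta_\chi(x)]=1\otimes[y,x]$ and the Lie algebra homomorphism property $[\theta_\chi(x),\theta_\chi(y)]=\theta_\chi([x,y])$ from Lemma \ref{lem:quantized-moment-map}, gives
\begin{align*}
 [[[Q,Q],1\otimes x],1\otimes y]=0\qquad\text{for all }x,y\in\finn.
\end{align*}
Second, since $\gr Q=\bar Q$ and $\{\bar Q,\bar Q\}=0$ by Lemma \ref{lem:barQ}, the symbol of $[Q,Q]$ vanishes, so $[Q,Q]\in C_0(\fing)\cap C^2(\fing)=\Lambda^2(\finn^*)$. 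The main (and essentially only) obstacle is then to extract $[Q,Q]=0$ from these two facts: writing $[Q,Q]=\sum c_{ij}\,x_i^*x_j^*$ and computing the iterated Clifford commutator with $x_k,x_l\in\finn$ produces the scalar $2c_{lk}$, which must vanish by the doubly-invariance established above. Hence $[Q,Q]=0$, completing the proof.
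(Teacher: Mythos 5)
Your proof is correct and fills in the details of the paper's own (deliberately terse) argument, which just says the proof is similar to that of Lemma \ref{lem:barQ} and records the explicit formula for $Q$; your existence check, the uniqueness argument via the commutant of $1\otimes\finn$, and the double-commutator computation $[[[Q,Q],1\otimes x],1\otimes y]=0$ are exactly the intended quantization of that argument. One small imprecision: with respect to the filtration $C_\bullet(\fing)$ the symbol of $Q$ is not $\bar Q$ but its $\chi=0$ specialization, since the terms $\chi(x_\alpha)\otimes x_\alpha^*$ lie in lower filtration degree --- indeed the paper points out immediately after this lemma that $\ad Q$ does \emph{not} induce $\ad\bar Q$ on $\gr C(\fing)$, which is why the Kazhdan filtration is introduced. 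This does not affect your conclusion: Lemma \ref{lem:barQ} applied with the character $\chi=0$ still gives $\{\sigma(Q),\sigma(Q)\}=0$, hence $[Q,Q]\in C_0(\fing)\cap C^2(\fing)=\Lambda^2(\finn^*)$, after which your explicit Clifford computation finishes the proof.
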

  \begin{proof}
   The proof 
is similar to that of Lemma \ref{lem:barQ}.
In fact   the element $Q$ is 
explicitly given by the same formula as $\bar Q$:
\begin{align*}
Q=\sum_\alpha(x_\alpha-\chi(x_\alpha))\otimes x_\alpha^\ast-1\otimes
 \dfrac{1}{2}\sum_{\alpha,\beta,\gamma}c_{\alpha, \beta}^\gamma x_\alpha^\ast x_\beta^\ast x_\gamma.
\end{align*}
  \end{proof}
 Since $Q$ is odd, Lemma \ref{lem:Q} implies that 
 \begin{align*}
(\ad Q)^2=0.
 \end{align*}
 Thus,
 $(C(\mf{g}), \ad Q)$
 is a {\em differential graded algebra}, and
 its cohomology
 $H^\bullet(C(\mf{g}), \ad Q)$ is a graded superalgebra.

 However
the 
operator on $\gr  C(\fing)=\bar C(\fing)$ induced by $\ad Q$ does not coincide
with $\ad \bar Q$.
To remedy this, we introduce the {\em Kazhdan filtration}
$K_\bullet C(\fing)$
of $C(\fing)$ as follows:
Defined a $\Z$-grading on $\fing$ by
\begin{align*}
\fing=\bigoplus_{j\in \Z}\fing_j,\quad
\fing_j=\{x\in \fing: [h,x]=2 jx\}
\end{align*}
where $h$ is defined in 
\eqref{eq:sl2-triple}.
Then $\finn=\bigoplus_{j>0}\fing_j\subset \finb=\bigoplus_{j\geq
0}\fing_j$,
and
\begin{align*}
\finh:=\fing_0
\end{align*}
 is the Cartan subalgebra of $\fing$ consisting of diagonal matrices.
Extend the basis $\{x_\alpha\}_{\alpha\in \Delta_+}$
of $\finn$ to the basis $\{x_a\}_{a\in \Delta_+\sqcup I}$ of $\finb$
by adding a basis $\{x_i\}_{i\in I}$ of $\finh$.
Let $c_{a, b}^d$ denote the structure constant of $\finb$ with respect to this basis.
 \begin{lem}
The map  $\rho:\finn\ra Cl$ extends to the Lie algebra homomorphism
 \begin{align*}
  \rho: \finb\ra Cl,\quad x_a\mapsto \sum_{\beta,\gamma\in
  \Delta_+}c_{a,\beta}^\gamma x_\gamma x_\beta^*.
 \end{align*}
   \end{lem}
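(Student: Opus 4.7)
The plan is to extend the proof of the earlier lemma giving $\rho : \finn \to Cl$ by exploiting the fact that $\finn$ remains an ideal of $\finb$. The key observation is that the formula $\rho(x_a) = \sum_{\beta,\gamma \in \Delta_+} c_{a,\beta}^\gamma x_\gamma x_\beta^*$ is only well defined in $Cl$ as written (since both $\beta$ and $\gamma$ range over $\Delta_+$) precisely because $[\finb, \finn] \subset \finn$, so the structure constants $c_{a,\beta}^\gamma$ with $a \in \Delta_+ \sqcup I$ and $\beta \in \Delta_+$ vanish unless $\gamma \in \Delta_+$.

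First I would reinterpret $\rho(x_a)$ via the spin representation: the Clifford algebra $Cl$ acts faithfully and irreducibly on $\Lambda(\finn)$, with $\finn$ acting by exterior multiplication and $\finn^*$ by contraction, giving an isomorphism $Cl \isomap \End(\Lambda(\finn))$. Under this identification I claim $\rho(x_a)$ is the unique derivation of $\Lambda(\finn)$ extending the linear map $\ad(x_a)|_\finn : \finn \to \finn$. To verify this I would compute, for $x_\delta \in \finn \subset Cl$, the super-commutator
\begin{align*}
[\rho(x_a), x_\delta] = \sum_{\beta,\gamma} c_{a,\beta}^\gamma \bigl(x_\gamma [x_\beta^*, x_\delta] - [x_\delta, x_\gamma] x_\beta^*\bigr) = \sum_{\beta,\gamma} c_{a,\beta}^\gamma \, \delta_{\beta,\delta}\, x_\gamma = [x_a, x_\delta],
\end{align*}
using $\{x_\beta^*, x_\delta\} = \delta_{\beta,\delta}$ and $\{x_\gamma, x_\delta\} = 0$ in $Cl$. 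This confirms that $\ad \rho(x_a)$ restricts to $\ad(x_a)$ on $\finn \subset Cl$, and since $\rho(x_a) \in Cl^0$ is even and $\ad \rho(x_a)$ is a super-derivation of $Cl$, it is determined by this restriction.

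Now since $\finn$ is an ideal of $\finb$, the adjoint action yields a Lie algebra homomorphism $\ad : \finb \to \End(\finn)$. Composing with the canonical extension $\End(\finn) \to \Der(\Lambda \finn) \hookrightarrow \End(\Lambda \finn) \cong Cl$ produces a Lie algebra homomorphism $\finb \to Cl$, which by the computation above agrees with $x_a \mapsto \rho(x_a)$ on generators and therefore coincides with $\rho$. In particular, restricted to $\finn$ it recovers the original lemma.

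The main (minor) obstacle is simply bookkeeping with super-signs in the Clifford super-bracket and making sure one uses that $\finn$ is an ideal of $\finb$ at the right spot, namely to guarantee that $\ad(x_a)$ preserves $\finn$ so that the derivation of $\Lambda(\finn)$ is defined. There is no substantive difficulty: the proof is essentially identical to that of the earlier Lie algebra homomorphism $\rho : \finn \to Cl$.
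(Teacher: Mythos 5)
Your proof is correct; the paper in fact states this lemma without proof, and your argument---that $\rho$ is the composite of $\ad:\finb\to\End(\finn)$ (a Lie algebra homomorphism because $\finn$ is an ideal of $\finb$, which is also exactly why the formula only involves $\beta,\gamma\in\Delta_+$) with the standard realization of $\End(\finn)\cong\finn\otimes\finn^*$ inside $Cl\cong\End(\Lambda(\finn))$ as derivations of $\Lambda(\finn)$---is the expected one. The only wording to tighten is the claim that $\ad\rho(x_a)$ is ``determined by this restriction'': as a derivation of $Cl$ it is determined by its values on $\finn$ \emph{and} $\finn^*$, but what you actually need (and have) is that an even derivation of $\Lambda(\finn)$ is determined by its restriction to $\Lambda^1(\finn)=\finn$.
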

Define the Lie algebra homomorphism \begin{align*}
  \theta_0: \finb\ra C(\fing),\quad x_i\mapsto x_i\* 1+1\*\rho(x_i),
 \end{align*}
 and define a $\Z$-grading on $C(\fing)$ by
\begin{align*}
 C(\fing)=\bigoplus_{j\in \Z}C(\fing)[j],\quad
C(\fing)[j]=\{c\in C(\fing)\mid  [\theta_0(h),c]=2j x\}.
\end{align*}
Set
\begin{align*}
K_p C(\fing)=\sum_{i-j\leq p}C_i(\fing)[j],\quad\text{where }C_i(\fing)[j]=C_i(\fing)\cap C(\fing)[j].
\end{align*}
Then
$K_{\bullet}C(\fing)$ defines an increasing, exhaustive, separated filtration of $C(\fing)$
such that
$K_p C(\fing)\cdot K_q C(\fing)\subset K_{p+q}C(\fing)$,
$[K_p C(\fing), K_qC(\fing)]\subset K_{p+q-1}C(\fing)$,
and
$\gr_K C(\fing)=\bigoplus_{p}K_pC(\fing)/K_{p-1}C(\fing)$ is isomorphic to
$\bar C(\fing)$ as Poisson superalgebras.
Moreover,
the complex $(\gr_K C(\fing), \ad Q)$ is identical to $(\bar C(\fing), \ad \bar Q)$.

Let $\mc{Z}(\fing)$ be the center of $U(\fing)$.

 \begin{thm}[\cite{Kos78}]\label{Thm:whittaker-model}
  We have
  $H^{i}(C(\mf{g}), \ad Q)=0$ for $i\ne 0$
  and the map $\mc{Z}(\mf{g}) \longrightarrow H^0(C(\mf{g}), \ad Q)$
  defined by sending $z$ to $[z\otimes 1]$ is an isomorphism of algebras.
  Here $[z\* 1]$ denotes the cohomology class of $z\* 1$.
 \end{thm}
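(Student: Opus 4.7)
The plan is to reduce the quantum statement to the classical Theorem \ref{thm:KS-classical} using the Kazhdan filtration, then identify $H^0$ with $\mc{Z}(\mf{g})$ by passing to the associated graded.

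First I would exploit the Kazhdan filtration $K_{\bullet}C(\mf{g})$ constructed just before the theorem. The key property recorded in the excerpt is that $K_{\bullet}C(\mf{g})$ is compatible with $\ad Q$ and that $(\gr_K C(\mf{g}), \ad Q) = (\bar C(\mf{g}), \ad \bar Q)$ as differential graded Poisson superalgebras. This gives a spectral sequence of the filtered complex with $E_0$-page equal to $\bar C(\mf{g})$ with differential $\ad \bar Q$, so
\begin{equation*}
E_1^{n} = H^n(\bar C(\mf{g}), \ad \bar Q) \Longrightarrow H^n(C(\mf{g}), \ad Q).
\end{equation*}
By Theorem \ref{thm:KS-classical}, the $E_1$-page is concentrated in cohomological degree $0$, where it equals $\C[\mc{S}]$. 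Hence the spectral sequence collapses at $E_1 = E_{\infty}$, yielding the vanishing $H^i(C(\mf{g}), \ad Q) = 0$ for $i \ne 0$ and equipping $H^0(C(\mf{g}), \ad Q)$ with an induced filtration whose associated graded is $\C[\mc{S}]$.

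Next I would check that $z \mapsto [z \otimes 1]$ really defines an algebra homomorphism $\mc{Z}(\mf{g}) \to H^0(C(\mf{g}), \ad Q)$. Since $z$ is central and even, $[Q, z\otimes 1] = \sum_{\alpha}[x_\alpha, z]\otimes x_\alpha^\ast = 0$, so $z \otimes 1$ is a cocycle; multiplicativity is obvious because both factors are even and $\mc{Z}(\mf{g})$ is a subalgebra of $U(\mf{g})$.

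To prove the map is bijective I would use the filtered argument: equip $\mc{Z}(\mf{g})$ with the Kazhdan filtration inherited from $C(\mf{g})$, so that $\gr_K \mc{Z}(\mf{g}) \hookrightarrow \C[\mf{g}^\ast]^G$ (by the standard Harish-Chandra/PBW argument this inclusion is an equality). The associated graded of the map $\mc{Z}(\mf{g}) \to H^0(C(\mf{g}), \ad Q)$ agrees by construction with the classical map $\C[\mf{g}^\ast]^G \to H^0(\bar C(\mf{g}), \ad \bar Q) \cong \C[\mc{S}]$ shown to be an isomorphism in the previous theorem. Since both filtrations are exhaustive and separated, a standard filtered five-lemma argument then promotes the isomorphism on associated graded to an isomorphism of the filtered objects.

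The main obstacle I expect is the convergence of the spectral sequence: the Kazhdan filtration is neither bounded above nor below on $C(\mf{g})$ (it mixes PBW degree with a $\Z$-grading coming from $\ad h$ that is unbounded in both directions). I would address this by restricting attention to each cohomological degree $n$ separately and observing that $C^n(\mf{g}) = U(\mf{g}) \otimes \bigoplus_{j-i=n}\Lambda^i(\mf{n}) \otimes \Lambda^j(\mf{n}^\ast)$ decomposes as a direct sum of finite-dimensional generalized weight spaces for the diagonalizable operator $\ad \theta_0(h)$, with Kazhdan degrees bounded below on each weight space; this yields the needed regularity of the filtration and completes the convergence argument. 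The second technical point — equality $\gr_K \mc{Z}(\mf{g}) = \C[\mf{g}^\ast]^G$ — is essentially the classical Chevalley restriction theorem applied with the Kazhdan grading, and follows from the fact that a system of homogeneous (for $\ad h$) generators of $\C[\mf{g}^\ast]^G$ lifts to central elements of $U(\mf{g})$.
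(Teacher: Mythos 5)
Your main line of argument is exactly the paper's: form the spectral sequence of the Kazhdan filtration, identify the $E_1$-page with $H^\bullet(\bar C(\mf{g}),\ad\bar Q)\cong\C[\mf{g}^*]^G\cong\C[\mc{S}]$ via Theorem \ref{thm:KS-classical}, conclude collapse at $E_1=E_\infty$, and then compare the filtration on $\mc{Z}(\mf{g})$ (on which the Kazhdan and PBW filtrations agree, since central elements have $\ad h$-weight zero) with the induced filtration on $H^0$, using $\gr\mc{Z}(\mf{g})\cong\C[\mf{g}^*]^G$. This matches the paper's proof step for step, and your verification that $z\otimes 1$ is a cocycle and that the map is multiplicative is fine.

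The one place you go beyond the paper is the convergence of the spectral sequence, and there your argument does not work as stated. First, the $\ad\theta_0(h)$-weight spaces of $C^n(\mf{g})$ are not finite-dimensional (e.g.\ $U(\mf{h})\otimes 1$ lies entirely in weight $0$). Second, and more seriously, they are not subcomplexes: $\ad Q$ decomposes as a weight-$0$ part plus a weight-$(-2)$ part coming from the terms $\chi(x_\alpha)\otimes x_\alpha^*$ (for simple $\alpha$ one has $\alpha(h)=2$), so you cannot run the spectral sequence weight space by weight space. The concern you are trying to address is real --- the Kazhdan filtration is unbounded below on $C(\mf{g})$ as soon as $n\geq 3$, since $e_{1n}^k$ has Kazhdan degree $k(2-n)$ --- and the paper itself leaves it implicit. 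A correct treatment needs either a completeness argument for the Kazhdan filtration (so that $\gr H^i=0$ forces $H^i=0$), or one sidesteps the issue entirely via the bicomplex decomposition $\ad Q=d_++d_-$ recorded in the Remark following the theorem, which computes $H^\bullet(C(\mf{g}),\ad Q)$ directly as $H^\bullet(\mf{n},U(\mf{g})\otimes_{U(\mf{n})}\C_\chi)$ and yields both the vanishing and the Whittaker-model identification. So: same route as the paper, correct in outline, but the convergence step you supply would fail and needs to be replaced.
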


 \begin{proof}
  We have
 the spectral sequence 
$$E_r \Longrightarrow H^\bullet(C(\mf{g}), \ad Q)$$
such that 
$$E_1^{\bullet,i}=H^i(\gr_K {C}(\mf{g}), \ad \bar{Q})\cong \begin{cases}\C[\mf{g}^\ast]^G,&\text{if}\quad i=0
\\0,&\text{if}\quad i\neq 0.\end{cases}$$
Therefore the spectral sequence collapses  at $E_1=E_\infty$, so we get 
  $$\gr H^0(C(\mf{g}), \ad Q)\cong \C[\mf{g}^\ast]^G.$$
  Since the homomorphism  $\mc{Z}(\fing)\ra H^0(C(\fing), \ad Q)$,
  $z\mapsto [z\*1]$, respects the filtration
  $\mc{Z}_{\bullet}(\fing)$ and $K_{\bullet}H^0(C(\fing),\ad Q)$,
  where $\mc{Z}_p(\fing)=\mc{Z}(\fing)\cap U_p(\fing)$, $K_{p}H^{\bullet}(C(\fing),\ad Q)=
  \im (H^0(K_p C(\fing),\ad Q)\ra H^0(C(\fing),\ad Q))$,
  we get the desired isomorphism.
 \end{proof}
 \begin{rmk}[{see \cite[{\S 2}]{Ara07} for the details}]
  As in the case of  $\bar C(\fing)$,
$C(\mf{g})$ is also bigraded, we can also write $\ad Q = d_++d_-$ such that $d_+(C^{i,j})\subset C^{i+1, j}, d_-(C^{i,j})\subset C^{i, j+1}$ and get a spectral sequence 
$$E_r \Longrightarrow H^\bullet(C(\mf{g}), \ad Q)$$ 
  such that
  \begin{align*}
   E_2^{p,q}=H^p(H^q(C(\mf{g}), d_-), d_+) \cong 
  \delta_{q,0} H^p(\mf{n}, U(\mf{g})\otimes_{U(\mf{n})}\C_{\chi})\\
  \cong \delta_{p,0}\delta_{q,0}H^0(\finn,U(\fing)\*_{U(\finn)}\C_{\chi})\cong \End_{U(\mf{g})}(U(\mf{g})\otimes_{U(\mf{n})} \C_{\chi})^{op}.
  \end{align*}
Where $\C_\chi$ is the one-dimensional representation of $\mf{n}$ defined
  by the character $\chi$. 
  Thus we get the Whittaker model isomorphism \cite{Kos78}
  $$\mc{Z}(\mf{g})\cong  H^0(C(\mf{g}), \ad Q ) \cong
  \End_{U(\mf{g})}(U(\mf{g})\otimes_{U(\mf{n})} \C_{\chi})^{op}.$$
 \end{rmk}

 \subsection{Classical Miura map}\label{subsection:classical Miura}
 Let $\finn_-=\bigoplus\limits_{j<0}\fing_j$ be the subalgebra of $\fing$ consisting of lower triangular matrices,
 and set
 $\finb_-=\bigoplus\limits_{j\leq 0}\fing_j=\finn_-\+ \finh$.
 We have
 \begin{align}
  \fing=\finb_-\+\finn_+.
  \label{eq:dec}
 \end{align}

 Extend the basis
 $\{x_a\}_{a\in \Delta_+\sqcup I}$ to the basis
 $\{x_a\}_{a\in \Delta_+\sqcup I\sqcup \Delta_-}$ by adding a basis
 $\{x_{\alpha}\}_{\alpha\in \Delta_-}$ of $\finn_-$.
 Let $c_{a,b}^d$ be the structure constant with respect to this basis.
Extend $\theta_0:\finb\ra C(\fing)$ to the linear map
 $\theta_0:\fing\ra C(\fing)$ by setting
 \begin{align*}
  \theta_0(x_a)=x_a\* 1+1\*\sum_{\beta,\gamma\in \Delta_+}c_{a,\beta}^{\gamma}x_{\gamma}x_{\beta}^*.
 \end{align*}
 We already know that
 the restriction of $\theta_0$ to $\finn$ is a Lie algebra homomorphism and
 \begin{align*}
  [\theta_0(x),1\* y]=1\* [x,y]\quad \text{for }x,y\in \finn.
 \end{align*}
 Although 
$\theta_0$ is not a Lie algebra homomorphism,
  we have the following.
\begin{lem}
The restriction of $\theta_0$  to $\finb_-$ is a Lie algebra homomorphism.
	 We have
 $[\theta_0(x),1\* y]=1\* \ad^*(x)(y)$ for $x\in \finb_-$, $y\in \finn^*$,
 where $\ad^*$ denote the coadjoint action and
$\finn^*$ is identified with  $(\fing/\finb_-)^*$.
\end{lem}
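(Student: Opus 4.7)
The plan is to deduce both assertions from a single Clifford-algebra computation. Since $x \otimes 1$ lies in the even subalgebra $U(\fing) \otimes 1$, it commutes with every element of $1 \otimes Cl$, so for $x \in \finb_-$ and $y \in \finn^*$ the identity $[\theta_0(x), 1 \otimes y] = 1 \otimes [\rho(x), y]$ holds. A direct application of the defining anticommutation relations of $Cl$ yields
\begin{align*}
[x_\gamma x_\beta^*,\, x_\delta^*] = -\delta_{\gamma,\delta}\, x_\beta^*,
\end{align*}
which, substituted into the definition of $\rho(x_a)$, gives $[\rho(x_a), x_\delta^*] = -\sum_{\beta \in \Delta_+} c_{a,\beta}^{\delta}\, x_\beta^*$.

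I then match this against $\ad^*(x_a)(x_\delta^*)$. Under the identification of $\finn^*$ with the annihilator of $\finb_-$ in $\fing^*$, the functional $x_\delta^*$ vanishes on $\finb_-$, so for $\beta \in \Delta_+$
\begin{align*}
\ad^*(x_a)(x_\delta^*)(x_\beta) = -x_\delta^*([x_a, x_\beta]) = -c_{a,\beta}^{\delta},
\end{align*}
matching the Clifford-algebra calculation. Note that the coadjoint action of all of $\fing$ does not preserve $\finn^* \subset \fing^*$ in general; it is essential here that $\finb_-$ is a subalgebra, so that its coadjoint action descends to $(\fing/\finb_-)^* \cong \finn^*$.

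For the Lie algebra homomorphism claim, the cross terms in $[\theta_0(x), \theta_0(y)]$ vanish for parity reasons, so it suffices to check $[\rho(x), \rho(y)] = \rho([x, y])$ in $Cl$ for $x, y \in \finb_-$. The conceptual observation is that $\rho(x_a) = \sum_{\beta,\gamma \in \Delta_+} c_{a,\beta}^{\gamma}\, x_\gamma x_\beta^*$ is the image, under the Lie algebra embedding $E_{\gamma, \beta} \mapsto x_\gamma x_\beta^*$ (routinely verified to respect brackets by the same sign-bookkeeping as above), of the matrix representing the operator $x_\beta \mapsto \pi([x_a, x_\beta])$ on $\finn$, where $\pi \colon \fing \twoheadrightarrow \fing/\finb_- \cong \finn$ is the projection. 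Since $\finb_-$ is a subalgebra of $\fing$, this operator really is the adjoint action of $x_a$ on the quotient $\fing/\finb_-$, so $x_a \mapsto (x_\beta \mapsto \pi[x_a, x_\beta])$ is a Lie algebra homomorphism $\finb_- \to \End(\finn)$. Composing with the Clifford-algebra embedding exhibits $\rho|_{\finb_-}$ as a composition of Lie algebra homomorphisms.

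The only obstacle I foresee is sign bookkeeping in $Cl$ combined with fixing the sign convention for $\ad^*$, which is a matter of care rather than difficulty. The conceptual point, and the reason the lemma would fail on all of $\fing$, is that the formula for $\rho$ discards the $\finb_-$-components of $[x_a, x_\beta]$; this discarding is harmless precisely when $\ad(x_a)$ preserves $\finb_-$, which happens exactly for $x_a \in \finb_-$.
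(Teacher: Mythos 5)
Your proof is correct. The paper states this lemma without proof, so there is no argument to compare against; your computation is the natural one. Both halves check out: the Clifford identity $[x_\gamma x_\beta^*, x_\delta^*]=-\delta_{\gamma\delta}x_\beta^*$ and the identity $[x_\gamma x_\beta^*, x_{\gamma'}x_{\beta'}^*]=\delta_{\beta\gamma'}x_\gamma x_{\beta'}^*-\delta_{\gamma\beta'}x_{\gamma'}x_\beta^*$ hold, so $\rho|_{\finb_-}$ is the composite of the $\finb_-$-action on $\fing/\finb_-\cong\finn$ with the standard embedding $\mf{gl}(\finn)\hookrightarrow Cl$, and you correctly isolate the role of $\finb_-$ being a subalgebra (so that both the quotient module $\fing/\finb_-$ and the coadjoint action on its dual are defined), which is exactly why $\theta_0$ fails to be a homomorphism on all of $\fing$.
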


Let
$C(\fing)_+$ denote the subalgebra of
$C(\fing)$ generated by
$\theta_0(\finn)$ and $\Lam(\finn)\subset Cl$,
and let $C(\fing)_-$ denote the subalgebra generated by
$\theta_0(\finb_-)$ and $\Lam(\finn^*)\subset Cl$.
\begin{lem}\label{lem:linear-dec-finite}
 The multiplication map gives a linear isomorphism
 \begin{align*}
  C(\fing)_-\*C(\fing)_+\isomap C(\fing).
 \end{align*}
\end{lem}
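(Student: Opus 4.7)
\emph{Proof plan.} I would prove this by a PBW-filtration argument. Filter $C(\fing) = U(\fing) \otimes Cl$ by the PBW filtration of $U(\fing)$ with $Cl$ sitting entirely in filtration degree $0$, so that $\gr C(\fing) \cong \C[\fing^\ast] \otimes Cl$ as associative algebras. Under this filtration, for every $x \in \fing$ the element $\theta_0(x) = x \otimes 1 + 1 \otimes \rho(x)$ has symbol $x \in \fing \subset \C[\fing^\ast]$ in filtration degree $1$, since $1 \otimes \rho(x) \in Cl$ already lies in filtration degree $0$.

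The first step is to identify the associated graded subalgebras $\gr C(\fing)_\pm \subset \gr C(\fing)$. Because the symbol of a product equals the product of symbols, every element of $C(\fing)_+$ has symbol in $\C[\finn^\ast] \otimes \Lambda(\finn) \subset \gr C(\fing)$. For the reverse inclusion, given any homogeneous polynomial $f = \sum c_I\, x_{\beta_{i_1}} \cdots x_{\beta_{i_l}} \in \C[\finn^\ast]$ and any $\eta \in \Lambda(\finn)$, the element $\bigl(\sum c_I\, \theta_0(x_{\beta_{i_1}}) \cdots \theta_0(x_{\beta_{i_l}})\bigr)\cdot \eta \in C(\fing)_+$ has symbol $f \otimes \eta$, yielding $\gr C(\fing)_+ = \C[\finn^\ast] \otimes \Lambda(\finn)$. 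The analogous argument applied to the generators of $C(\fing)_-$ gives $\gr C(\fing)_- = \C[\finb_-^\ast] \otimes \Lambda(\finn^\ast)$, invoking the preceding lemma so that $\theta_0|_{\finb_-}$ is a genuine Lie algebra homomorphism and the exhibited lifts make sense inside $C(\fing)_-$.

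Next I would filter $C(\fing)_- \otimes C(\fing)_+$ by the tensor-product filtration, so that $\gr \bigl(C(\fing)_- \otimes C(\fing)_+\bigr) \cong \gr C(\fing)_- \otimes \gr C(\fing)_+$. The associated graded of the multiplication map $\mu$ then becomes
\begin{equation*}
 \C[\finb_-^\ast] \otimes \Lambda(\finn^\ast) \otimes \C[\finn^\ast] \otimes \Lambda(\finn) \;\longrightarrow\; \C[\fing^\ast] \otimes Cl, \qquad p \otimes \eta \otimes q \otimes \xi \;\longmapsto\; pq \otimes \eta\xi.
\end{equation*}
Its polynomial component is the classical PBW isomorphism $\C[\finb_-^\ast] \otimes \C[\finn^\ast] \isomap \C[\fing^\ast]$ induced by the vector-space decomposition $\fing = \finb_- \oplus \finn$, and its Clifford component is the linear isomorphism $\Lambda(\finn^\ast) \otimes \Lambda(\finn) \isomap Cl$ obtained by reordering via the Clifford relations (ordering all $\finn^\ast$-generators to the left still yields a linear basis of $Cl$, by surjectivity and a dimension count). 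Hence $\gr \mu$ is a linear isomorphism, and since both sides carry exhaustive, separated filtrations, the standard filtered argument concludes that $\mu$ itself is a linear isomorphism.

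The main step requiring care is the identification of $\gr C(\fing)_\pm$: while the upper containments $\gr C(\fing)_+ \subseteq \C[\finn^\ast] \otimes \Lambda(\finn)$ and $\gr C(\fing)_- \subseteq \C[\finb_-^\ast] \otimes \Lambda(\finn^\ast)$ are immediate from multiplicativity of symbols, the lower containments need the explicit lifts indicated above, and one must verify the Clifford PBW in the non-standard ordering. None of these obstacles is deep; the proof is a routine PBW bookkeeping once the filtration is chosen so that $\theta_0(x)$ has symbol $x$.
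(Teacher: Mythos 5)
The paper states this lemma without proof, so there is no argument of the author's to compare against; your filtration strategy is the standard one and its architecture is sound. Putting $F_pC(\fing)=U_p(\fing)\otimes Cl$, identifying $\gr C(\fing)_-=S(\finb_-)\otimes\Lambda(\finn^*)$ and $\gr C(\fing)_+=S(\finn)\otimes\Lambda(\finn)$ inside $\gr C(\fing)=S(\fing)\otimes Cl$, and observing that the graded multiplication map is the tensor product of the isomorphism $S(\finb_-)\otimes S(\finn)\isomap S(\fing)$ with the Clifford reordering isomorphism $\Lambda(\finn^*)\otimes\Lambda(\finn)\isomap Cl$ does prove the claim, since the filtrations are exhaustive and bounded below, so bijectivity of $\gr m$ descends to $m$.

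The one step you have mislabelled is the upper containment $\gr C(\fing)_+\subseteq S(\finn)\otimes\Lambda(\finn)$ (and its analogue for $C(\fing)_-$): this is not ``immediate from multiplicativity of symbols,'' and it is in fact where the PBW content of the lemma lives. Multiplicativity of symbols controls a single product of generators, but a general element of $C(\fing)_+$ is a linear combination of such products, and if the top symbols cancel, the symbol of the sum is governed by the subleading terms; for $\theta_0(x_{\alpha_1})\cdots\theta_0(x_{\alpha_k})$ these involve the Clifford corrections $\rho(x_{\alpha_i})$ and hence carry $\Lambda(\finn^*)$-factors, a priori outside the claimed subspace. The repair is to use that the generators of $C(\fing)_{\pm}$ close under commutators --- $\theta_0$ restricts to Lie algebra homomorphisms on $\finn$ and on $\finb_-$, and $[\theta_0(x),1\otimes y]=1\otimes[x,y]$, resp.\ $[\theta_0(x),1\otimes f]=1\otimes\ad^*(x)(f)$ --- so that every element can be straightened into a combination of ordered monomials, whose symbols are linearly independent; this rules out the problematic cancellation. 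Equivalently, and more cleanly, map $U(\finb_-\oplus\Pi\finn^*)\otimes U(\finn\oplus\Pi\finn)$ onto $C(\fing)_-\otimes C(\fing)_+$ and check that the composite with multiplication has bijective associated graded: this yields both containments and the isomorphism at once. With that adjustment your proof is complete.
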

\begin{lem}\label{lem-decom-finite}
 The subspaces $C(\fing)_-$
 and $C(\fing)_+$ are subcomplexes of
 $(C(\fing),\ad Q)$.
 Hence $C(\fing)\cong C(\fing)_-\* C(\fing)_+$ as complexes.
\end{lem}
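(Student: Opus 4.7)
The plan is to verify that $\ad Q$ preserves each of the subalgebras $C(\fing)_\pm$ in $(C(\fing), \ad Q)$; the vector-space decomposition of Lemma~\ref{lem:linear-dec-finite} will then automatically upgrade to an isomorphism of complexes. Since $Q$ is odd and $Q^2 = 0$, $\ad Q$ is a super-derivation of $C(\fing)$, so stability of a subalgebra reduces to checking the inclusion on any set of algebra generators.

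For $C(\fing)_+$, which is generated by $\theta_0(\finn)$ and $1 \otimes \Lam(\finn)$, the verification is immediate from what is already proved. For $x \in \finn$, Lemma~\ref{lem:Q} gives $\ad Q(1 \otimes x) = \theta_\chi(x) = \theta_0(x) - \chi(x)\cdot 1 \in C(\fing)_+$; then $\ad Q(\theta_0(x)) = \ad Q(\theta_\chi(x)) = (\ad Q)^2(1 \otimes x) = 0$, using $(\ad Q)^2 = 0$. The super-derivation property propagates these to all of $C(\fing)_+$.

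The real content lies in handling $C(\fing)_-$, generated by $\theta_0(\finb_-)$ and $1 \otimes \Lam(\finn^*)$. For $f \in \finn^*$, the quadratic part of $Q$ contributes nothing to $[Q, 1 \otimes f]_s$ because $\{x_\alpha^*, f\} = 0$ in $Cl$ (the subspace $\finn^*$ is isotropic for the Clifford form), and a short computation with the Clifford relations shows that the cubic part of $Q$ produces $-\tfrac{1}{2}\sum_{\alpha,\beta,\gamma} c_{\alpha\beta}^\gamma f(x_\gamma) \otimes x_\alpha^* x_\beta^* \in 1 \otimes \Lam^2(\finn^*) \subset C(\fing)_-$. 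For $x \in \finb_-$, I would expand $\theta_0(x) = x \otimes 1 + 1 \otimes \rho(x)$ and apply super-Leibniz to $[Q, \theta_0(x)]_s$; using the identity $[\theta_0(x), 1 \otimes y]_s = 1 \otimes \ad^*(x)(y)$ for $y \in \finn^*$ from the lemma preceding Lemma~\ref{lem:linear-dec-finite}, together with the Lie algebra homomorphism property of $\theta_0|_{\finb_-}$, the resulting expression assembles into an element of the subalgebra generated by $\theta_0(\finb_-)$ and $\Lam(\finn^*)$.

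Once both $C(\fing)_\pm$ are known to be $\ad Q$-stable, the linear isomorphism $C(\fing)_- \otimes C(\fing)_+ \isomap C(\fing)$ of Lemma~\ref{lem:linear-dec-finite} automatically upgrades to an isomorphism of complexes: for $a \in C(\fing)_-$ and $b \in C(\fing)_+$, the super-Leibniz rule gives $\ad Q(a \cdot b) = \ad Q(a) \cdot b + (-1)^{|a|} a \cdot \ad Q(b)$, which is exactly the tensor-product differential. The main obstacle is the sign bookkeeping in the last computation for $x \in \finb_-$: one must carefully combine the $U(\fing)$-valued commutator $[Q, x \otimes 1]$ with the $Cl$-valued commutator $[Q, 1 \otimes \rho(x)]$ while tracking the three odd factors in the cubic term of $Q$, but this is otherwise a routine if tedious calculation.
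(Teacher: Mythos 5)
Your proposal is correct and follows essentially the same route as the paper: the $C(\fing)_+$ case is immediate from Lemma \ref{lem:Q} together with $(\ad Q)^2=0$, and the $C(\fing)_-$ case reduces to computing $\ad Q$ on the generators $\theta_0(x_a)$ ($a\in\Delta_-\sqcup I$) and $1\*x_\alpha^*$ and observing that the results lie in $C(\fing)_-$ — the paper simply records the explicit formulas that your sketched Leibniz computation produces. The upgrade of Lemma \ref{lem:linear-dec-finite} to an isomorphism of complexes via the super-Leibniz rule is exactly the intended ``Hence''.
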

\begin{proof}
 The fact that $C(\fing)_+$ is subcomplex is obvious (see Lemma \ref{lem:Q}).
 The fact that
 $C(\fing)_-$ is a  subcomplex follows from the following formula.
 \begin{align*}
  &[Q,\theta_0(x_a)]=\sum_{b\in \Delta_-\sqcup I,\alpha\in \Delta_+}c_{\alpha,a}^b \theta_0(x_b)(1\*x_\alpha^*)-1\*\sum_{\beta,\gamma\in
  \Delta_+}c_{a,\beta}^{\gamma}\chi(x_{\gamma})x_{\beta}^* \\
  &[Q,1\* x_{\alpha}^*]=-1\*\frac{1}{2}\sum_{\beta,\gamma\in\Delta_+}c_{\beta,\gamma}^{\alpha}x_{\beta}^*x_{\gamma}^*
 \end{align*}
 ($a\in \Delta_-\sqcup I$, $\alpha\in \Delta_+$).
\end{proof}
\begin{prp}\label{prp:subcomp}
 $H^\bullet(C(\fing)_-, \ad Q)\cong H^\bullet(C(\fing),\ad Q)$.
\end{prp}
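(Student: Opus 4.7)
The plan is to apply the K\"unneth formula to the tensor-product decomposition of Lemma \ref{lem-decom-finite}, and then show that the factor $C(\fing)_+$ is acyclic except in cohomological degree $0$, where it computes $\C$.

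First, by Lemma \ref{lem-decom-finite}, the multiplication map $C(\fing)_- \otimes C(\fing)_+ \isomap C(\fing)$ is an isomorphism of complexes. Because $\ad Q$ is a graded derivation and each factor is a subcomplex, the differential on the left is the tensor-product differential. Over the field $\C$, the K\"unneth formula gives
\[
H^\bullet(C(\fing), \ad Q) \cong H^\bullet(C(\fing)_-, \ad Q) \otimes H^\bullet(C(\fing)_+, \ad Q),
\]
and the inclusion $a \mapsto a \otimes 1$ of $C(\fing)_-$ into $C(\fing)$ realises the first tensor factor. The proposition therefore reduces to the claim that $H^\bullet(C(\fing)_+, \ad Q) \cong \C$, concentrated in cohomological degree $0$.

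To prove this reduced claim, I would restrict the Kazhdan filtration of $C(\fing)$ to $C(\fing)_+$ and pass to the associated graded, exactly as in the proof of Theorem \ref{Thm:whittaker-model}. The resulting classical complex is the subalgebra $\bar C(\fing)_+ \subset \bar C(\fing)$ generated by the classical counterpart $\bar\theta_0(\finn)$ of $\theta_0(\finn)$ (with $\bar\theta_0(x) = x \otimes 1 + 1 \otimes \bar\rho(x)$) together with $\Lambda(\finn) \subset \overline{Cl}$, equipped with the classical differential $\ad \bar Q$. I would then identify this with a Koszul complex: projection onto $\C[\fing^*]$ shows that the elements $\bar\theta_0(x_\alpha)$ for $\alpha \in \Delta_+$ are algebraically independent, so $\bar C(\fing)_+ \cong \C[\bar\theta_0(x_\alpha)] \otimes \Lambda(\finn)$ as a supercommutative algebra. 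The differential annihilates the even generators (a consequence of $\{\bar Q, \bar Q\} = 0$, which forces $\{\bar Q, \bar\theta_\chi(x_\alpha)\} = 0$) and sends $1 \otimes x_\alpha$ to $\bar\theta_\chi(x_\alpha) = \bar\theta_0(x_\alpha) - \chi(x_\alpha)$. This exhibits the complex as the Koszul complex of the $N = |\Delta_+|$ elements $\bar\theta_\chi(x_\alpha)$ inside the polynomial ring $\C[\bar\theta_0(\finn)]$ of Krull dimension $N$; since their common zero locus is a single reduced point, they form a regular sequence and the cohomology collapses to $\C$ in degree $0$.

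The main obstacle I anticipate lies in the middle step: one must verify that the Kazhdan filtration restricts nicely to the subalgebra $C(\fing)_+$, so that $\gr_K C(\fing)_+$ is genuinely the expected $\bar C(\fing)_+$ (rather than some larger object), and that the associated spectral sequence converges in this restricted setting. Once this reduction is secured, the classical Koszul-regularity computation is entirely standard.
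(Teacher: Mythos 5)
Your reduction via Lemma \ref{lem-decom-finite} and K\"unneth to the single claim $H^i(C(\fing)_+,\ad Q)=\delta_{i,0}\C$ is exactly the first half of the paper's proof. Where you diverge is in how that claim is established. The paper stays entirely at the quantum level: using $[Q,\theta_0(x_\alpha)]=[Q,[Q,1\*x_\alpha]]=0$ and $[Q,1\*x_\alpha]=\theta_\chi(x_\alpha)$, it exhibits $C(\fing)_+$ (via a PBW-ordered basis of $U(\finn)\*\Lam(\finn)$) as a tensor product of rank-one Koszul complexes $\C[\theta_\chi(x_\alpha)]\*\Lam(x_\alpha)$ with differential $\theta_\chi(x_\alpha)\*x_\alpha^*$, each of which visibly has cohomology $\C$ in degree $0$; no filtration or spectral sequence is needed. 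You instead pass to the Kazhdan associated graded, run the classical Koszul-regularity argument there, and then must climb back up a spectral sequence. Your route does work — indeed the paper itself asserts $H^i(\gr_K C(\fing)_+,\ad\bar Q)=\delta_{i,0}\C$ and uses the induced filtrations on $C(\fing)_\pm$ a few lines later, in the proof of Proposition \ref{prp:miura-classical} — but it imports precisely the technical overhead you flag at the end (identifying $\gr_K C(\fing)_+$ with the subalgebra generated by the symbols, and convergence of the spectral sequence), which the direct quantum computation avoids. One small simplification available to you even on your route: the acyclicity of the one-variable quantum complexes is so elementary that the "regular sequence" machinery is overkill; the essential input in either approach is just that $\ad Q$ kills $\theta_\chi(x_\alpha)$ and that these elements generate a polynomial (resp.\ PBW-free) subalgebra.
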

\begin{proof}
 By Lemma \ref{lem-decom-finite} and  Kunneth's Theorem,
 \begin{align*}
  H^p(C(\fing),\ad Q)\cong \bigoplus_{i+j=p}H^i(C(\fing)_-,\ad Q)\* H^j(C(\fing)_+,\ad Q).
 \end{align*}
 On the other hand
 we have
 $\ad(Q)(1\*x_\alpha)=\theta_\chi(x_{\alpha})=\theta_0(x_{\alpha})-\chi(x_{\alpha})$
 for $\alpha\in \Delta_-$.
Hence $C(\fing)_-$ is isomorphic to the tensor product of complexes of
 the form $\C[\theta_{\chi}(x_{\alpha})]\* \Lam (x_{\alpha})$
with the differential $\theta_{\chi}(x_{\alpha})\* x_{\alpha}^*$, where
 $x_{\alpha}^*$ denotes the odd derivation of the exterior algebra  $\Lam (x_{\alpha})$ with one variable $x_{\alpha}$  such that
 $x_{\alpha}^*(x_{\alpha})=1$.
Each of these complexes has one-dimensional zeroth cohomology and zero
 first cohomology.
Therefore
 $H^i(C(\fing)_+,\ad Q)=\delta_{i,0}\C$.
This completes the proof.
\end{proof}

Note that
the cohomological gradation takes only non-negative values on $C(\fing)_-$.
Hence by Proposition \ref{prp:subcomp}
we may identify
$\mc{Z}(\fing)=H^0(C(\fing),\ad Q)$  with  the subalgebra
$H^0(C(\fing)_-,\ad Q)=\{c\in C(\fing)_-^0\mid \ad Q(c)=0\}$ of $C(\fing)_-$.

  Consider the decomposition
  \begin{align*}
   C(\fing)_-^0=\bigoplus_{j\leq 0}C(\fing)_{-,j}^0,\quad C(\fing)^0_{-,j}
   =\{c\in C(\fing)_-^0\mid
   [\theta_0(h),c]=2jc\}.
  \end{align*}
  Note that
  $C(\fing)_{-,0}^0$   is generated by $\theta_0(\finh)$ and
  is isomorphic to $U(\finh)$.
The projection 
  \begin{align*}
   C(\fing)_-^0\rightarrow C(\fing)_{-,0}^0\cong  U(\finh)
  \end{align*}
  is an algebra homomorphism,
  and hence,
  its restriction 
\begin{align*}
\Mi: \mc{Z}(\fing)=H^0(C(\fing)_-,\ad Q)\ra U(\finh)
\end{align*}
is also an algebra homomorphism.
\begin{prp}\label{prp:miura-classical}
 The map $\Mi$ is an embedding.
\end{prp}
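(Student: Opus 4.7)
The plan is to pass to the associated graded with respect to the Kazhdan filtration and reduce injectivity of $\Mi$ to the classical Chevalley restriction theorem.

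First I would observe that the Kazhdan filtration $K_{\bullet} C(\fing)$ restricts to the subalgebra $C(\fing)_-$ and hence induces an exhaustive separated filtration on $\mc{Z}(\fing) = H^0(C(\fing)_-, \ad Q) \subset C(\fing)_-^0$. By the spectral sequence used in the proof of Theorem \ref{Thm:whittaker-model}, the associated graded is
\begin{align*}
\gr_K \mc{Z}(\fing) \cong \C[\fing^*]^G.
\end{align*}
Similarly $\gr_K U(\finh) \cong \C[\finh^*]$. Since the projection $C(\fing)_-^0 \twoheadrightarrow C(\fing)_{-,0}^0 \cong U(\finh)$ respects the Kazhdan filtration, $\Mi$ is a filtered map and induces
\begin{align*}
\gr_K \Mi \colon \C[\fing^*]^G \longrightarrow \C[\finh^*].
\end{align*}

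Next I would identify $\gr_K \Mi$ with the classical Miura map $\bar{\Mi}$ obtained by applying the analogous construction to $(\bar{C}(\fing)_-, \ad \bar{Q})$: namely the composition
\begin{align*}
H^0(\bar{C}(\fing)_-, \ad \bar{Q}) \hookrightarrow \bar{C}(\fing)_-^0 \twoheadrightarrow \bar{C}(\fing)_{-,0}^0 \cong \C[\finh^*].
\end{align*}
Unravelling the isomorphisms $H^0(\bar{C}(\fing)_-, \ad \bar{Q}) \cong H^0(\bar{C}(\fing), \ad \bar{Q}) \cong \C[\mc{S}] \cong \C[\fing^*]^G$ provided by Theorem \ref{thm:KS-classical} and Corollary \ref{cor:iso-first}, one verifies that $\bar{\Mi}$ sends a $G$-invariant polynomial on $\fing^* \cong \fing$ to its restriction to the Cartan subalgebra $\finh$. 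This is the classical Chevalley restriction map, which is an isomorphism onto $\C[\finh^*]^W$ and in particular injective. Hence $\gr_K \Mi$ is injective, and since the Kazhdan filtration on $\mc{Z}(\fing)$ is separated, $\Mi$ itself is injective.

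The main obstacle is the identification of $\gr_K \Mi$ with Chevalley restriction. Concretely, for $z \in \mc{Z}(\fing)$ one must produce a specific cocycle representative of $[z \otimes 1]$ inside $C(\fing)_-^0$ and match its Kazhdan leading symbol with $z \in \C[\fing^*]^G$ viewed as a function on $\mc{S} = f + \mf{a}$, and then show that the further projection to $\bar{C}(\fing)_{-,0}^0 \cong \C[\finh^*]$ corresponds to restriction to $\finh$. This identification can be carried out by combining the Künneth decomposition of Lemma \ref{lem-decom-finite} (which transports cohomology classes from $H^0(C(\fing), \ad Q)$ to $H^0(C(\fing)_-, \ad Q)$) with the explicit spectral sequence that proves Theorem \ref{Thm:whittaker-model}, exploiting the fact that $\theta_0|_{\finb_-}$ is a Lie algebra homomorphism so that $\theta_0$ identifies $C(\fing)_-^0$ with a copy of $U(\finb_-)$ inside $C(\fing)$ under which the projection to $U(\finh)$ is the standard one annihilating $\finn_-$.
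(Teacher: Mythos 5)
Your reduction to the associated graded is the same as the paper's: both arguments pass to the Kazhdan filtration, identify the induced map with the restriction map $\bar \Mi\colon \C[\mc{S}]=\C[f+\finb]^N\to \C[f+\finh]$ via \eqref{eq;iso-11-3} and Theorem \ref{thm:KS-classical}, and conclude from separatedness of the filtration. Where you diverge is in proving injectivity of $\bar\Mi$: you identify it with the Chevalley restriction map $\C[\fing]^G\isomap\C[\finh]^W$ (which for this lower-triangular $f$ amounts to noting that $f+h$ has the same characteristic polynomial as $h$), whereas the paper proves directly that the action map $N\times(f+\finh)\to f+\finb$ is dominant, by computing its differential at $(1,x)$ for $x\in f+\finh_{\on{reg}}$. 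The paper is explicit in Remark \ref{rem:classical-Miura} that your route is available and well known; the reason it prefers the dominance argument is that it does not rely on any Chevalley-type theorem and therefore carries over verbatim to general finite $W$-algebras and, via Lemma \ref{Lem:dominant-jet}, to the arc-space setting needed for the injectivity of the affine Miura map in Theorem \ref{thm:Miura}. Your approach is perfectly valid for the case at hand but is tied to the principal nilpotent, where the slice meets the Cartan in the way Chevalley's theorem requires.
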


Let $K_{\bullet}C(\fing)_{\pm}$
 be the filtration of $C(\fing)_{\pm}$
induced by  the Kazhdan filtration of
$C(\fing)$.
We have the isomorphism
\begin{align*}
\bar C(\fing)= \gr_K C(\fing)\cong \gr_K C(\fing)_-\otimes \gr_K C(\fing)_+
\end{align*}
as complexes.
Similarly as above,
we have $H^i(\gr_KC(\fing)_+,\ad \bar Q)=\delta_{i,0}\C$,
and
\begin{align}
 H^0(\bar C(\fing),\ad \bar Q)\cong H^0(\gr_KC(\fing)_-,\ad \bar Q).
\label{eq;iso-11-3}
\end{align}
 \begin{proof}[Proof of Proposition \ref{prp:miura-classical} ]
The filtration
$K_{\bullet}U(\finh)$
of $U(\finh)\cong C(\fing)_{-,0}^0$ induced by the Kazhdan filtration
coincides with the usual PBW filtration.
By \eqref{eq;iso-11-3}
and Theorem \ref{thm:KS-classical},
 the induced map
\begin{align*}
H^0(\gr_K C(\fing)_-, \ad Q)\ra \gr_K U(\finh)
\end{align*}
can be identified with the 
restriction map
\begin{align}
 \bar \Mi: \C[\mc{S}]= \C[f+\finb]^N\ra \C[f+\finh].
 \label{eq:gr-of-classical-Miura}
\end{align}
 It is sufficient to show that 
$\bar \Mi$ is injective.

If $\varphi\in \C[f+\finb]^N$ is in the kernel,
$\varphi(g.x)=0$ for all $g\in N$ and $x\in f+\finh$.
Hence it is enough to show that  the image of the 
 the action map
 \begin{align}
  N\times (f+\finh)\ra f+\finb,\quad (g,x)\mapsto \Ad(g)x,
\label{eq:the-action-map-for-HC}
 \end{align}
is Zariski dense in $f+\finb$.

The differential of this morphism at
$(1,x)\in N\times (f+\finh)$ is given by
\begin{align*}
 \finn\times \finh\ra \finb,\quad (y,z)\mapsto [y,x]+z.
\end{align*}
This is an isomorphism if 
$x\in f+\finh_{\on{reg}}$,
where 
$\finh_{\on{reg}}=\{x\in \finh\mid \finn^x=0\}$.
Hence \eqref{eq:the-action-map-for-HC} is a dominant morphism as required, see e.g.
\cite[Theorem 16.5.7]{TauYu05}.
\end{proof}

\begin{rmk}\label{rem:classical-Miura}
 The fact that $\bar \Mi$ is injective is in fact well-known.
 Indeed, under the identifications $\C[\mc{S}]\cong \C[\fing]^G$, $\C[f+\finh]\cong \C[\finh]$,
 $\bar \Mi$ is identified with the Chevalley restriction map
 $\C[\fing]^G\isomap \C[\finh]^W$,
 where $W=\mf{S}_n$.

 The advantage of the above  proof is that it applies to a general finite $W$-algebra (\cite{Lyn79}),
 and also, it generalizes to the affine setting, see \S \ref{subsection:Miura}.
 \end{rmk}

The map $\Mi$ is called the classical {\em Miura map}.

 \subsection{Generalization to an arbitrary simple Lie algebra}
 It is clear that the above argument works if we replace $\mf{gl}_n$ by $\mf{sl}_n$,
 and $\mf{a}$ by $\mf{a}\cap \mf{sl}_n$.

 More generally, let $\fing$ be an arbitrary simple Lie algebra.
 Let $f$  be a {\em principal} (regular) nilpotent element of $\fing$,
 $\{e,f,h\}$ an associated $\mf{sl}_2$-triple. One may assume that
  \begin{align*}
   f=\sum_{i\in I}f_i,
  \end{align*}
  where $f_i$ is a  root vector of roots
  $\alpha_i$ and
  $\{\alpha_i\}_{i\in I}$ is the set of simple roots of $\fing$.
  Define
  the {\em Kostant slice} $\mc{S}$
  by
  \begin{align*}
   \mc{S}:=f+\fing^e\subset \fing=\fing^*,
  \end{align*}
  where $\fing^e$ is the centralizer of $e$ in $\fing$.

  Then all the statements in previous subsections that make sense 
  hold 
by replacing the set of companion matrices by the Kostant slice (\cite{Kos78}).

  \subsection{Generalization to finite $W$-algebras}
  \label{subsection:Generalization to finite $W$-algebras}
 In fact,  the above  argument works in more general setting of
 Hamiltonian reduction. In particular for {\em Slodowy slices}. 
Namely, for a non-zero nilpotent element $f$ of a finite-dimensional
 semisimple Lie algebra $\mf{g}$, we can use Jacobson-Morozov's theorem
 to embed $f$ into an $sl_2$-triple $\{e, f, h\}$.
 The Slodowy slice at $f$ is defined to be the affine subslace
 \begin{align*}
\mc{S}_f=f+ \mf{g}^e
 \end{align*}
 of $\fing$.

The Slodowy slice $\mc{S}_f$ has the following properties.
\begin{itemize}
\item $\mc{S}_f$ intersects the $G$-orbits at any point of $\mc{S}_f$, where $G$ is the adjoint group of $\mf{g}$.
\item $\mc{S}_f$ admits a $\C^\ast$-action which is contracting at $f$.
\end{itemize}
As in the case of the set of companion matrices
$\mc{S}_f$ can be realized by Hamiltonian reduction. 
Let
$\fing_j=\{x\in \fing\mid [h,x]=2j x\}$,
so that
\begin{align*}
 \fing=\bigoplus_{j\in \frac{1}{2}\Z}\fing_j.
\end{align*}
Then the subspace $\fing_{1/2}$ admits
a symplectic form defined by $\bra x|y\ket=(f|[x,y])$.
Choose a Lagrangian subspace $l$ of $\fing_{1/2}$ with respect to this
form,
and set
$\mf{m}=l+\sum_{j\geq 1}\fing_j$.
Then $\mf{m}$ is a nilpotent subalgebra
of $\fing$ and 
$\chi:\mf{m}\ra \C$, $x\mapsto (f|x)$,
defines a character.
Let $M$ be the unipotent subgroup of $G$ corresponding to $\mf{m}$,
that is, 
$\Lie M=\mf{m}$.
The adjoint action of $M$ on 
$\mf{g}$ is Hamiltonian,
so we can 
consider the moment map of this action
\begin{align*}
 \mu: \mf{g}^\ast \longrightarrow \mf{m}^\ast,
\end{align*}
which is just a restriction map.
Then 
we have 
the following realization of the Slodowy slice.
$$\mc{S}_f \cong \dfrac{\mu^{-1}(\chi)}{M}$$

To obtain the BRST realization of this Hamiltonian reduction
we simply
 replace the Clifford algebra $Cl$ by $Cl_{\mf{m}}$, i.e., 
the Clifford algebra associated to $\mf{m}\oplus \mf{m}^\ast$.
Then
 we can define the operator $\ad \bar{Q}$ similarly and get a
 differential cochain complex $(\C[\mf{g}^\ast]\otimes
 \overline{Cl}_{\mf{m}}, \ad \bar{Q})$. We have 
$$\C[\mc{S}_f]\cong H^0(\C[\mf{g}^\ast]\otimes \overline{Cl}_{\mf{m}},
\ad \bar{Q} )$$
as Poisson algebras.

As above, 
this construction has a natural quantization 
and 
the quantization $U(\fing,f)$ of $\mc{S}_f$
thus defined
is called  the {\em finite $W$-algebra associated to the pair $(\mf{g},
f)$} \cite{Pre02}:
$$U(\mf{g}, f):=  H^0(U(\mf{g})\otimes Cl_{\mf{m}}, \ad \bar{Q}_+ ) \cong \End_{U(\mf{g})}(U(\mf{g})\otimes_{U(\mf{m})} \C_\chi)^{op},$$
where $\C_\chi$ is the one-dimensional representation of $\mf{m}$
defined by $\chi$ (cf. \cite{De-Kac06,Ara07}).

\section{Arc spaces, Poisson vertex algebras,  and  associated varieties
 of vertex algebras.}
 \label{sec:Arc spaces}
 \subsection{Vertex algebras}
 A {\em vertex algebra} is a vector space $V$ equipped with 
 $|0\ket \in V$ (the vacuum vector),
 $T\in \End V$ (the translation operator),
 and
 a bilinear product
 \begin{align*}
V\times V\ra V((z)),\quad (a,b)\mapsto  a(z)b,
\end{align*}
where $a(z)=\sum_{n\in \Z}a_{(n)}z^{-n-1}$, $a_{(n)}\in \End V$, such that
\begin{enumerate}
\item $(|0\ket)(z)=\id_V$,
\item $a(z)|0\ket \in V[[z]]$ and $\lim\limits_{z\ra 0}a(z)|0\ket =a$ for all $a\in V$,
\item $(Ta)(z)=\partial_z a(z)$ for all $a\in V$,
where $\partial_z=d/dz$,
\item for any $a,b\in V$,
$(z-w)^{N_{a,b}}[a(z),b(w)]=0$ for some $N_{a,b}\in \Z_+=\{0,1,2,\ldots\}$. \end{enumerate}
The last condition is called the {\em locality},
which is equivalent to the fact that
\begin{align}
[a(z),b(w)]=\sum_{n= 0}^{N_{a,b}-1}(a_{(n)}b)(w)\frac{1}{n!}\partial_w^n\delta(z-w),
\label{eq:OPE}
\end{align}
where $\delta(z-w)=\sum_{n\in \Z}w^n z^{-n-1}\in \C[[z,w,z^{-1},w^{-1}]]$.

A consequence of the definition is
the  following {\em Borcherds identities}:
\begin{align}
 &[a_{(m)}, b_{(n)}]
 =\sum_{i\geq 0}\begin{pmatrix}
				 m\\i
		\end{pmatrix}(a_{(i)}b)_{(m+n-i)},
\label{eq:com-formula}\\
&(a_{(m)}b)_{(n)}=\sum_{j\geq 0}(-1)^j
\begin{pmatrix}
m\\j
\end{pmatrix}(a_{(m-j)}b_{(n+j)}-(-1)^mb_{(m+n-j)}a_{(j)}).
\end{align}

We write \eqref{eq:OPE}
 as 
 \begin{align*}
 [a_{\lam}b]=\sum_{n\geq 0}\frac{\lam^n}{n!}a_{(n)}b\in V[\lam],
\end{align*}
and call it 
the {\em $\lam$-bracket }of $a$ and $b$.
(We have $a_{(n)}b=0$ if $(z-w)^n[a(z),b(w)]=0$.)

Here are some properties of $\lambda$-brackets.
\begin{align}
 & [(Ta) _{\lam}b]=-\lam[a_{\lam}b],\quad [a_{\lam}(T b)]=(\lam+T)[a_{\lam}b],
 \label{eq:sesquilinearity}
 \\
 &[b_{\lam}a]=-[a_{-\lam-T}b],\label{eq:skew}\\
 &[a_{\lam}[b_{\mu}c]]-[b_{\mu}[a_{\lam}c]]=[[a_{\lam}b]_{\lam+\mu}c].
 \label{eq:Jacobi}
\end{align}

The normally ordered product
on $V$ is defied as $:ab:=a_{(-1)}b$.
We also write $:ab:(z)=:a(z)b(z):$.
We have
\begin{align*}
:a(z)b(z):=a(z)_+b(z)+b(w)a(z)_-,
\end{align*}
where
$a(z)_+=\sum_{n<0}a_{(n)}z^{-n-1}$,
$a(z)_-=\sum_{n\geq 0}a_{(n)}z^{-n-1}$.
We have the following {\em non-commutative Wick formula}.
\begin{align}
& [a_{\lam}:bc:]=:[a_{\lam}b]c:+:[a_{\lam}c]b:+\int_0^{\lam}[[a_{\lam}b]_\mu c]d\mu,
\label{eq:non-com-wick1}
\\&
[:ab:_{\lam}c]=:(e^{T\partial_{\lam}}a)[b_{\lam}c]:
 +:(e^{T\partial_{\lam}}b)[a_{\lam}c]:+\int_{0}^\lam [b_\mu[a_{\lam-\mu}c]]d\mu.
 \label{eq:non-com-wick2}
\end{align}

\subsection{Commutative vertex algebras and differential algebras}
A vertex algebra $V$ is called {\em commutative}
if 
\begin{align*}
[a_{\lam}b]=0,\quad \forall a,b\in V,
\end{align*}
or equivalently,
$a_{(n)}=0$ for $n\geq 0$ in $\End V$ for all $a\in V$.
This condition is equivalent to that
\begin{align*}
 [a_{(m)}, b_{(n)}]=0\quad\forall a,b\in \Z,\ m,n\in \Z
\end{align*}
by 
\eqref{eq:com-formula}.

A commutative vertex algebra has the  structure of a unital commutative 
algebra 
by the product
\begin{align*}
 a\cdot b=:ab:=a_{(-1)}b,
\end{align*}
where the unite is given by the vacuum vector $|0\ket$.
The translation operator $T$ of $V$
acts on $V$ as a derivation with respect to this product:
\begin{align*}
 T(a\cdot  b)=(Ta)\cdot b+a\cdot (Tb).
\end{align*}
Therefore 
a commutative vertex algebra 
has the structure of a
 {\em differential algebra},
that is,
 a unital commutative algebra
equipped with a derivation.
Conversely,
there is a unique vertex algebra structure on
a differential algebra $R$  with a derivation $T$ 
such that
\begin{align*}
Y(a,z)=e^{zT}a
\end{align*}
for  $a\in R$.
This correspondence gives the following.
\begin{thm}[\cite{Bor86}]
 The category of commutative vertex algebras
is the same as that of differential algebras.
\end{thm}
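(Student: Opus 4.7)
The plan is to construct functors in both directions and verify they are mutually inverse. In one direction, from a commutative vertex algebra $(V,|0\rangle,T,Y)$ I produce the differential algebra $(V,\cdot,|0\rangle,T)$ with $a\cdot b:=a_{(-1)}b$. In the other, from a differential algebra $(R,T,1)$ I produce the vertex algebra with vacuum $1$, translation $T$, and field map $Y(a,z)b:=(e^{zT}a)\cdot b$. Morphisms transport in the evident way, so functoriality is immediate once the underlying correspondence is established.

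For the forward direction, commutativity of $V$ gives $a_{(n)}b=0$ for all $n\geq 0$ and hence $[a(z),b(w)]=0$ by the OPE formula \eqref{eq:OPE}. First I would verify the unit: $a_{(-1)}|0\rangle=a$ is the limit condition in axiom (2), and $|0\rangle_{(-1)}a=a$ follows from $Y(|0\rangle,z)=\id_V$. Commutativity of the product follows from $[a_{(-1)},b_{(-1)}]=0$ (the coefficient of $z^0w^0$ in $[a(z),b(w)]=0$) applied to $|0\rangle$. For associativity, I would apply the second Borcherds identity with $m=n=-1$, whose right-hand side collapses in the commutative case since all $a_{(j)}b$ with $j\geq 0$ vanish. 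That $T$ is a derivation of this product follows from the general vertex algebra identity $[T,a_{(-1)}]=(Ta)_{(-1)}$, which is itself the mode-wise form of $(Ta)(z)=\partial_z a(z)$.

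For the reverse direction, starting from $(R,T,1)$, axiom (1) holds since $T1=0$ and thus $Y(1,z)=\id_R$. Axiom (2) follows from $a(z)\cdot 1=e^{zT}a\in R[[z]]$ with limit $a$. The translation axiom is $\partial_z Y(a,z)=\partial_z(e^{zT}a)\cdot=(Te^{zT}a)\cdot=Y(Ta,z)$. Locality is automatic with $N_{a,b}=0$: both $Y(a,z)$ and $Y(b,w)$ are left multiplications by commuting elements of $R[[z,w]]$, so they commute. The resulting vertex algebra is clearly commutative.

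The key step that glues the two constructions is the identity $Y(a,z)b=(e^{zT}a)\cdot b$ inside any commutative vertex algebra; this immediately shows that applying forward-then-reverse recovers the original $Y$, while reverse-then-forward is tautological because $Y(a,0)b=a\cdot b$ in the constructed vertex algebra. To prove this identity I would argue as follows: in a commutative vertex algebra, $a(z)$ commutes with all modes $b_{(n)}$ (by the vanishing of $[a(z),b(w)]$), and in particular with $b_{(-1)}$. The general vertex algebra creation formula gives $a(z)|0\rangle=e^{zT}a$, so
\begin{equation*}
a(z)b \;=\; a(z)b_{(-1)}|0\rangle \;=\; b_{(-1)}\,a(z)|0\rangle \;=\; b\cdot(e^{zT}a) \;=\; (e^{zT}a)\cdot b.
\end{equation*}
The main obstacle is really just ensuring the right bookkeeping here: the potential subtlety is establishing commutativity and associativity of $a_{(-1)}$ without circularity (this is why I rely on the Borcherds identity and on passing through the vacuum), and confirming that the two functors are literal inverses rather than merely equivalences, which the key identity above settles.
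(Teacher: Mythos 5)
Your proposal is correct and follows the same route the paper sketches (the paper only outlines the correspondence $a\cdot b=a_{(-1)}b$ in one direction and $Y(a,z)=e^{zT}a$ in the other, without verifying the axioms). Your verification of associativity via the second Borcherds identity at $m=n=-1$ and your derivation of the key identity $Y(a,z)b=(e^{zT}a)\cdot b$ by passing through the vacuum are exactly the standard details that the paper omits, and they are carried out correctly.
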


\subsection{Arc spaces}
Define the  (formal) disc
as
\begin{align*}
 D=\on{Spec}(\C[[t]]).
\end{align*}
For a scheme $X$, a homomorphism $\alpha:D\ra X$ is called an {\em arc} of $X$.
  \begin{Th} [\cite{Bosch:1990fj,EinMus,Ish07}]
   Let $X$ be a scheme of finite type over $\C$,
   $Sch$ the category of schemes of finite type over $\C$,
   $Set$ the category of sets.
     The contravariant  functor
  \begin{align*}
 Sch\ra Set,\quad Y\mapsto \Hom_{Sch} (Y\widehat{\times} D,X),
  \end{align*}
   is represented by a scheme $JX$,
   that is,
   \begin{align*}
\Hom_{Sch}(Y,JX)\cong \Hom_{Sch} (Y\widehat{\times} D,X).
\end{align*}
for any $Y\in Sch$.   
Here $Y\widehat{\times} D$
is the completion of $Y{\times} D$ with respect to the subscheme
$Y\widehat{\times}\{0\}$.
\end{Th}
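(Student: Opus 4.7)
The plan is to construct $JX$ first in the affine case using Hasse--Schmidt style coordinates that encode Taylor expansions of arcs, and then to glue the construction along an affine open cover. The resulting $JX$ will in general fail to be of finite type, but this is not required by the statement.

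Suppose first that $X=\Spec R$ with $R=\C[x_1,\dots,x_n]/(f_1,\dots,f_m)$. Introduce formal variables $x_i^{(j)}$ ($1\leq i\leq n$, $j\geq 0$) and define polynomials $f_k^{(j)}\in\C[x_i^{(l)}]$ by expanding
$$f_k\Bigl(\sum_{j\geq 0}x_i^{(j)}t^j\Bigr)=\sum_{j\geq 0}f_k^{(j)}\bigl(x_i^{(l)}\bigr)\,t^j,$$
and set
$$JX:=\Spec\C\bigl[x_i^{(j)}\bigr]_{1\leq i\leq n,\,j\geq 0}\big/\bigl(f_k^{(j)}\bigr)_{k,j}.$$
For affine test scheme $Y=\Spec A$, a morphism $Y\widehat{\times}D\to X$ is the same as a $\C$-algebra homomorphism $R\to A[[t]]$, i.e.\ a tuple $a_i(t)=\sum_{j\geq 0}a_i^{(j)}t^j\in A[[t]]$ satisfying $f_k(a_1(t),\dots,a_n(t))=0$. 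Matching coefficients of $t^j$ shows this is equivalent to $f_k^{(j)}(a_i^{(l)})=0$ for all $k,j$, which is precisely the data of an $A$-point of $JX$. This yields the desired bijection $\Hom_{Sch}(Y,JX)\cong\Hom_{Sch}(Y\widehat{\times}D,X)$ naturally in affine $Y$.

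For general $X$ I would glue, by covering $X$ by affine opens $X=\bigcup_i U_i$ and constructing each $JU_i$ as above. The key point is that for an affine $U=\Spec R$ and a principal open $U_g=\Spec R[g^{-1}]\subset U$, the scheme $JU_g$ is naturally identified with the principal open subscheme of $JU$ defined by $g(x_1^{(0)},\dots,x_n^{(0)})\neq 0$: an arc of $U$ lands in $U_g$ exactly when its image of the closed point $\{t=0\}$ does. Consequently the $JU_i$ glue along $J(U_i\cap U_j)$ to a scheme $JX$. The universal property in the general case follows by the same token: given $\varphi:Y\widehat{\times}D\to X$, the preimages of the $U_i$ under $\varphi|_{Y\times\{0\}}:Y\to X$ cover $Y$ by opens $V_i^{(Y)}$ on which $\varphi$ factors through $U_i$, yielding maps $V_i^{(Y)}\to JU_i$ that are compatible on overlaps and so glue to $Y\to JX$.

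The main subtlety is precisely this gluing step: one has to verify both that the functor $Y\mapsto\Hom_{Sch}(Y\widehat{\times}D,X)$ is a Zariski sheaf on $Y$, and that open immersions $V\hookrightarrow U\subset X$ induce open immersions $JV\hookrightarrow JU$ compatible with intersections. Both reduce to the fact that $Y\widehat{\times}D$ has the same underlying topological space as $Y\times\{0\}\cong Y$, so that ``factoring through an open $U\subset X$'' is a condition only on the closed-point image of the arc; in the affine description this manifests as localization at the degree-zero coordinate $g(x_i^{(0)})$, matching the formation of $JU_g$.
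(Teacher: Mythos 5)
Your proposal is correct and follows essentially the same route the paper itself takes (the paper only states the theorem with references and then sketches exactly this: the explicit Hasse--Schmidt coordinate description $\Spec\C[x_{i(n)}]/(T^nf_i)$ in the affine case, followed by gluing over an affine cover). The two points you single out as the real content of the gluing step --- that invertibility of $g(\sum_j a^{(j)}t^j)$ in $A[[t]]$ is detected by its constant term, so $JU_g$ is the principal open of $JU$ where $g(x^{(0)})\neq 0$, and that factoring an arc through an open of $X$ is a condition only on its closed-point image --- are precisely the lemmas used in the cited sources, so nothing is missing.
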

By definition,
the $\C$-points of  $JX$
are
\begin{align*}
\Hom_{Sch} (\Spec\C,JX)=\Hom_{Sch}(D,X),
\end{align*}
that is, the set of arcs of $X$.
The reason we need the completion $Y\widehat{\times} D$
in the definition is that 
$A\* \C[[t]]\subsetneqq A[[t]]=A\widehat{\otimes} \C[[t]]$ in general.

The scheme $JX$ is called the {\em arc space}, or the {\em infinite jet scheme}, of $X$.

It is easy to describe $JX$ when
$X$ is affine:

First, consider the case
$X= \C^N= \Spec \C[x_1, x_2, \cdots, x_N]$.
  The $\C$-points of $JX$ are 
  the arcs $\Hom_{Sch}(D, JX) $,
  that is,
the ring homomorphisms
\begin{equation*}
\gamma: \C[x_1, x_2, \cdots, x_N]\ra \C[[t]].
\end{equation*}
Such a map is determined by the image 
\begin{align}
 \gamma(x_i)=\sum_{n\geq 0}\gamma_{i,(-n-1)}
t^{n}
\label{eq:gamma(xi)}
\end{align}
of each $x_i$, and 
conversely,
the coefficients 
$\{\gamma_{i,(-n-1)}\}$ determines a $\C$-point of $JX$.
If we choose coordinates $x_{i,(-n-1)}$ of $JX$ as
$x_{i,(-n-1)}(\gamma)=\gamma_{i,(-n-1)}$,
we have
\begin{align*}
J\C^N = \Spec \C[x_{i,(n)}| i=1, 2, \cdots, N,  n=-1, -2, \cdots ].
\end{align*}

Next,  let
$X=\Spec R$, 
with $R={\C[x_1, x_2, \cdots, x_N]}/{\langle f_1, f_2, \cdots, f_r\rangle}$.
The arcs of $X$ are
	   \begin{equation*}
\Hom_{\on{ring}}
\big( \frac{\C[x_1, x_2, \cdots, x_n]}{<f_1, f_2, \cdots, f_r>}, \C[[t]]\big) \subset \Hom_{\on{ring}}( \C[x_1, x_2, \cdots, x_n], \C[[t]]).
	   \end{equation*}
An element $\gamma \in \Hom_{\on{ring}}( \C[x_1, x_2, \cdots, x_n], \C[[t]])$ is an element of this subset if and only if  $\gamma(f_i)=0$ for $i=1, 2, \cdots, r$.
By writing
\begin{align*}
f_i(x_1(t),x_2(t),\dots,x_N(t))=\sum_{m\geq 0}\frac{f_{i,m}}{m!}t^m
\end{align*}
with $f_{i,m}\in \C[x_{i,(-n-1)}]$,
where $x_i(t):=\sum_{m\geq 0}x_{i,(-m-1)}t^{m}$,
we get that
$$JX = \Spec \frac{\C[x_{i(n)}| i=1, 2, \cdots, N; n=-1, -2, \cdots]}{\langle f_{i,m}(x_{i(n)}), i=1, 2, \cdots, r; m\geq 0 \rangle}.$$
\begin{lem}
Define the derivation $T$
of 
$\C[x_{i(n)}| i=1, 2, \cdots, N; n=-1, -2, \cdots ]$ by 
\begin{align*}
T x_{i(n)}=-nx_{i(n-1)}.
\end{align*}
 Then $f_{i,m}= T^nf_i$ for $n\geq 0$. Here we identify $x_i$ with $x_{i(-1)}$.
\end{lem}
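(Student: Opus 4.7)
The plan is to show that $T$, extended trivially to act on the variable $t$, agrees with $\partial_t$ when applied to any polynomial in the $x_i(t)$'s, and then to read off the coefficients.

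First I would work in $A[[t]]$, where $A=\C[x_{i,(n)}\mid i=1,\dots,N,\ n\leq -1]$, and extend $T$ to $A[[t]]$ by declaring $T(t)=0$. The key computation is that for each $i$,
\begin{align*}
T\, x_i(t)\;=\;\sum_{m\geq 0}(Tx_{i,(-m-1)})\,t^m
\;=\;\sum_{m\geq 0}(m+1)\,x_{i,(-m-2)}\,t^m
\;=\;\partial_t\, x_i(t).
\end{align*}
Since both $T$ (on $A[[t]]$) and $\partial_t$ are derivations that kill the constants $\C$ and agree on each generator $x_i(t)$ of the subalgebra $\C[x_1(t),\dots,x_N(t)]\subset A[[t]]$, they coincide on that subalgebra. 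In particular, for any polynomial $g\in\C[y_1,\dots,y_N]$,
\begin{align*}
T\bigl(g(x_1(t),\dots,x_N(t))\bigr)\;=\;\partial_t\bigl(g(x_1(t),\dots,x_N(t))\bigr).
\end{align*}

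Next, I would iterate this identity $m$ times, applied to $g=f_i$, to obtain
\begin{align*}
T^m\bigl(f_i(x_1(t),\dots,x_N(t))\bigr)\;=\;\partial_t^m\bigl(f_i(x_1(t),\dots,x_N(t))\bigr).
\end{align*}
On the one hand, substituting the defining expansion $f_i(x_1(t),\dots,x_N(t))=\sum_{k\geq 0}\frac{f_{i,k}}{k!}t^k$ into the right-hand side and then setting $t=0$ picks out precisely $f_{i,m}$. On the other hand, setting $t=0$ on the left-hand side substitutes $x_i(t)\mapsto x_i(0)=x_{i,(-1)}$, which under the identification $x_i=x_{i,(-1)}$ gives $T^m f_i$. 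Hence $f_{i,m}=T^m f_i$, as claimed.

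The argument is essentially routine once the correct setup is chosen, and there is no real obstacle: the only thing to be careful about is checking that the extension of $T$ to $A[[t]]$ by $T(t)=0$ really is a derivation commuting with the formal substitution $y_i\mapsto x_i(t)$, which follows immediately from the Leibniz rule together with the matching identity $T x_i(t)=\partial_t x_i(t)$ on generators.
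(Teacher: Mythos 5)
Your proof is correct; the paper actually leaves this lemma unproved as a routine verification, and your argument is exactly the intended one: the derivation $T$ matches $\partial_t$ on the generating series $x_i(t)$, hence on all polynomials in them, and extracting the constant term (which commutes with $T$ since $T(t)=0$ makes $T$ act coefficient-wise) yields $f_{i,m}=T^m f_i$. The only point worth making fully explicit is that last commutation of $T$ with evaluation at $t=0$, which you have implicitly and correctly used.
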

With the above lemma, we 
conclude  that for the affine scheme $X=\Spec R$,
$R= {\C[x_1, x_2, \cdots, x_n]}/{\langle f_1, f_2, \cdots, f_r\rangle}$, its arc space
 $JX$ is
the affine scheme
$\Spec (JR)$,
where
 $$JR:= \frac{\C[x_{i(n)}| i=1, 2, \cdots, N; n=-1, -2, \cdots]}{\langle T^nf_i, i=1, 2, \cdots, r; n\geq 0 \rangle}$$
and $T$ is as defined in the lemma. 

The derivation $T$ acts on the above quotient ring $JR$.
Hence
for an affine scheme $X=\Spec R$,
the coordinate ring  $JR=\C[JX]$
of its arc space $JX$ is a differential algebra,  hence is a commutative vertex algebra.
\begin{Rem}\label{Rem:universal property of JR}
The differential algebra $JR$ has the universal property that
$$\Hom_{\rm{dif.alg.}}(JR,A)\cong \Hom_{\rm{ring}}(R,A)$$
for any differential algebra $A$,
where $ \Hom_{\rm{diff. alg.}}(JR, A)$ is the set of homomorphisms $JR\ra A$ of differential algebras.
\end{Rem}

For a general scheme $Y$ of finite type with an affine open covering $\{U_i\}_{i\in I}$, its arc space $JY$ is obtained by glueing
 $JU_i$ (see \cite{EinMus,Ish07}). In particular, the structure sheaf $\mc{O}_{JY}$ is a sheaf of commutative vertex algebras.
 
 There is a natural 
 projection $\pi_{\infty}: JX\ra X$  that corresponds to the embedding $R\hookrightarrow JR$, $x_i\ra x_{i,(-1)}$,
 in the case $X$ is affine.
 In terms of arcs, $\pi_{\infty}(\alpha)=\alpha(0)$ for $\alpha\in \Hom_{Sch}(D,X)$,
 where $0$ is the unique closed point of the disc $D$.

 The map from a scheme to its arc space is functorial. i.e., a scheme homomorphism $f : X \rightarrow Y$ induces a scheme homomorphism $Jf : JX \rightarrow JY$ that makes the following diagram commutative:
 \begin{align*}
			      \begin{CD}
JX @>Jf>   > J Y\\
@VV \pi_{\infty}V @VV \pi_{\infty}  V\\
X@> f >>Y. \end{CD}
			    \end{align*}
 In terms of arcs,
$Jf(\alpha)=f\circ \alpha$
for $\alpha\in \Hom_{Sch}(D,X)$.

We also have
\begin{align}
J(X \times Y)\cong  JX\times JY.
\label{eq:product-of-jets}
\end{align}
Indeed,
for any scheme $Z$,
\begin{equation*}
\begin{split}
\Hom (Z, J(X\times Y) )
&= \Hom (Z\widehat\times D, X\times Y) \\
&\cong \Hom (Z\widehat\times D, X) \times \Hom(Y\widehat \times D, Y) \\
&= \Hom(Z, JX) \times \Hom(Y, JY) \\
&\cong \Hom(Z, JX\times JY).
\end{split}
\end{equation*}

\begin{Lem}\label{prp:homeo}
The natural morphism $X_{\on{red}}\ra X$ induces 
an isomorphism $J X_{\on{red}}\ra JX$ of topological spaces,
where $X_{\on{red}}$ denotes the reduced scheme of $X$.
\end{Lem}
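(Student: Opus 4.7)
The plan is to reduce to the affine case and then compare the defining ideals of $JX$ and $JX_{\on{red}}$ up to taking radicals.

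Because arc spaces are built by gluing along affine opens and the construction $X\mapsto JX$ is functorial, it suffices to treat affine $X=\Spec R$ with $R=\C[x_1,\ldots,x_N]/I$ for $I=\langle f_1,\ldots,f_r\rangle$. Then $X_{\on{red}}=\Spec R'$ with $R'=\C[x_1,\ldots,x_N]/\sqrt{I}$, and by the explicit description of $J(\cdot)$ recalled above,
$JR=\C[x_{i,(n)}]/J$ and $JR'=\C[x_{i,(n)}]/J'$ where $J=\langle T^m f_j:m\geq 0\rangle$ and $J'=\langle T^m g:m\geq 0,\ g\in\sqrt{I}\rangle$. The induced surjection $JR\twoheadrightarrow JR'$ exhibits $JX_{\on{red}}\to JX$ as a closed immersion, and it is a homeomorphism of topological spaces precisely when the kernel $J'/J$ is contained in the nilradical of $JR$, i.e.\ when $J'\subset\sqrt{J}$.

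The crux is the claim that $T^m g\in\sqrt{J}$ for every $g\in\sqrt{I}$ and every $m\geq 0$, which I would prove by induction on $m$. Fix $N$ with $g^N\in I\subset J$; the base case $m=0$ is immediate. For the inductive step, note that $J$ is a differential ideal (its generators are closed under $T$), so $T^{Nm}(g^N)\in J$. Expanding by Leibniz,
\begin{align*}
T^{Nm}(g^N)=\sum_{a_1+\cdots+a_N=Nm}\binom{Nm}{a_1,\ldots,a_N}(T^{a_1}g)\cdots(T^{a_N}g),
\end{align*}
the multi-index $(m,\ldots,m)$ contributes $c(T^m g)^N$ with $c=(Nm)!/(m!)^N\neq 0$, while every other multi-index must contain some $a_j<m$ (since $a_i\geq m$ for all $i$ together with $\sum a_i=Nm$ forces all $a_i=m$). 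By the inductive hypothesis $T^{a_j}g\in\sqrt{J}$, so each such product lies in the ideal $\sqrt{J}$. Rearranging gives $c(T^m g)^N\in J+\sqrt{J}=\sqrt{J}$, and hence $T^m g\in\sqrt{J}$, completing the induction.

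The step I expect to require the most care is precisely this combinatorial argument: it relies on the $T$-stability of $J$ and on the observation that $(m,\ldots,m)$ is the unique multi-index of length $N$ summing to $Nm$ with no entry below $m$. Once the claim is established, $J'\subset\sqrt{J}$, hence $\sqrt{J}=\sqrt{J'}$, and the affine case is settled. Gluing these homeomorphisms over an affine open cover of $X$ yields the topological isomorphism $JX_{\on{red}}\isomap JX$ in general.
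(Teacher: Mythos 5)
Your proof is correct, but it takes a genuinely different route from the paper's. The paper argues at the level of points: an arc of $X$ is a ring homomorphism $R\to\C[[t]]$, and since $\C[[t]]$ is an integral domain (in particular reduced), every such homomorphism factors through $R/\sqrt{0}$, so $X$ and $X_{\on{red}}$ have the same arcs. That argument is a one-liner, but as written it only matches the $\C$-valued points, and since $JX$ is not of finite type one must in principle say a word more to get a homeomorphism of the underlying topological spaces. Your argument supplies exactly that missing word: you show the kernel of $JR\twoheadrightarrow JR_{\on{red}}$ lies in the nilradical of $JR$ by proving $T^m g\in\sqrt{J}$ for $g\in\sqrt{I}$, which identifies $\Spec$'s as topological spaces on the nose. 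Your combinatorial induction is sound (the multinomial expansion, the observation that $(m,\dots,m)$ is the unique multi-index with all entries $\geq m$ summing to $Nm$, and the nonvanishing of $(Nm)!/(m!)^N$ in characteristic zero all check out); note that it is essentially a re-proof of the paper's later Lemma \ref{lem:radical}, which states that the radical of a differential ideal over $\Q$ is again a differential ideal — citing that lemma would let you replace the induction by: $g\in\sqrt{J}$ since $g^N\in I\subset J$, hence $T^mg\in\sqrt{J}$ for all $m$. The only cosmetic blemish is the double use of $N$ for both the number of variables and the exponent with $g^N\in I$.
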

\begin{proof}
We may assume that $X=\Spec R$.
An arc $\alpha$  of $X$ corresponds to a ring homomorphism
$\alpha^*:R\ra \C[[t]]$. Since $\C[[t]]$ is an integral domain
it decomposes as $\alpha^*: R\ra R/\sqrt{0}\ra \C[[t]]$.
Thus, $\alpha $ is an arc of $X_{\on{red}}$.
\end{proof}

If $X$ is  a point, 
then $JX$ is also a point,
since
$\Hom(D,X)=\Hom(\C,\C[[t]])$ consists of only one element.
Thus, Lemma \ref{prp:homeo} implies the following.
\begin{cor}\label{Co:jets-0-dim}
If $X$ is zero-dimensional then $JX$ is also zero-dimensional.
\end{cor}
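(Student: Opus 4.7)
The plan is to reduce everything, via Lemma \ref{prp:homeo}, to the case of a reduced zero-dimensional scheme, which is a finite disjoint union of reduced points, and then combine the remark that $J(\Spec\C)$ is a point with the fact that arc spaces respect disjoint unions.

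More concretely, first I would invoke Lemma \ref{prp:homeo}: since the natural map $X_{\on{red}}\to X$ induces a homeomorphism $JX_{\on{red}}\to JX$, it suffices to show $JX_{\on{red}}$ is zero-dimensional. Next, because $X$ is of finite type over $\C$ and zero-dimensional, $X_{\on{red}}$ is a finite set of closed points, each of which is $\Spec\C$; so $X_{\on{red}}\cong \bigsqcup_{i=1}^{n}\Spec\C$.

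Then I would observe that arc spaces commute with finite disjoint unions: indeed, since $D=\Spec\C[[t]]$ is connected, any arc $D\to X_1\sqcup X_2$ factors through exactly one component, which gives $J(X_1\sqcup X_2)\cong JX_1\sqcup JX_2$ at the level of points and, by the universal property of Remark \ref{Rem:universal property of JR} applied locally, as schemes. Combining this with the explicit fact (noted right before Lemma \ref{prp:homeo}) that $J(\Spec\C)$ is a point, we obtain $JX_{\on{red}}\cong\bigsqcup_{i=1}^{n}\Spec\C$, which is a finite discrete space and hence zero-dimensional.

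There is no real obstacle here; the only thing that one has to be slightly careful about is that Lemma \ref{prp:homeo} only gives a homeomorphism and not a scheme isomorphism, but since zero-dimensionality is a topological property this suffices. The content of the corollary is really just the trivial observation that $J(\Spec\C)=\Spec\C$ together with the reduction step provided by the preceding lemma.
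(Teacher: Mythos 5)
Your proof is correct and follows essentially the same route as the paper, which deduces the corollary from the observation that $J(\Spec\C)$ is a point together with the homeomorphism $JX_{\on{red}}\to JX$ of Lemma \ref{prp:homeo}. You merely spell out the implicit steps (that $X_{\on{red}}$ is a finite disjoint union of copies of $\Spec\C$ and that $J$ commutes with finite disjoint unions), which the paper leaves to the reader.
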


\begin{Th}[\cite{Kolchin:1973kq}]
$JX$ is irreducible if $X$ is irreducible.
\end{Th}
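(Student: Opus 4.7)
The plan is to rephrase the statement as a question about differential ideals and then invoke the Ritt--Kolchin theorem from differential algebra.

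First, by Lemma~\ref{prp:homeo} we may replace $X$ by $X_{\on{red}}$ and assume $X$ is reduced, hence integral. If $\{U_i\}$ is an affine open cover of $X$ (each $U_i$ irreducible), then $\{JU_i\}$ is an open cover of $JX$; since $U_i\cap U_j$ is nonempty and dense in each $U_i$, the open $J(U_i\cap U_j)$ is nonempty in each $JU_i$, so once every $JU_i$ is known to be irreducible their closures in $JX$ must all coincide and $JX=\bigcup JU_i$ is irreducible. Thus it suffices to treat $X=\Spec R$ with $R=\C[x_1,\dots,x_N]/P$ for a prime ideal $P$.

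In this affine case, the explicit description preceding the theorem gives
\begin{align*}
\C[JX]=\C[x_{i,(-n-1)}:1\le i\le N,\ n\ge 0]\,/\,[P],
\end{align*}
where $[P]$ denotes the \emph{differential ideal} generated by $P$, i.e.\ the ideal generated by $\{T^k f:f\in P,\ k\ge 0\}$. Using Lemma~\ref{prp:homeo} once more, irreducibility of $JX$ is equivalent to primality of the radical $\sqrt{[P]}$ inside the polynomial ring $\C[x_{i,(-n-1)}]$.

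The crucial input is the Ritt--Kolchin theorem of differential algebra: in any commutative $\mb{Q}$-algebra $A$ equipped with a derivation, the radical of the differential ideal generated by a (non-differential) prime ideal of $A$ is itself a prime differential ideal. Applied with $A=\C[x_{i,(-n-1)}]$ and with $P$ viewed inside $A$ via $x_i\mapsto x_{i,(-1)}$, this yields at once that $\sqrt{[P]}$ is prime, so $JX$ is irreducible.

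The whole substance of the argument is concentrated in the Ritt--Kolchin theorem itself, whose proof uses the machinery of differential characteristic sets and pseudo-reduction of differential polynomials, and relies essentially on characteristic zero. A more geometric alternative would be to set $U=X_{\on{sm}}$ and observe that each truncation $J_n U$ is a Zariski-locally trivial $\C^{n\dim X}$-bundle over $U$, so $JU$ is smooth and irreducible; one would then try to show $\overline{JU}=JX$ by lifting arcs through a resolution of singularities $\pi:\tilde X\to X$. The obstacle on that route is that an arc of $X$ contained in $X_{\on{sing}}$ can fail to admit an honest lift to $\tilde X$ and only lifts after a ramified base change $t\mapsto t^{1/m}$ of the disc; controlling the scheme-theoretic image of $J\pi$ requires additional care, and it is precisely this ramification issue that the algebraic Ritt--Kolchin route bypasses.
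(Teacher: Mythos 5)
The paper gives no proof of this theorem at all --- it is quoted directly from Kolchin's book --- so your proposal is really supplying the standard bridge from the geometric statement to the differential-algebraic one. The reductions you perform are all correct: passing to $X_{\on{red}}$ by Lemma \ref{prp:homeo}, the gluing argument (each $JU_i=\pi_{\infty}^{-1}(U_i)$ is open in $JX$, and $J(U_i\cap U_j)$ is nonempty because it contains the constant arcs), and the identification of $\C[JX]$ with $\C[x_{i,(-n-1)}]/[P]$, so that irreducibility of $JX$ amounts to primality of $\sqrt{[P]}$. In substance you therefore end up citing the same result the paper cites.

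The one thing you must repair is the statement of the ``crucial input.'' As you phrase it --- in an arbitrary commutative $\Q$-algebra with a derivation, the radical of the differential ideal generated by \emph{any} prime ideal is prime --- the claim is false, and this is exactly the phenomenon of essential singular components governed by Ritt's low power theorem. Concretely, take $A=\Q[y,y',y'',\dots]$ with the usual derivation and $P=(y'^2-4y)$; this is prime, since the generator is linear in $y$ over a UFD. But $T(y'^2-4y)=2y'(y''-2)\in[P]$, while neither $y'$ nor $y''-2$ lies in $\sqrt{[P]}$: the solution $y=x^2$ kills the first option (it would force $y\in\sqrt{[P]}$ via $y'^2-4y$) and the solution $y=0$ kills the second. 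So $\sqrt{[P]}$ is not prime. Kolchin's irreducibility theorem is the narrower statement in which $P$ is the extension of a prime ideal of the order-zero polynomial subring $\C[x_{1,(-1)},\dots,x_{N,(-1)}]$ of the differential polynomial ring --- precisely the situation arising from an affine variety, and precisely the instance your argument uses. With the hypothesis stated accurately the proof is fine; as written, the theorem you isolate as carrying ``the whole substance of the argument'' is not a theorem.
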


\begin{Lem}\label{Lem:dominant-jet}
Let $Y$ be irreducible, and
let $f:X\ra Y$ be a morphism
that restricts to a bijection between some open subsets $U\subset X$ and $V\subset Y$.
Then $Jf: JX\ra JY$ is  dominant.
\end{Lem}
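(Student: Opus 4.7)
The plan is to reduce the problem to the case where $f$ restricts to an honest isomorphism on a dense open subscheme and then invoke the functoriality of the arc space. By Lemma \ref{prp:homeo} I may replace $X$ and $Y$ with their reductions, since this does not change the underlying topological spaces of $JX$ and $JY$ and hence does not affect dominance. After this reduction $Y$ is an integral variety, so the nonempty open subscheme $V$ is dense.

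The crucial step is to extract open subschemes $W \subset X$ and $V' \subset Y$ such that $f|_W \colon W \isomap V'$ is an isomorphism of schemes. Because $V$ is irreducible, the unique preimage in $U$ of the generic point $\eta_V$ must be the generic point of a single irreducible component $U_1$ of $U$; throwing away the finitely many other components of $U$ yields a nonempty open $\tilde U \subset X$, contained in $U_1$, on which $f$ remains injective and dominant onto $V$. Generic smoothness in characteristic zero then produces a dense open $V^\circ \subset V$ such that the restriction $\tilde U \cap f^{-1}(V^\circ) \to V^\circ$ is smooth; its fibers are singletons or empty, so where nonempty it has relative dimension zero and is therefore \'etale. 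An \'etale morphism that is injective on geometric points is radicial, and a radicial \'etale morphism is an open immersion. Replacing $V^\circ$ by its image $V'$ in $V$ (which is open and contains $\eta_V$, hence nonempty), I obtain the desired $f|_W \colon W \isomap V'$ with $W := \tilde U \cap f^{-1}(V')$.

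Applying the functor $J$ produces an isomorphism $J(f|_W) \colon JW \isomap JV'$, which by functoriality is identified with the restriction of $Jf$ to the open subscheme $JW \subset JX$, landing in the open subscheme $JV' \subset JY$. Therefore $JV'$ is contained in the image of $Jf$. Since $Y$ is irreducible, so is $JY$ by Kolchin's theorem recalled above, and $JV'$ is nonempty because any closed point of $V'$ gives a constant arc. A nonempty open of an irreducible scheme is dense, so $Jf$ has dense image and is dominant. The only delicate point is the generic-smoothness step combined with the fact that a radicial \'etale morphism is an open immersion; the rest is pure functoriality of $J$ and the irreducibility of $JY$.
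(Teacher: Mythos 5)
Your argument is correct and follows essentially the same route as the paper: restrict $f$ to an isomorphism of open subschemes, apply the functor $J$ to get an isomorphism onto an open subscheme of $JY$, and conclude by density of a nonempty open in the irreducible $JY$ (Kolchin). The only difference is that you carefully upgrade the hypothesis ``bijection'' to ``isomorphism on a smaller open set'' via generic smoothness in characteristic zero, a point the paper's one-line proof silently elides by treating $f|_U:U\to V$ as already an isomorphism.
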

\begin{proof}
$Jf$ restricts to the isomorphism $JU\isomap JV$,
and the open subset $JV$ is dense in $JY$ since $JY$ is irreducible.
\end{proof}

\subsection{Arc space of Poisson varieties and Poisson vertex algebras}
Let $V$ be a commutative vertex algebra,
or equivalently, a differential algebra.
 $V$ is called a {\em Poisson vertex algebras}  if
it is equipped with a
bilinear map
\begin{align*}
 V\times V\ra V[\lam], \quad (a,b)\mapsto \{a_{\lam}b\}=\sum_{n\geq 0}\frac{\lam^n}{n!} a_{(n)}b,\quad
 a_{(n)}\in \End V,
\end{align*}
also called the {\em $\lam$-bracket},
satisfying the following axioms:
\begin{align}
 & \{(Ta) _{\lam}b\}=-\lam\{a_{\lam}b\},\quad \{a_{\lam}(T b)\}=(\lam+T)\{a_{\lam}b\},
 \label{eq:sesquilinearity-P} \\
 &\{b_{\lam}a\}=-\{a_{-\lam-T}b\},\label{eq:skew-P}\\
 &\{a_{\lam}\{b_{\mu}c\}\}-\{b_{\mu}\{a_{\lam}c\}\}=\{\{a_{\lam}b\}_{\lam+\mu}c\}, \label{eq:Jacobi-P}\\
 & \{a_{\lam}(bc)\}=\{a_{\lam}b\}c+\{a_{\lam}c\}b,\quad \{(ab)_{\lam}c\}=\{a_{\lam+T}c\}_{\rightarrow }b
 +\{b_{\lam+T}c\}_{\rightarrow}a, \label{eq:lipniz-P}
\end{align}
where
the arrow means that
$\lam+T $ should be moved to the right,
that is,
$\{a_{\lam+T}c\}_{\rightarrow }b=\sum_{n\geq 0}(a_{(n)}c)\frac{(\lam+T)^n}{n!}b$.

The first equation in \eqref{eq:lipniz-P}
says that $a_{(n)}$, $n\geq 0$, is a derivation of the ring $V$.
(Do not confuse $a_{(n)}\in \on{Der}(V)$, $n\geq 0$, with the multiplication $a_{(n)}$ as a vertex algebra,
which should be zero for a commutative vertex algebra.)

Note that \eqref{eq:sesquilinearity-P}, \eqref{eq:skew-P},  \eqref{eq:Jacobi-P}
are the same as  \eqref{eq:sesquilinearity}, \eqref{eq:skew},  \eqref{eq:Jacobi},
and 
\eqref{eq:lipniz-P} is the same with
\eqref{eq:non-com-wick1} and \eqref{eq:non-com-wick2} without the third terms.
In particular, by \eqref{eq:Jacobi-P},
we have
\begin{align}
 [a_{(m)}, b_{(n)}]=\sum_{i\geq 0}\begin{pmatrix}
				   m\\i
 \end{pmatrix}(a_{(i)}b)_{(m+n-i)},\quad m,n\in \Z_+.
 \label{eq:commutator-formula-Poisson}
\end{align}

\begin{thm}[{\cite[Proposition 2.3.1]{Ara12}}]
Let $X$ be an affine Poisson scheme, that is,  $X=\Spec R$ for some Poisson
 algebra $R$. Then there is a unique Poisson vertex algebra structure on
 $JR=\C[JX]$ such that 
\begin{align*}
\{a_\lambda b\}= \{a, b\} \quad\text{for }a, b \in R\subset JR,
\end{align*}
where $\{a, b\}$ is the Poisson bracket in $R$. 
\end{thm}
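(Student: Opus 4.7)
The plan is first to establish uniqueness, then to construct the $\lambda$-bracket in the polynomial-ring case, and finally to descend to an arbitrary quotient.

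\textbf{Uniqueness.} By Remark \ref{Rem:universal property of JR}, $JR$ is generated as a commutative algebra by the $T$-derivatives of elements of $R$. The sesquilinearity axiom \eqref{eq:sesquilinearity-P} forces
\begin{equation*}
\{(T^m a)_\lambda (T^n b)\} = (-\lambda)^m (\lambda+T)^n \{a_\lambda b\}
\end{equation*}
for $a, b \in R$, and the Leibniz rules \eqref{eq:lipniz-P} then express the bracket on arbitrary products of such generators in terms of these values. Hence the condition $\{a_\lambda b\} = \{a, b\}$ on $R$ determines the bracket on all of $JR$ uniquely.

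\textbf{Existence in the polynomial case.} Suppose first $R = \C[x_1, \ldots, x_N]$ with $\{x_i, x_j\} = p_{ij} \in R$. I set
\begin{equation*}
\{(T^m x_i)_\lambda (T^n x_j)\} := (-\lambda)^m (\lambda+T)^n p_{ij}
\end{equation*}
and extend to $JR = \C[x_{i,(-n-1)}]$ by imposing the Leibniz rules \eqref{eq:lipniz-P}. Sesquilinearity and Leibniz hold by construction, and skew-symmetry \eqref{eq:skew-P} and Jacobi \eqref{eq:Jacobi-P} reduce, via sesquilinearity and Leibniz, to verifications on the generators $x_i$, where they are immediate consequences of the corresponding identities in the Poisson algebra $R$.

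\textbf{Descent in the general case.} For arbitrary $R$, choose a presentation $R = S/I$ with $S = \C[x_1, \ldots, x_N]$ and $I$ a Poisson ideal, and pick any lifts $\tilde p_{ij} \in S$ of $\{\bar x_i, \bar x_j\}$. Applying the polynomial-case construction with the $\tilde p_{ij}$ produces a bilinear operation on $JS$ that satisfies sesquilinearity and Leibniz, but skew-symmetry and Jacobi only modulo the ideal $\mc{J} := \langle T^n f : f \in I, n \geq 0 \rangle$, which is precisely the defining ideal for $JR = JS/\mc{J}$. The key step is to show that $\{JS{}_\lambda \mc{J}\} \subset \mc{J}[\lambda]$; by sesquilinearity and Leibniz this reduces to $\{x_i{}_\lambda f\} = \{x_i, f\} \in I$ for $f \in I$, which is exactly the Poisson-ideal hypothesis. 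The operation then descends to a genuine Poisson vertex algebra structure on $JR$, and independence of the choices of presentation and lifts follows from uniqueness. The main obstacle throughout is this descent step: confirming that all failures of the axioms on $JS$ are controlled by $\mc{J}$ and therefore vanish on the quotient.
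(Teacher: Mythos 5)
Your proposal is correct and follows the paper's own route: the uniqueness argument (sesquilinearity \eqref{eq:sesquilinearity-P} together with the fact that $JR$ is generated by $R$ as a differential algebra, with Leibniz handling products) is exactly the paper's, while the existence/well-definedness check is precisely what the paper explicitly leaves to the reader, and you fill it in with the standard construction — define the bracket on generators in the polynomial case, extend by Leibniz and sesquilinearity, verify skew-symmetry and Jacobi on generators, then descend along the differential ideal generated by a Poisson ideal. The only places where you assert rather than prove (that skew-symmetry and Jacobi for the extended bracket reduce to the generators) are standard computations carried out in the cited reference, so this is at least as complete as the paper's own treatment.
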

\begin{proof}
 The uniqueness is clear by \eqref{eq:sesquilinearity} since $JR$ is generated by $R$ as a differential algebra.
 We leave it to the reader to check the well-definedness.
\end{proof}

 \begin{rmk}
More generally, let $X$  be a Poisson scheme which is not necessarily
  affine.
Then the structure sheaf $\mathcal{O}_{JX}$ carries a unique vertex
  Poisson algebra
structure such that $\{f_{\lam}g\}=\{f,g\}$ for $f,g\in \mathcal{O}_X\subset
  \mathcal{O}_{JX}$, see \cite[Lemma 2.1.3.1]{AKM}.
 \end{rmk}

\begin{exm}
Let $G$ be an affine algebraic group, $\fing=\on{Lie}G$. 
The arc space $JG$ is naturally a proalgebraic group.
Regarding $JG$ as the $\C[[t]]$-points of $G$,
we have $JG=G[[t]]$.
Similarly, $J\fing=\fing[[t]]=\on{Lie}(JG)$.

 The affine space $\fing^*$ is a Poisson variety
 by the  Kirillov-Kostant Poisson structure, see \S \ref{subs:trasversal-slice}.
 If 
 $\{x_i\}$is  a basis of $\fing$,
 then
 $$ \C[\mf{g}^*]=\C[x_1, x_2, \cdots, x_n].$$
Thus
\begin{equation}
\begin{split}
J\mf{g}^*&= \Spec \C[x_{i(-n)}| i=1, 2, \cdots, l; n\geq 1].
\end{split}
\end{equation}
So we may identify $\C[J\fing^*]$ with the symmetric algebra $S(\mf{g}[t^{-1}]t^{-1})$.
 
 Let $x=x_{(-1)}\vac=(xt^{-1})\vac$, where we denote by $\vac$ the 
unite element in $S(\mf{g}[t^{-1}]t^{-1})$.  
Then  \eqref{eq:commutator-formula-Poisson} gives that
 \begin{equation}
 [x_{(m)},y_{(n)}]=[x,
  y]_{(m+n)},\quad
x,y\in \fing,\
 m,n\in\Z_{\geq 0}.
 \end{equation}
 So the Lie algebra $J\mf{g}=\fing[[t]]$ acts on $\C[J\mf{g}^*]$.
This action coincides with that obtained by differentiating the action of $JG =G[[t]]$ on $J\mf{g}^*$
induced by the coadjoint action of $G$.
In other words, the vertex Poisson algebra structure of 
$\C[J\fing^*]$ comes from the $JG$-action on $J\fing^*$.
\end{exm}

\subsection{Canonical filtration of vertex algebras.}
Haisheng Li
\cite{Li05} has shown that {\em every} vertex algebra is canonically filtered:
For a vertex algebra $V$, 
let $F^pV$ be the subspace of
$V$ spanned by the elements
\begin{align*}
a^1_{(-n_1-1)}a^2_{(-n_2-1)}\cdots a^r_{(-n_r-1)}|0\ket
\end{align*}
with 
$a^1, a^2, \cdots, a^r\in V$, $n_i \geq 0$, $n_1+n_2+\cdots +n_r
 \geq p$.
Then
\begin{align*}
 V=F^0V\supset F^1 V\supset \dots .
\end{align*}
It is clear that 
 $TF^pV \subset F^{p+1}V$.
 
 Set $(F^pV)_{(n)}F^qV:= \haru_{\C}\{a_{(n)}b| a\in F^p V ,b \in
	    F^qV\} $.

\begin{Lem}
We have
\begin{align*}
F^pV=\sum_{j\geq 0}(F^0V)_{(-j-1)} F^{p-j}V.
\end{align*}
\end{Lem}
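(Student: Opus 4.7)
My plan is to prove both inclusions directly from the spanning-set definition of $F^pV$, without invoking the Borcherds identities or reordering any operators. The key observation is that the defining generators of $F^pV$ already have the shape $a_{(-n-1)}(\text{something})$ with $n\geq 0$, so peeling off (or prepending) one such factor matches the structure of the right-hand side exactly.

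For the inclusion $\sum_{j\geq 0}(F^0V)_{(-j-1)}F^{p-j}V \subseteq F^pV$: since $F^0V=V$, I fix an arbitrary $a\in V$, an index $j\geq 0$, and a spanning monomial $b=c^1_{(-m_1-1)}\cdots c^s_{(-m_s-1)}\vac$ of $F^{p-j}V$, where $m_i\geq 0$ and $m_1+\cdots+m_s\geq p-j$. Then
\[
a_{(-j-1)}b \;=\; a_{(-j-1)}c^1_{(-m_1-1)}\cdots c^s_{(-m_s-1)}\vac
\]
is again one of the defining spanning monomials of the canonical filtration, with total index $j+m_1+\cdots+m_s\geq p$, hence lies in $F^pV$. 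Taking $\C$-linear combinations yields the inclusion.

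Conversely, I take a generating monomial $v=a^1_{(-n_1-1)}\cdots a^r_{(-n_r-1)}\vac$ of $F^pV$ with $n_i\geq 0$ and $n_1+\cdots+n_r\geq p$. Assuming $r\geq 1$, set $j:=n_1$ and $b:=a^2_{(-n_2-1)}\cdots a^r_{(-n_r-1)}\vac$; then $\sum_{i\geq 2}n_i\geq p-j$, so $b\in F^{p-j}V$, while $a^1\in V=F^0V$. Hence $v=(a^1)_{(-j-1)}b\in (F^0V)_{(-j-1)}F^{p-j}V$, which sits in the claimed sum. The only degenerate case is $r=0$, i.e.\ $v=\vac$, which occurs as a spanning element only when $p\leq 0$; there $\vac=\vac_{(-1)}\vac\in (F^0V)_{(-1)}F^{p}V$ (using $F^0V=V\supseteq F^pV$). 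I do not anticipate any real obstacle: the lemma is essentially a tautological reformulation of how $F^pV$ was defined, and in particular no commutator argument or use of \eqref{eq:com-formula} is required.
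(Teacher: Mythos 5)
Your proof is correct, and the paper itself states this lemma without proof precisely because it is the tautological unwinding of the spanning-set definition that you carry out: peeling off the leftmost operator $a^1_{(-n_1-1)}$ for one inclusion and prepending $a_{(-j-1)}$ for the other, with the degenerate case $v=\vac$ handled properly. No commutator or reordering argument is needed, as you say.
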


\begin{prp}\label{Prp:filtration}
 \begin{itemize}
\item[(1)]  $(F^pV)_{(n)}(F^qV)\subset F^{p+q-n-1}V$.
Moreover, if $n\geq 0$, we have $(F^pV)_{(n)}(F^qV) \subset F^{p+q-n}V$.
\item[(2)] 
The filtration $F^{\bullet}V$
is separated, that is,
$\bigcap_{p \geq 0} F^pV = \{0\}$, if $V$ is a positive energy representation over itself.
\end{itemize} 
\end{prp}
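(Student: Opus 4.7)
The plan for part (1) is a double induction using the previous lemma $F^pV=\sum_{j\geq 0}(F^0V)_{(-j-1)}F^{p-j}V$ together with the Borcherds identities \eqref{eq:com-formula}. First I would treat the case $p=0$, namely that $a_{(n)}(F^qV)\subset F^{q-n-1}V$ for $a\in V$, with the refinement $\subset F^{q-n}V$ when $n\geq 0$, by induction on $q$. An element of $F^qV$ may be written as a sum of terms $c_{(-k-1)}d$ with $c\in V$ and $d\in F^{q-k}V$, and the commutator formula
\[
a_{(n)}(c_{(-k-1)}d)=c_{(-k-1)}(a_{(n)}d)+\sum_{i\geq 0}\binom{n}{i}(a_{(i)}c)_{(n-k-1-i)}d
\]
reduces the claim to the inductive hypothesis on $q$: the first summand lies in $(F^0V)_{(-k-1)}F^rV$ for the appropriate $r$, which the lemma places in the desired filtration piece, and each term in the finite sum involves a mode of $a_{(i)}c\in V$ (with $i\geq 0$) acting on $d\in F^{q-k}V$, handled similarly by induction on $q$.

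For the inductive step on $p$, the lemma lets me reduce to showing the claim for elements of the form $a_{(-j-1)}b\in F^pV$ with $j\geq 0$, $a\in F^0V$, $b\in F^{p-j}V$. The second Borcherds identity expresses $(a_{(-j-1)}b)_{(n)}c$ for $c\in F^qV$ as a finite sum of terms of the shape $a_{(-j-1-i)}b_{(n+i)}c$ and $\pm\,b_{(n-j-1-i)}a_{(i)}c$. Each such term is then placed in $F^{p+q-n-1}V$ (or $F^{p+q-n}V$ when $n\geq 0$) by iterating the already-established $p=0$ case and invoking the lemma once more to absorb the outer mode.

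For part (2), I would use the positive-energy grading: if $V$ is a positive-energy module over itself, then $V=\bigoplus_{\Delta\geq 0}V_\Delta$ with $\vac\in V_0$, and a homogeneous element $a\in V_{\Delta_a}$ satisfies $a_{(n)}V_k\subset V_{k+\Delta_a-n-1}$. Decomposing each spanning vector into weight components and applying the modes successively to $\vac$, the spanning monomial $a^1_{(-n_1-1)}\cdots a^r_{(-n_r-1)}\vac$ has weight $\sum_i(\Delta_{a^i}+n_i)\geq\sum_i n_i\geq p$, using $\Delta_{a^i}\geq 0$. Hence $F^pV\subset\bigoplus_{\Delta\geq p}V_\Delta$, and since the weight decomposition is a direct sum, $\bigcap_{p\geq 0}F^pV=\{0\}$.

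The main obstacle will be the combinatorial bookkeeping in part (1). The Borcherds summations produce many terms whose modes may be either positive or negative, and keeping the double induction on $(p,q)$ consistent across the two refined bounds (one for $n\geq 0$, the other for general $n$) is the delicate point; every summand must be placed by hand in the claimed filtration piece, and the $p=0$ case must be strong enough in both variants to feed back into the inductive step on $p$. Part (2) is, in comparison, a straightforward weight-counting argument once the positive-energy grading is in hand.
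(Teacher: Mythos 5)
Your argument is correct and is exactly the verification the paper dismisses as ``straightforward to check''; in particular your part (2) is precisely the weight count underlying Lemma \ref{lem:G=F}, which the paper cites for that point. The one adjustment needed in part (1) is to run the induction on the number of modes in a spanning monomial of $F^qV$ rather than on $q$ itself, since when the outermost mode is $c_{(-1)}$ (i.e.\ $k=0$) the remaining factor $d$ still lies in $F^qV$ and the inductive hypothesis ``on $q$'' gives nothing.
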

\begin{proof}
It is straightforward to check.
((2) also follows from Lemma \ref{lem:G=F} below.)
\end{proof}
In this note  we assume that 
the filtration $F^{\bullet}V$ is separated.

Set
\begin{align*}
 \gr V=\bigoplus_{p\geq 0}F^pV/F^{p+1}V.
\end{align*}
We denote by $\sigma_p: F^pV \mapsto F^pV/F^{p+1}V$ for $p \geq 0$, the
canonical quotient map.

Proposition \ref{Prp:filtration} gives the following.
 \begin{prp}[\cite{Li05}]\label{p:2}
 The space $\gr V$ is a Poisson vertex algebra by
 $$\sigma_p(a)\cdot\sigma_q(b):=\sigma_{p+q}(a_{(-1)}b), \quad
 \sigma_p(a)_{(n)}\sigma_q(b):=\sigma_{p+q-n}(a_{(n)}b)$$ for 
$a \in F^pV$, $b\in F^qV$, $n\geq 0$.
\end{prp}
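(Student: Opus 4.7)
The strategy is to verify each Poisson vertex algebra axiom on $\gr V$ by applying $\sigma$ to the corresponding Borcherds identity in $V$, with Proposition \ref{Prp:filtration} providing the filtration accounting. The critical point is that the sharper bound in Proposition \ref{Prp:filtration}(1) for $n\geq 0$ separates the $(n=-1)$ product, which gives the commutative multiplication, from the $(n\geq 0)$ operations, which give the Poisson $\lambda$-bracket.

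First I would verify well-definedness. For $a\in F^pV$ and $b\in F^qV$, Proposition \ref{Prp:filtration}(1) gives $a_{(-1)}b\in F^{p+q}V$, and if $a$ is replaced by an element of $F^{p+1}V$ the result lies in $F^{p+q+1}V$ and vanishes after $\sigma_{p+q}$; the analogous check for the bracket uses the sharper bound $a_{(n)}b\in F^{p+q-n}V$ for $n\geq 0$. I would then show that $(\gr V,\cdot,T)$ is a differential commutative unital associative algebra, hence a commutative vertex algebra. Unitality of $\sigma_0(|0\rangle)$ and the Leibniz rule for the induced $T$ are immediate. Commutativity and associativity follow from the quasi-commutativity relation $a_{(-1)}b-b_{(-1)}a=\sum_{j\geq 0}\tfrac{(-1)^j}{(j+1)!}T^{j+1}(a_{(j)}b)$ and its associativity counterpart: in each, the right-hand side involves $(j)$-products with $j\geq 0$ followed by powers of $T$, which together land one step deeper in the filtration.

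For the $\lambda$-bracket, sesquilinearity \eqref{eq:sesquilinearity-P} and skew-symmetry \eqref{eq:skew-P} are immediate images of \eqref{eq:sesquilinearity} and \eqref{eq:skew}. The Jacobi identity \eqref{eq:Jacobi-P} is equivalent to the commutator formula \eqref{eq:commutator-formula-Poisson}, which descends from \eqref{eq:com-formula} applied to $c\in F^rV$ with $m,n\geq 0$: every term $\binom{m}{i}(a_{(i)}b)_{(m+n-i)}c$ lies at the common filtration level $F^{p+q+r-m-n}V$ by the sharper bound, so the identity survives intact after $\sigma_{p+q+r-m-n}$. The Leibniz axiom \eqref{eq:lipniz-P} follows from the non-commutative Wick formulas \eqref{eq:non-com-wick1}--\eqref{eq:non-com-wick2}: extracting the coefficient of $\lambda^n/n!$ from the integral correction yields $\sum_{i=0}^{n-1}\binom{n}{i}(a_{(i)}b)_{(n-i-1)}c$, and since both indices $i$ and $n-i-1$ are nonnegative, two applications of the refined bound place this in $F^{p+q+r-n+1}V$, one step deeper than the principal terms at level $F^{p+q+r-n}V$, so it vanishes in $\gr V$.

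The main obstacle is disciplined filtration bookkeeping rather than any substantive difficulty; the one genuinely conceptual point is that the refinement in Proposition \ref{Prp:filtration}(1) for $n\geq 0$ is precisely what forces every quantum correction term appearing in the Borcherds identities to land one step deeper in the filtration than the principal terms, and this is what separates the commutative product from the Poisson bracket in the associated graded.
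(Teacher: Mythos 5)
Your proposal is correct and follows the same route as the paper, which simply asserts that the statement is a consequence of the filtration bounds in Proposition \ref{Prp:filtration}; you have filled in the axiom-by-axiom verification via the Borcherds identities, with the key observation (that the refined bound for $n\geq 0$ pushes every quantum correction one step deeper in the filtration) correctly identified. The only cosmetic slip is in the quasi-associativity step, where the correction terms are of the form $a_{(-j-2)}(b_{(j)}c)$ rather than powers of $T$ applied to $(j)$-products, but the filtration estimate and the conclusion are unaffected.
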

Set
\begin{align*}
R_V:= F^0V/F^1V \subset \gr V.
\end{align*}
Note that $F^1 V=\haru_{\C}\{a_{(-2)}b\mid a,b\in V\}$.
 \begin{prp}[\cite{Zhu96,Li05}]
  The restriction of the Poisson structure  gives $R_V$ a Poisson algebra structure,
that is,
$R_V$
 is a Poisson algebra by 
\begin{align*}
\bar{a}\cdot \bar{b}:=\overline{a_{(-1)}b},
\quad\{\bar{a}, \bar{b}\}=\overline{a_{(0)}b},
\end{align*}
where $\bar a=\sigma_0(a)$.
 \end{prp}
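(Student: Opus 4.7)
The plan is to deduce this from Proposition \ref{p:2}, exhibiting $R_V = F^0V/F^1V$ as the bottom graded piece of the Poisson vertex algebra $\gr V$ on which the derivation $T$ acts trivially, so that the $\lambda$-bracket axioms collapse to the ordinary Poisson algebra axioms.

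First I would check that the product and bracket are well-defined on $R_V$ and land in $R_V$. By Proposition \ref{Prp:filtration}(1), $(F^0V)_{(-1)}(F^0V) \subset F^0V$, and raising either argument to $F^1V$ gives a result in $F^1V$, so $\overline{a_{(-1)}b}$ depends only on $\bar a, \bar b$. Using the refined estimate for $n\geq 0$, $(F^0V)_{(0)}(F^0V) \subset F^0V$, and similarly the coset of $a_{(0)}b$ depends only on $\bar a, \bar b$, so $\{\bar a, \bar b\} := \overline{a_{(0)}b}$ is a well-defined binary operation $R_V \times R_V \to R_V$. In the language of Proposition \ref{p:2}, these are precisely the restrictions of the commutative product and the $(0)$-product of the Poisson vertex algebra $\gr V$ to its degree-zero summand.

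Next I would verify commutativity, associativity, and the unit axiom for the product. Commutativity follows from the vertex algebra skew-symmetry
\begin{align*}
a_{(-1)}b - b_{(-1)}a = \sum_{j\geq 1}\frac{(-1)^{j}}{j!}\,T^{j}(b_{(j-1)}a),
\end{align*}
since $T(V) \subset F^1V$ and hence each term on the right lies in $F^1V$. Associativity is inherited from the commutative associative product on $\gr V$ given by Proposition \ref{p:2}, and the vacuum $|0\rangle \in F^0V$ projects to a unit because $|0\rangle_{(-1)}a = a$.

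Finally I would verify the Lie and Leibniz axioms for $\{\cdot,\cdot\}$. The Leibniz rule is immediate from the first part of \eqref{eq:lipniz-P} applied to $\sigma_0(a),\sigma_0(b),\sigma_0(c)$, which reads $\{\bar a,\bar b\bar c\} = \{\bar a,\bar b\}\bar c + \{\bar a,\bar c\}\bar b$ in $R_V$; equivalently, it follows from the Borcherds identity $[a_{(0)},b_{(-1)}] = (a_{(0)}b)_{(-1)}$. For antisymmetry and Jacobi, the key observation is that $T$ acts as zero on $R_V$ since $T(F^0V) \subset F^1V$: reading \eqref{eq:skew-P} at $\lambda=0$ modulo $TF^0V\subset F^1V$ gives $\{\bar b,\bar a\} = -\{\bar a,\bar b\}$, and the $\lambda^0\mu^0$-coefficient of \eqref{eq:Jacobi-P} produces the Jacobi identity. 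The main (and only) obstacle is the filtration bookkeeping in these collapses; once one has internalized that $T$ is zero on $R_V$ and that the sharpened estimate $(F^pV)_{(n)}(F^qV) \subset F^{p+q-n}V$ for $n\geq 0$ keeps the $(0)$-product in $R_V$, every axiom of a Poisson algebra reduces to the corresponding Poisson vertex algebra axiom evaluated at $\lambda = \mu = 0$.
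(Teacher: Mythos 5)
Your proof is correct and takes essentially the same route as the paper, whose entire argument is that the statement is ``straightforward from Proposition \ref{p:2}''; you have simply made explicit the routine checks (well-definedness via Proposition \ref{Prp:filtration}(1), and the collapse of skew-symmetry and the Jacobi identity modulo $F^1V$ using $TV\subset F^1V$). Nothing further is needed.
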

\begin{proof}
 It is straightforward from Proposition \ref{p:2}.
\end{proof}
In the literature $F^1 V$ is often denoted by $C_2(V)$
and
the Poisson algebra
$R_V$ is called \emph{Zhu's $C_2$-algebra}.

A vertex algebra $V$ is called {\em finitely strongly  generated} if $R_V$ is 
finitely generated as a ring.
If the images of vectors $a_1,\dots,a_N\in V$ generate $R_V$,
we say that $V$ is strongly generated by
$a_1,\dots,a_N$.

Below we always assume that a vertex algebra
$V$ is finitely strongly generated.

Note that if $\phi:V\ra W$ is a homomorphism of vertex algebras,
$\phi$ respects the canonical filtration, that is,
$\phi(F^pV)\subset F^pW$.
Hence it induces the homomorphism $\gr V\ra \gr W$
of Poisson vertex algebra homomorphism
which we denote by $\gr \phi$.

\subsection{Associated variety and singular support of vertex algebras
}
\begin{defn}
Define the {\em associated scheme} $\tilde{X}_V$ and the {\em associated variety} $X_V$ of a vertex algebra $V$ as 
\begin{align*}
\tilde{X}_V:= \Spec R_V,\quad X_V:=\on{Specm} R_V=(\tilde X_V)_{\on{red}}.
\end{align*}
\end{defn}

It was shown in  {\cite[Lemma 4.2]{Li05}}
that 
 $\gr V$ is generated by the subring $R_V$ as a differential algebra.
Thus, we have a surjection 
$JR_V\ra \gr V$ of differential algebras by Remark \ref{Rem:universal property of JR}.
This is in fact a homomorphism of Poisson vertex algebras:
\begin{thm}[{\cite[Lemma 4.2]{Li05}}, {\cite[Proposition 2.5.1]{Ara12}}]
\label{thm:surj-poisson}
The identity map $ R_V \rightarrow R_V$ induces a surjective Poisson
 vertex algebra homomorphism
\begin{align*}
JR_V=\C[J\tilde X_V]  \twoheadrightarrow \gr V.
\end{align*}
\end{thm}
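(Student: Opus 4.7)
The plan is to construct the map via the universal property of the arc space and then verify the two claims (surjectivity and compatibility with the $\lambda$-brackets).

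First I would verify that the embedding of the degree-zero piece $R_V \hookrightarrow \gr V$ is compatible with the Poisson brackets, viewing $R_V$ as a Poisson algebra and $\gr V$ as a Poisson vertex algebra. For $a, b \in F^0V$, the refined part of Proposition \ref{Prp:filtration}(1) gives $(F^0V)_{(n)}(F^0V) \subset F^{-n}V$ for $n \geq 0$, so the degree $p+q-n = -n$ image vanishes in $\gr V$ whenever $n \geq 1$. Consequently, as elements of $\gr V$,
\begin{align*}
\{\bar{a}_{\lambda}\bar{b}\}_{\gr V} \;=\; \sigma_0(a_{(0)}b) \;=\; \{\bar{a},\bar{b}\}_{R_V},
\end{align*}
that is, no higher $\lambda$-terms appear. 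Moreover $TF^pV\subset F^{p+1}V$, so the translation operator descends to a derivation on $\gr V$, making $\gr V$ a differential algebra.

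Next, I would apply the universal property of the arc space (Remark \ref{Rem:universal property of JR}) to the ring homomorphism $R_V \hookrightarrow \gr V$: this yields a unique differential algebra homomorphism $\varphi: JR_V \to \gr V$ extending the inclusion. Surjectivity of $\varphi$ is then immediate from the cited result of Li, which says that $\gr V$ is generated by the subring $R_V$ as a differential algebra.

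It remains to check that $\varphi$ intertwines the $\lambda$-brackets. The key observation is that the Poisson vertex algebra axioms \eqref{eq:sesquilinearity-P}--\eqref{eq:lipniz-P} force the $\lambda$-bracket on any Poisson vertex algebra generated (as a differential algebra) by a subspace $A$ to be uniquely determined by its restriction to $A$: sesquilinearity propagates the bracket through $T$, and the Leibniz rule \eqref{eq:lipniz-P} propagates it through products. Since $JR_V$ is generated by $R_V$ and $\gr V$ is generated by its image, the previous paragraph together with the fact that $\varphi$ is a differential ring map give $\varphi(\{u_\lambda v\}) = \{\varphi(u)_\lambda \varphi(v)\}$ by a straightforward induction on the number of applications of $T$ and of $(-1)$-products used to build $u, v$ from $R_V$.

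The only real content, and the step on which everything hinges, is the vanishing of higher $\lambda$-terms in the bracket $\{\bar a_\lambda \bar b\}_{\gr V}$ for $a, b \in F^0V$; after that, the rest is a formal unwinding of the universal property of arc spaces and of the defining axioms of a Poisson vertex algebra.
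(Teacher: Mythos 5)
Your proposal is correct and follows essentially the same route as the paper (and as the cited references \cite{Li05}, \cite{Ara12}): Li's result that $\gr V$ is differentially generated by $R_V$ plus the universal property of $JR_V$ give the surjection of differential algebras, and compatibility of $\lambda$-brackets is verified on $R_V$ and then propagated through $T$ and products via sesquilinearity and the Leibniz rules. One cosmetic remark: the vanishing of $\sigma_0(a)_{(n)}\sigma_0(b)$ for $n\geq 1$ is best read off from the grading in Proposition \ref{p:2} --- the element $\sigma_{-n}(a_{(n)}b)$ lies in the degree $-n$ component of $\gr V$, which is zero --- rather than from the containment $(F^0V)_{(n)}(F^0V)\subset F^{-n}V=V$, which by itself is vacuous.
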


Let $a^1,\dots, a^n$
be a set of strong generators  of $V$.
Since
 $\gr V\cong V$ as $\C$-vector spaces by the assumption that
$F^{\bullet}V$
is separated,
it follows from Theorem \ref{thm:surj-poisson}
that
 $V$
is spanned by elements
\begin{align*}
 a^{i_1}_{(-n_1)}\dots  a^{i_r}_{(-n_r)}|0\ket \quad \text{with }r\geq
 0,\ n_i\geq 1.
\end{align*}

\begin{defn}
Define the {\em singular support} of a vertex algebra $V$ as 
\begin{align*}
SS(V):= \Spec (\gr V) \subset J\tilde X_V.
\end{align*}
\end{defn}
\begin{thm}
We have
$\dim SS(V)=0$ if and only if $\dim X_V=0$. 
\end{thm}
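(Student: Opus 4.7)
The plan is to prove both implications directly from the results established earlier in the section, using the relationship between $SS(V)$, $\tilde X_V$, and $J\tilde X_V$.

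For the ``only if'' direction, I would exploit the natural inclusion $R_V = F^0V/F^1V \hookrightarrow \gr V$ (which sits as the degree-zero part of the graded algebra). This inclusion of commutative algebras induces a surjective morphism of affine schemes $SS(V) = \Spec(\gr V) \twoheadrightarrow \Spec R_V = \tilde X_V$; geometrically this is just the restriction of the natural projection $\pi_\infty: J\tilde X_V \to \tilde X_V$ to the closed subscheme $SS(V) \hookrightarrow J\tilde X_V$ coming from the surjection $JR_V \twoheadrightarrow \gr V$ of Theorem \ref{thm:surj-poisson}. Since surjective morphisms do not increase dimension, $\dim \tilde X_V \leq \dim SS(V) = 0$, and hence $\dim X_V = \dim (\tilde X_V)_{\on{red}} = 0$.

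For the ``if'' direction, the point is that $SS(V)$ is a closed subscheme of $J\tilde X_V$ by Theorem \ref{thm:surj-poisson}, so it suffices to show $\dim J\tilde X_V = 0$ whenever $\dim X_V = 0$. Here I would invoke Lemma \ref{prp:homeo}, which tells us that the natural morphism $JX_V \to J\tilde X_V$ (induced by $X_V = (\tilde X_V)_{\on{red}} \to \tilde X_V$) is a homeomorphism of topological spaces. Thus $\dim J\tilde X_V = \dim J X_V$. Since $\dim X_V = 0$, Corollary \ref{Co:jets-0-dim} gives $\dim JX_V = 0$, and therefore $\dim SS(V) = 0$.

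There is really no main obstacle here: the statement is essentially a packaging of the two results Lemma \ref{prp:homeo} and Corollary \ref{Co:jets-0-dim} about arc spaces, plus the elementary observation that $SS(V)$ projects onto $\tilde X_V$. The only thing to be careful about is that dimensions are taken for the underlying reduced schemes, but since $\dim$ is invariant under passage to $(-)_{\on{red}}$ this causes no trouble.
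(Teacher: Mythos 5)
Your argument is correct and is essentially the paper's own proof: the ``only if'' direction uses that $\pi_{\infty}$ maps $SS(V)$ onto $\tilde X_V$, and the ``if'' direction is exactly Corollary \ref{Co:jets-0-dim} (whose proof already incorporates Lemma \ref{prp:homeo}, which you simply unpack). No issues.
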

\begin{proof}
 The ``only if'' part is 
obvious sine $\pi_{\infty}(SS(V))=\tilde{X}_V$,
where 
$\pi_{\infty}:J\tilde X_V\ra \tilde X_V$ is the projection.
The  ``if'' part
follows from Corollary \ref{Co:jets-0-dim}. 
\end{proof}

\begin{defn}
We call $V$ {\em lisse} (or {\em $C_2$-cofinite}) if $\dim X_V=0$.
\end{defn}
\begin{rmk}
Suppose that
 $V$ is $\Z_+$-graded, so that $V=\bigoplus_{i\geq 0}V_i$, 
and   that  $V_0=\C\vac$.
Then  $\gr V$ and $R_V$ are  equipped with the induced grading:
\begin{align*}
&\gr V= \bigoplus_{i\geq 0}(\gr V)_i,\quad (\gr V)_0=\C,\\
&R_V=\bigoplus_{i\geq 0}(R_V)_i,\quad (R_V)_0=\C.
\end{align*}
So
the following conditions are equivalent:
\begin{enumerate}
 \item $V$ is lisse.
       \item $X_V=\{0\}$.
\item The image of any vector $a\in V_i$ for $i\geq 1$ in $\gr V$ is
      nilpotent.
\item The image of any vector $a\in V_i$ for $i\geq 1$ in $R_V$ is nilpotent.
\end{enumerate}
Thus,
lisse vertex algebras  can be regarded as a generalization of
 finite-dimensional algebras.
 \end{rmk}

\begin{Rem}\label{Rem:one-step-further}
 Suppose that
 the Poisson structure of $R_V$ is trivial.
 Then the 
 Poisson vertex algebra structure of $JR_V$ is trivial, and so is that of $\gr V$
 by Theorem \ref{thm:surj-poisson}.
This happens if and only if
\begin{align*}
(F^pV)_{(n)}(F^qV)\subset F^{p+q-n+1}V\quad\text{ for all $n\geq 0$.}
\end{align*}
 If this is the case,
 one can give $\gr V$ yet another Poisson vertex algebra structure
 by setting  \begin{align}
  \sigma_p(a)_{(n)}\sigma_q(b):=\sigma_{p+q-n+1}(a_{(n)}b)\quad \text{for }n\geq 0.
  \label{eq:further}
 \end{align}
 (We can repeat this procedure if this Poisson vertex algebra structure is again trivial).
\end{Rem}

\subsection{Comparison with weight-depending filtration}
Let
$V$ be a vertex algebra that is  $\Z$-graded 
by some Hamiltonian $H$:
$$V= \bigoplus_{\Delta\in \Z}V_\Delta  \quad \mbox{where} \quad V_\Delta:=\{v\in V| Hv=\Delta v \}.$$
Then there is \cite{Li04} another natural filtration of $V$ 
defined as follows.

For a homogeneous vector
$a\in V_{\Delta}$,
$\Delta$ is called the {\em conformal weight} of $a$ and is denote by $\Delta_a$.
Let 
$G_pV$  be the subspace 
of $V$
spanned by the vectors
\begin{align*}
a^1_{(-n_1-1)}a^2_{(-n_2-1)}\cdots a^r_{(-n_r-1)}|0\ket
\end{align*}
with $\Delta_{a^1}+\dots +\Delta_{a^r}\leq p$.
Then $G_{\bullet}V$ defines an increasing filtration of $V$:
\begin{align*}
 0=G_{-1}V\subset G_0 V\subset \dots G_1 V\subset \dots,
\quad V=\bigcup_p G_p V.
 \end{align*}
Moreover  we have
\begin{align*}
& T G_p V\subset G_p V,\\
&(G_p)_{(n)}G_q V\subset G_{p+q}V\quad \text{for }n\in \Z,\\
&(G_p)_{(n)}G_q V\subset G_{p+q-1}V\quad \text{for }n\in \Z_+,
\end{align*}
It follows that $\gr_G V=\bigoplus G_pV/G_{p-1}V$ 
is naturally a Poisson vertex algebra.

It is not too difficult to see the following.
\begin{lem}[{\cite[Proposition 2.6.1]{Ara12}}]\label{lem:G=F}
 We have 
\begin{align*}
 F^p V_{\Delta}=G_{\Delta-p}V_{\Delta},
\end{align*}
where $F^p V_{\Delta}=V_{\Delta}\cap F^p V$,
$G_p V_{\Delta}=V_{\Delta}\cap G_p V$.
Therefore
\begin{align*}
 \gr V\cong \gr_G V
\end{align*}
as Poisson vertex algebras.
\end{lem}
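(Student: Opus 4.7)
The plan is to prove the identity $F^p V_\Delta = G_{\Delta - p} V_\Delta$ by a direct comparison of the monomials spanning each side, and then deduce the isomorphism of associated graded Poisson vertex algebras by reindexing the double grading.

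First, I would observe that both filtrations respect the weight grading. If $a \in V_{\Delta_a}$, then $a_{(m)}$ has conformal weight $\Delta_a - m - 1$ with respect to $H$, so $a_{(-n-1)}$ raises weight by $\Delta_a + n$. Consequently the spanning monomial
\[
a^1_{(-n_1-1)} a^2_{(-n_2-1)} \cdots a^r_{(-n_r-1)} \vac
\]
with homogeneous $a^i \in V_{\Delta_{a^i}}$ is itself homogeneous of weight $\Delta = \sum_i (\Delta_{a^i} + n_i)$. Expanding arbitrary $a^i$ into homogeneous components, the defining spanning sets of $F^p V$ and of $G_p V$ may be chosen to consist of weight-homogeneous vectors, so each of $F^p V$ and $G_p V$ decomposes as the direct sum of its weight pieces.

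The key arithmetic observation is the identity $\sum_i n_i + \sum_i \Delta_{a^i} = \Delta$ for a homogeneous spanning monomial of weight $\Delta$. Thus the condition $\sum_i n_i \geq p$ cutting out $F^p V$ is equivalent to the condition $\sum_i \Delta_{a^i} \leq \Delta - p$ cutting out $G_{\Delta - p} V$. The two sets of homogeneous spanning monomials for $F^p V_\Delta$ and for $G_{\Delta - p} V_\Delta$ therefore agree, giving both inclusions at once.

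For the second assertion, the reindexing $q = \Delta - p$ provides a canonical linear bijection
\[
(\gr V)_\Delta \;=\; \bigoplus_{p \geq 0} \frac{F^p V_\Delta}{F^{p+1} V_\Delta} \;\isomap\; \bigoplus_{q} \frac{G_q V_\Delta}{G_{q-1} V_\Delta} \;=\; (\gr_G V)_\Delta.
\]
Since the Poisson vertex algebra structures on both sides are induced by the same underlying operations $a_{(-1)} b$ (for the product) and $a_{(n)} b$ for $n \geq 0$ (for the $\lambda$-bracket), and since the weight identity above forces the corresponding shifts in $F$-level and $G$-level to agree, this bijection intertwines all the structure maps. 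The main, rather mild, obstacle is the bookkeeping for this last compatibility, but it is dictated directly by the arithmetic relation ``$F$-level $+$ $G$-level $=$ weight''.
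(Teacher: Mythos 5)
Your argument is correct and is precisely the weight-counting proof that the paper has in mind (the paper itself omits the proof, deferring to \cite[Proposition 2.6.1]{Ara12}): both filtrations are spanned by weight-homogeneous monomials, and for a monomial of weight $\Delta$ the constraint $\sum_i n_i\geq p$ is equivalent to $\sum_i \Delta_{a^i}\leq \Delta-p$ since $\sum_i n_i+\sum_i\Delta_{a^i}=\Delta$. The final compatibility check is also right: for $a\in F^pV_{\Delta_a}$, $b\in F^qV_{\Delta_b}$ and $n\geq 0$, the vector $a_{(n)}b$ has weight $\Delta_a+\Delta_b-n-1$ and $F$-level $p+q-n$, hence $G$-level $(\Delta_a-p)+(\Delta_b-q)-1$, matching the shift by $-1$ in the increasing filtration, and similarly for $a_{(-1)}b$.
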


\subsection{Example: universal affine vertex algebras}
\label{subs:affineVOA}
Let $\mf{a}$ be a Lie algebra
with a symmetric invariant bilinear form 
$\kappa$.
Let
\begin{align*}
\widehat{\mf{a}}=\mf{a}[t,t^{-1}]\+ \C  \mathbf{1}
\end{align*} 
be the Kac-Moody affinization of $\mf{a}$.
It is a Lie algebra with commutation relations
\begin{align*}
 [xt^m,yt^n]=[x,y]t^{m+n}+m\delta_{m+n,0}\kappa(x,y)\mathbf{1},
\quad x,y\in \mf{a},\ m,n\in \Z,\quad
\quad [\mathbf{1},\widehat{\mf{a}}]=0.
\end{align*}
Let
\begin{align*}
V^\kappa(\mf{a}) =
 U(\hat{\mf{a}})\otimes_{U(\mf{a}[t]\oplus \C \mathbf{1})}\C,
\end{align*}
where $\C$ is one-dimensional representation of 
$\mf{a}[t]\oplus \C \mathbf{1}$ on which
$\mf{a}[t]$ acts trivially and $\mathbf{1}$ acts as the identity.
The space $V^\kappa(\mf{a})$ is naturally graded:
$V^\kappa(\mf{a}) =\bigoplus_{\Delta\in \Z_{\geq 0}}V^\kappa(\mf{a}) _{\Delta}$,
where the grading is defined by setting 
$\deg xt^n=-n$,
$\deg |0\ket=0$.
Here $|0\ket=1\*1$.
We have $V^\kappa(\mf{a})_0=\C|0\ket$.
We identify $\mf{a}$ with $V^\kappa(\mf{a})_1$ via the linear isomorphism
defined by $x\mapsto xt^{-1}|0\ket$.

There is a unique vertex algebra structure on 
$V^\kappa(\mf{a})$ such that
$|0\ket$ is the vacuum vector and 
$$Y(x,z)=x(z):=\sum_{n\in \Z}(xt^{n})z^{-n-1},\quad x\in \mf{a}.$$
(So $x_{(n)}=xt^n$ for $x\in \mf{a}=V^\kappa(\mf{a})_1$, $n\in \Z$).

The vertex algebra $V^\kappa(\mf{a})$ is called the {\em universal affine
 vertex algebra associated with} ($\mf{a}$, $\kappa$).

We have
$F^1 V^\kappa(\mf{a})=\mf{a}[t^{-1}]t^{-2}V^\kappa(\mf{a})$,
and the Poisson algebra isomorphism
\begin{equation}
\begin{split}
\C [\mf{a}^*] &\isomap
R_{V^\kappa(\mf{a})}= V^k(\mf{a})/\mf{g}[t^{-1}]t^{-2}V^\kappa(\mf{a}) 
\\
x_1\dots x_r& \longmapsto \overline{(x_1t^{-1})\dots (x_r t^{-1})\vac}
 \quad (x_i\in \mf{a}).
\end{split}
\end{equation}
Thus
\begin{align*}
X_{V^\kappa(\mf{a})}=\mf{a}^*.
\end{align*}
 We have the isomorphism 
\begin{align}
 \C [J\mf{a}^*] \simeq \gr V^\kappa(\mf{a})
\label{eq:iso-gr-affineVA}
\end{align}
because the
 graded dimensions of  both sides coincide.
Therefore
\begin{align*}
SS(V^\kappa(\mf{a}))=J\mf{a}^*.
\end{align*}

The isomorphism \eqref{eq:iso-gr-affineVA}  follows also from the fact
that
\begin{align*}
 G_p V^\kappa(\mf{a})=U_p(\mf{a}[t^{-1}]t^{-1})
|0\ket,
\end{align*}
where $\{U_p(\mf{a}[t^{-1}]t^{-1})\}$ is the PBW filtration of $U(\mf{a}[t^{-1}]t^{-1})$.

\subsection{Example: simple affine vertex algebras}
\label{subs:simple-affine}
For a finite-dimensional simple Lie algebra $\mf{g}$ and $k\in \C$,
we denote by
$V^k(\mf{g})$ the universal affine vertex algebra 
$V^{k\kappa_0}(\mf{g})$,
where $\kappa_0$ is the normalized invariant inner product of $\mf{g}$,
that is,
 $$\kappa_0(\theta,\theta)=2,$$ where $\theta$ is the highest root of $\mf{g}$.
Denote by $V_k(\mf{g})$ the unique simple graded quotient of
$V^k(\mf{g})$.
As a
 $\hat{\mf{g}}$-module, 
$V_k(\mf{g})$
is isomorphic to the irreducible highest weight representation
$L(k\Lambda_0)$
of $\affg$ with highest weight $k\Lam_0$,
where $\Lam_0$ is the weight of the basic representation of $\affg$.

\begin{thm}
\label{t:6}
The vertex algebra $V_k(\mf{g})$ is lisse
if and only if $V_k(\mf{g})$ is integrable as
a  $\hat{\mf{g}}$-module, which is true if and only if $k \in \Z_+$.
\end{thm}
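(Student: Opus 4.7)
The theorem asserts the equivalence of three statements: $V_k(\mf{g})$ is lisse, $V_k(\mf{g})=L(k\Lam_0)$ is integrable as an $\affg$-module, and $k\in\Z_+$. The equivalence of the latter two is the classical Gabber--Kac theorem for affine Kac--Moody algebras, which I would take as given; the substantive content is linking these to lisseness of the associated vertex algebra, and this is what I will sketch.

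For the direction $k\in\Z_+ \Rightarrow$ lisse, my plan is to exploit the Gabber--Kac singular vector. When $k\in\Z_+$, the vector $v=(e_\theta t^{-1})^{k+1}\vac\in V^k(\mf{g})$ generates the maximal proper $\affg$-submodule, hence maps to $0$ in $V_k(\mf{g})$. Since every factor in $v$ has mode index $-1$, $v\notin F^1 V^k(\mf{g})=\mf{g}[t^{-1}]t^{-2}V^k(\mf{g})$, so under the identification $R_{V^k(\mf{g})}\cong\C[\mf{g}^*]$ of \S\ref{subs:affineVOA} its class is the monomial $e_\theta^{k+1}$. The induced surjection $R_{V^k(\mf{g})}\twoheadrightarrow R_{V_k(\mf{g})}$ thus forces $e_\theta^{k+1}=0$ in $R_{V_k(\mf{g})}$, so $e_\theta$ vanishes on the reduced scheme $X_{V_k(\mf{g})}\subset\mf{g}^*$. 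Because $R_{V_k(\mf{g})}$ is a Poisson quotient of $\C[\mf{g}^*]$, its defining ideal is stable under $\{x,\cdot\}$ for all $x\in\mf{g}$; integrating (using connectedness of $G$) shows $X_{V_k(\mf{g})}$ is $G$-invariant. Since $\mf{g}$ is simple, the $\C$-span of the $G$-orbit of $e_\theta\in\mf{g}^*$ is a nonzero $\mf{g}$-stable subspace and hence equals $\mf{g}^*$; therefore every linear form on $\mf{g}^*$ must vanish on $X_{V_k(\mf{g})}$, yielding $X_{V_k(\mf{g})}=\{0\}$ as required.

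For the converse lisse $\Rightarrow k\in\Z_+$, the same $G$-invariance argument combined with $\dim X_{V_k(\mf{g})}=0$ gives $X_{V_k(\mf{g})}=\{0\}$, so the augmentation ideal of $R_{V_k(\mf{g})}$ is nilpotent; in particular $e_\theta^N=0$ in $R_{V_k(\mf{g})}$ for some $N$, which translates into $(e_\theta t^{-1})^N\vac\in F^1 V_k(\mf{g})$. To extract integrability I would consider the $\mf{sl}_2$-triple $\{e_\theta t^{-1},\,K-h_\theta,\,f_\theta t\}\subset\affg$ (with central element $K$ acting on $V_k(\mf{g})$ by $k$): on $\vac$ the element $f_\theta t$ vanishes and $K-h_\theta$ acts by $k$, so $\vac$ is an $\mf{sl}_2$-highest weight vector of weight $k$, and the cyclic $\mf{sl}_2$-submodule it generates in $V_k(\mf{g})$ is finite-dimensional if and only if $k\in\Z_+$.

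The main obstacle is the final bridging step: promoting the relation $(e_\theta t^{-1})^N\vac\in F^1 V_k(\mf{g})$ to an honest vanishing $(e_\theta t^{-1})^M\vac=0$ in $V_k(\mf{g})$ for some $M$, at which point the $\mf{sl}_2$ argument above closes the equivalence. The cleanest route is to invoke Zhu's theorem (treated in \S\ref{sec:Zhu}): lisseness implies that the Zhu algebra $A(V_k(\mf{g}))$ is finite-dimensional, and since $A(V_k(\mf{g}))$ is a quotient of $U(\mf{g})$, finite-dimensionality forces the image of $e_\theta$ to be nilpotent on every simple module; via the correspondence between simple $\Z_+$-graded $V_k(\mf{g})$-modules and simple $A(V_k(\mf{g}))$-modules, highest weight theory for $\affg$ then confines $k$ to $\Z_+$.
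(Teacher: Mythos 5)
Your argument for the direction $k\in\Z_+\Rightarrow$ lisse is correct and is essentially the proof in the text: the Gabber--Kac singular vector $(e_\theta t^{-1})^{k+1}\vac$ has image $e_\theta^{k+1}$ in $R_{V^k(\fing)}=\C[\fing^*]$, so $e_\theta$ is nilpotent in $R_{V_k(\fing)}$, and then $\fing$-stability of the ideal (the paper phrases this via Lemma \ref{lem:radical}, you via $G$-invariance of the zero locus plus simplicity of $\fing$) forces $X_{V_k(\fing)}=\{0\}$. The two packagings are interchangeable.

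The converse is where there is a genuine gap. You correctly identify the obstacle --- promoting $(e_\theta t^{-1})^N\vac\in F^1V_k(\fing)$ (or $e_\theta^M=0$ in $\Zhu(V_k(\fing))$) to an actual vanishing $(e_\theta t^{-1})^M\vac=0$ in $V_k(\fing)$ --- but the proposed escape through Zhu's algebra does not close it; it merely relocates the same difficulty from $R_V$ to $\Zhu(V)$. Finite-dimensionality of $\Zhu(V_k(\fing))$ does imply that $e_\theta$ is nilpotent there (a finite-dimensional quotient of $U(\fing)$ is an integrable $\fing$-module under left multiplication), and hence that $e_\theta$ acts nilpotently on the \emph{top component} of every simple positive energy module. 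But Zhu's correspondence only sees top components: the top of $V_k(\fing)$ itself is the trivial module $\C\vac$, which carries no information about $k$, and nothing in this setup controls the action of $e_\theta t^{-1}$ on the higher graded pieces of $V_k(\fing)$. So the final sentence ``highest weight theory for $\affg$ then confines $k$ to $\Z_+$'' is unsubstantiated. The bridge you need is exactly the content of Dong--Mason \cite{DonMas06} (that in a $C_2$-cofinite vertex algebra such nilpotency in $R_V$ exponentiates to genuine local nilpotency of $(e_\theta)_{(-1)}$), which the paper cites rather than reproves; alternatively the paper's own route (Remark \ref{Rem:minimal}) avoids the issue entirely by using the minimal quantized Drinfeld--Sokolov reduction: $H_{f_\theta}(V_k(\fing))\cong\W_k(\fing,f_\theta)\neq 0$ whenever $k\notin\Z_+$, while by \eqref{eq:cirterion} lisseness gives $X_{V_k(\fing)}=\{0\}\not\supset\overline{G.f_\theta}$ and hence $H_{f_\theta}(V_k(\fing))=0$, forcing $k\in\Z_+$. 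To complete your proof you must either import the Dong--Mason exponentiation lemma explicitly or switch to an argument of the latter kind.
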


\begin{lem}\label{lem:radical}
 Let $(R, \partial)$ be a differential algebra over $\mathbb{Q}$, $I$ a differential   ideal of $R$, i.e.,
$I$ is an ideal of $R$ such that
 $\partial I \subset I$. Then $\partial \sqrt{I} \subset \sqrt{I}$.
\end{lem}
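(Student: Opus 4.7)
The plan is to show that for any $a \in \sqrt{I}$ (so $a^n \in I$ for some $n \geq 1$), we have $(\partial a)^{2n-1} \in I$, which proves $\partial a \in \sqrt{I}$. The strategy is to establish by downward-building induction on $k$ the following chain of assertions:
\begin{equation*}
 a^{n-k}(\partial a)^{2k-1} \in I \quad \text{for } k = 1, 2, \ldots, n.
\end{equation*}
The case $k = n$ yields exactly $(\partial a)^{2n-1} \in I$.

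For the base case $k=1$, I differentiate $a^n \in I$ (using that $I$ is a differential ideal) to obtain $n a^{n-1}\partial a \in I$, and then divide by the nonzero integer $n$, which is where the hypothesis $\mathbb{Q} \subset R$ is essential.

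For the inductive step, suppose $a^{n-k}(\partial a)^{2k-1} \in I$ with $k < n$. Applying $\partial$ gives
\begin{equation*}
 (n-k)\, a^{n-k-1}(\partial a)^{2k} + (2k-1)\, a^{n-k}(\partial a)^{2k-2}\,\partial^2 a \in I.
\end{equation*}
Multiplying this by $\partial a$ and using the inductive hypothesis $a^{n-k}(\partial a)^{2k-1}\in I$ (which absorbs the second term, since $I$ is an ideal), I obtain $(n-k)\, a^{n-k-1}(\partial a)^{2k+1} \in I$. Dividing by the nonzero rational $n-k$ completes the step. Then $\partial a \in \sqrt{I}$, and since $a$ was arbitrary, $\partial \sqrt{I} \subset \sqrt{I}$.

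There is no real obstacle here beyond bookkeeping: the argument is entirely mechanical once one writes down the right ansatz $a^{n-k}(\partial a)^{2k-1}$. The one conceptual point to flag is the crucial use of $\mathbb{Q} \subset R$ at every inductive step to invert the integers $n-k$; over a ring of positive characteristic the statement can fail.
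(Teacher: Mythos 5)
Your proof is correct. The base case, the inductive step (including the absorption of the $\partial^2 a$ term via the ideal property and the previous stage of the induction), and the use of $\mathbb{Q}\subset R$ to divide by $n$ and $n-k$ are all in order, and the endpoint $k=n$ gives $(\partial a)^{2n-1}\in I$, hence $\partial a\in\sqrt{I}$.

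The route is genuinely different from the paper's. The paper applies $\partial$ to $a^m\in I$ all $m$ times at once and observes that in the resulting Leibniz expansion every term except $m!(\partial a)^m$ retains an undifferentiated factor of $a$, hence lies in $(a)\subset\sqrt{I}$; dividing by $m!$ then gives $(\partial a)^m\in\sqrt{I}$. (The displayed identity in the paper suppresses the higher derivatives $\partial^2 a,\partial^3 a,\dots$ that actually appear in $\partial^m a^m$; only the congruence modulo $\sqrt{I}$ is meant, and only that is needed.) You instead differentiate once per step and track the explicit monomial $a^{n-k}(\partial a)^{2k-1}$, trading the single-shot computation for a mechanical induction. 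What your version buys is a sharper conclusion --- membership of $(\partial a)^{2n-1}$ in $I$ itself rather than of $(\partial a)^m$ in $\sqrt{I}$ --- and it sidesteps the slightly delicate bookkeeping of which terms of $\partial^m a^m$ are being discarded. What the paper's version buys is brevity: one differentiation, one congruence, done. Both arguments use commutativity and the $\mathbb{Q}$-algebra hypothesis in the same essential way, and as you note, the statement indeed fails in positive characteristic.
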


\begin{proof}
Let $a\in \sqrt{I}$, so that $a^m \in I$ for some $m\in \N= \{1,2,\ldots\}$. Since $I$ is  $\partial$-invariant, we have $\partial^m a^m \in I$. But 
$$\partial^m a^m= \sum_{0 \leq i \leq m}{m \choose i}a^{m-i} (\partial
 a)^i
\equiv m!(\partial a)^m\pmod{\sqrt{I}}.$$
Hence
$(\partial a)^m\in \sqrt{I}$,
and therefore,
$\partial a\in \sqrt{I}$.
\end{proof}
\begin{proof}[Proof of the ``if'' part of Theorem \ref{t:6}]
 Suppose that
 $V_k(\fing)$ is integrable.
 This condition
 is equivalent to that
 $k\in \Z_+$  and
the maximal submodule 
$N_k$ of $V^k(\fing)$ is generated by the singular vector
$(e_{\theta}t^{-1})^{k+1}\vac$ (\cite{Kac90}).
The exact sequence
$0\ra N_k\ra V^k(\fing)\ra V_k(\fing)\ra 0$
induces the exact sequence
\begin{align*}
0\ra  I_k\ra R_{V^k(\fing)}
\ra R_{V_k(\fing)}\ra 0,
\end{align*}
where 
$I_k$ is the image of $N_k$ in 
$R_{V^k(\fing)}=\C[\fing^*]$,
and so,
$R_{V_k(\fing)}=\C[\fing^*]/I_k$.
The image of the singular  vector in $I_k$ is given by
$e_{\theta}^{k+1}$.
Therefore,
 $e_{\theta}\in \sqrt{I}$.
 On the other hand,
 by  Lemma \ref{lem:radical},
 $\sqrt{I_k}$ is preserved by the adjoint action of $\fing$.
  Since  $\fing$ is simple,
 $\fing\subset \sqrt{I}$.
This proves  that $X_{V_k(\fing)}=\{0\}$ as required.
\end{proof}
The proof of ``only if'' part
follows from
\cite{DonMas06}.
We will give a different proof
 using $W$-algebras in Remark \ref{Rem:minimal}.
 
 In view of Theorem \ref{t:6},
 one may regard
the lisse condition as  a generalization of the integrability condition
to an arbitrary vertex algebra.
\section{Zhu's algebra}
\label{sec:Zhu}
In this section we will introduce and study the Zhu's algebra of a vertex algebra,
which plays an important role in the representation theory.

See \cite{KacPisa} in this volume for the definition of modules over vertex algebras.

\subsection{Zhu's $C_2$-algebra and Zhu's algebra of a vertex algebra.}
\label{subsection:Zhu}
Let
$V$ be a  $\Z$-graded 
vertex algebra. 
  \emph{Zhu's algebra} $\Zhu V$ \cite{FreZhu92,Zhu96} is defined as 
\begin{align*}
\Zhu (V):= V/V\circ
 V
\end{align*}
 where $V \circ V :=\haru \{a\circ b | a, b \in V\}$ and 
\begin{align*}
a\circ b:=
 \sum_{i\geq 0}{\Delta_a \choose i}a_{(i-2)}b
\end{align*}for homogeneous elements
 $a, b$ and extended linearly. It is an associative algebra with
 multiplication defined as 
\begin{align*}
a\ast b:= \sum_{i\geq 0}{\Delta_a \choose
 i}a_{(i-1)}b
\end{align*}
for homogeneous elements $a,b\in V$.

For a simple positive energy representation
$M=\bigoplus_{n\in
 \Z_+}M_{\lam+n}$,
 $M_{\lam}\ne 0$, of $V$,
let $M_{top}$ be the top degree component $M_{\lam}$ of $M$.
Also,
for 
a homogeneous  vector $a\in V$,
let $o(a)=a_{(\Delta_a-1)}$,
so that $o(a)$ preserves the homogeneous  components of any graded
 representation of $V$.

The importance of Zhu's algebra in vertex algebra theory is the
following fact that was established by Yonchang Zhu.
\begin{thm}[\cite{Zhu96}]
 For any positive energy representation $M$ of $V$,
$\overline{a}\mapsto o(a)$ defines a well-defined representation of
 $\Zhu (V)$ on $M_{top}$.
Moreover, the correspondence $M\mapsto M_{top}$ gives a bijection
 between the set of isomorphism classes of irreducible positive energy
representations of $V$ and that of simple $\Zhu (V)$-modules.
\end{thm}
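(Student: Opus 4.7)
The plan is to break the theorem into two parts: (a) verifying that $\overline{a} \mapsto o(a)$ descends to a well-defined $\operatorname{Zhu}(V)$-action on $M_{\text{top}}$, and (b) constructing an inverse to the map $M \mapsto M_{\text{top}}$ and showing compatibility with irreducibility. Throughout, I would exploit the basic fact that for homogeneous $a \in V_{\Delta_a}$, the mode $a_{(n)}$ shifts conformal weight by $\Delta_a - n - 1$. Hence $o(a) = a_{(\Delta_a - 1)}$ preserves each graded component of a $\Z_{\geq 0}$-graded module $M = \bigoplus_{n \geq 0} M_{\lambda + n}$, while any mode that raises conformal weight annihilates $M_{\text{top}}$.

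For part (a), I would first check that $o(a \circ b)$ vanishes on $M_{\text{top}}$. Using the Borcherds identity
\[
(a_{(m)} b)_{(k)} = \sum_{j \geq 0} (-1)^j \binom{m}{j}\bigl(a_{(m-j)} b_{(k+j)} - (-1)^m b_{(m+k-j)} a_{(j)}\bigr),
\]
I would expand $o(a_{(i-2)} b) = (a_{(i-2)}b)_{(\Delta_a + \Delta_b - i)}$ into a sum of products $a_{(p)} b_{(q)}$ and $b_{(q')} a_{(p')}$, and check that after summing against $\binom{\Delta_a}{i}$ and using elementary binomial identities the resulting expression consists entirely of modes that raise conformal weight and therefore act as zero on $M_{\text{top}}$. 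The analogous computation, comparing $o(a \ast b)$ with the composition $o(a) \, o(b)$ on $M_{\text{top}}$, again reduces via the Borcherds identity to a telescoping of binomial sums modulo modes that strictly raise degree, yielding $o(a \ast b) = o(a) o(b)$ on $M_{\text{top}}$. These two computations together show that $\bar{a} \mapsto o(a)|_{M_{\text{top}}}$ is a well-defined algebra homomorphism, which also implicitly verifies that $\ast$ is associative modulo $V \circ V$ (or, if one prefers, this can be checked directly by the same kind of Borcherds-identity manipulation applied to the universal case).

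For part (b), given a $\operatorname{Zhu}(V)$-module $U$, I would construct a positive-energy $V$-module $M(U)$ with $M(U)_{\text{top}} \cong U$ as follows. Regard $U$ as a module over a suitable subalgebra corresponding to the ``non-negative'' modes of $V$ (concretely, one uses the Frenkel--Zhu bimodule picture: filter $V$ by conformal weight and form the induced module from $U$ along the positive part). One then quotients by the submodule generated by all relations coming from the Borcherds identity so as to obtain a genuine $V$-module, and finally passes to the unique maximal positive-energy quotient $L(U)$ whose top piece is still $U$. The construction is functorial, sends simple $\operatorname{Zhu}(V)$-modules to simple positive-energy $V$-modules, and one verifies $L(M_{\text{top}}) \cong M$ for $M$ simple by noting that $M$ is generated by $M_{\text{top}}$ under $V$ and that any proper submodule would have to meet $M_{\text{top}}$ trivially, giving the inverse bijection.

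The principal technical obstacle is the first computation in (a): isolating, after the Borcherds expansion of $(a_{(i-2)} b)_{(\Delta_a + \Delta_b - i)}$ and summation over $i$ with weights $\binom{\Delta_a}{i}$, the cancellations that make every surviving term raise the conformal weight. This is a delicate but essentially combinatorial manipulation of binomial coefficients, and the identity $\sum_i \binom{\Delta_a}{i}\binom{i-2}{\cdot}(-1)^{\cdot}$-type sums must be handled with care; everything else in the argument is then formal. The construction of $L(U)$ in (b) is conceptually straightforward but requires a careful bookkeeping of the filtration and of the compatibility of the Borcherds identity with the $\operatorname{Zhu}(V)$-action, and is where one must be most cautious that no extra relations are inadvertently imposed that would collapse the top piece below $U$.
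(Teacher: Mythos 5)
The paper does not prove this theorem; it is stated with a citation to Zhu's original work, so there is no in-text argument to compare yours against. That said, your outline is a faithful reconstruction of the standard proof (Zhu's original argument, as also presented in Frenkel--Zhu and Dong--Li--Mason): part (a) is exactly the Borcherds-identity computation showing $o(a\circ b)$ kills $M_{\rm top}$ and $o(a\ast b)=o(a)o(b)$ there, and part (b) is the generalized Verma (induced) module construction $U\mapsto L(U)$ together with $L(M_{\rm top})\cong M$ for simple $M$. The degree bookkeeping you set up ($a_{(n)}$ shifts weight by $\Delta_a-n-1$, so $(a_{(i-2)}b)_{(\Delta_a+\Delta_b-i)}$ is the correct zero mode) is right.

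One step is missing from part (b) as written: for the correspondence $M\mapsto M_{\rm top}$ to land in \emph{simple} $\Zhu(V)$-modules, you must show that $M_{\rm top}$ is simple whenever $M$ is irreducible. Your remark that a proper $V$-submodule meets $M_{\rm top}$ trivially handles $L(M_{\rm top})\cong M$, but not this converse direction: if $U'\subsetneq M_{\rm top}$ is a nonzero $\Zhu(V)$-submodule, irreducibility forces $V\cdot U'=M$, and you need to know that the degree-zero component of $V\cdot U'$ is exactly $\Zhu(V)\cdot U'=U'$ (not something larger) to reach a contradiction. This is a separate mode-rearrangement lemma --- any composite of modes mapping $M_{\rm top}$ to itself acts as $o(a)$ for some $a\in V$ --- proved by the same Borcherds-identity manipulations as your part (a), and it is also the ingredient that makes the bijection injective on isomorphism classes. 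With that lemma added, your plan is complete.
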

A vertex algebra $V$ is called a {\em chiralization} of an algebra $A$
if $\Zhu (V)\cong A$.

Now we define an increasing filtration of Zhu's algebra. 
For this, we assume that $V$ is $\Z_+$-graded:
$V= \bigoplus_{\Delta\geq 0}V_\Delta$.
Then $V_{\leq p}=\bigoplus _{\Delta= 0}^p V_\Delta$
gives an increasing filtration of $V$.
Define $$\Zhu _p(V):= \im(V_{\leq p}\ra \Zhu( V)).$$
Obviously, we have  
$$0=\Zhu_{-1}(V) \subset  \Zhu_0(V) \subset \Zhu_1(V) \subset \cdots, \quad
\mbox{and} \quad  \Zhu (V)= \bigcup_{p\geq -1} \Zhu_p(V).$$
Also, since
$a_{(n)}b\in V_{\Delta_a+\Delta_b-n-1}$ for $a\in V_{\Delta_a}$, $b\in
V_{\Delta_b}$,
we have
\begin{align}\label{eq:Zhu-mul}
\Zhu_p(V) \ast \Zhu_q(V) \subset \Zhu_{p+q}(V).
\end{align}
The following assertion follows from the
skew symmetry.
\begin{lem}\label{lem:commu-ZHu}
We have
\begin{equation*}
b \ast a\equiv \sum_{i\geq 0}{\Delta_a -1 \choose i}a_{(i-1)}b  \quad \pmod{ V\circ V},
\end{equation*}
and hence,
\begin{equation*}
\begin{split}
a \ast b -b\ast a &\equiv \sum_{i\geq 0}{\Delta_a-1 \choose i}a_{(i)}b  \quad
\pmod{ V\circ V}.
\end{split}
\end{equation*}
 \end{lem}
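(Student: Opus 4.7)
The plan is to use the vertex algebra skew-symmetry $Y(b,z)a = e^{zT}Y(a,-z)b$ to rewrite $b\ast a$ in terms of operations $a_{(n)}b$, and then reduce modulo $V\circ V$ using a translation identity. The base observation, which is what the hint calls \emph{skew symmetry} applied in the simplest case, is that for every homogeneous $c\in V$,
\begin{equation*}
c\circ\vac = \sum_{i\geq 0}\binom{\Delta_c}{i}c_{(i-2)}\vac = c_{(-2)}\vac + \Delta_c\, c_{(-1)}\vac = Tc + \Delta_c\, c,
\end{equation*}
so $(T+\Delta_c)c\in V\circ V$. Applying this relation successively to the homogeneous iterates $T^{k-1}a$ (each of weight $\Delta_a+k-1$) gives $T^{k}c/k! \equiv \binom{-\Delta_c}{k}c \pmod{V\circ V}$ for every homogeneous $c$; crucially, this inductive argument never requires $V\circ V$ itself to be $T$-stable.

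Next I would rewrite the multiplications as residues. Observing that $a\ast b = \mathop{\mathrm{Res}}_z\tfrac{(1+z)^{\Delta_a}}{z}Y(a,z)b$ and $\sum_{i\geq 0}\binom{\Delta_a-1}{i}a_{(i-1)}b = \mathop{\mathrm{Res}}_z\tfrac{(1+z)^{\Delta_a-1}}{z}Y(a,z)b$, I start from $b\ast a = \mathop{\mathrm{Res}}_z\tfrac{(1+z)^{\Delta_b}}{z}Y(b,z)a$, apply skew-symmetry, and change variables $z\mapsto -w$ to obtain
\begin{equation*}
b\ast a = \mathop{\mathrm{Res}}_w\,\frac{(1-w)^{\Delta_b}}{w}\, e^{-wT}\, Y(a,w)b.
\end{equation*}
The translation identity from the previous paragraph replaces $e^{-wT}$ on each term $a_{(n)}b\,w^{-n-1}$ by multiplication by $(1-w)^{-(\Delta_a+\Delta_b-n-1)}$ modulo $V\circ V$. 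After cancellation against $(1-w)^{\Delta_b}$ the residue simplifies to $\sum_n[\text{coefficient of }w^{n+1}\text{ in }(1-w)^{n+1-\Delta_a}]\, a_{(n)}b$, and the elementary identity $\binom{j-\Delta_a}{j}(-1)^j = \binom{\Delta_a-1}{j}$ with $j=n+1$ reproduces $\sum_{i\geq 0}\binom{\Delta_a-1}{i}a_{(i-1)}b$ after reindexing.

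The second congruence is then immediate: subtract the first from the definition $a\ast b = \sum_i\binom{\Delta_a}{i}a_{(i-1)}b$ and apply Pascal's rule $\binom{\Delta_a}{i}-\binom{\Delta_a-1}{i} = \binom{\Delta_a-1}{i-1}$, followed by reindexing $j=i-1$. The main obstacle is purely combinatorial, namely keeping careful track of binomials through the skew-symmetry substitution; the only conceptual subtlety is the evaluation of $e^{-wT}$ modulo $V\circ V$, which the relation $(T+\Delta_c)c\in V\circ V$ handles cleanly by being invoked inductively on each homogeneous $T^{k-1}a$, sidestepping any need to know that $V\circ V$ is preserved by $T$.
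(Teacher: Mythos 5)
Your proof is correct and takes essentially the route the paper intends (the paper gives no written proof beyond attributing the lemma to skew symmetry): the relation $(T+\Delta_c)c=c\circ\vac\in V\circ V$, the full skew-symmetry $Y(b,z)a=e^{zT}Y(a,-z)b$, and the binomial identities (Vandermonde convolution plus $(-1)^j\binom{j-\Delta_a}{j}=\binom{\Delta_a-1}{j}$) are exactly the standard ingredients. The bookkeeping, including the sign in the substitution $z\mapsto -w$ and the observation that $T$-stability of $V\circ V$ is not needed because the contraction relation is applied afresh to each homogeneous iterate, all checks out.
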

By Lemma \ref{lem:commu-ZHu},
we have
\begin{align}\label{Zhu:comm}
[\Zhu_p(V), \Zhu_q(V)] \subset \Zhu_{p+q-1}(V).
\end{align} 
By \eqref{eq:Zhu-mul}
and \eqref{Zhu:comm},
the associated graded
$\gr \Zhu (V)=\bigoplus_p \Zhu_p (V)/\Zhu_{p-2}(V)$
is naturally a graded Poisson algebra.

Note that $a\circ b\equiv a_{(-2)}b\pmod{\bigoplus_{\Delta\leq \Delta_a+\Delta_b}V_{\Delta}}$ for 
homogeneous elements $a,b\in V$.

 \begin{lem}[Zhu, see  {\cite[Proposition 2.17(c)]{De-Kac06}, \cite[Proposition 3.3]{ALY}}]
  \label{lem:Zhu-vs-ZhusC2}
The following map defines a well-defined surjective homomorphism of Poisson algebras. 
\begin{equation*}
\begin{split}
\eta_V:R_V &\longrightarrow \gr \Zhu (V) \\
\bar{a} &\longmapsto a \pmod{V\circ V + \bigoplus_{\Delta < \Delta_a}V_\Delta}.
\end{split}
\end{equation*} 
\end{lem}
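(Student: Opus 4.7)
The plan is to verify in turn that $\eta_V$ is well-defined, respects the commutative product, respects the Poisson bracket, and is surjective. In each step the key observation is a \emph{leading-term} computation: writing the relevant vertex operation as its top conformal weight piece plus strictly lower-weight corrections, which will be absorbed into the lower filtration piece of $\Zhu(V)$ when we pass to $\gr \Zhu(V)$.

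\textbf{Well-definedness.} I would first check that the image of any $a\in F^1 V$ vanishes in $\gr \Zhu(V)$. It suffices to treat $a=b_{(-2)}c$ with $b,c$ homogeneous, so $\Delta_a=\Delta_b+\Delta_c+1$. By definition
\[
b\circ c \;=\; b_{(-2)}c \;+\; \sum_{i\geq 1}\binom{\Delta_b}{i}\,b_{(i-2)}c \;\in\; V\circ V,
\]
so modulo $V\circ V$ the element $b_{(-2)}c$ equals a combination of terms $b_{(i-2)}c$ with $i\geq 1$, each of weight $\Delta_a-i<\Delta_a$. Hence the image of $a$ in $\Zhu_{\Delta_a}(V)$ already lies in $\Zhu_{\Delta_a-1}(V)$ and is zero in the associated graded. (In particular the definition of $\eta_V$ does not depend on the chosen representative.)

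\textbf{Product and bracket.} For homogeneous $a,b$,
\[
a\ast b \;=\; a_{(-1)}b \;+\; \sum_{i\geq 1}\binom{\Delta_a}{i}\,a_{(i-1)}b,
\]
and each correction has weight $\Delta_a+\Delta_b-i$, strictly less than $\Delta_a+\Delta_b$. So in the relevant graded piece $a\ast b$ reduces to $a_{(-1)}b$, matching the product $\bar a\cdot\bar b=\overline{a_{(-1)}b}$ on $R_V$. For the Poisson bracket, Lemma \ref{lem:commu-ZHu} gives
\[
[a,b] \;\equiv\; \sum_{i\geq 0}\binom{\Delta_a-1}{i}\,a_{(i)}b \pmod{V\circ V},
\]
whose leading term is $a_{(0)}b$ of weight $\Delta_a+\Delta_b-1$, the remaining terms being of strictly smaller weight. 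This matches $\{\bar a,\bar b\}=\overline{a_{(0)}b}$ on $R_V$, so $\eta_V$ is a Poisson map.

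\textbf{Surjectivity.} Since $\Zhu(V)=\bigcup_p\Zhu_p(V)$, it is enough to hit each $\Zhu_p(V)/\Zhu_{p-1}(V)$. Any class there is represented by some $v\in V_p$. If $v\in V_p\cap F^1 V$, the first step shows its image already lies in $\Zhu_{p-1}(V)$, so is zero in the graded piece; otherwise $\bar v\in (R_V)_p$ is sent by $\eta_V$ precisely to the class of $v$ by construction. Thus $\eta_V$ surjects onto $\gr \Zhu(V)$.

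\textbf{Main obstacle.} None of the four steps is deep, but the bookkeeping demands care: one must check that the weight grading on $V$ interacts correctly with the filtration $\{\Zhu_p(V)\}$ and that the corrections produced by $\circ$, $\ast$, and the commutator all have \emph{strictly} lower weight than the leading term. The expected delicate point is verifying that the map respects grading degrees consistently (so that $(R_V)_p$ indeed lands in the correct graded piece of $\gr\Zhu(V)$), which in each case reduces to observing that $a_{(n)}b$ has conformal weight $\Delta_a+\Delta_b-n-1$. Once this is fixed, everything follows from the three leading-term identities above.
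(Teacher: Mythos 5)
Your argument is correct, and it is exactly the standard leading-term proof found in the references the paper cites for this lemma ([De-Kac06, Prop.~2.17(c)], [ALY, Prop.~3.3]); the paper itself omits the proof entirely. All four steps check out: since $a_{(n)}b\in V_{\Delta_a+\Delta_b-n-1}$, the corrections in $a\circ b$, $a\ast b$, and $a\ast b-b\ast a$ beyond the leading terms $a_{(-2)}b$, $a_{(-1)}b$, $a_{(0)}b$ do have strictly smaller weight, which is precisely why one must work with the filtration $\Zhu_\bullet(V)$ (the subspace $V\circ V$ is not graded) — a point your bookkeeping handles correctly.
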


\begin{rmk}
The  map $\eta_V$ is not an isomorphism
 in general. For an example,
let
 $\mf{g}$ be the simple Lie algebra of type $E_8$ and $V=V_1(\mf{g})$. Then $\dim R_V > \dim  \Zhu V =1$.
\end{rmk}
 

\begin{cor}
 If $V$ is lisse then $\Zhu V$ is finite dimensional.
 Hence the number of isomorphic classes of simple positive energy representations of $V$ is finite.
\end{cor}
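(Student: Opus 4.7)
The plan is to deduce finite-dimensionality of $\Zhu(V)$ from finite-dimensionality of $R_V$ via the surjection $\eta_V$ of Lemma \ref{lem:Zhu-vs-ZhusC2}, and then apply Zhu's classification of simple positive energy representations.

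First, I would observe that the lisse assumption $\dim X_V=0$ forces $R_V$ to be finite-dimensional over $\C$. Indeed, by our standing assumption $V$ is finitely strongly generated, so $R_V$ is a finitely generated commutative $\C$-algebra. A finitely generated commutative $\C$-algebra whose maximal spectrum is zero-dimensional is Artinian, and every Artinian finitely generated commutative $\C$-algebra is finite-dimensional as a $\C$-vector space.

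Next, I would invoke Lemma \ref{lem:Zhu-vs-ZhusC2} to get a surjective Poisson algebra homomorphism
\begin{equation*}
\eta_V : R_V \twoheadrightarrow \gr \Zhu(V).
\end{equation*}
Hence $\gr \Zhu(V)$ is finite-dimensional. Since the filtration $\Zhu_\bullet(V)$ is increasing, exhaustive and starts from $0=\Zhu_{-1}(V)$, we have $\dim \Zhu(V)=\dim \gr \Zhu(V)<\infty$, so $\Zhu(V)$ is finite-dimensional.

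Finally, by Zhu's theorem cited above, the map $M\mapsto M_{top}$ gives a bijection between isomorphism classes of irreducible positive energy representations of $V$ and isomorphism classes of simple modules over $\Zhu(V)$. A finite-dimensional associative $\C$-algebra has only finitely many isomorphism classes of simple modules (they are the simple factors of the semisimple quotient $\Zhu(V)/\on{rad}\Zhu(V)$). This yields the second assertion. There is no real obstacle here; the only mild subtlety is the passage from $\gr \Zhu(V)$ finite-dimensional to $\Zhu(V)$ finite-dimensional, which uses that $\Zhu_\bullet(V)$ is an exhaustive filtration bounded below.
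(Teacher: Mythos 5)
Your proof is correct and follows exactly the route the paper intends: the corollary is stated right after Lemma \ref{lem:Zhu-vs-ZhusC2} precisely so that the surjection $\eta_V\colon R_V\twoheadrightarrow\gr\Zhu(V)$, combined with finite-dimensionality of $R_V$ (a finitely generated algebra with zero-dimensional spectrum) and Zhu's bijection, yields the claim. The only cosmetic point is that with the paper's convention $\gr\Zhu(V)=\bigoplus_p\Zhu_p(V)/\Zhu_{p-2}(V)$ one has finite-dimensionality of $\Zhu(V)$ from that of $\gr\Zhu(V)$ without the exact equality of dimensions, but this changes nothing.
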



In fact the following stronger facts are known.
 \begin{thm}[\cite{AbeBuhDon04}]
  Let $V$ be lisse.
  Then any simple $V$-module is a positive energy representation.
  Therefore the number of isomorphic classes of simple $V$-modules  is finite.
 \end{thm}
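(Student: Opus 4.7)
The second assertion follows immediately from the first, combined with the preceding corollary (finite-dimensionality of $\Zhu(V)$ and Zhu's bijection between simple positive-energy $V$-modules and simple $\Zhu(V)$-modules). So the core task is to show that every simple $V$-module is a positive-energy representation. I would attack this via a PBW-type spanning theorem for $V$, originally due to Buhl.

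First, choose a homogeneous strong generating set $a^1,\dots,a^n$ of $V$ of positive conformal weight whose images generate $R_V$. Since $V$ is lisse, $X_V$ is a point, so the augmentation ideal of $R_V$ is nilpotent, and each $\bar a^i$ satisfies $(\bar a^i)^{N_i}=0$ in $R_V$ for some $N_i\ge 1$; equivalently, $(a^i_{(-1)})^{N_i}\vac\in F^1 V$. These are the only nontrivial inputs from lisseness into the rest of the argument.

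Second, starting from the monomial spanning statement that follows Theorem \ref{thm:surj-poisson} and combining the relations $(a^i_{(-1)})^{N_i}\vac\in F^1 V$ with repeated use of the Borcherds identities \eqref{eq:com-formula}, refine the spanning set: show that $V$ is linearly spanned by ordered monomials
\begin{align*}
 a^{i_1}_{(-k_1)}\cdots a^{i_r}_{(-k_r)}\vac,\qquad k_1\geq k_2\geq\cdots\geq k_r\geq 1,
\end{align*}
subject to a \emph{uniform} bound (depending only on the $a^i$ and the $N_i$) on the number of factors $a^i_{(-k_j)}$ with $k_j$ smaller than some fixed threshold. This is the technical heart of the argument and is where nilpotency is genuinely used; the idea is to induct on total conformal weight, pushing any long block of small-mode factors of a fixed $a^i$ through a relation of the form $(a^i_{(-1)})^{N_i}\vac\in F^1 V$ together with \eqref{eq:com-formula} to either shorten the monomial or to replace small-mode factors by higher-mode ones.

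Third, let $M$ be a simple $V$-module and $0\ne v\in M$. The lower-truncation axiom supplies integers $K_i$ with $a^i_{(m)}v=0$ for $m\ge K_i$. Using Step~2 together with \eqref{eq:com-formula} to commute modes, one verifies that the subspace
\begin{align*}
 M^{\circ}:=\{w\in M\mid \text{for each }i,\ a^i_{(m)}w=0\text{ for }m\gg 0\}
\end{align*}
is stable under all modes of $V$; the uniform bound from Step~2 is exactly what is needed so that acting by an arbitrary field does not blow up the truncation index. Since $v\in M^\circ$ and $M$ is simple, $M^\circ=M$. Invoking the conformal vector, the $L_0$-action is thereby forced to be locally finite with spectrum bounded below on $M$, i.e., $M$ is a positive-energy representation. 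The main obstacle is Step~2: converting the per-generator nilpotency $(a^i_{(-1)})^{N_i}\vac\in F^1 V$ into a uniform mode bound robust enough for Step~3 requires a careful double induction on the length and on the total Kazhdan-like weight of a monomial.
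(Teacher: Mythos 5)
The paper does not prove this statement: it is quoted from Abe--Buhl--Dong \cite{AbeBuhDon04} as a known result, so there is no internal proof to compare against. Your strategy is nevertheless the one used in the literature -- lisseness gives nilpotency of the strong generators in $R_V$, this is upgraded to a spanning set of ordered monomials with a uniform bound on repetitions of low modes (Gaberdiel--Neitzke for $V$ itself, Buhl for modules), and that spanning set forces any simple weak module to be bounded below -- so the overall architecture is right, and the reduction of the second sentence to the first via the corollary on $\dim\Zhu(V)<\infty$ is fine.

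Two concrete gaps remain. First, what you actually need in Step 3 is not the refined spanning set for $V$ but its analogue for the module $M$ generated by a single vector $v$: that $M$ is spanned by monomials $a^{i_1}_{(-k_1)}\cdots a^{i_r}_{(-k_r)}v$ with the $k_j$ bounded below by a fixed integer and with controlled repetition of small modes. Deriving this from the relations $(a^i_{(-1)})^{N_i}\vac\in F^1V$ is Buhl's theorem and is the entire technical content of the result; deferring it to ``a careful double induction'' leaves the proof essentially unwritten. Second, Step 3 as formalized is vacuous: the condition defining $M^{\circ}$ (for each $i$, $a^i_{(m)}w=0$ for $m\gg0$) is precisely the lower-truncation axiom satisfied by \emph{every} vector of \emph{every} weak module, so $M^{\circ}=M$ holds trivially and carries no information. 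The correct conclusion to extract is that the module spanning set, combined with $a^i_{(m)}v=0$ for $m\geq K_i$, bounds the conformal weights occurring in $M$ from below and makes each (generalized) $L_0$-eigenspace finite-dimensional; establishing local finiteness of $L_0$ on a weak module is itself a nontrivial step that your sketch passes over with ``invoking the conformal vector.''
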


 \begin{thm}[\cite{Dong:1998kq,MatNagTsu05}]
  Le
  $V$ be lisse.
  Then the abelian category of $V$-modules is equivalent to the module category of a finite-dimensional associative algebra.
 
\end{thm}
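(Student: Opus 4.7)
The plan is to identify the category of $V$-modules with the module category of a suitable higher Zhu algebra. Recall that Dong--Li--Mason introduced a tower of associative algebras $A_n(V)$, $n\geq 0$, generalizing Zhu's algebra $A_0(V)=\Zhu(V)$: each $A_n(V)$ is a quotient of $V$ by a subspace analogous to $V\circ V$, equipped with surjective homomorphisms $A_{n+1}(V)\twoheadrightarrow A_n(V)$, and there is a functor $M\mapsto \Omega_n(M)$ sending a $V$-module to an $A_n(V)$-module on the sum of the bottom $n+1$ graded pieces, with an adjoint "generalized Verma" functor $L\mapsto \bar{M}(L)$ from $A_n(V)$-mod to $\mathbb{Z}_+$-graded $V$-modules.

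The first step is to establish finite-dimensionality of each $A_n(V)$ under the lisse assumption. Just as $\Zhu(V)$ carries a filtration $\Zhu_p(V)$ with a surjection $R_V\twoheadrightarrow \gr \Zhu(V)$ (Lemma~\ref{lem:Zhu-vs-ZhusC2}), one equips $A_n(V)$ with an analogous filtration and produces a surjection from a Poisson algebra built out of $R_V$ and finitely many "jets" of it onto $\gr A_n(V)$; when $\dim X_V=0$ the Poisson algebra in question is a finite-dimensional quotient of $R_V\otimes(\text{finite-dim.})$, forcing $\dim A_n(V)<\infty$. In particular all simple $A_n(V)$-modules are finite-dimensional.

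Next I would combine this with the preceding theorem of Abe--Buhl--Dong, which guarantees that every simple $V$-module is positive-energy, to deduce that any $V$-module $M$ is admissible in the sense that $M=\bigoplus_{\Delta}M_\Delta$ with each $M_\Delta$ finite-dimensional and lowest conformal weights bounded below. Choosing $n$ large enough so that no two simple $V$-modules have top-level conformal weights differing by more than $n$, the pair of functors $\Omega_n$ and $\bar{M}(\cdot)$ induce mutually inverse equivalences between the category of $V$-modules and a full subcategory of $A_n(V)$-mod. Since $A_n(V)$ is finite-dimensional, this subcategory is itself equivalent to the module category of a (finite-dimensional) quotient or idempotent truncation of $A_n(V)$.

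The main obstacle will be the second step: controlling $\dim A_n(V)$ for all $n$ simultaneously, because the filtration on $A_n(V)$ is much coarser than that on $\Zhu(V)$ and the associated graded involves not just $R_V$ but also its image in higher degrees of $\gr V$. The technical heart of the argument, due to Nagatomo--Tsuchiya and Matsuo--Nagatomo--Tsuchiya, is to show that lisse-ness propagates through this tower — equivalently, that the singular support $SS(V)$ being zero-dimensional controls the "$n$-th $C_2$-algebra" — after which the equivalence of abelian categories follows formally by Morita theory applied to a progenerator in the image of $\bar{M}(\cdot)$.
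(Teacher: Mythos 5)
The paper does not actually prove this theorem; it is stated as a citation to Dong--Li--Mason and Matsuo--Nagatomo--Tsuchiya, so there is no in-text argument to compare yours against. Judged on its own, your outline correctly identifies the standard ingredients from those references (the tower of higher Zhu algebras $A_n(V)$, their finite-dimensionality under the lisse hypothesis, the adjunction between $\Omega_n$ and the generalized Verma functor, and a final Morita-theoretic step), but two of your intermediate deductions have genuine gaps.

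First, the step ``every simple $V$-module is positive energy, hence every $V$-module is admissible with finite-dimensional graded pieces'' is not a valid implication. For a non-simple weak module there is no a priori reason for $L_0$ to act locally finitely, and that is exactly the point that must be proved; it does not follow formally from the simple case, since one cannot yet assume finite length or even that the module is built from simples by extensions. What is actually needed is the spanning-set theorem for arbitrary weak modules over a $C_2$-cofinite vertex algebra (every weak module generated by one vector is spanned by $a^{i_1}_{(-n_1)}\cdots a^{i_r}_{(-n_r)}m$ with strictly decreasing $n_j$), from which local finiteness of $L_0$ and admissibility are extracted. This is a separate, substantial result and should be invoked explicitly rather than derived from the classification of simples. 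Second, $\Omega_n$ and $\bar{M}(\cdot)$ are adjoint but emphatically not mutually inverse: $\bar{M}(\Omega_n(M))$ maps onto the submodule of $M$ generated by $\Omega_n(M)$ and typically has a large kernel (the maximal submodule meeting degrees $\leq n$ trivially), so the claimed ``mutually inverse equivalences with a full subcategory of $A_n(V)$-mod'' does not hold as stated. Repairing this --- by passing to a suitable quotient of $A_n(V)$ acting faithfully on $\bigoplus_{L}\Omega_n(L)$, or by working with the quasi-finite current algebra of Matsuo--Nagatomo--Tsuchiya and its finite-dimensional truncations --- is precisely the technical heart of the cited proofs, so your sketch defers the hardest point rather than resolving it. The finite-dimensionality of each $A_n(V)$ and the concluding Morita argument are fine in outline.
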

\subsection{Computation of Zhu's algebras}
We say that a vertex algebra $V$ {\em admits a PBW basis} if $R_V$ is a polynomial algebra and 
the map $\C[JX_V]\twoheadrightarrow  \gr V$ is an isomorphism.
\begin{thm}
\label{t:10} 
If $V$ admits a $PBW$ basis, then 
$\eta_V: R_V \twoheadrightarrow \gr \Zhu V$ is an isomorphism.
\end{thm}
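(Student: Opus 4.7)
The surjectivity of $\eta_V$ is Lemma~\ref{lem:Zhu-vs-ZhusC2}, so we focus on injectivity, which we establish by comparing graded dimensions in each conformal weight. Under the PBW hypothesis, $R_V$ is a polynomial ring on generators $\bar a_1,\dots,\bar a_N$ of weights $\Delta_i:=\Delta_{a_i}$.

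Using the identification $F^1V\cap V_\Delta = G_{\Delta-1}V_\Delta$ from Lemma~\ref{lem:G=F}, we may rewrite
\begin{align*}
(R_V)_\Delta &= V_\Delta/G_{\Delta-1}V_\Delta, \\
(\gr\Zhu V)_\Delta &= V_\Delta/U_\Delta,
\end{align*}
where $U_\Delta$ denotes the image of $V\circ V\cap V_{\leq\Delta}$ under the weight-$\Delta$ projection $V_{\leq\Delta}\twoheadrightarrow V_\Delta$. In these terms, $\eta_V$ on the weight-$\Delta$ component becomes the natural surjection $V_\Delta/G_{\Delta-1}V_\Delta \twoheadrightarrow V_\Delta/U_\Delta$, which is an isomorphism precisely when $G_{\Delta-1}V_\Delta = U_\Delta$. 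The easy inclusion $G_{\Delta-1}V_\Delta\subseteq U_\Delta$ is just well-definedness of $\eta_V$; the content of the theorem is the reverse inclusion.

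To prove $U_\Delta\subseteq G_{\Delta-1}V_\Delta$, take $w=\sum_k c_k\,a_k\circ b_k \in V\circ V\cap V_{\leq\Delta}$ and expand $a_k\circ b_k = \sum_{i\geq 0}\binom{\Delta_{a_k}}{i}(a_k)_{(i-2)}b_k$. The top-weight contributions $(a_k)_{(-2)}b_k$, arising from those $k$ with $\Delta_{a_k}+\Delta_{b_k}+1=\Delta$, already lie in $F^1V\cap V_\Delta = G_{\Delta-1}V_\Delta$. The remaining contributions to the weight-$\Delta$ part of $w$ are leading normal-ordered terms $\Delta_{a_k}(a_k)_{(-1)}b_k$ produced when top-weight pieces from indices with $\Delta_{a_k}+\Delta_{b_k}+1 > \Delta$ cancel in order for $w$ to lie in $V_{\leq\Delta}$. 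This is where the PBW hypothesis enters crucially: under the isomorphism $\gr V\cong\C[JX_V]$, any such cancellation of leading $(a_k)_{(-2)}b_k$-terms in $V$ corresponds to a polynomial identity in the free polynomial ring $\C[JX_V]$, which by freeness imposes rigid linear relations on the $c_k$; unwinding these relations shows that the residual normal-ordered contributions themselves assemble into an element of $G_{\Delta-1}V_\Delta$. The main obstacle is precisely this bookkeeping—translating, for each cancellation pattern among the leading terms, the resulting descent into a manifest element of $G_{\Delta-1}V_\Delta$—and it is here that the polynomial structure of $R_V$, equivalently the PBW isomorphism $\gr V\cong JR_V$, is used in an essential way.
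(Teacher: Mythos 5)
Your setup is sound and agrees with the paper's: surjectivity is Lemma \ref{lem:Zhu-vs-ZhusC2}, the identification $(\gr \Zhu V)_\Delta\cong V_\Delta/U_\Delta$ is correct, and injectivity indeed amounts to $U_\Delta\subseteq F^1V\cap V_\Delta$, i.e.\ to $\gr(V\circ V)\subseteq F^1V$ for the filtration induced by $V_{\leq p}$. You also correctly locate the difficulty: when leading terms $(a_k)_{(-2)}b_k$ cancel, the weight-$\Delta$ component of $w$ picks up subleading contributions that are not visibly in $F^1V$. But at exactly this point the proposal stops being a proof. The claims that such a cancellation ``corresponds to a polynomial identity in $\C[JX_V]$'' which ``imposes rigid linear relations on the $c_k$'', and that ``unwinding these relations'' puts the residual terms into $G_{\Delta-1}V_\Delta$, are not supported by any argument --- you yourself label the remaining work ``the main obstacle''. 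Note, moreover, that the relation $\sum_k c_k (a_k)_{(-2)}b_k=0$ reads in $\gr V$ as $\sum_k c_k\,\sigma(Ta_k)\sigma(b_k)=0$ with $a_k,b_k$ arbitrary vectors rather than monomials, so freeness of $\C[JX_V]$ does not by itself produce linear relations among the $c_k$; and for indices with $\Delta_{a_k}+\Delta_{b_k}+1=\Delta+j$, $j\geq 2$, the contribution to weight $\Delta$ is $\binom{\Delta_{a_k}}{j}(a_k)_{(j-2)}b_k$ with $j-2\geq 0$, terms your bookkeeping never mentions. As it stands the crucial implication is asserted, not proved.

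The paper's proof takes a different and much shorter route and never analyzes cancellation patterns term by term. It reduces the claim $\gr(V\circ V)=F^1V$ to the single statement that $a\circ b\neq 0$ implies $a_{(-2)}b\neq 0$, and derives that statement from two concrete consequences of the PBW hypothesis: $\gr V\cong\C[JX_V]$ has no zero divisors, and $Ta=0$ forces $a\in\C|0\ket$. Since $a_{(-2)}b=(Ta)_{(-1)}b$ has symbol $\sigma(Ta)\cdot\sigma(b)$ in $\gr V$, it can vanish only when $a$ is a multiple of the vacuum, in which case $a\circ b=0$ anyway. Neither of these two inputs appears anywhere in your proposal. To repair your more direct route you would have to actually carry out the missing step --- for instance, show by descending induction on the top weight that whenever $\sum_k c_k(a_k)_{(-2)}b_k=0$ the element $\sum_k c_k\,a_k\circ b_k$ can be re-expressed through $a\circ b$'s of strictly smaller $\Delta_a+\Delta_b$ --- or else switch to the paper's reduction.
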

\begin{proof}
We have $\gr \Zhu(V)=V/\gr (V\circ V)$,
where $\gr (V\circ V)$ is the associated graded space of $V\circ V$ with respect to the filtration
induced  by the filtration $V_{\leq p}$.
We wish to show that
$\gr (V\circ V)=F^1 V$.
Since $a\circ b\equiv a_{(-2)}b\pmod {F_{\leq \Delta_a+\Delta_b}V}$,
it is sufficient to show that $a\circ b\ne 0$ implies that $a_{(-2)}b\ne 0$.

Suppose that 
$a_{(-2)}b=(Ta)_{(-1)}b=0$.
Since
$V$ admits a PBW basis,
$\gr V$ has no zero divisors.
That fact that $V$ admits a PBW basis
also 
shows that $Ta=0$ implies that $a=c|0\ket$ for some constant $c\in \C$.
Thus, 
 $a$  is a constant multiple of $|0\ket $, in which case $a\circ b=0$.
\end{proof}

\begin{exm}[Universal affine vertex algebras]
The universal affine vertex algebra
$V^\kappa(\mf{a})$  (see  \S \ref{subs:affineVOA})
admits a PBW basis.
Therefore
\begin{align*}
\eta_{V^\kappa(\mf{a})}
:R_{V^\kappa(\mf{a})}= \C[\mf{a}^*]\isomap \gr \Zhu V^\kappa(\mf{a}).
\end{align*}
On the other hand, 
from Lemma \ref{lem:commu-ZHu}
one finds that
\begin{equation}
 \label{eq:Zhu-universal-affine}
\begin{split}
U(\mf{a}) &\longrightarrow \Zhu (V^\kappa(\mf{a}))\\
\mf{a} \ni x & \longmapsto \bar x=\overline{x_{(-1)}\vac}
\end{split}
\end{equation}
gives  a well-defined algebra homomorphism.
This map  respects the filtration on both sides,
where
the filtration in the left-hand-side is the PBW filtration.
Hence it induces a map between their associated graded algebras,
which  is identical to $\eta_{V^\kappa(\mf{a})}$.
Therefore \eqref{eq:Zhu-universal-affine} is  an isomorphism,
that is to say,
 $V^{\kappa}(\mf{a})$ is a chiralization of $U(\mf{a})$.
\end{exm}

\begin{exr}\label{exr:superPBW}
 Extend Theorem \ref{t:10}
to the case that $\mf{a}$ is a Lie superalgebra.
\end{exr}

\begin{exm}[Free fermions]
\label{eq:free-fermions}
Let 
$\mf{n}$
be a finite-dimensional vector space.
The {\em Clifford affinization} $\hat{Cl}$ of $\mf{n}$
is the 
Clifford algebra
associated with
$\mf{n}[t, t^{-1}]\oplus \mf{n}^\ast[t, t^{-1}]$
and its symmetric bilinear form
defined by
\begin{align*}
 (xt^m|f t^n)=\delta_{m+n,0}f(x),
\ (xt^m|yt^n)=0=(ft^m|gt^n)
\end{align*}
for $x,y\in \finn$,
$f,g\in \finn^*$,
$m,n\in \Z$.

Let $\{x_\alpha\}_{\alpha\in \Delta_+}$ be a basis of $\mf{n}$,
$\{x_\alpha^*\}$ its dual basis.
We write
$\psi_{\alpha,m}$ for 
$x_\alpha t^m \in \hat{Cl}$
and $\psi_{\alpha,m}^*$ for $x_\alpha^*t^m \in \hat{Cl}$,
so that
 $\hat{Cl}$ 
  is the associative superalgebra with 
\begin{itemize}
\item  odd generators: $\psi_{\alpha,m}, \psi_{\alpha,m}^*, m\in \Z,  \alpha \in \Delta_+$.
 \item   relations: $[\psi_{\alpha,m}, \psi_{\beta,n}]= [\psi_{\alpha,m}^*, \psi_{\beta,n}^*]=0, [\psi_{\alpha,m}, \psi_{\beta,n}^*]=
	 \delta_{\alpha, \beta}\delta_{m+n, 0}.$
\end{itemize}
Define the {\em charged fermion Fock space} associated with $\finn$ as
\begin{equation*}
\mc{F}_{\finn}:= \hat{Cl}/(\sum_{\substack{m\geq 0\\\alpha \in \Delta_+}}
\hat{Cl}\psi_{\alpha,m}+\sum_{\substack{k\geq 1\\\beta \in \Delta_+}}\hat{Cl}\psi_{\beta,k}^*).
\end{equation*}
It is an irreducible $\hat{Cl}$-module,
and as $\C$-vector  spaces we have
\begin{equation*}
 \mc{F}_{\finn} \cong  \Lambda(\mf{n}^\ast[t^{-1}])
  \otimes \Lambda(\mf{n}[t^{-1}]t^{-1}).
\end{equation*}

There is a unique vertex (super)algebra structure on
$\mc{F}_{\finn}$ such that
the image of $1$  is the vacuum $|0\ket$  and 
\begin{align*}
& Y(\psi_{\alpha,-1}|0\ket,z)=\psi_\alpha(z):=\sum_{n\in \Z}\psi_{\alpha,n}z^{-n-1},\\
&Y(\psi_{\alpha,0}^*|0\ket,z)=\psi_\alpha^*(z):=\sum_{n\in \Z}\psi_{\alpha,n}^*z^{-n}.
\end{align*}
We have
$F^1 \mc{F}_{\finn}=\finn^\ast[t^{-1}]t^{-1}\mc{F}_{\finn}+\finn[t^{-1}]t^{-2}\mc{F}_{\finn}$,
and  it follows that there is an isomorphism
\begin{align*}
 \begin{array}{ccc}
 \overline{Cl}& \isomap& R_{\mc{F}_{\finn}}, \\
 x_\alpha& \mapsto &\overline{\psi_{\alpha,-1}|0\ket},\\
 x_\alpha^*& \mapsto &\overline{\psi_{\alpha,0}^*|0\ket}
 \end{array}
\end{align*}
as Poisson superalgebras.
Thus,
\begin{align*}
 X_{\mc{F}_{\finn}}=T^* \Pi \finn,
\end{align*}
where $\Pi \finn$ is the space $\finn$ considered as a purely odd
 affine space.
The arc space
 $J T^*\Pi \finn$ 
 is also
 regarded as a purely odd affine space,
 such that
 $\C[JT^*\Pi \finn]=  \Lambda(\mf{n}^\ast[t^{-1}])
  \otimes \Lambda(\mf{n}[t^{-1}]t^{-1})$. 
The map
$\C[JX_{\mc{F}_{\finn}}]\ra \gr \mc{F}_{\finn}$ is an isomorphism and
 $\mc{F}_{\finn}$ admits a PBW basis.
Therefore we have the isomorphism
 \begin{align*}
  \eta_{\mc{F}_{\finn}}:R_{\mc{F}_{\finn}}=\overline{Cl}\isomap \Zhu (\mc{F}_{\finn})
 \end{align*}
by Exercise \ref{exr:superPBW}.
On the other hand the map
\begin{align*}
\begin{array}{ccc}
 Cl&\ra &\Zhu (\mc{F}_{\finn})\\
x_\alpha &\mapsto &\overline{\psi_{\alpha,-1}|0\ket}, \\
x_\alpha^* &\mapsto &\overline{\psi_{\alpha,0}^*|0\ket} \\
\end{array}
\end{align*}
gives an algebra homomorphism that respects the filtration.
Hence we have
\begin{align*}
 \Zhu (\mc{F}_{\finn})\cong Cl.
\end{align*}
That is,
 $\mc{F}_{\finn}$ is a chiralization of $Cl$.
\end{exm}

%
%
%
%
%

\section{$W$-algebras}\label{sec:W-algbeas}
We are now in a position to define $W$-algebras. We will construct a differential graded vertex algebra, so that  its cohomology algebra is
 a vertex algebra and that will be our main object to study.

For simplicity, we let $\mf{g}=\mf{gl}_n$ and we only consider the
principal nilpotent case.
However the definition works for any simple Lie algebra.
 The general definition for an arbitrary nilpotent element  will be similar but
one does need a new idea 
(see \cite{KacRoaWak03} for the most general definition). 

\subsection{The BRST complex}
Let $\fing, \finn$ be as in \S \ref{sec:1}.
Denote by $\kappa_\fing$ the
Killing form on $\fing$
and $\kappa_0=\frac{1}{2n}\kappa_{\fing}$,
so that $\kappa_0(\theta,\theta)=2$.

Choose  any symmetric invariant bilinear form $\kappa$ on $\fing$
and let $V^\kappa(\fing)$ be the universal affine vertex algebra 
associated with $(\fing, \kappa)$
(see \S \ref{subs:affineVOA})
and let $\Fock=\Fock_{\finn}$  be the fermion Fock space as in Example
\ref{eq:free-fermions}.

We have the following commutative diagrams: 
\begin{align*}
 \xymatrix{
\C[J\fing^*]\ar[d]_{\on{Zhu}(?)} & V^{\kappa}(\fing)\ar[d]^{\on{Zhu}(?)}
 \ar[dl]_{R_{?}} \ar[l]_{\on{gr}(?)} \\
 \C[\fing^*]
 &  
U(\fing),
 \ar[l]^{\gr(?)}
}
\quad
 \xymatrix{
\C[JT^* \Pi \finn]\ar[d]_{\on{Zhu}(?)} & \Fock\ar[d]^{\on{Zhu}(?)}
 \ar[dl]_{R_{?}} \ar[l]_{\on{gr}(?)} \\
 \overline{Cl}
 &  
Cl
 \ar[l]^{\gr(?)}
}
\end{align*}

Define
\begin{align*}
 \Caff(\mf{g}):= V^{\kappa}(\mf{g}) \otimes \mc{F}.
\end{align*}
Since it is a tensor product of two vertex algebras,
$\Caff(\mf{g})$ is a vertex algebra.
We have
\begin{equation*}
R_{\Caff(\mf{g})}=R_{V^{\kappa}(\mf{g})} \otimes R_{\mc{F}}=\C[\mf{g}^\ast] \otimes \overline{Cl}=\bar{C}(\mf{g}),
\end{equation*}
and 
\begin{equation*}
\Zhu \Caff(\mf{g})=\Zhu V^{\kappa}(\mf{g}) \otimes \Zhu \mc{F}=U(\mf{g}) \otimes Cl=C(\mf{g}).
\end{equation*}
Thus,
$\Caff(\fing)$ is a chiralization of $C(\fing)$
considered in \S \ref{subsection:Quantized Hamiltonian reduction}.
Further we have
\begin{align*}
 \gr \Caff (\fing)=\gr V^{\kappa}(\fing)\* \gr \mc{F}=\C[J\fing^*]\otimes \C[JT^* \Pi \finn].
\end{align*}
So we have the following commutative diagram:
\begin{align*}
 \xymatrix{
& \Caff(\fing)\ar[d]^{\on{Zhu}(?)}
 \ar[dl]_{R_{?}}  \\
\bar C(\fing)
 &  
 C(\fing)
 \ar[l]^{\gr(?)}
}
\end{align*}


Define a gradation 
\begin{equation}
\mc{F}=\bigoplus_{p \in \Z}\mc{F}^p
\end{equation}
by setting
$\deg \psi_{\alpha,m}=-1, \deg
 \psi_{\alpha,k}^*=1, \forall i, j \in I, m, k\in \Z$,
$\deg |0\ket =0$.
This induces a
 $\Z$-grading (that is different from the conformal grading)
on $\Caff(\fing)$:
\begin{equation}
\Caff(\mf{g})= V^{\kappa}(\mf{g}) \otimes \mc{F}
 =\bigoplus_{p \in \Z}C^{\kappa,
 p}(\mf{g}),
 \quad \mbox{where} \quad C^{\kappa, p}(\mf{g}):= V^{\kappa}(\mf{g}) \otimes \mc{F}^p.
\end{equation}

Let $V(\finn)$ be the 
the universal affine vertex algebra associated with
$\finn$ and the zero bilinear form,
which  is  identified with the vertex subalgebra 
of $V^{\kappa}(\fing)$ generated by 
$x_{\alpha}(z)$ with $\alpha\in \Delta_+$.

\begin{lem}\label{lem:chiralization-of-va-from-clifford}
The following defines a vertex algebra homomorphism.
\begin{equation*}
\begin{split}
\hat{\rho} : V(\mf{n}) &\longrightarrow \mc{F}\\
       x_\alpha(z) &\longmapsto \sum_{\beta, \gamma\in
 \Delta_+}c_{\alpha \beta}^\gamma \psi_\beta^\ast(z) \psi_\gamma(z).
\end{split}
\end{equation*}
\end{lem}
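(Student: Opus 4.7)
The plan is to invoke the universal property of $V(\finn)$: since $\finn$ carries the zero bilinear form, producing a vertex algebra homomorphism $\hat\rho\colon V(\finn)\to\Fock$ is equivalent to producing mutually local fields $X_\alpha(z)\in \Fock[[z^{\pm 1}]]$ ($\alpha\in\Delta_+$) whose OPE realizes the current algebra of $\finn$ at level zero, i.e.\ $[X_{\alpha\,\lambda} X_\beta] = \sum_\gamma c_{\alpha\beta}^\gamma X_\gamma$ with no higher-order polar term. Setting $X_\alpha := \sum_{\beta,\gamma} c_{\alpha\beta}^\gamma {:}\psi_\beta^*\psi_\gamma{:}$, I would verify this identity by a direct $\lambda$-bracket calculation built from the elementary brackets $[\psi_\alpha{}_\lambda\psi_\beta^*]=\delta_{\alpha\beta}$ and $[\psi_\alpha{}_\lambda\psi_\beta]=0=[\psi_\alpha^*{}_\lambda\psi_\beta^*]$.

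The calculation proceeds in two Wick steps. First, using \eqref{eq:non-com-wick1}, one obtains the intermediate brackets
\[
[\psi_\mu^*{}_\lambda\,{:}\psi_\rho^*\psi_\sigma{:}] = -\delta_{\mu\sigma}\psi_\rho^*,\qquad
[\psi_\nu{}_\lambda\,{:}\psi_\rho^*\psi_\sigma{:}] = \delta_{\nu\rho}\psi_\sigma,
\]
the integral terms vanishing because the inner brackets are constant in $\lambda$. Applying \eqref{eq:non-com-wick2} to $[{:}\psi_\mu^*\psi_\nu{:}_\lambda\,{:}\psi_\rho^*\psi_\sigma{:}]$ then produces terms of the shape ${:}\psi^*\psi{:}$ together with at most one linear-in-$\lambda$ term from the double-contraction integral. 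Summing against $c_{\alpha\mu}^\nu c_{\beta\rho}^\sigma$, the ${:}\psi^*\psi{:}$ pieces reorganize under the Jacobi identity for $\finn$ (applied to its structure constants) into $\sum_\gamma c_{\alpha\beta}^\gamma X_\gamma$; this is exactly the chiralization of the finite-dimensional identity that underlies Lemma \ref{lem:Lie-alg-hom-Clifford}.

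The key point to verify, and the step I expect to be the main obstacle, is the vanishing of the potential central term coming from the double-contraction integral. This term is proportional to $\sum_{\mu,\nu} c_{\alpha\mu}^\nu c_{\beta\nu}^\mu = \on{tr}_\finn(\ad x_\alpha\cdot\ad x_\beta)$. Since $\finn\subset\mf{gl}_n$ is strictly upper triangular, the operator $\ad x_\alpha\cdot\ad x_\beta$ strictly raises the height of roots on $\finn$, hence is nilpotent with zero trace. This matches the fact that the target is $V(\finn)$, the universal affine vertex algebra at \emph{level zero}, rather than at some nonzero level. The remaining technicality is the sign bookkeeping for the odd fields, which is absorbed into the standard super-Wick conventions.
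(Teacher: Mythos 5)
Your argument is correct and is precisely the standard verification that the paper leaves implicit (the lemma is stated there without proof): one checks the level-zero current OPE by Wick calculus, the would-be central term $\on{tr}_{\finn}(\ad x_\alpha\,\ad x_\beta)\,\lambda$ vanishing because $\ad x_\alpha\,\ad x_\beta$ strictly raises height on the nilpotent algebra $\finn$ — the same nilpotency that, per the paper's accompanying remark, makes normal ordering unnecessary. The only remaining point is the super-sign bookkeeping, which you have correctly flagged as routine.
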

 \begin{rmk}
  In the above formula
 the normally ordered product is not needed because $\finn$ is nilpotent.
 \end{rmk}

The map $\hat\rho$ induces an algebra homomorphism
\begin{align*}
\Zhu V(\finn)=U(\finn)\ra \Zhu \mc{F}=Cl
\end{align*}
and a Poisson algebra homomorphism 
\begin{align*}
R_{V(\finn)}=\C[\finn^*]\ra
R_{\mc{F}}=\overline{Cl}
\end{align*}that are identical to $\rho$ and $\bar \rho$
(see Lemma \ref{lem:Lie-alg-hom-Clifford} and \ref{eq:Lie-alg-hom-Poisson}), respectively.

Recall  the character $\chi:\finn\ra \C$,
$x\mapsto (f|x)$.
\begin{lem}\label{lem:theta-hat}
The following defines a vertex algebra homomorphism.
 \begin{equation*}
\begin{split}
\hat{\theta}_{\chi} : V(\mf{n}) &\longrightarrow \Caff(\fing)\\
       x_\alpha(z) &\longmapsto (x_\alpha(z) + \chi(x_\alpha))\otimes \id +\id\otimes \hat{\rho}(x_\alpha(z)).
\end{split}
\end{equation*}
\end{lem}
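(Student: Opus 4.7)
The plan is to use the universal property of $V(\mf{n})$: since the bilinear form on $\mf{n}$ is zero, $V(\mf{n})$ is freely generated as a vertex algebra by the currents $\{x_\alpha(z)\}_{\alpha \in \Delta_+}$ subject only to the OPE relations
\[
[x_{\alpha\,\lambda} x_\beta] = [x_\alpha,x_\beta] = \sum_{\gamma \in \Delta_+} c_{\alpha\beta}^{\gamma}\, x_\gamma.
\]
Hence, to produce a vertex algebra homomorphism out of $V(\mf{n})$, it suffices to exhibit elements $Y_\alpha \in \Caff(\fing)$ whose $\lambda$-brackets realize these same relations; the homomorphism is then determined on the strong generators and the OPE's extend by the Borcherds identities.

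I would carry out the verification as follows. Write $\hat\theta_\chi(x_\alpha) = A_\alpha \otimes 1 + 1 \otimes B_\alpha$, where $A_\alpha = x_\alpha + \chi(x_\alpha)|0\rangle \in V^\kappa(\fing)$ and $B_\alpha = \hat\rho(x_\alpha) \in \Fock$. Since $V^\kappa(\fing)$ and $\Fock$ sit in $\Caff(\fing)$ as mutually local tensor factors, the cross $\lambda$-brackets between them vanish, and
\[
[\hat\theta_\chi(x_\alpha)_\lambda \hat\theta_\chi(x_\beta)] = [A_{\alpha\,\lambda} A_\beta] \otimes 1 + 1 \otimes [B_{\alpha\,\lambda} B_\beta].
\]
The constants $\chi(x_\alpha)$ kill under the $\lambda$-bracket, and $\kappa(x_\alpha, x_\beta) = 0$ because $\mf{n}$ is isotropic for any invariant symmetric bilinear form on $\fing$ (both $x_\alpha$ and $x_\beta$ lie in positive root spaces and $\alpha + \beta \neq 0$). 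Therefore
\[
[A_{\alpha\,\lambda}A_\beta] = [x_{\alpha\,\lambda} x_\beta] = \sum_\gamma c_{\alpha\beta}^\gamma x_\gamma,
\]
with no central $\lambda$-term. For the Clifford piece, Lemma \ref{lem:chiralization-of-va-from-clifford} gives that $\hat\rho$ is a vertex algebra homomorphism from $V(\mf{n})$, so
\[
[B_{\alpha\,\lambda} B_\beta] = \hat\rho([x_{\alpha\,\lambda} x_\beta]) = \sum_\gamma c_{\alpha\beta}^\gamma B_\gamma.
\]

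Summing the two contributions gives
\[
[\hat\theta_\chi(x_\alpha)_\lambda \hat\theta_\chi(x_\beta)] = \sum_\gamma c_{\alpha\beta}^\gamma(x_\gamma \otimes 1 + 1 \otimes B_\gamma) = \sum_\gamma c_{\alpha\beta}^\gamma \hat\theta_\chi(x_\gamma) - \chi([x_\alpha, x_\beta]),
\]
and the correction term $\chi([x_\alpha,x_\beta]) = \sum_\gamma c_{\alpha\beta}^\gamma \chi(x_\gamma)$ vanishes because $\chi$ is a character of $\mf{n}$, i.e.\ $\chi([\mf{n},\mf{n}]) = 0$. This matches the $\lambda$-bracket relation in $V(\mf{n})$, so $\hat\theta_\chi$ extends to the desired vertex algebra homomorphism.

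There is essentially no obstacle beyond bookkeeping: the only places where something could go wrong are (i) a central term appearing in $[x_{\alpha\,\lambda}x_\beta]$ inside $V^\kappa(\fing)$ and (ii) a $\chi$-anomaly in the bracket of the images. Both are precisely why $\mf{n}$ being isotropic under $\kappa$ and $\chi$ being a character of $\mf{n}$ are the indispensable hypotheses; once they are invoked, the homomorphism property is automatic.
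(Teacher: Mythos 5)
Your proof is correct; the paper in fact omits the proof of this lemma entirely (as it does for the finite-dimensional analogues, Lemmas \ref{lem:moment} and \ref{lem:quantized-moment-map}), and your argument — reducing to the $\lambda$-bracket relations on the strong generators, using that $\mf{n}$ is $\kappa$-isotropic so no central term appears, that $\hat\rho$ is a vertex algebra homomorphism by Lemma \ref{lem:chiralization-of-va-from-clifford}, and that the constant shift is harmless because $\chi$ vanishes on $[\mf{n},\mf{n}]$ — is exactly the intended one.
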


The map $\hat\theta_{\chi}$ induces an algebra homomorphism
\begin{align*}
\Zhu V(\finn)=U(\finn)\ra \Zhu \Caff(\fing)=C(\fing)
\end{align*}
and a Poisson algebra homomorphism 
\begin{align*}
R_{V(\finn)}=\C[\finn^*]\ra
R_{\mc{F}}=\overline{C(\fing)}
\end{align*}that are identical to $\theta_{\chi}$ and $\bar \theta$,
respectively
(see Lemmas \ref{lem:moment} and \ref{lem:quantized-moment-map}).

The proof of the following assertion is similar to that of 
 Lemma \ref{lem:barQ}.

\begin{prp}\label{Pro:Q-theta-hat}
There exists a  unique element $\hat{Q} \in C^{k, 1}(g)$ such that
\begin{align*}
[\hat{Q}_{\lam}  (1\*\psi_\alpha)]=\hat{\theta}_{\chi} (x_\alpha),\quad
 \forall \alpha\in \Delta_+.
\end{align*}
We have $[\hat{Q}_{\lam} \hat{Q}]=0$.
\end{prp}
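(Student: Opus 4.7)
The plan is to mirror the proofs of Lemmas \ref{lem:barQ} and \ref{lem:Q}, transposing every (anti)commutator and ordinary product into a $\lambda$-bracket and a normally ordered product respectively, and replacing the Clifford-invariants argument by its arc-space / Kazhdan-filtration lift. The proof thus splits naturally into existence, uniqueness, and the vanishing $[\hat Q_\lambda \hat Q]=0$.

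\emph{Existence.} First I would just write down the chiralization of the formula from Lemma \ref{lem:Q}, namely
\[
\hat Q \;=\; \sum_\alpha :\!(x_\alpha + \chi(x_\alpha))\,\psi_\alpha^*\!:\;-\;\tfrac12\sum_{\alpha,\beta,\gamma} c_{\alpha\beta}^\gamma\,:\!\psi_\alpha^*\psi_\beta^*\psi_\gamma\!:,
\]
which lies in $C^{\kappa,1}(\fing)$ (conformal weight $1$, fermionic charge $+1$). The defining identity $[\hat Q_\lambda (1\otimes\psi_\alpha)]=\hat\theta_\chi(x_\alpha)$ is then checked by applying the non-commutative Wick formula \eqref{eq:non-com-wick2} term by term, using the elementary $\lambda$-brackets $[(\psi_\beta)_\lambda \psi_\gamma^*]=\delta_{\beta\gamma}$, $[(x_\alpha)_\lambda \psi_\gamma]=0$, $[(\psi_\beta)_\lambda \psi_\gamma]=0$. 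The quadratic piece of $\hat Q$ contributes the $(x_\alpha+\chi(x_\alpha))\otimes\id$ summand of $\hat\theta_\chi(x_\alpha)$, and the cubic fermion piece reproduces $\id\otimes\hat\rho(x_\alpha)$ by comparison with the formula in Lemma \ref{lem:chiralization-of-va-from-clifford}.

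\emph{Uniqueness.} If two such elements exist, their difference $R\in C^{\kappa,1}(\fing)$ satisfies $[R_\lambda (1\otimes\psi_\alpha)]=0$ for every $\alpha$. Passing to the Kazhdan associated graded, the image $\bar R$ lies in $\gr_K C^{\kappa,1}(\fing)\subset\C[J\fing^*]\otimes\C[JT^*\Pi\finn]$ and satisfies $\{\bar R_\lambda (1\otimes\psi_\alpha)\}=0$ in the induced Poisson vertex algebra. The arc-space lift of Lemma \ref{lem:invariant-of-classical-Poisson}---the charge-$+1$ piece of the $\finn[t]$-invariants in $\C[JT^*\Pi\finn]$ vanishes---then forces $\bar R=0$, and induction on the Kazhdan degree gives $R=0$.

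\emph{Square-zero.} Since $\hat Q$ is odd, iterating the Jacobi identity \eqref{eq:Jacobi} and using the defining relation above yields
\[
[(1\otimes\psi_\beta)_{\nu}\,[(1\otimes\psi_\alpha)_{\mu}\,[\hat Q_\lambda\hat Q]]] \;=\; 0,
\]
the terminal vanishing being the Lie-$\lambda$-algebra homomorphism property of $\hat\theta_\chi$ (Lemma \ref{lem:theta-hat}). The same uniqueness argument as above, applied now in fermionic charge $+2$, then forces $[\hat Q_\lambda\hat Q]=0$.

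\emph{Main obstacle.} The technical heart is the existence computation: the non-commutative Wick formula generates extra integral terms absent in the Poisson setting, and in the $[\hat Q_\lambda\hat Q]$ calculation the level-$\kappa$ central term in $[(x_\alpha)_\lambda x_\beta]$ could a priori obstruct the vanishing. These dangers dissolve because $\kappa(\finn,\finn)=0$ for any invariant bilinear form on $\fing$ (so no central term survives), and because the structure constants $c_{\alpha\beta}^\gamma$ of $\finn$ satisfy the Jacobi identity, causing the remaining potentially problematic integrals to cancel telescopically.
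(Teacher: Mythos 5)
Your proposal is correct and follows essentially the same route as the paper, whose proof is literally ``similar to that of Lemma \ref{lem:barQ}'' together with the displayed explicit formula for $\hat Q(z)$: the explicit chiral formula for existence, a ``commutant of the $\psi_\alpha$'s sits in non-positive charge'' argument for uniqueness, and the vanishing of $[(1\otimes\psi_\beta)_{\nu}[(1\otimes\psi_\alpha)_{\mu}[\hat Q_{\lam}\hat Q]]]$ combined with that same commutant fact applied twice for $[\hat Q_{\lam}\hat Q]=0$. One cosmetic remark: your uniqueness step needs no detour through an associated graded (and the filtration you would want there is Li's canonical filtration, not the Kazhdan filtration, which the paper only defines in the finite-dimensional setting), since by skew-symmetry $[R_{\lam}(1\otimes\psi_\alpha)]=0$ means $\psi_{\alpha,n}R=0$ for all $n\geq 0$, and the common kernel of these operators in $\Caff(\fing)$ is already $V^{\kappa}(\fing)\otimes\Lambda(\finn[t^{-1}]t^{-1})\vac$, which is concentrated in charge $\leq 0$.
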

The field $\hat{Q}(z)$ is given explicitly as
\begin{equation*}
\hat{Q}(z)
= \sum_{\alpha \in \Delta_+}(x_\alpha+ \chi (x_\alpha))\otimes
\psi_\alpha^\ast (z) - 
\id \otimes \frac{1}{2}\sum_{\alpha, \beta, \gamma \in \Delta_+}
c_{\alpha, \beta}^\gamma \psi_\alpha^\ast (z)\psi_\beta^\ast (z)\psi_\gamma (z).
\end{equation*}
 Since
$\hat{Q}$ is odd and $[\hat{Q}_\lambda \hat{Q}]=0$, we have 
\begin{align*}
\hat{Q}_{(0)}^2=0.
\end{align*}
(Recall  that we write  $\hat{Q}(z)=\sum_{n\in \Z}\hat{Q}_{(n)}z^{-n-1}
$.)
So $(\Caff(\mf{g}), \hat{Q}_{(0)})$ is a cochain complex.

\begin{lem}
If it is nonzero,
the cohomology $H^\bullet(\Caff(\mf{g}), \hat{Q}_{(0)})$ 
inherits the vertex algebra structure from $\Caff(\mf{g})$.
\end{lem}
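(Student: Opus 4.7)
The plan is to observe that $\hat Q_{(0)}$ is an odd derivation of every $n$-th product on $\Caff(\fing)$, so that $\ker \hat Q_{(0)}$ is a vertex subsuperalgebra and $\im \hat Q_{(0)}$ a two-sided ideal in it, whence the quotient inherits a vertex algebra structure.

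Concretely, first I would extract from the Borcherds commutator identity \eqref{eq:com-formula} the specialization $m=0$:
\begin{align*}
[\hat Q_{(0)},a_{(n)}] = (\hat Q_{(0)}a)_{(n)}
\end{align*}
(with the appropriate sign if $a$ is odd, since $\hat Q$ is odd). This says that $\hat Q_{(0)}$ is a super-derivation of each bilinear product $a_{(n)}b$, i.e.
\begin{align*}
\hat Q_{(0)}(a_{(n)}b) = (\hat Q_{(0)}a)_{(n)}b + (-1)^{|a|}a_{(n)}(\hat Q_{(0)}b).
\end{align*}
Combined with $\hat Q_{(0)}^2=0$ (from Proposition \ref{Pro:Q-theta-hat}), this immediately implies that $\ker \hat Q_{(0)}$ is closed under all $a_{(n)}b$ and that $\im \hat Q_{(0)}$ is a two-sided ideal inside $\ker \hat Q_{(0)}$ with respect to all these products.

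Next, I would verify that the vacuum $|0\rangle$ and translation operator $T$ descend to the cohomology. The vacuum satisfies $\hat Q_{(0)}|0\rangle=0$ by the general vertex algebra axiom $a_{(n)}|0\rangle=0$ for $n\geq 0$, so the class $[|0\rangle]$ lies in $H^0$ and acts as the identity on cohomology classes (since $|0\rangle_{(-1)}$ is the identity). For $T$, the identity $[T,a_{(n)}]=-n\,a_{(n-1)}$, specialized to $n=0$, gives $[T,\hat Q_{(0)}]=0$, so $T$ preserves $\ker\hat Q_{(0)}$ and $\im\hat Q_{(0)}$ and descends to a translation operator on $H^\bullet$ satisfying $(Ta)(z)=\partial_z a(z)$.

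Finally, I would note that locality is inherited: if $a,b$ are cocycles, then $(z-w)^{N_{a,b}}[a(z),b(w)]=0$ in $\Caff(\fing)$, so the same relation holds modulo $\im\hat Q_{(0)}$. All the defining axioms of a vertex (super)algebra are preserved under passing to $H^\bullet(\Caff(\fing),\hat Q_{(0)})$, endowing it with a vertex algebra structure with $n$-th products $[a]_{(n)}[b]:=[a_{(n)}b]$, vacuum $[|0\rangle]$, and translation $T$. There is essentially no obstacle here beyond keeping track of signs due to the odd parity of $\hat Q$; the content is entirely formal once the derivation property following from the commutator formula \eqref{eq:com-formula} is in hand.
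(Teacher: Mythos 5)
Your proposal is correct and follows essentially the same route as the paper: the specialization $[\hat Q_{(0)},a_{(m)}]=(\hat Q_{(0)}a)_{(m)}$ of the Borcherds commutator formula \eqref{eq:com-formula} shows that $Z=\ker\hat Q_{(0)}$ is a vertex subalgebra and $B=\im\hat Q_{(0)}$ an ideal of $Z$, so the quotient $Z/B$ inherits the structure. The additional checks you record (vacuum, translation operator, locality descending to cohomology) are standard and only make the argument more complete than the paper's.
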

\begin{proof}
Set
 $Z:=\{v\in \Caff(\fing)\mid \hat{Q}_{(0)}v=0\}$,
$B=\hat{Q}_{(0)}\Caff(\fing)\subset Z$,
so that
 $H^\bullet(\Caff(\mf{g}), \hat{Q}_{(0)})=Z/B$.
From the commutator   formula \eqref{eq:com-formula},
we know that
\begin{equation*}
[\hat{Q}_{(0)},a_{(m)}]=(\hat{Q}_{(0)}a)_{(m)}\quad \forall a\in
 \Caff(\fing),
m\in \Z.
\end{equation*}
Thus,
if $a,b\in Z$,
then $ \hat Q_{(0)}(a_{(m)}b)=0$,
that is,
$a_{(m)}b\in Z$.
It follows that $Z$
 a vertex subalgebra of $\Caff(\fing)$.
Further, if $a\in Z$
 and $b=\hat Q_{(0)}b'\in B$,
then $ a_{(m)}b=a_{(m)}\hat Q_{(0)}b'=\hat{Q}_{(0)}(a_{(m)}b)\in B$.
Hence $B$
is an ideal of  $Z$.
This completes the proof.
\end{proof}

\begin{defn}
The {\em $W$-algebra} $\W^\kappa(\fing)=\W^\kappa(\mf{g},f)$ associated to $(\mf{g}, f, \kappa)$ 
 is defined to be the zero-th cohomology of the cochain complex
 $(\Caff(\mf{g}), \hat{Q}_{(0)})$,
that is,
\begin{equation*}
\W^\kappa(\mf{g}): =H^0(\Caff(\mf{g}), \hat{Q}_{(0)}).
\end{equation*}
\end{defn}

This definition of $\W^\kappa(\fing)$
is due to Feigin and Frenkel \cite{FF90}.
In \S \ref{subsection:Miura}
we show that the above   $\W^\kappa(\fing)$
 is identical to the 
original
$W$-algebra defined by Fateev and Lukyanov \cite{FatLyk88}.

\subsection{Cohomology of associated graded}

We have $\hat{Q}_{(0)} F^p\Caff(\mf{g}) \subset F^p\Caff(\mf{g})$, so
$(\gr^F\Caff(\mf{g}), \hat{Q}_{(0)})$ is also a cochain complex.
The cohomology $H^\bullet(\gr^F\Caff(\mf{g}), \hat{Q}_{(0)})$ inherits a Poisson vertex algebra structure from $\gr^F\Caff(\mf{g})$.

\begin{thm}\label{them:vanishing-associated-graded-BRST}
We have
$H^{i}(\gr^F\Caff(\mf{g}), \hat{Q}_{(0)})=0$ 
 for $i\ne 0$ and
 \begin{align*}
H^0(\gr^F\Caff(\mf{g}), \hat{Q}_{(0)}) \cong
 \C[J\mc{S}]
 \end{align*}as Poisson vertex algebras,
 where $\mc{S}$ is the  slice defined in \S \ref{sec:Review of Kostant's results}.
\end{thm}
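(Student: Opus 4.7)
The plan is to mimic the proof of Theorem~\ref{thm:KS-classical} at the level of arc spaces. First, using \eqref{eq:iso-gr-affineVA}, the analogous identification $\gr\Fock_\finn\cong\C[JT^*\Pi\finn]$ from Example~\ref{eq:free-fermions}, and \eqref{eq:product-of-jets}, one obtains the Poisson vertex algebra isomorphism
\begin{align*}
\gr^F\Caff(\fing)\cong \C[J\fing^*]\otimes\C[JT^*\Pi\finn]\cong \C[J(\fing^*\times T^*\Pi\finn)].
\end{align*}
Under this isomorphism $\hat Q_{(0)}$ acts as a derivation of the underlying differential algebra whose restriction to the subring $\bar C(\fing)=\C[\fing^*\times T^*\Pi\finn]$ recovers $\ad\bar Q$; thus $(\gr^F\Caff(\fing),\hat Q_{(0)})$ is the arc-space lift of the BRST complex $(\bar C(\fing),\ad\bar Q)$ of Theorem~\ref{thm:KS-classical}.

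Next, I would transport the bigrading $\bar C^{i,j}$ of that proof to $\gr^F\Caff(\fing)$ and decompose $\hat Q_{(0)}=d_++d_-$ with $d_-$ raising $j$ by $1$ and $d_+$ raising $i$ by $1$. The identity $\hat Q_{(0)}^2=0$ combined with bidegree considerations yields $d_-^2=d_+^2=[d_+,d_-]=0$, producing a spectral sequence $E_r\Rightarrow H^\bullet(\gr^F\Caff(\fing),\hat Q_{(0)})$ with $E_1=H^\bullet(d_-)$. Tensoring out the $\Lambda(\finn^*[t^{-1}])$ factor on which $d_-$ acts trivially, the $d_-$-complex is the arc-space Koszul complex for the sequence $\{T^n(x_\alpha-\chi(x_\alpha))\}_{\alpha\in\Delta_+,\,n\geq 0}$ cutting out $J\mu^{-1}(\chi)=J(\chi+\finn^\bot)$ inside $J\fing^*$. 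Since $\mu^{-1}(\chi)$ is an affine subspace of $\fing^*$, this sequence remains regular in $\C[J\fing^*]$, the Koszul complex is acyclic, and one obtains
\begin{align*}
E_1^{\bullet,q}=\delta_{q,0}\,\C[J\mu^{-1}(\chi)]\otimes\Lambda(\finn^*[t^{-1}]),
\end{align*}
with $d_+$ identified with the arc-space Chevalley--Eilenberg differential for the action of $\finn[[t]]$ on $\C[J\mu^{-1}(\chi)]$.

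The last step is to apply the arc functor to Theorem~\ref{them:Kostant}: by \eqref{eq:product-of-jets} this yields $JN\times J\mc{S}\isomap J\mu^{-1}(\chi)$, whence a $\finn[[t]]$-module decomposition $\C[J\mu^{-1}(\chi)]\cong\C[JN]\otimes\C[J\mc{S}]$ in which $\finn[[t]]$ acts only on $\C[JN]$. The theorem then reduces to the vanishing
\begin{align*}
H^\bullet(\finn[[t]],\C[JN])=\delta_{\bullet,0}\,\C,
\end{align*}
which I expect to be the main obstacle: it is the pro-algebraic analogue of the triviality of the Chevalley cohomology of a unipotent Lie algebra acting on the coordinate ring of its own group (equivalently, of the algebraic de~Rham cohomology of an affine space), and some care is required in the pro-algebraic setting. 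The cleanest route is probably to use the contracting $\C^*$-action $\rho$ of \eqref{eq:C-star-action}, which preserves the bigrading, and an equivariant spectral-sequence argument to reduce to the corresponding finite-dimensional vanishing already used inside the proof of Theorem~\ref{thm:KS-classical}. Granted this, $E_2=E_\infty\cong\C[J\mc{S}]$ is concentrated in bidegree $(0,0)$, and compatibility of Poisson vertex algebra structures is automatic since both sides are induced from the $\lambda$-bracket on $\gr^F\Caff(\fing)$.
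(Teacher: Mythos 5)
Your proposal follows essentially the same route as the paper's proof: the same bigrading and decomposition $\hat Q_{(0)}=\hat d_++\hat d_-$, the identification of the $\hat d_-$-cohomology via a Koszul complex as $\C[J\mu^{-1}(\chi)]\otimes\Lambda(\finn^*[t^{-1}])$, the identification of $\hat d_+$ with the Chevalley differential for $\finn[[t]]$ acting on $\C[J\mu^{-1}(\chi)]$, and the arc-space form $JN\times J\mc{S}\isomap J\mu^{-1}(\chi)$ of Kostant's theorem obtained from \eqref{eq:product-of-jets}. The only point the paper makes explicit that you omit is the convergence of the spectral sequence (justified there by decomposing $\gr^F\Caff(\fing)$ into the subcomplexes $F^p\Caff(\fing)/F^{p+1}\Caff(\fing)$, on each of which the induced filtration is regular), while the vanishing $H^{\bullet}(J\finn,\C[JN])=\delta_{\bullet,0}\C$ that you flag as the main remaining obstacle is simply asserted there without further argument.
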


 \begin{proof}
  The proof is an arc space analogue of that
  of
Theorem \ref{thm:KS-classical}.
  
The moment map $\mu: \mf{g}^\ast \longrightarrow \mf{n}^\ast$ for
 the $N$-action on $\mf{g}$ induces a $JN$-equivariant morphism
\begin{align*}
J\mu: J\mf{g}^\ast  \longrightarrow J\mf{n}^\ast.
\end{align*}
The pullback
 $(J{\mu})^*: \C[J\finn^*]\ra \C[ J\fing^*]$ is an embedding of vertex
 Poisson
algebras.

The point 
$\chi=J\chi$ of $J\finn^*$ corresponds to
the arc
$\alpha\in \Hom (D,\mf{n}^\ast) =\Hom (\C[\mf{n}^\ast], \C[[t]])
$ 
such that $\alpha(f)=\chi(x)$ for $x\in \finn \subset \C[\mf{n}^\ast]$.

We have
\begin{align*}
 (J\mu)^{-1}(\chi)=J (\mu^{-1}(\chi))=\chi+ J\finb\subset J \fing^*,
\end{align*}
and the adjoint action gives the isomorphism
\begin{align}
JN\times J \mc{S}\isomap  J\mu^{-1}(\chi)
\label{eq:jets-of-product}
\end{align}
by Theorem \ref{them:Kostant}
and \eqref{eq:product-of-jets}.

Now put
\begin{equation}
 C:=\gr \Caff(\mf{g})= \C[J\mf{g}^\ast]\otimes \Lambda (\mf{n}[t^{-1}]t^{-1})
  \otimes \Lambda
  (\mf{n}^\ast[t^{-1}])
\end{equation}
and define a bigrading on $C$ 
by
\begin{equation}
 C=\bigoplus_{i\leq 0, j\geq 0}C^{i, j}, \quad \mbox{where}\quad C^{i, j} =\C[J\mf{g}^\ast]\otimes
  \Lambda^{-i} (\mf{n}[t^{-1}]t^{-1})\otimes \Lambda^j(\mf{n}^\ast[t^{-1}]).
 \end{equation}

  As before,
  we can decompose the operator $\hat{Q}_{(0)}$ as the sum of two suboperators such that each of them preserves one grading but increase the other grading by $1$. Namely, we have
\begin{equation*}
\begin{split}
&\hat{Q}_{(0)}=\hat{d}_++\hat{d}_-,\\
&\hat{d}_-: C^{i, j} \longrightarrow C^{i, j+1},
\quad \hat{d}_+: C^{i, j} \longrightarrow C^{i+1, j}.
\end{split}
\end{equation*}
This shows that
\begin{equation*}
(\hat{d}_+)^2=(\hat{d}_-)^2=[\hat{d}_+, \hat{d}_-]=0.
\end{equation*}
Thus we can get a spectral sequence
$ E_r \Longrightarrow H^\bullet (C, \hat{Q}_{(0)})$
such that
\begin{equation*}
E_1= H^\bullet (C, \hat{d}_-),\quad
E_2= H^\bullet ( H^\bullet (C, \hat{d}_-), \hat{d}_+).
\end{equation*}
  This is a converging spectral sequence since
  $C$ is a direct sum of subcomplexes $F^p \Caff(\fing)/F^{p+1}\Caff(\fing)$,
  and the associated filtration is regular on each subcomplex.

The complex
$(C,\hat d_-)$ is the Koszul complex
with respect to the sequence
\begin{align*}
x_1t^{-1}-\chi(x_1),\dots,
x_{N}t^{-1}-\chi(x_{N}),
x_1t^{-2},x_2t^{-2},\dots, x_{N}t^{-2},
 x_1t^{-3}, x_2t^{-3},\dots
\end{align*}
where $N=\dim \finn$.
Hence we have 
\begin{equation}
H^i (C, \hat{d}_-)
=\delta_{i, 0}\C[J\mu^{-1}(\chi)]\otimes \Lambda(\mf{n}^\ast[t^{-1}]).
\label{eq:E1}
\end{equation}
Next, by \eqref{eq:E1},
the complex 
$(H^0 (C, \hat{d}_-),\hat d_+)$
is identical to the Chevalley complex for  
the Lie algebra cohomology 
$H^{\bullet}(J\finn,\C[J\mu^{-1}(\chi)])=H^{\bullet}(\finn[[t]],\C[J\mu^{-1}(\chi)])$.
By \eqref{eq:jets-of-product},
\begin{align*}
H^{i}(J\finn,\C[J\mu^{-1}(\chi)])=H^i(J\finn,\C[JN]\* \C[J \mc{S}])\\
=H^i(J\finn,\C[JN])\* \C[J \mc{S}]=\delta_{i,0}\C[J\mc{S}].
\end{align*}
We conclude that 
\begin{align*}
H^{i}(H^j (C, \hat{d}_-),\hat d_+)=\delta_{i,0}\delta_{j,0}\C[\mc{S}].
\end{align*}
Thus, the spectral sequence $E_r$ collapses at $E_2=E_{\infty}$,
 and we get the desired isomorphisms. 
 \end{proof}

\begin{thm}[\cite{FF90,FreBen04}]\label{thm:vanishing-W}
We have 
$H^{0}(\Caff(\mf{g}), \hat{Q}_{(0)})=0$ for $i\ne 0$
and 
 \begin{align*}
\gr \W^k(\mf{g})=\gr H^0(\Caff(\mf{g}), \hat{Q}_{(0)}) \cong H^0(\gr \Caff(\mf{g}), \hat{Q}_{(0)})=\C[J\mc{S}].
\end{align*}
In particular, $R_{W^k(\mf{g})}\cong \C[\mc{S}] \cong \C[\mf{g}]^G$,
so $\tilde{X}_{\W^k(\fing)}=\mc{S}$,
$SS(\W^k(\fing))=J\mc{S}$.
\end{thm}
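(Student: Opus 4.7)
The plan is to deduce the cohomology of $(\Caff(\mf{g}), \hat{Q}_{(0)})$ from the cohomology of its associated graded, which has already been computed in Theorem \ref{them:vanishing-associated-graded-BRST}, via a standard spectral sequence of a filtered complex. The only nontrivial point is convergence, and this will be arranged by passing to finite-dimensional conformal weight pieces.

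First I would verify that the canonical (Li) filtration $F^\bullet \Caff(\mf{g})$ is compatible with $\hat{Q}_{(0)}$. Since $\hat{Q} \in \Caff(\mf{g}) = F^0 \Caff(\mf{g})$ and the zeroth product is taken, Proposition \ref{Prp:filtration}(1) applied with $n=0$ yields $\hat{Q}_{(0)} F^p \Caff(\mf{g}) \subset F^p \Caff(\mf{g})$, so $(F^\bullet \Caff(\mf{g}), \hat{Q}_{(0)})$ is a filtered complex whose associated graded is precisely the complex treated in the previous theorem.

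Next, fix the conformal grading on $\Caff(\mf{g})$ (coming from a standard Hamiltonian $H$ for $V^\kappa(\mf{g}) \otimes \mc{F}$, possibly shifted by $\theta_0(h)$ as in the Kazhdan construction), so that $\hat{Q}_{(0)}$ preserves conformal weight and each eigenspace $\Caff(\mf{g})_\Delta$ is finite-dimensional. By Lemma \ref{lem:G=F}, within $\Caff(\mf{g})_\Delta$ the filtration $F^\bullet$ is finite: $F^{\Delta+1} \Caff(\mf{g})_\Delta = 0$. Restricting to each $\Delta$ gives a convergent spectral sequence
\begin{align*}
E_1^{p,q} = H^{p+q}\bigl(\gr^p_F \Caff(\mf{g})_\Delta,\, \hat{Q}_{(0)}\bigr) \;\Longrightarrow\; H^{p+q}\bigl(\Caff(\mf{g})_\Delta,\, \hat{Q}_{(0)}\bigr).
\end{align*}
By Theorem \ref{them:vanishing-associated-graded-BRST} the $E_1$ page is $\C[J\mc{S}]_\Delta$, concentrated in cohomological degree zero; hence every differential $d_r$ ($r \geq 1$) necessarily vanishes on degree reasons, $E_1 = E_\infty$, and we conclude both that $H^i(\Caff(\mf{g})_\Delta, \hat{Q}_{(0)}) = 0$ for $i \neq 0$ and that $\gr^F H^0(\Caff(\mf{g})_\Delta, \hat{Q}_{(0)}) \cong \C[J\mc{S}]_\Delta$ as Poisson vertex algebras (with the induced structure). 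Summing over $\Delta$ yields the first two assertions: $H^i = 0$ for $i \neq 0$ and $\gr \W^\kappa(\mf{g}) \cong \C[J\mc{S}]$.

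Finally, taking $F^0/F^1$ of both sides collapses $\C[J\mc{S}]$ onto its arc-space bottom layer $\C[\mc{S}]$, so $R_{\W^\kappa(\mf{g})} \cong \C[\mc{S}]$. By Corollary \ref{cor:iso-first} (identifying $\mf{g}$ with $\mf{g}^*$ via the trace form), $\C[\mc{S}] \cong \C[\mf{g}]^G$, whence $\tilde{X}_{\W^\kappa(\mf{g})} = \mc{S}$. The singular support statement then follows directly from the definition $SS(\W^\kappa(\mf{g})) = \Spec \gr \W^\kappa(\mf{g}) = J\mc{S}$. The main obstacle is not the algebra but convergence: without the conformal weight decomposition, the decreasing filtration $F^\bullet$ is infinite and the spectral sequence need not converge; Lemma \ref{lem:G=F} is precisely what makes the whole argument rigorous by reducing every computation to a finite filtration on a finite-dimensional vector space.
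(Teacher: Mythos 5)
Your overall strategy (filtered complex, spectral sequence with $E_1$-term given by Theorem \ref{them:vanishing-associated-graded-BRST}, convergence forced by finiteness in each conformal weight) is exactly the naive approach that the paper explicitly flags as problematic right after the statement of the theorem, and your fix for the convergence issue does not work. The difficulty is that there is no Hamiltonian on the full complex $\Caff(\fing)$ that is simultaneously (a) preserved by $\hat Q_{(0)}$ and (b) has finite-dimensional eigenspaces on which the Li filtration terminates. The standard Hamiltonian $H$ has finite-dimensional, bounded-below weight spaces (so Lemma \ref{lem:G=F} gives $F^{\Delta+1}\Caff(\fing)_\Delta=0$), but $H$ does \emph{not} commute with $\hat Q_{(0)}$ because of the term $\sum_\alpha \chi(x_\alpha)\psi_{\alpha,1}$, so $\Caff(\fing)_\Delta$ is not a subcomplex. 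The corrected Hamiltonian $H_\W$ (your ``shift by $\theta_0(h)$'') does commute with $\hat Q_{(0)}$, but its grading on $\Caff(\fing)$ is unbounded below and its homogeneous components are infinite-dimensional (e.g.\ $(x_{-\theta})_{(-1)}^k|0\ket$ has $H_\W$-weight $k(2-n)$, which can be compensated by fermionic modes of positive weight in infinitely many ways); consequently neither the finite-dimensionality nor the termination $F^{\Delta+1}=0$ of the Li filtration holds on an $H_\W$-weight space. So your spectral sequence is not known to converge, which is precisely the gap the paper's proof is designed to close: one first decomposes $\Caff(\fing)\cong \Caff(\fing)_-\otimes\Caff(\fing)_+$ as complexes, shows $H^i(\Caff(\fing)_+)=\delta_{i,0}\C$, and then runs the spectral sequence only on $\Caff(\fing)_-$, where the $H_\W$-grading is bounded with finite-dimensional components.

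There is a second, smaller gap. Even granting collapse at $E_1$, what the spectral sequence computes is $\gr^G H^0$ for the filtration $G^\bullet$ \emph{induced} on cohomology by $F^\bullet\Caff(\fing)$, and this is a priori coarser than the canonical Li filtration of the vertex algebra $\W^\kappa(\fing)$ itself. You pass from one to the other without comment. The paper proves they agree by a separate argument: the inclusion $F^p\W^\kappa(\fing)\subset G^p\W^\kappa(\fing)$ gives a surjection $\gr\W^\kappa(\fing)\twoheadrightarrow\gr^G\W^\kappa(\fing)\cong\C[J\mc{S}]$ (using that $\C[J\mc{S}]$ is generated by $\C[\mc{S}]$ as a differential algebra), and an Euler--Poincar\'e character comparison shows this surjection is an isomorphism. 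You need some version of this step before concluding $\gr\W^\kappa(\fing)\cong\C[J\mc{S}]$ and $R_{\W^\kappa(\fing)}\cong\C[\mc{S}]$.
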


The proof of Theorem \ref{thm:vanishing-W} will be given in \S \ref{Proof of Theorem {thm:vanishing-W}}.

Note that
there is a spectral sequence for $H^\bullet(\Caff(\mf{g}), \hat{Q}_{(0)})$
such that
$E_1^{\bullet,q}=H^q(\gr \Caff(\mf{g}),\hat{Q}_{(0)})$.
Hence 
Theorem \ref{thm:vanishing-W} would immediately follow from Theorem \ref{them:vanishing-associated-graded-BRST}
if this spectral sequence converges.
However, this is not clear at this point because our algebra is not Noetherian.

\begin{Rem}
The complex
$(\Caff(\fing),\hat Q_{(0)})$
is identical to Feigin's standard complex for the semi-infinite $\finn[t,t^{-1}]$-cohomology
$H^{\frac{\infty}{2}+\bullet}(\finn[t,t^{-1}],V^{\kappa}(\fing)\* \C_{\hat\chi})$
with coefficient in the $\fing[t,t^{-1}]$-module  $V^{\kappa}(\fing)\* \C_{\hat\chi}$ (\cite{Feu84}),
where $\C_{\hat \chi}$ is the one-dimensional representation of $\finn[t,t^{-1}]$ defined by the character
$\hat \chi:\finn[t,t^{-1}]\ra \C$, $xt^n\mapsto \delta_{n,-1}\chi(x)$:
 \begin{align}
H^{\bullet}(\Caff(\fing),\hat Q_{(0)})\cong H^{\frac{\infty}{2}+\bullet}(\finn[t,t^{-1}],V^{\kappa}(\fing)\* \C_{\hat \chi}).
 \label{eq:deltaef-of-J}
 \end{align}
\end{Rem}

\subsection{$W$-algebra associated with $\mf{sl}_n$}

It is straightforward to generalize the above definition to
an arbitrary simple Lie algebra $\fing$.
In particular, by replacing  $V^{\kappa}(\mf{gl}_n)$ with  $V^{k}(\mf{sl}_n)$, $k\in \C$,
we  define
the $W$-algebra
\begin{align}
 \W^k(\mf{sl}_n):=H^0(C^{k}(\mf{sl}_n),\hat {Q}_{(0)})
 \label{eq:sln-W}
\end{align}
associated with $(\mf{sl}_n,f)$ at level $k$.

We have 
$V^{\kappa}(\mf{gl}_n)= \pi_{\kappa}\otimes V^{\kappa}(\mf{sl}_n)$,
where $\kappa|_{\mf{sl}_n\times \mf{sl}_n}=k\kappa_0$
and $\pi_{\kappa}$ is the rank $1$ {\em Heisenberg vertex algebra} generated by
$I(z)=\sum_{i=1}^n e_{ii}(z)$
with $\lambda$-bracket $[I_{\lam}I]=\kappa(I,I)\lam$.
It follows that
$C^k(\mf{gl}_n)=\pi_{\kappa}\otimes C^k(\mf{sl}_n)$.
As easily seen,
$\hat{Q}_{(0)}I=0$.
Hence
$H^{\bullet}(\Caff(\mf{gl}_n))=\pi_{\kappa}\otimes H^{\bullet}(C^k(\mf{sl}_n))$,
so that
\begin{align*}
\W^\kappa(\mf{gl}_n)=\W^k(\mf{sl}_n)\otimes \pi_{\kappa}.
\end{align*}
In particular if we choose the
form $\kappa$ to be $k\kappa_0$,
we find that
$\pi_{\kappa}$ belongs to the center of $\W^\kappa(\mf{gl}_n)$
as $\pi_{\kappa}$ belongs to the center of $\Caff(\mf{gl}_n)$.
Thus,
$\W^k(\mf{sl}_n)$ is isomorphic to the
quotient of $\W^{k\kappa_0}(\mf{gl}_n)$ by the ideal generated by $I_{(-1)}|0\ket$.

\subsection{The grading of  $\W^\kappa(\fing)$}
The standard conformal grading of $\Caff(\fing)$ is given by the
Hamiltonian
$H$ defined by
\begin{align*}
H|0\ket =0,\quad[H,x_{(n)}]=-n x_{(n)}\quad(x\in \fing),\\
\quad [H,\psi_{\alpha,n}]=-n\psi_{\alpha,n},\quad [H,\psi_{\alpha,n}^*]=-n\psi_{\alpha,n}^*.
\end{align*}
However
$H$ is not well-defined in $\W^\kappa(\fing)$ since $H$ does not commute
with the action of 
$$\hat{Q}_{(0)}=\sum_{\alpha\in \Delta_+}\sum_{k\in \Z}(x_{\alpha})_{(-k)}\psi_{\alpha,k}
+\sum_{\alpha\in \Delta_+}\chi(x_{\alpha})\psi_{\alpha,1}
-\sum_{\alpha,\beta,\gamma\in \Delta_+}\sum_{k+l+m=0}c_{\alpha,\beta}^{\gamma}\psi_{\alpha,k}^*\psi_{\beta,l}^*
\psi_{\gamma,m}.$$
Here and below we omit the tensor product sign.

To remedy this,
define the linear operator $H_\W$
by
\begin{align*}
H_\W|0\ket =0,
\quad 
[H_\W,(x_{i})_{(n)}]=-n (x_{i})_{(n)}\quad(i\in I),\\
\quad[H_\W,(x_{\alpha})_{(n)}]=(\alpha(\rho^{\vee})-n) (x_{\alpha})_{(n)}\quad(\alpha\in \Delta),\\
\quad [H_\W,\psi_{\alpha,n}]=(\alpha(\rho^{\vee})-n)\psi_{\alpha,n},\quad [H_\W,\psi_{\alpha,n}^*]=(-\alpha(\rho^{\vee})-n)\psi_{\alpha,n}^*,
\quad (\alpha\in \Delta_+).
\end{align*}
Here $\rho^{\vee}=1/2h$, where $h$ is defined in \eqref{eq:sl2-triple}.
Set $\Caff(\fing)_{\Delta,new}=\{v\in \Caff(\fing)\mid H_\W c=\Delta c\}$.
Then
\begin{align}
\Caff(\fing)=\bigoplus_{\Delta\in \Z}\Caff(\fing)_{\Delta,new}.
\label{eq:newgrading-of-C}
\end{align}
Since 
$[\hat{Q},H_\W]=0$,
$\Caff(\fing)_{\Delta,new}$ is a subcomplex of $\Caff(\fing)$.
We have
\begin{align*}
H^{\bullet}(\Caff(\fing),\hat{Q}_{(0)})=\bigoplus_{\Delta\in \Z}H^{\bullet}(\Caff(\fing),\hat{Q}_{(0)})_{\Delta},
\quad H^{\bullet}(\Caff(\fing),\hat{Q}_{(0)})_{\Delta}=H^{\bullet}(\Caff(\fing)_{\Delta,new},\hat{Q}_{(0)}).
\end{align*}
In particular
$\W^\kappa(\fing)=\bigoplus_{\Delta\in \Z}\W^\kappa(\fing)_{\Delta}$.
Note that the grading \eqref{eq:newgrading-of-C}
is not bounded from below.

If $k\ne -n$ then the action of $H_\W$  on the vertex subalgebra  $\W^k(\mf{sl}_n)$ of $ \W^k(\fing)$ is inner:
Set
\begin{align*}
L(z)=L_{sug}(z)+\rho^{\vee}(z)+L_{\mc{F}}(z)=\sum_{n\in \Z}L_nz^{-n-1},
\end{align*}
where $L_{sug}(z)$ is the Sugawara field of $V^{k}(\mf{sl}_n)$:
$$L_{sug}(z)=\frac{1}{2(k+n)}\sum_{a}:x_a(z)x^a(z):,$$
and
$$L_{\mc{F}}(z)=\sum_{\alpha\in \Delta_+}(\on{ht}(\alpha):\partial_z \psi_{\alpha}(z)\psi_{\alpha}^*(z):
+(1-\on{ht}(\alpha)):\partial_z \psi_{\alpha}^*(z)\psi_{\alpha}(z):).$$
Here $\{x_a\}$ is a basis of $\mf{sl}_n$ and $\{x^a\}$ is the dual basis of $\{x_a\}$ with respect to $(~|~)$.
Then $\hat Q_{(0)}L=0$,
and so
$L$ defines an element of $\W^k(\mf{sl}_n)$.
It is a conformal vector of $\W^k(\mf{sl}_n)$,
that is to say,
$L_0=H_\W$ and $L_{-1}=T$ 
and
\begin{align*}
[L_m,L_n]=(m-n)L_{m+n}+\frac{m^3-m}{12}\delta_{m,n}c,
\end{align*}
where $c\in \C$ is the central charge of $L$,
which is in this case  given by
$$(n-1)(1-n(n+1)(n+k-1)^2/(n+k).$$
\subsection{Decomposition of BRST complex}
We extend the map in \S \ref{subsection:classical Miura}
to the linear map
 $\widehat{\theta}_0:\fing[t,t^{-1}]\ra \Caff(\fing)$ by setting
 \begin{align*}
  \widehat{\theta}_0(x_a(z))=x_a(z)+\sum_{\beta,\gamma\in \Delta_+}c_{a,\beta}^{\gamma}
 : \psi_{\gamma}(z)\psi_{\beta}^*(z):.
 \end{align*}

 \begin{prp}\label{Pro:decomposition-def}
  \begin{enumerate}
   \item
	 The correspondence
   \begin{align*}
    x_a(z)\mapsto J_a(z):=\widehat\theta_0(x_a(z))
    \quad (x_a\in \mf{b}_-)   \end{align*}
 defines
 a vertex algebra embedding
 $V^{\kappa_\mf{b}}(\mf{b})\hookrightarrow \Caff(\fing)$,
 where $\kappa_\mf{b}$ is the bilinear form on $\finb$ defined by
 $\kappa_\mf{b}(x,y)=\kappa(x,y)+\frac{1}{2}\kappa_\fing(x,y)$.
We have
$$[{J_a}_\lam \psi_\alpha^*]=\sum_{\beta\in \Delta_+}c_{a,\beta}^{\alpha}\psi_{\beta}^*.$$
 \item
	 The correspondence
   \begin{align*}
    x_\alpha(z)\mapsto J_\alpha(z):=\widehat\theta_0(x_\alpha)
    \quad (x_\alpha\in \mf{n})   \end{align*}
 defines
 a vertex algebra embedding
 $V(\mf{n})\hookrightarrow \Caff(\fing)$.
We have
$$[{J_\alpha}_\lam \psi_\beta]=\sum_{\beta\in \Delta_+}c_{\alpha,\beta}^{\gamma}\psi_{\gamma}^*.$$
  \end{enumerate}
 \end{prp}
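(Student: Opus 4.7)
The plan is to reduce both parts to direct OPE checks and then establish injectivity on the level of associated graded objects. By the universal property of the affine vertex algebras $V^{\kappa_\mf{b}}(\mf{b}_-)$ and $V(\mf{n})$, it suffices to verify
\begin{align*}
[J_a{}_\lambda J_b] = J_{[x_a,x_b]} + \kappa_\mf{b}(x_a,x_b)\lambda \quad (x_a,x_b\in \mf{b}_-), \qquad [J_\alpha{}_\lambda J_\beta] = J_{[x_\alpha,x_\beta]} \quad (x_\alpha,x_\beta\in \mf{n}).
\end{align*}
Since $V^\kappa(\fing)$ and $\Fock$ sit in commuting tensor factors of $\Caff(\fing)$, each bracket decomposes as
\begin{align*}
[J_a{}_\lambda J_b] = [x_a{}_\lambda x_b] + [\hat\rho(x_a){}_\lambda \hat\rho(x_b)],
\end{align*}
where $\hat\rho(x_a):=\sum_{\beta,\gamma\in\Delta_+}c_{a,\beta}^\gamma {:}\psi_\gamma\psi_\beta^*{:}$ is the ghost companion of $J_a$.

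The first summand equals $[x_a,x_b]+\kappa(x_a,x_b)\lambda$ by definition of $V^\kappa(\fing)$. The second summand I would compute via the non-commutative Wick formulas \eqref{eq:non-com-wick1}--\eqref{eq:non-com-wick2} applied to each pair of normally ordered bilinears in the fermions. The single-contraction contributions, after combining the two possible orderings, recover $\hat\rho([x_a,x_b])$; here it is crucial that $\mf{b}_-$ is a Lie subalgebra of $\fing$, so that $[x_a,x_b]\in\mf{b}_-$ and $\hat\rho([x_a,x_b])$ is defined by the same formula used for the $J$'s --- this step mirrors the Lie-algebra computation of Lemma \ref{lem:Lie-alg-hom-Clifford}. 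The double-contraction contributions collapse to a scalar multiple of $\lambda$; identifying the resulting sum of structure constants via the trace formula $\kappa_\fing(x,y)=\on{tr}(\ad x\, \ad y)$ and a root-space bookkeeping shows that this scalar is exactly $\tfrac{1}{2}\kappa_\fing(x_a,x_b)$, the factor $1/2$ coming from the restriction of the trace to the positive-root block. Summing yields $J_{[x_a,x_b]} + (\kappa + \tfrac{1}{2}\kappa_\fing)(x_a,x_b)\lambda$, as required. I expect this double-contraction identification, together with its fermionic sign bookkeeping, to be the main technical obstacle.

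Part (2) follows from the same computation specialized to $\mf{n}\times \mf{n}$: on $\mf{n}$ both $\kappa$ and $\kappa_\fing$ vanish, since any symmetric invariant form pairs $\fing_\alpha$ with $\fing_\beta$ trivially unless $\alpha+\beta=0$, which is impossible for $\alpha,\beta\in\Delta_+$. Hence the central term disappears and we recover the level-zero relations of $V(\mf{n})$. The displayed $\lambda$-brackets with $\psi_\alpha^*$ and $\psi_\beta$ come from one-step Wick calculations: the affine piece of $J_a$ commutes with the fermions, so only $\hat\rho(x_a)$ contributes, and a single contraction inside ${:}\psi_\gamma\psi_\beta^*{:}$ against the target fermion produces the stated formulas (up to the standard fermionic signs). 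Finally, to promote each homomorphism to an embedding I pass to the canonical Li filtration: the principal symbol of $J_a$ in $\gr \Caff(\fing)\cong \C[J\fing^*]\otimes \C[JT^*\Pi\mf{n}]$ is $x_a\in \C[J\fing^*]$ (the ghost term has strictly larger conformal weight, hence lives in a higher $F$-piece), so the induced maps $\C[J\mf{b}_-^*]\to \C[J\fing^*]$ and $\C[J\mf{n}^*]\to \C[J\fing^*]$ are the injections dual to the natural surjections $\fing^*\twoheadrightarrow \mf{b}_-^*$ and $\fing^*\twoheadrightarrow \mf{n}^*$; injectivity of the original maps follows.
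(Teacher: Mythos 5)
The paper states this proposition without proof, so there is no ``paper's argument'' to compare against; your strategy --- universal property of the affine vertex algebra plus a Wick-formula computation of $[J_a{}_{\lambda}J_b]$, splitting into affine and ghost contributions --- is the standard and correct route, and your identification of the double-contraction cocycle with $\on{tr}_{\finn}(\ad x_a\,\ad x_b)=\tfrac12\kappa_{\fing}(x_a,x_b)$ (concentrated on $\finh\times\finh$ by weight reasons) is the right bookkeeping. The vanishing of the central term on $\finn\times\finn$ in part (2) is also correct.

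There is, however, a genuine error in your injectivity step. You claim the ghost term of $J_a$ ``has strictly larger conformal weight, hence lives in a higher $F$-piece,'' so that the symbol of $J_a$ in $\gr\Caff(\fing)$ is just $x_a$. This is false: $\psi_\gamma$ has conformal weight $1$ and $\psi^*_\beta$ has conformal weight $0$, so ${:}\psi_\gamma\psi^*_\beta{:}=(\psi_\gamma)_{(-1)}(\psi^*_\beta)_{(-1)}\vac$ has the same conformal weight as $(x_a)_{(-1)}\vac$ and lies in $F^0\Caff(\fing)\setminus F^1\Caff(\fing)$; its image in $R_{\Caff(\fing)}=\C[\fing^*]\otimes\overline{Cl}$ is $x_\gamma x_\beta^*\neq 0$ (this is exactly the identification $R_{\mc F}\cong\overline{Cl}$ in Example \ref{eq:free-fermions}). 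Hence the symbol of $J_a$ is $x_a\otimes 1+1\otimes\bar\rho(x_a)$, i.e.\ $\bar\theta_0(x_a)$, not $x_a$, and the induced map is not the injection dual to $\fing^*\twoheadrightarrow\finb_-^*$ as you describe it. The conclusion is still true and easily repaired: compose the induced map $\C[J\finb_-^*]\to\C[J\fing^*]\otimes\Lambda(\finn[t^{-1}]t^{-1})\otimes\Lambda(\finn^*[t^{-1}])$ with the augmentation killing all fermionic generators (a homomorphism of differential algebras); the composite sends $x_a\mapsto x_a$ and is the standard injection, which forces injectivity of $\gr$ of your map and hence of the map itself. Equivalently, filter $\Caff(\fing)$ by total fermion number: modulo terms of positive fermion charge, a PBW monomial $(J_{a_1})_{(-n_1)}\cdots(J_{a_r})_{(-n_r)}\vac$ equals $(x_{a_1})_{(-n_1)}\cdots(x_{a_r})_{(-n_r)}\vac\otimes\vac$, and these are linearly independent in $V^{\kappa}(\fing)$. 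With that correction your proof is complete up to the fermionic sign checks you already flagged.
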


Let
$\Caff(\fing)_+$ denote the subalgebra of
$\Caff(\fing)$ generated by
$J_{\alpha}(z)$ and $\psi_{\alpha}(z)$ with $\alpha\in \Delta_+$,
and let $\Caff(\fing)_-$ denote the subalgebra generated by
$J_{a}(z)$ and $\psi_{\alpha}^*(z)$ with $a\in \Delta_-\sqcup I$, $\alpha\in \Delta_+$.

The proof of the following assertions are parallel to that of 
Lemma \ref{lem:linear-dec-finite},
Lemma  \ref{lem-decom-finite} and Proposition \ref{prp:subcomp}.

\begin{lem}
 The multiplication map gives a linear isomorphism
 \begin{align*}
  \Caff(\fing)_-\*\Caff(\fing)_+\isomap \Caff(\fing).
 \end{align*}
\end{lem}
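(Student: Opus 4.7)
The plan is to imitate the argument of Lemma~\ref{lem:linear-dec-finite} in the vertex-algebra setting, combining a PBW-type spanning statement for each factor with a comparison of graded characters. The starting point is the standard vector-space basis of $\Caff(\fing)=V^\kappa(\fing)\otimes \Fock$ consisting of ordered normally-ordered monomials in the modes $x_{a,n}$ ($a\in \Delta_-\sqcup I\sqcup \Delta_+$, $n<0$), $\psi^*_{\alpha,n}$ ($\alpha\in \Delta_+$, $n\leq 0$) and $\psi_{\alpha,n}$ ($\alpha\in \Delta_+$, $n<0$), arranged so that the $\finb_-$-type generators precede the $\finn_+$-type generators.

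The first step is to re-express this basis in terms of the new generators $J_{a,n}$ in place of $x_{a,n}$. Introduce the increasing filtration $G_\bullet$ on $\Caff(\fing)$ counting the total number of Lie-algebra modes appearing in a monomial. Since $J_{a,n}$ equals $x_{a,n}$ plus a correction that is quadratic in the fermion modes, the substitution $x\mapsto J$ is upper-triangular with respect to $G_\bullet$, and on each associated-graded piece the two families of ordered monomials coincide. A finite-dimensional weight-space argument (each conformal weight space of $\Caff(\fing)$ has only finitely many non-zero $G$-filtered pieces) then shows that ordered monomials in $\{J_{a,n}\}\cup \{\psi^*_{\alpha,n}\}\cup \{\psi_{\alpha,n}\}$ also form a vector-space basis of $\Caff(\fing)$.

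Next, I verify that ordered monomials in the ``$-$-generators'' $\{J_{a,n}\mid a\in \Delta_-\sqcup I\}\cup \{\psi^*_{\alpha,n}\mid \alpha\in \Delta_+\}$ span $\Caff(\fing)_-$, and analogously for $\Caff(\fing)_+$. Using Proposition~\ref{Pro:decomposition-def} together with the commutator formula \eqref{eq:com-formula}, one checks that every $\lambda$-bracket among the $-$-generators stays within their $\C$-linear span, so a standard reordering induction via \eqref{eq:com-formula} closes. Granted this, the PBW-type basis in the new generators factors each basis element of $\Caff(\fing)$ uniquely as (an ordered $-$-monomial) times (an ordered $+$-monomial), giving surjectivity of the multiplication map. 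Injectivity follows by comparing graded characters: $\Caff(\fing)_\pm$ have the graded characters of $S(\finb_-[t^{-1}]t^{-1})\otimes \Lambda(\finn^*[t^{-1}])$ and $S(\finn_+[t^{-1}]t^{-1})\otimes \Lambda(\finn[t^{-1}]t^{-1})$ respectively, whose tensor product has the same character as $\Caff(\fing)$ thanks to the vector-space decomposition $\fing=\finb_-\oplus \finn_+$.

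The main obstacle is the closure-plus-reordering step identifying ordered monomials in the $J_a$'s and $\psi_\alpha^*$'s as a basis of $\Caff(\fing)_-$ (and its analogue for $\Caff(\fing)_+$). One has to track both the quantum correction in the definition of $J_a$ and the affine cocycle appearing in the $\lambda$-brackets of $V^{\kappa_{\mf{b}}}(\mf{b})$ from Proposition~\ref{Pro:decomposition-def}, and verify that every rewrite lands in a strictly lower piece of the filtration on a suitable ordering of monomials, so that the induction terminates.
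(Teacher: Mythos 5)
Your argument is correct and is exactly the route the paper intends: the paper gives no details here, simply asserting that the proof is parallel to the finite-dimensional Lemma \ref{lem:linear-dec-finite} for $C(\fing)=U(\fing)\otimes Cl$, which is the same PBW-plus-triangularity argument (the substitution $x_a\mapsto J_a$ is unipotent for the filtration by the number of Lie-algebra modes, the $-$- and $+$-generators each close under $\lambda$-brackets, and the character comparison with respect to the standard conformal grading $H$ finishes the proof). Your write-up just makes explicit the details the paper leaves to the reader.
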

\begin{lem}\label{lem-decom}
 The subspaces $\Caff(\fing)_-$
 and $\Caff(\fing)_+$ are subcomplexes of
 $(\Caff(\fing), \hat Q_{(0)})$.
 Hence $\Caff(\fing)\cong \Caff(\fing)_-\* \Caff(\fing)_+$ as complexes.
\end{lem}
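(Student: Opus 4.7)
The plan is to follow the three-step finite-dimensional argument (Lemmas \ref{lem:linear-dec-finite}, \ref{lem-decom-finite} and Proposition \ref{prp:subcomp}) essentially verbatim, replacing commutators by $\lambda$-bracket computations. The central observation is that $\hat Q_{(0)}$ is an odd derivation of every $n$-th product on $\Caff(\fing)$: the commutator formula \eqref{eq:com-formula} gives $[\hat Q_{(0)}, a_{(n)}] = (\hat Q_{(0)} a)_{(n)}$ for all $a$ and all $n \in \Z$. Consequently, to verify that $\Caff(\fing)_{\pm}$ is $\hat Q_{(0)}$-stable it is enough to evaluate $\hat Q_{(0)}$ on the prescribed generators.

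For $\Caff(\fing)_+$ this is immediate. Proposition \ref{Pro:Q-theta-hat} gives $\hat Q_{(0)}\psi_\alpha = \hat\theta_\chi(x_\alpha) = J_\alpha + \chi(x_\alpha)|0\rangle$, which lies in $\Caff(\fing)_+$, and since $\hat Q_{(0)}^2 = 0$ we obtain $\hat Q_{(0)} J_\alpha = \hat Q_{(0)}^2\psi_\alpha - \chi(x_\alpha)\hat Q_{(0)}|0\rangle = 0$. For $\Caff(\fing)_-$ the required computation is the vertex-algebra lift of the two formulas displayed in the proof of Lemma \ref{lem-decom-finite}. Using the explicit expression for $\hat Q(z)$ together with the elementary OPEs $[\psi_\beta^*{}_\lambda \psi_\alpha^*]=0$ and $[\psi_\delta{}_\lambda \psi_\alpha^*]=\delta_{\alpha,\delta}$, one finds
\begin{equation*}
\hat Q_{(0)}\psi_\alpha^* \;=\; -\tfrac{1}{2}\sum_{\beta,\gamma\in\Delta_+} c_{\beta,\gamma}^{\,\alpha}\,{:}\psi_\beta^*\psi_\gamma^*{:},
\end{equation*}
which lies in $\Caff(\fing)_-$. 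For $a \in \Delta_-\sqcup I$ the computation of $\hat Q_{(0)} J_a$ is carried out using the non-commutative Wick formulas \eqref{eq:non-com-wick1}--\eqref{eq:non-com-wick2} together with the $\lambda$-bracket $[J_a{}_\lambda \psi_\alpha^*] = \sum_{\beta\in\Delta_+} c_{a,\beta}^{\,\alpha}\psi_\beta^*$ supplied by Proposition \ref{Pro:decomposition-def}; the output is a linear combination of normally ordered products of the $J_b$ ($b\in\Delta_-\sqcup I$) and $\psi_\beta^*$'s, possibly decorated by $T$-derivatives, and hence lies in $\Caff(\fing)_-$.

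Once both $\Caff(\fing)_{\pm}$ are known to be subcomplexes, the isomorphism $\Caff(\fing)\cong \Caff(\fing)_-\otimes \Caff(\fing)_+$ as complexes is immediate from the linear isomorphism of the preceding lemma, since the derivation property of $\hat Q_{(0)}$ translates the restricted differentials on the two factors into the total differential on the tensor product. The expected main obstacle is the explicit computation of $\hat Q_{(0)} J_a$ for $a\in \Delta_-\sqcup I$: the non-commutative Wick formula introduces ``quantum correction'' terms involving the invariant form $\kappa$ and $T$-derivatives that have no analogue in the finite-dimensional formula for $[Q,\theta_0(x_a)]$. What one must verify is that all such corrections are built exclusively from the generators $J_b$ ($b\in\Delta_-\sqcup I$) and $\psi_\beta^*$ of $\Caff(\fing)_-$; this follows by inspection of the Wick formula together with Proposition \ref{Pro:decomposition-def}, but it is precisely the source of the additional delicacy relative to the finite-dimensional argument.
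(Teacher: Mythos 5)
Your proposal is correct and takes essentially the same route as the paper, which gives no separate proof here but simply states that the argument is parallel to Lemmas \ref{lem:linear-dec-finite}, \ref{lem-decom-finite} and Proposition \ref{prp:subcomp}; your write-up is precisely that parallel argument (the derivation property $[\hat Q_{(0)},a_{(n)}]=(\hat Q_{(0)}a)_{(n)}$ reducing everything to the generators, the trivial check for $\Caff(\fing)_+$ via Proposition \ref{Pro:Q-theta-hat} and $\hat Q_{(0)}^2=0$, and the vertex-algebra lift of the two displayed formulas in the proof of Lemma \ref{lem-decom-finite} for $\Caff(\fing)_-$). You also correctly identify the only genuinely new point relative to the finite case, namely that the Wick-formula correction terms (the $\kappa$- and $T$-derivative terms in $\hat Q_{(0)}J_a$) still lie in the vertex subalgebra $\Caff(\fing)_-$.
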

  \begin{thm}[\cite{BoeTji94,FreBen04}]
  We have $H^{i}(\Caff(\fing)_+,\hat{Q}_{(0)})=\delta_{i,0}\C$.
  Hence
   $H^{\bullet}(\Caff(\fing),\hat{Q}_{(0)})=H^{\bullet}(\Caff(\fing)_-,\hat Q_{(0)})$.
   In particular
   $\W^\kappa(\fing)=H^0({\Caff(\fing)_-},\hat Q_{(0)})$.
 \end{thm}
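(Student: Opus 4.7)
The plan is to mirror the finite-dimensional argument in Proposition \ref{prp:subcomp} in the vertex algebra setting, with modes playing the role of generators. By Lemma \ref{lem-decom} we already have $\Caff(\fing)\cong \Caff(\fing)_-\otimes \Caff(\fing)_+$ as complexes; combined with a K\"unneth argument (applicable weight by weight since every conformal weight space of $\Caff(\fing)_\pm$ is finite-dimensional), the second and third claims follow formally from the first. Thus everything reduces to proving $H^i(\Caff(\fing)_+,\hat Q_{(0)})=\delta_{i,0}\C$.

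For this key computation, Proposition \ref{Pro:decomposition-def}(2) identifies $\Caff(\fing)_+$ as the vertex subalgebra generated by the affine fields $J_\alpha(z)$ and $\psi_\alpha(z)$ for $\alpha\in\Delta_+$, and a PBW-style argument realizes it as a vector space as $V(\finn)\otimes\Lambda(\finn[t^{-1}]t^{-1})$. Proposition \ref{Pro:Q-theta-hat} together with the Borcherds commutator formula \eqref{eq:com-formula} then yields, on the fermion generators,
\begin{align*}
\hat Q_{(0)}(\psi_{\alpha,n}\vac)=J_{\alpha,n}\vac+\chi(x_\alpha)\delta_{n,-1}\vac,
\end{align*}
so $\hat Q_{(0)}$ acts as a chiral Koszul-type differential sending $\psi_{\alpha,n}$ to $J_{\alpha,n}+\chi(x_\alpha)\delta_{n,-1}$.

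Next, I introduce an increasing filtration on $\Caff(\fing)_+$ whose associated graded is supercommutative --- the most natural choice is a PBW-type filtration on the $V(\finn)$-factor, or equivalently the Kazhdan filtration of $\Caff(\fing)$ restricted to $\Caff(\fing)_+$. On the associated graded the $\bar J_{\alpha,n}$ all mutually (super)commute, and the differential takes the form $\bar\psi_{\alpha,n}\mapsto \bar J_{\alpha,n}+\chi(x_\alpha)\delta_{n,-1}$. The complex $(\gr\Caff(\fing)_+,\hat Q_{(0)})$ therefore splits as an infinite tensor product, over $\alpha\in\Delta_+$ and $n<0$, of two-term free Koszul complexes
\begin{align*}
\C[\bar J_{\alpha,n}+\chi(x_\alpha)\delta_{n,-1}]\otimes \Lambda(\bar\psi_{\alpha,n}),
\end{align*}
each with $H^0=\C$ and $H^i=0$ otherwise. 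Applying K\"unneth weight space by weight space gives $H^\bullet(\gr\Caff(\fing)_+,\hat Q_{(0)})=\delta_{\bullet,0}\C$.

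Finally, a spectral sequence argument lifts this to $\Caff(\fing)_+$ itself, yielding the desired $H^\bullet(\Caff(\fing)_+,\hat Q_{(0)})=\delta_{\bullet,0}\C$. The main technical obstacle is twofold: first, verifying that the filtration is chosen so that the leading term of the differential is exactly the classical Koszul differential displayed above (i.e.\ that all lower-order corrections coming from the Lie bracket of the $J_{\alpha,n}$'s and from normal ordering drop out on the associated graded); second, ensuring that the resulting spectral sequence converges. Both issues are handled by the finite-dimensionality of each conformal weight space of $\Caff(\fing)_+$ (which makes the induced filtration bounded on each weight space), exactly as in the proof of Theorem \ref{them:vanishing-associated-graded-BRST}.
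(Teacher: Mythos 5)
Your proof is correct and follows essentially the same route the paper indicates: it only sketches this theorem by declaring the argument ``parallel to'' Lemma \ref{lem:linear-dec-finite}, Lemma \ref{lem-decom-finite} and Proposition \ref{prp:subcomp}, i.e.\ K\"unneth applied to the decomposition $\Caff(\fing)\cong\Caff(\fing)_-\otimes\Caff(\fing)_+$ followed by exhibiting $\Caff(\fing)_+$ as a (twisted) Koszul complex built from the pairs $(J_{\alpha,n},\psi_{\alpha,n})$, each contributing $H^\bullet=\delta_{\bullet,0}\C$. Your extra step of passing to an associated graded and running a spectral sequence (convergent by finite-dimensionality of the $H_\W$-weight spaces) is just a careful way of justifying the ``tensor product of two-term complexes'' assertion that the finite-dimensional model proof states directly, so the two arguments coincide in substance.
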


Since the complex $\Caff(\fing)_-$
 has no positive cohomological degree,
its zeroth cohomology
   $\W^k(\fing)=H^0(\Caff(\fing)_-,\hat Q_{(0)})$
   is a vertex {\em subalgebra}
   of $\Caff(\fing)_-$.
   Observe also that
   $\Caff(\fing)_-$    has no negative degree with respect
 to the Hamiltonian $H_{\W}$,
and each homogeneous space is finite-dimensional: 
 \begin{align}
  \Caff(\fing)_-=\bigoplus_{\Delta\in \Z_-}\Caff(\fing)_{-,\Delta,new},
  \quad \dim \Caff(\fing)_{-,\Delta,new}<\infty.
  \label{eq:grading-Cnew}
 \end{align}
 Here $\Caff(\fing)_{-,\Delta,new}=\Caff(\fing)_{-}\cap \Caff(\fing)_{\Delta,new}$.
 \subsection{Proof of Theorem \ref{thm:vanishing-W}}
 \label{Proof of Theorem {thm:vanishing-W}}
    As $\affQ F^{p}{\Caff(\fing)_-}\subset F^{p}{\Caff(\fing)_-}$,
    one can consider a spectral sequence for 
 $H^{\bullet}({\Caff(\fing)_-},\affQ)$
    such that
    the $E_1$-term is $H^{\bullet}(\gr  {\Caff(\fing)_-},\affQ)$.
    This spectral sequence clearly
    converges,
    since ${\Caff(\fing)_-}$ is a direct sum of finite-dimensional subcomplexes
    $\Caff(\fing)_{-,\Delta,new}$.

    We have
    $\gr {\Caff(\fing)_-}\cong S(\finb_-[t^{-1}]t^{-1})\* \Lam
    (\finn[t^{-1}]t^{-1})
\cong \C[J\mu^{-1}(\chi)]\* \Lam(\finn[t^{-1}]t^{-1})$,
and the complex
$(\gr  {\Caff(\fing)_-},\affQ)$ is identical to the Chevalley  complex
    for the Lie algebra cohomology $H^{\bullet}(\finn[t],
    \C[J\mu^{-1}(\chi)])$.
Therefore 
\begin{align}
 H^{i}(\gr  {\Caff(\fing)_-},\affQ)
 \cong \delta_{i,0}\C[J\mc{S}].
\label{eq:vanishing-intermidiate}
\end{align}
    Thus
     the spectral sequence collapses at $E_1=E_{\infty}$,
and we get 
\begin{align*}
 \gr^G H^{i}(  {\Caff(\fing)_-},\affQ)\cong  H^{i}(\gr  {\Caff(\fing)_-},\affQ)
 \cong \delta_{i,0}\C[J\mc{S}].
\end{align*}
    Here $\gr^G H^{i}( {\Caff(\fing)_-},\affQ)$ is the associated graded space with respect to the filtration
    $G^{\bullet }H^{i}( {\Caff(\fing)_-},\affQ)
    $ induced by  the filtration
    $F^\bullet {\Caff(\fing)_-}$, that is,
\begin{align*}
G^pH^{i}( {\Caff(\fing)_-},\affQ)=\im(H^i(F^p {\Caff(\fing)_-},\affQ)\ra H^i({\Caff(\fing)_-},\affQ)).
\end{align*}
    We claim that the filtration $G^{\bullet}H^{0}( {\Caff(\fing)_-},\affQ)$ coincides with the canonical filtration
    of $H^{0}( {\Caff(\fing)_-},\affQ)=\W^\kappa(\fing)$.
    Indeed, from the definition of
    the canonical filtration we have
    $F^pW^k(\fing)\subset G^p \W^\kappa(\fing)$ for all $p$,
    and hence,
    there is a Poisson vertex algebra homomorphism
    \begin{align}
     \gr \W^\kappa(\fing,f)\ra \gr^G \W^\kappa(\fing,f)\cong \C[J\mc{S}]
     \label{eq:From-F-to-G}
    \end{align}
    that restricts to a surjective homomorphism
   \begin{align*}
    \W^\kappa(\fing)/F^1 \W^\kappa(\fing)\twoheadrightarrow \W^\kappa(\fing)/G^1\W^\kappa(\fing)\cong \C[\mc{S}].
   \end{align*}
    Since $\C[J\mc{S}]$  is generated by $\C[\mc{S}]$ as differential algebras
    it follows that
    \eqref{eq:From-F-to-G} is surjective.
    On the other hand
    the cohomology vanishing and the Euler-Poincar\'{e} principle imply that
    the graded character of $\W^\kappa(\fing)$ and $\C[J\mc{S}]$ are the same.
    Therefore  \eqref{eq:From-F-to-G} is an isomorphism, and thus,
    $G^p \W^\kappa(\fing)=F^p \W^\kappa(\fing)$ for all $p$.
    
    Finally
    the embedding
    $\gr {\Caff(\fing)_-}\ra \gr \Caff(\fing)$
    induces an isomorphism
    \begin{align*}
     H^{0}(\gr  {\Caff(\fing)_-},\affQ)\cong H^0(\gr \Caff(\fing),\affQ)
    \end{align*}
    by  Theorem \ref{them:vanishing-associated-graded-BRST}
    and   \eqref{eq:vanishing-intermidiate}.
    This completes the proof.
\qed

\subsection{Zhu's algebra of $W$-algebra}
Let 
$\Zhu_{new} (\Caff(\mf{g}))$
be Zhu's algebra of
$\Caff(\mf{g})$ 
with respect to the Hamiltonian $H_{W}$,
$\Zhu_{old} (\Caff(\mf{g}))$
Zhu's algebra of
$\Caff(\mf{g})$ 
with respect to the standard Hamiltonian
$H$.
We have
\begin{align*}
\Zhu_{new} (\Caff(\mf{g}))\cong \Zhu_{old} (\Caff(\mf{g}))\cong C(\fing),
\end{align*}
see \cite[Proposition 5.1]{A2012Dec} for the details.
Then it is legitimate to write $\Zhu(\Caff(\mf{g}))$ for $\Zhu_{new} (\Caff(\mf{g}))$ or $\Zhu_{old} (\Caff(\mf{g}))$.

By the commutation formula, we have 
\begin{align*}
\hat{Q}_{(0)} (\Caff(\mf{g})
 \circ \Caff(\mf{g}) ) \subset \Caff(\mf{g}) \circ \Caff(\mf{g}).
\end{align*}
 Here the circle $\circ$ is defined as in the definition of the Zhu
 algebra (with respect to the grading $H_\W$). So $(\Zhu_{new} \Caff(\mf{g}), \hat{Q}_{(0)})$ is a differential,
 graded algebra,
 which is identical to $(C(\fing),\ad Q)$.

\begin{thm}[\cite{Ara07}]
We have
\begin{align*}
\Zhu \W^\kappa(\mf{g}) \cong H^0(\Zhu_{new} \Caff(\mf{g}), \hat{Q}_{(0)}) \cong  \mc{Z}(\mf{g}).
\end{align*}
\end{thm}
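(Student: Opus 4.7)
The second isomorphism is essentially a reformulation of Theorem \ref{Thm:whittaker-model}. Under the identification $\Zhu \Caff(\mf{g}) \cong \Zhu V^\kappa(\mf{g}) \otimes \Zhu \Fock = U(\mf{g}) \otimes Cl = C(\mf{g})$, the image of $\hat Q \in \Caff^1(\mf{g})$ in Zhu's algebra coincides with the element $Q$ of Lemma \ref{lem:Q}, so the operator induced on $\Zhu_{new} \Caff(\mf{g})$ by $\hat Q_{(0)}$ is exactly $\ad Q$. Hence $H^0(\Zhu_{new} \Caff(\mf{g}), \hat Q_{(0)}) = H^0(C(\mf{g}), \ad Q) \cong \mc{Z}(\mf{g})$ by Kostant.

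For the first (and substantive) isomorphism, the plan is to construct a natural map and then prove bijectivity via associated graded comparison. Because $\hat Q_{(0)}$ has $H_\W$-degree zero and acts as an odd derivation of every $n$-th product, it preserves both $V_{\leq p}$ and the subspace $\Caff(\mf{g}) \circ \Caff(\mf{g})$; hence it descends to a differential on $\Zhu_{new} \Caff(\mf{g})$ and the canonical quotient $\Caff(\mf{g}) \twoheadrightarrow \Zhu_{new} \Caff(\mf{g})$ is a chain map. Restricting to $\hat Q_{(0)}$-cocycles in cohomological degree zero and descending modulo coboundaries together with Zhu relations yields a well-defined algebra homomorphism
\begin{align*}
\Phi : \Zhu \W^\kappa(\mf{g}) \longrightarrow H^0(\Zhu_{new} \Caff(\mf{g}), \hat Q_{(0)}).
\end{align*}

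To show $\Phi$ is an isomorphism, I would equip both sides with filtrations and compute the associated graded algebras. By Theorem \ref{thm:vanishing-W}, $R_{\W^\kappa(\mf{g})} \cong \C[\mc{S}] \cong \C[\mf{g}]^G$ is polynomial, and the natural surjection $\C[J\mc{S}] \twoheadrightarrow \gr \W^\kappa(\mf{g})$ is an isomorphism; thus $\W^\kappa(\mf{g})$ admits a PBW basis. Theorem \ref{t:10} then gives that $\eta_{\W^\kappa(\mf{g})} : R_{\W^\kappa(\mf{g})} \isomap \gr \Zhu \W^\kappa(\mf{g})$ is an isomorphism, so $\gr \Zhu \W^\kappa(\mf{g}) \cong \C[\mf{g}^*]^G$. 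On the other side, the Kazhdan filtration on $C(\mf{g})$ induces a filtration on $\mc{Z}(\mf{g}) \cong H^0(C(\mf{g}), \ad Q)$, and the proof of Theorem \ref{Thm:whittaker-model} identifies $\gr_K \mc{Z}(\mf{g}) \cong \C[\mf{g}^*]^G$. A direct check, tracing through the classical BRST complex of Theorem \ref{thm:KS-classical}, shows that $\gr \Phi$ intertwines these two copies of $\C[\mf{g}^*]^G$ via the identity; consequently $\Phi$ itself is an isomorphism.

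The chief obstacle is matching the Zhu filtration $\Zhu_\bullet \W^\kappa(\mf{g})$ with the Kazhdan filtration on $\mc{Z}(\mf{g})$ so that $\Phi$ is filtered with the correct associated graded map. The Hamiltonian $H_\W$ was engineered precisely so that, passing from $\Caff(\mf{g})$ to Zhu's algebra, the grading by $V_{\leq p}$ induces the Kazhdan filtration on $C(\mf{g})$; verifying this compatibility carefully, together with showing that representatives of classes in $\Zhu \W^\kappa(\mf{g})$ can be lifted to $\hat Q_{(0)}$-cocycles in $\Caff(\mf{g})$ whose images in $C(\mf{g})$ are genuine $\ad Q$-cocycles in the correct filtration degree, is where the bulk of the technical work lies.
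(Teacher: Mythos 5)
Your proposal is correct and follows essentially the same route as the paper: identify $(\Zhu_{new}\Caff(\mf{g}),\hat Q_{(0)})$ with $(C(\mf{g}),\ad Q)$ and invoke Kostant for the second isomorphism, then establish the first by constructing the natural map and comparing associated gradeds, using the PBW property of $\W^\kappa(\mf{g})$ (from Theorem \ref{thm:vanishing-W} and Theorem \ref{t:10}) to identify $\gr\Zhu\W^\kappa(\mf{g})\cong R_{\W^\kappa(\mf{g})}\cong\C[\mc{S}]\cong\gr\mc{Z}(\mf{g})$. The filtration-compatibility issues you flag at the end are exactly the details the paper defers to \cite{Ara07} and \cite[Proposition 5.1]{A2012Dec}.
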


\begin{proof}
By 
Theorem \ref{thm:vanishing-W},
it follows that
$\W^\kappa(\fing)$ admits a PBW basis.
Hence
$\eta_{\W^\kappa(\fing)}: \gr \Zhu\W^\kappa(\fing)\ra R_{\W_k(\fing)}$ is an
 isomorphism by Theorem \ref{t:10}.
On the other hand
we have a natural algebra homomorphism $\Zhu \W^\kappa(\mf{g})
 \longrightarrow H^0(\Zhu \Caff(\mf{g}), \hat{Q}_{(0)})$ which makes the
 following diagram commute.\begin{align*}
			      \begin{CD}
\gr \Zhu \W^\kappa(\mf{g}) @>\eta_{\W^\kappa(\fing,f)}>\cong  > R_{\W^\kappa(\mf{g})}\\
@VV V @V\cong V \text{Theorem \ref{thm:vanishing-W}}  V\\
\gr \mc{Z}(\mf{g})@> \cong >>\C[\mc{S}].
 \end{CD}
			    \end{align*}
%
Note that we have the isomorphisms $H^0(R_{\Caff(\mf{g})}, \hat{Q}_{(0)})\cong H^0(\bar{C}^k(\mf{g}), \ad \bar{Q}_{(0)}) \cong \C[\mc{S}]$ and $\gr H^0(\Zhu_{new} \Caff(\mf{g}), \hat{Q}_{(0)}) \cong \gr \mc{Z}(\mf{g})$ in the diagram. Now the other three isomorphisms will give the desired isomorphism.
\end{proof}

We conclude that we have the following commutative diagram: 
\begin{align*}
 \xymatrix{
\C[J\mc{S}]\ar[d]_{\on{Zhu}(?)} & \W^\kappa(\fing)\ar[d]^{\on{Zhu}(?)}
 \ar[dl]_{R_{?}} \ar[l]_{\on{gr}(?)} \\
 \C[\mc{S}]
 &  
\mc{Z}(\fing).
 \ar[l]^{\gr(?)}
}
\end{align*}
\begin{Rem}
The same  proof applies for an arbitrary simple Lie algebra $\fing$.
In particular, we have
$\Zhu(\W^k(\mf{sl}_n))\cong \mc{Z}(\mf{sl}_n)$.
In fact the same  proof applies for the $W$-algebra associated with a simple Lie algebra $\fing$ and an arbitrary 
nilpotent element $f$ of $\fing$ to show its Zhu's algebra is isomorphic to the finite $W$-algebra $U(\fing,f)$ (\cite{De-Kac06}).
\end{Rem}
\subsection{Explicit generators}
It is possible to write down the explicit generators of $\W^\kappa(\fing)\subset \Caff (\fing)_-$.

Recall that the
{\em column-determinant} of a matrix $A=(a_{ij})$ over
an associative algebra is defined by
\begin{align*}
\on{cdet} A=\sum_{\sigma\in\mf{S}_n}\on{sgn}\sigma\cdot a^{}_{\sigma(1)\tss 1}
a^{}_{\sigma(2)\tss 2}\dots\ts a^{}_{\sigma (n)\ts n}.
\end{align*}

Introduce an extended Lie algebra $\finb[t^{-1}]t^{-1}\oplus\C\tau$, where the element $\tau$ commutes
with $\mathbf{1}$, and
\begin{align*}
\big[\tau, x_{(-n)}\big]=n\tss x_{(-n)}\qquad\text{for}\quad
x\in \finb,
n\in \finn,
\end{align*}
where $x_{(-n)}=x\tss t^{-n}$. This induces
an associative algebra structure on
the tensor product
space $U\big(\finb[t^{-1}]t^{-1}\big)\+ \C[\tau]$.

Consider the matrix
\begin{align*}
B=\begin{bmatrix}
\alpha\tss\tau+(e_{11})_{(-1)} &-1\phantom{-}&0&\dots & 0\\[0.4em]
(e_{2\tss 1})_{(-1)} &\alpha\tss\tau+(e_{2\tss 2})_{(-1)} &-1\phantom{-}&\dots & 0\\[0.4em]
\vdots &\vdots &\ddots & &\vdots\\[0.4em]
(e_{n-1\tss 1})_{(-1)} &(e_{n-1\tss 2})_{(-1)} &\dots
&\alpha\tss\tau+(e_{n-1\ts n-1})_{(-1)} &-1\phantom{-}\\[0.4em]
(e_{n\tss 1})_{(-1)} &(e_{n\tss 2})_{(-1)} &\dots &\dots  &\alpha\tss\tau+(e_{n\tss n})_{(-1)}.
   \end{bmatrix}
\end{align*}
with entries
in $U(\finb[t^{-1}]t^{-1}])\* \C[\tau]\* \C[\alpha]$,
where $\alpha$ is a parameter.

For its column-determinant\footnote{It is easy to verify that
$\on{cdet}B$ coincides with the {\it row-determinant\/}
of $B$ defined in a similar way.}
we can write
\begin{align*}
\on{cdet}B=\tau^{\tss n}+W^{(1)}_{\alpha}\tau^{\tss n-1}+\dots+
W^{(n)}_{\alpha}
\end{align*}
for certain coefficients
$W^{(r)}_{\alpha}$ which are elements of 
$U(\finb[t^{-1}]t^{-1}])\* \C[\alpha]$.
Set
\begin{align*}
W^{(i)}=W^{(i)}_{\alpha}|_{\alpha=k+n-1}.
\end{align*}
This is an element of $U(\finb[t^{-1}]t^{-1})$,
which we identify with $V^{\kappa_\mf{b}}(\finb)\subset \Caff (\fing)_-$.
\begin{Th}[\cite{AM}]\label{Th:AM}
$\W^k(\fing)$ is strongly generated by
$W^{(1)},\dots, W^{(n)}$.
\end{Th}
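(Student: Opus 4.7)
The plan is to verify two things: first, that each $W^{(i)}$ is $\hat{Q}_{(0)}$-closed (so it defines an element of $\W^k(\fing)\subset\Caff(\fing)_-$), and second, that the images $\sigma_{0}(W^{(i)})\in R_{\W^k(\fing)}$ form a set of algebraically independent generators of the polynomial algebra $R_{\W^k(\fing)}\cong \C[\mc{S}]$. Once both are established, strong generation of $\W^k(\fing)$ by $W^{(1)},\dots,W^{(n)}$ follows automatically: by Theorem \ref{thm:vanishing-W}, $\gr \W^k(\fing)\cong \C[J\mc{S}]$ is a polynomial differential algebra, so $\W^k(\fing)$ admits a PBW basis, and then the surjection $J R_{\W^k(\fing)}\twoheadrightarrow \gr \W^k(\fing)$ of Theorem \ref{thm:surj-poisson} shows that lifting generators of $R_{\W^k(\fing)}$ to strong generators of the vertex algebra works.

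For the classical picture, I would compute $\sigma_0(W^{(i)})\in R_{V^{\kappa_\mf{b}}(\finb)}=\C[\finb^*]$ and then project to $R_{\W^k(\fing)}\cong \C[\mc{S}]\cong \C[\fing]^G$. In $\gr$, the noncommutativity given by $[\tau,x_{(-n)}]=nx_{(-n)}$ disappears, so $\on{cdet}B$ becomes the ordinary determinant of the matrix with entries $\alpha\tau\cdot\delta_{ij}+\bar{e}_{ij}-\delta_{i+1,j}$. Expanding and taking the coefficient of $\tau^{n-i}$, one obtains (up to a nonzero scalar depending on $\alpha$) the $i$-th elementary symmetric polynomial in the eigenvalues of the companion matrix in $\mc{S}$, i.e., a nonzero multiple of $p_i$. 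Since $p_1,\dots,p_n$ generate $\C[\fing]^G$ freely by Theorem \ref{thm:invariant-polynomials}, the images $\sigma_{0}(W^{(i)})$ generate $R_{\W^k(\fing)}$ as required.

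The main obstacle is the BRST closedness $\hat{Q}_{(0)}W^{(i)}=0$. My approach would be to work entirely inside the subcomplex $\Caff(\fing)_-$ using the fields $J_a(z)=\widehat{\theta}_0(x_a(z))$ of Proposition \ref{Pro:decomposition-def}, since the explicit $\lambda$-bracket $[{J_a}_\lambda \psi_\alpha^*]=\sum_\beta c_{a,\beta}^\alpha\psi_\beta^*$ controls the action of $\hat Q_{(0)}$. The matrix $B$ is a Manin/Talalaev-type matrix whose column determinant is designed so that applying $\hat Q_{(0)}$ produces a sum of terms that telescope and cancel, provided the scalar shift in front of $\tau$ is exactly the critical level correction $\alpha=k+n-1$. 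A convenient way to package this is to exhibit $B$ as a quantum analogue of a Miura-type operator $(\tau+J_{11})(\tau+J_{22})\cdots(\tau+J_{nn})$ plus off-diagonal corrections that are $\hat Q_{(0)}$-exact, where the shift $k+n-1$ is what makes the commutators between $\tau$ and the $J_{ii}$ align correctly.

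The hard part is therefore the structural identity showing that $\hat Q_{(0)}\on{cdet}B=0$ at $\alpha=k+n-1$. One clean route is to reduce it to the statement on the Miura image: since the complex $\Caff(\fing)_-$ projects onto the Cartan subquotient (giving the affine Miura map $\Upupsilon$ of \S \ref{subsection:Miura}), and $\Upupsilon$ is injective on the cohomology at the level of associated graded, it suffices to verify (i) that $W^{(i)}_\alpha$ projects into the Heisenberg/Cartan part with the correct shifted parameters, and (ii) that the column determinant manifestly lies in the Miura image (i.e., is a product of commuting "first order" factors on the Cartan), hence automatically extends to a cohomology class. The explicit computation of commutators, combined with the precise value $\alpha=k+n-1$ that identifies with the dual Coxeter shift, is the technical heart of the argument.
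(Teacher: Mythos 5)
This theorem is quoted in the paper from \cite{AM} without proof, so there is no internal argument to compare against; your proposal has to be judged on whether it would actually constitute a proof. Your overall reduction is the right one: by the paper's definition of strong generation and by Theorem \ref{thm:vanishing-W}, it suffices to show (a) $\hat Q_{(0)}W^{(i)}=0$, so that $W^{(i)}\in \W^k(\fing)=H^0(\Caff(\fing)_-,\hat Q_{(0)})=\{c\in \Caff(\fing)_-^0\mid \hat Q_{(0)}c=0\}$, and (b) the symbols of the $W^{(i)}$ generate $R_{\W^k(\fing)}\cong\C[\mc{S}]$. Your treatment of (b) via the classical limit of $\on{cdet}B$ and the companion-matrix coordinates $p_i$ is fine (modulo checking that the canonical filtration of $\W^k(\fing)$ is the one induced from $\Caff(\fing)_-$, which is established in \S\ref{Proof of Theorem {thm:vanishing-W}}).

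The gap is in (a), which is the entire content of the theorem, and your proposed shortcut for it does not work. The projection $\Caff(\fing)_-^0=V^{\kappa_\finb}(\finb_-)\to V^{\kappa_\finh}(\finh)$ onto the $\finh$-weight-zero part is a vertex algebra homomorphism on all of $\Caff(\fing)_-^0$, but it is injective only after restriction to $\W^k(\fing)$ (Theorem \ref{thm:Miura}); its kernel on $V^{\kappa_\finb}(\finb_-)$ is the whole sum of the nonzero weight spaces. Hence computing that $W^{(i)}$ projects to the Fateev--Lukyanov field $(\alpha\partial_z+J_1(z))\cdots(\alpha\partial_z+J_n(z))$, or that it ``manifestly lies in the Miura image,'' cannot establish $\hat Q_{(0)}W^{(i)}=0$: any element whose weight-zero component happens to equal $\hat\Mi$ of a genuine cocycle passes this test, and the argument becomes circular because $\hat\Mi$ is only defined and injective on classes already known to be closed. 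What must actually be supplied is either a direct verification that $\hat Q_{(0)}\on{cdet}B=0$ at $\alpha=k+n-1$ (in \cite{AM} this rests on the invariance of the column determinant of a Manin-type matrix under elementary row operations, combined with the explicit formulas for $[\hat Q_{(0)},J_a]$ and $[\hat Q_{(0)},\psi_\alpha^*]$ analogous to Lemma \ref{lem-decom-finite}, the $\chi$-term from $f$ producing the telescoping), or a genuine identification of $\W^k(\fing)$ inside $V^{\kappa_\finb}(\finb_-)$ by screening operators for generic $k$ followed by a specialization argument. Your proposal names this as ``the technical heart'' but does not carry it out, so as it stands it is a plan rather than a proof.
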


\subsection{Miura Map}\label{subsection:Miura}
   The Cartan subalgebra $\mf{h}$ of $\fing$
   acts on ${\Caff(\fing)_+}$ by $x_i\mapsto
(J_i)_{(0)}$, $i\in I$,
see Proposition \ref{Pro:decomposition-def}.
Let $\Caff(\fing)_+^{\lam}$ be the weight space of weight $\lam\in \mf{h}^*$ with respect to this action.
Then
   \begin{align*}
    {\Caff(\fing)_+}=\bigoplus_{\lam\leq 0}{\Caff(\fing)_+^{\lam}},
\quad {\Caff(\fing)_+^0}=V^{\kappa_\mf{b}}(\mf{h})\subset V^{\kappa_k}(\finb).
   \end{align*}
     The vertex algebra
     $V^{\kappa_\finh}(\mf{h})$ is the 
   {\em Heisenberg vertex algebra}
   associated with $\finh$ and the bilinear form $\kappa_{\finh}:=\kappa_\mf{b}|_{\finh\times \finh}$.

The projection
   ${\Caff(\fing)_+}\ra {\Caff(\fing)_+^0}=V^{\kappa_\finh}(\mf{h})$ with respect to this decomposition is a vertex algebra homomorphism.
 Therefore it restriction
   \begin{align}
    \hat \Mi
    :   \W^\kappa(\fing)\ra V^{\kappa_\finh}(\mf{h})
    \label{eq:miura}
   \end{align}
   is also a vertex algebra homomorphism that is called the {\em Miura map}.
 \begin{thm}\label{thm:Miura}
  The Miura map is injective for all $k\in \C$.
 \end{thm}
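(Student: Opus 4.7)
The strategy is to follow the pattern of Proposition~\ref{prp:miura-classical} and reduce the statement to its ``classical'' counterpart via the associated graded with respect to Li's canonical filtration. Equip $\W^\kappa(\fing) \subset \Caff(\fing)_-$ and the Heisenberg vertex algebra $V^{\kappa_\finh}(\mf{h})$ with $F^\bullet$. By Theorem~\ref{thm:vanishing-W} we have $\gr \W^\kappa(\fing) \cong \C[J\mc{S}]$, and by \eqref{eq:iso-gr-affineVA} we have $\gr V^{\kappa_\finh}(\mf{h}) \cong \C[J\finh^*]$. Since $\hat\Mi$ is induced by the vertex algebra projection associated with the $\finh$-weight decomposition, it is compatible with $F^\bullet$; because $F^\bullet$ is separated, injectivity of $\hat\Mi$ follows from injectivity of $\gr \hat\Mi$.

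Next I would identify $\gr \hat\Mi$. Unwinding the isomorphisms in the proofs of Theorems~\ref{them:vanishing-associated-graded-BRST} and \ref{thm:vanishing-W}, and using \eqref{eq:product-of-jets} together with the $JN$-equivariant jet isomorphism $JN \times J\mc{S} \isomap J(f+\finb)$ obtained by applying the jet functor to Theorem~\ref{them:Kostant}, the map $\gr \hat\Mi$ is identified with the jet-space analogue of \eqref{eq:gr-of-classical-Miura}, namely
\[
\bar{\hat\Mi} : \C[J\mc{S}] = \C[J(f+\finb)]^{JN} \longrightarrow \C[J(f+\finh)],
\]
the restriction map to $J(f+\finh)$.

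To prove injectivity of $\bar{\hat\Mi}$ I would repeat the argument from the proof of Proposition~\ref{prp:miura-classical}: a $JN$-invariant function vanishes on $J(f+\finh)$ iff it vanishes on the image of the jet action map
\[
\Phi : JN \times J(f+\finh) \longrightarrow J(f+\finb), \qquad (g, x) \longmapsto \Ad(g)\cdot x.
\]
By \eqref{eq:product-of-jets}, $\Phi = J\Phi_0$ where $\Phi_0 : N \times (f+\finh) \to f+\finb$ is the finite-dimensional action map, whose differential at $(1, x)$ with $x \in f+\finh_{\on{reg}}$ is an isomorphism by Proposition~\ref{prp:miura-classical}. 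Thus $\Phi_0$ is generically étale and dominant, so it remains to conclude that $\Phi = J\Phi_0$ is dominant.

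\textbf{Main obstacle.} The delicate point is the passage from dominance of $\Phi_0$ to dominance of $\Phi = J\Phi_0$. Lemma~\ref{Lem:dominant-jet} as formulated requires a \emph{bijection} between open subsets, but $\Phi_0$ is generically a degree-$|W|$ cover (since $f+\finh$ meets each regular semisimple $G$-orbit in $|W|$ points, via the Weyl group action on diagonal entries). The remedy is a mild strengthening of Lemma~\ref{Lem:dominant-jet}: any generically étale dominant morphism of irreducible varieties induces a dominant morphism on arc spaces, because an arc in the target whose starting point lies in the étale locus admits a formal lift to the source by the étale lifting property. Once this strengthening is established, the preceding outline yields the theorem.
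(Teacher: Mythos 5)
Your proposal follows the paper's own route: pass to the associated graded with respect to the canonical filtration, identify $\gr\hat\Mi$ with the jet restriction map $J\bar\Mi\colon \C[J(f+\finb)]^{JN}\to\C[J(f+\finh)]$, and deduce injectivity from dominance of the arc-space action map $JN\times J(f+\finh)\to J(f+\finb)$. Where you differ is in how that dominance is justified, and here you have put your finger on a genuine defect in the printed argument. The paper invokes Lemma \ref{Lem:dominant-jet} via the claim that the action map restricts to an isomorphism $N\times(f+\finh_{\on{reg}})\isomap U$ onto an open subset of $f+\finb$; this claim is false. Already for $\mf{gl}_2$ the fibre of $(g,x)\mapsto\Ad(g)x$ over a generic point $\left(\begin{smallmatrix}S&U\\1&T\end{smallmatrix}\right)$ of $f+\finb$ consists of two points of $N\times(f+\finh_{\on{reg}})$, indexed by the roots of $a^2+(T-S)a-U=0$, and in general the map is generically $|W|$-to-one --- as it must be, since by Remark \ref{rem:classical-Miura} the image of $\bar\Mi$ is $\C[\finh]^{W}$ rather than all of $\C[\finh]$. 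So Lemma \ref{Lem:dominant-jet} as stated does not apply, there and already in the proof of Proposition \ref{prp:miura-classical} only dominance (not injectivity) of the action map is actually established. Your repair is the correct one: the action map is dominant and \'etale over a dense open subset of $f+\finb$ (its differential at $(1,x)$ is an isomorphism for $x\in f+\finh_{\on{reg}}$), and any arc of the target based at a point of the \'etale image lifts to the source by the formal lifting property of \'etale morphisms (equivalently, $\widehat{\mc{O}}_{Y,f(x_0)}\cong\widehat{\mc{O}}_{X,x_0}$); since $J(f+\finb)$ is irreducible, the set of such arcs is dense, so $J\Phi_0$ is dominant. With this strengthened form of Lemma \ref{Lem:dominant-jet} in hand, the remaining steps of your outline (separatedness of $F^{\bullet}$, the identification of $\gr\hat\Mi$ via Theorem \ref{thm:vanishing-W}, and passage to $JN$-invariants) coincide with the paper's argument and are complete.
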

 \begin{proof}
 The induced  Poisson vertex algebra homomorphism
   \begin{align}
\gr    \hat \Mi
    : \gr  \W^\kappa(\fing)=\C[J\mathcal{S}]\ra \gr V^{\kappa_\finh}(\mf{h})=\C[J\finh^*]\cong \C[J(f+\finh)]
    \label{eq:miura}
   \end{align}
   is just a restriction map
   and 
coincides with $J\bar \Mi$,
where $\bar \Mi$ is defined in  \eqref{eq:gr-of-classical-Miura}.
Clearly,   it is sufficient to show that $J\bar \Mi$ is injective.

Recall that the action map gives an isomorphism
\begin{align*}
N\times (f+\finh_{\on{reg}})\isomap U\subset f+\finb,
\end{align*}
where $U$ is some open subset of $f+\finb$,
see the proof of Proposition \ref{prp:miura-classical}.
Therefore, by Lemma \ref{Lem:dominant-jet},
the action map $JN\times J(f+\finh)\ra J(f+\finb)$ is dominant.
Thus, the induced map
$\C[J(f+\finb)]\ra \C[JN\times J(f+\finh)]$ is injective,
and so is $J\bar \Mi:C[J(f+\finb)]^{JN}\ra \C[JN\times J(f+\finh)]^{JN}=\C[J(f+\finh)]$.
\end{proof}
 \begin{Rem}
  It is straightforward to generalize 
  Theorem \ref{thm:Miura}
  for the $W$-algebra $\W^k(\fing)$ associated with a general simple Lie algebra $\fing$.
  \end{Rem}

 \begin{thm}\label{thm:image-of-Miura}
 Let $x_i=E_{i i}\in \finh\subset \fing=\mf{gl}_n$,
 and $J_i(z)$ the corresponding field of $V^{\kappa_k}(\finh)$.
The image 
$\Mi(W^{(i)}(z))$ of $W^{(i)}(z)$
by the Miura map
is described by 
\begin{align*}
 \sum_{i=0}^n \Mi(W^{(i)})(z)(\alpha\partial_z)^{n-i}=
: (\alpha\partial_z+J_1(z))(\alpha\partial_z+J_2(z))\dots (\alpha\partial_z+J_N(z)):,
\end{align*}
where $\alpha=k+n-1$,
$ W^{(0)}(z)=1$, 
$[\partial_z,J_i(z)]=\frac{d}{dz}J_i(z)$.
 \end{thm}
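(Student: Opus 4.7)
The plan is to combine Theorem~\ref{Th:AM} with the explicit description of the Miura map~\eqref{eq:miura} to reduce the computation to a $2$-step calculation: first push $\hat{\Mi}$ through the entries of the matrix $B$, then compute the resulting column determinant of an upper bidiagonal matrix. Under the identification of $U(\finb_-[t^{-1}]t^{-1})$ with $V^{\kappa_\mf{b}}(\finb_-)\subset \Caff(\fing)_-$ supplied by Proposition~\ref{Pro:decomposition-def}, each entry $(e_{ij})_{(-1)}$ of $B$ becomes the field $J_{ij}=\widehat{\theta}_0(e_{ij})$; in particular $J_i=\widehat{\theta}_0(e_{ii})$ on the diagonal. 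The Miura map is the restriction to $\W^\kappa(\fing)$ of the $\finh$-weight-zero projection $\Caff(\fing)_-\supset V^{\kappa_\mf{b}}(\finb_-)\twoheadrightarrow V^{\kappa_\finh}(\finh)$, so $\hat{\Mi}(J_{ij})=0$ for $i>j$ (those fields have nonzero $\finh$-weight $\alpha_j-\alpha_i$), while $\hat{\Mi}(J_i)=J_i$.

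Applying $\hat{\Mi}$ entry-by-entry to $B$ therefore produces the upper bidiagonal matrix
\[
\hat{\Mi}(B)=
\begin{bmatrix}
\alpha\tau+J_1 & -1 & & & \\
 & \alpha\tau+J_2 & -1 & & \\
 & & \ddots & \ddots & \\
 & & & \alpha\tau+J_{n-1} & -1 \\
 & & & & \alpha\tau+J_n
\end{bmatrix}.
\]
Its column determinant is a telescoping calculation: expanding along the first column, the only nonzero entry is the $(1,1)$-entry $\alpha\tau+J_1$, whose cofactor is the analogous $(n-1)\times(n-1)$ bidiagonal column determinant. Iterating, one gets
\[
\on{cdet}(\hat{\Mi}(B))=(\alpha\tau+J_1)(\alpha\tau+J_2)\cdots(\alpha\tau+J_n),
\]
as an ordered product in $V^{\kappa_\finh}(\finh)\otimes\C[\tau][\alpha]$. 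Reading off coefficients of $(\alpha\tau)^{n-i}$ against the defining expansion of $\on{cdet}(B)$ in Theorem~\ref{Th:AM} and specialising $\alpha=k+n-1$ yields the ``state-level'' version of the claimed identity for $\hat{\Mi}(W^{(i)})$.

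The remaining step is to translate this state-level identity to the field identity of the theorem via the state-field correspondence $Y(-,z)$. The auxiliary element $\tau$ is characterised by $[\tau,x_{(-m)}]=m\,x_{(-m)}$, which under $Y(-,z)$ becomes the standard rule $[\partial_z,J_i(z)]=\partial_z J_i(z)$; hence $\alpha\tau$ passes to $\alpha\partial_z$. Since $\hat{\Mi}$ is a vertex algebra homomorphism it intertwines $Y$, and in the Heisenberg vertex algebra $V^{\kappa_\finh}(\finh)$ the fields $J_i(z)$ commute up to central singular terms that disappear inside normal ordering, so the only genuine non-commutativity in the product on the right-hand side comes from $\alpha\partial_z$ acting on the $J_i(z)$. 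This is exactly what $:(\alpha\partial_z+J_1(z))\cdots(\alpha\partial_z+J_n(z)):$ encodes.

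The step I expect to require the most care is this final translation: making precise the dictionary between the formal $\tau$-calculus on states in $V^{\kappa_\mf{b}}(\finb_-)[\tau]$ and the pseudo-differential operator calculus with $\partial_z$ on fields, and verifying that the left-to-right factor ordering produced by expanding the column determinant along the first column agrees with the ordering implicit in the normal-ordering convention on the right-hand side. The rest of the argument is an elementary matrix calculation once the kernel of $\hat{\Mi}$ on the matrix entries is identified.
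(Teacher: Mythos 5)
Your proof is correct and follows the same route as the paper, whose entire proof of this theorem is the single sentence that it is straightforward from Theorem \ref{Th:AM}; your write-up is a faithful elaboration of that computation (the $\finh$-weight-zero projection kills the strictly lower-triangular entries of $B$, the resulting bidiagonal column determinant telescopes to the ordered product of its diagonal entries, and $\tau$ passes to $\partial_z$ under the state--field correspondence). The only points that need care are exactly the ones you flag: the projection commutes with the column determinant because all $\finh$-weights occurring in $\Caff(\fing)_-$ are non-positive, so a product has weight zero only if every factor does, and the normally ordered product on the right-hand side must be read as right-nested to match the ordered product in $U(\finb[t^{-1}]t^{-1})\otimes\C[\tau]$.
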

\begin{proof}
It is straightforward from Theorem \ref{Th:AM}.
 \end{proof}

Note that
if we choose
$\kappa$ to be $k\kappa_0$
and
set 
$\sum_{i=1}^NJ_i(z)=0$,
we obtain 
the image of the generators of $\W^k(\mf{sl}_n)$ 
by  the Miura map $\hat \Mi$.
For $k+n\ne 0$.
this expression
 can be written in more symmetric manner:
 Set $   b_i(z)=\frac{1}{\sqrt{k+n}}J_i(z)$,
 so that
 $\sum_{i=1}^nb_i(z)=0$,
 and
  \begin{align*}
[(b_i)_{\lam}b_j]=\begin{cases}
(1-\frac{1}{n})\lam &\text{if }i=j,\\
-\frac{1}{n}\lam&\text{if }i\ne j.
\end{cases}
  \end{align*}
  Then we obtain the following original
  description of the $\W^k(\mf{sl}_n)$ due to Fateev and Lukyanov \cite{FatLyk88}.
  \begin{cor}\label{Co:Fateev-Lukyanov}
  Suppose that $k+n\ne 0$.
   Then 
the image of $\W^k(\mf{sl}_n)$ by the Miura map
is the vertex subalgebra  generated
  by fields
$\tilde W_2(z)\dots,\tilde W_n(z)$ defined by
\begin{align*}
 \sum_{i=0}^n\tilde W_i(z)(\alpha_0\partial_z)^{n-i}=
: (\alpha_0\partial_z+b_1(z))(\alpha_0\partial_z+b_2(z))\dots (\alpha_0\partial_z+b_n(z)):,
\end{align*}
   where $\alpha_0=\alpha_++\alpha_-$,
   $\alpha_+=\sqrt{k+n}$,
   $\alpha_-=-1/\sqrt{k+n}$,
  $\tilde W_0(z)=1$,
$  \tilde W_1(z) =0$.
\end{cor}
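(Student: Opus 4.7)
The plan is to derive Corollary \ref{Co:Fateev-Lukyanov} as a direct algebraic rescaling of Theorem \ref{thm:image-of-Miura}. By the discussion at the end of the subsection on the $\mf{sl}_n$ $W$-algebra, $\W^k(\mf{sl}_n)$ is the quotient of $\W^{k\kappa_0}(\mf{gl}_n)$ by the ideal generated by $I_{(-1)}|0\ket$, and the Miura map for $\mf{sl}_n$ is obtained from the Miura map for $\mf{gl}_n$ by imposing $\sum_{i=1}^n J_i(z)=0$. So I would start from the formula in Theorem \ref{thm:image-of-Miura} and read off the image of $\W^k(\mf{sl}_n)$ by working with $J_i$'s satisfying $\sum_i J_i=0$.

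Next, I would set $J_i(z)=\sqrt{k+n}\,b_i(z)$ and observe that
\begin{equation*}
\alpha\partial_z + J_i(z) \;=\; (k+n-1)\partial_z + \sqrt{k+n}\,b_i(z) \;=\; \sqrt{k+n}\bigl(\alpha_0\partial_z + b_i(z)\bigr),
\end{equation*}
since $\alpha_0=(k+n-1)/\sqrt{k+n}=\sqrt{k+n}-1/\sqrt{k+n}=\alpha_++\alpha_-$. Consequently, pulling the constant scalar out of each of the $n$ (normally ordered) factors gives
\begin{equation*}
{:}(\alpha\partial_z+J_1(z))\cdots(\alpha\partial_z+J_n(z)){:} \;=\; (k+n)^{n/2}\,{:}(\alpha_0\partial_z+b_1(z))\cdots(\alpha_0\partial_z+b_n(z)){:}.
\end{equation*}
Matching the coefficient of $\partial_z^{n-i}$ on both sides and using $\alpha^{n-i}=(k+n)^{(n-i)/2}\alpha_0^{n-i}$ yields $\tilde W_i(z)=(k+n)^{-i/2}\,\hat\Mi(W^{(i)})(z)$. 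Thus for each $i\ge 2$, $\tilde W_i$ is a nonzero scalar multiple of $\hat\Mi(W^{(i)})$, so the two families generate the same vertex subalgebra of $V^{\kappa_{\mathfrak{h}}}(\mathfrak{h}\cap\mathfrak{sl}_n)$. Combined with Theorem \ref{Th:AM}, which says $W^{(1)},\dots,W^{(n)}$ strongly generate $\W^k(\mf{gl}_n)$, this gives the claim.

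The only remaining points are the two sanity checks $\tilde W_0=1$ and $\tilde W_1=0$. The former is immediate from the normalization of the formal differential operator. For the latter, I would note that the only contributions to the coefficient of $\partial_z^{n-1}$ in ${:}\prod_i(\alpha_0\partial_z+b_i(z)){:}$ come from choosing $\alpha_0\partial_z$ in $n-1$ factors and $b_i(z)$ in exactly one factor (derivatives produced by commuting $\partial_z$ past the $b_i$'s lower the power of $\partial_z$ strictly). This coefficient equals $\alpha_0^{n-1}\sum_{i=1}^n b_i(z)$, which vanishes thanks to $\sum_i b_i(z)=0$.

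The proof is essentially a rescaling plus bookkeeping; there is no serious obstacle. The most delicate step is keeping track of the normal ordering so that the factorization $\alpha\partial_z+J_i=\sqrt{k+n}(\alpha_0\partial_z+b_i)$ is applied inside the normally ordered product consistently, but this is harmless because the scalar $\sqrt{k+n}$ is central and the rescaling is applied uniformly to each factor.
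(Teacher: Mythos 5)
Your proposal is correct and is essentially the paper's intended argument: the corollary is obtained from Theorem \ref{thm:image-of-Miura} by setting $\sum_i J_i(z)=0$, rescaling $J_i(z)=\sqrt{k+n}\,b_i(z)$, and pulling the scalar $\sqrt{k+n}$ out of each normally ordered factor, so that each $\tilde W_i$ is a nonzero multiple of $\hat\Mi(W^{(i)})$. The bookkeeping of constants and the check $\tilde W_1=0$ are exactly as in the text.
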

  \begin{cor}\label{Co:duality}
   Suppose that $k+n\ne 0$.
We have
   \begin{align*}
    \W^{k}(\mf{sl}_n)\cong \W^{{}^Lk}(\mf{sl}_n),
   \end{align*}
   where ${}^Lk$ is defined by $(k+n)({}^Lk+n)=1$.
 \end{cor}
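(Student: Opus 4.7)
The strategy is to realize both $\W^{k}(\mf{sl}_n)$ and $\W^{{}^Lk}(\mf{sl}_n)$ as vertex subalgebras of \emph{the same} Heisenberg vertex algebra via Corollary \ref{Co:Fateev-Lukyanov}, and then exhibit an automorphism of that Heisenberg algebra which carries one subalgebra onto the other.

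First, I check how $\alpha_\pm$ transform under the duality. From $(k+n)({}^Lk+n)=1$ we get
\begin{align*}
 \alpha_+({}^Lk)&=\sqrt{{}^Lk+n}=\frac{1}{\sqrt{k+n}}=-\alpha_-(k),\\
 \alpha_-({}^Lk)&=-\frac{1}{\sqrt{{}^Lk+n}}=-\sqrt{k+n}=-\alpha_+(k),
\end{align*}
so $\alpha_0({}^Lk)=-\alpha_0(k)$. In particular, after rescaling by $1/\sqrt{k+n}$ and $1/\sqrt{{}^Lk+n}$ respectively, both Miura images sit in the same Heisenberg vertex algebra $\mc H$ generated by $b_1,\dots,b_n$ with $\sum_i b_i=0$ and $\lambda$-bracket independent of $k$ as in Corollary \ref{Co:Fateev-Lukyanov}; by Theorem \ref{thm:Miura} each Miura map is injective, so it suffices to identify the two subalgebras inside $\mc H$.

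Consider the sign-flip $\sigma:\mc H\to\mc H$, $b_i\mapsto -b_i$. It preserves the $\lambda$-brackets (each factor gains a $(-1)(-1)=1$) and the constraint $\sum_i b_i=0$, hence is an automorphism of $\mc H$ commuting with the translation operator $T$ (so with $\partial_z$). Write $\tilde W_j^{(k)}$ and $\tilde W_j^{({}^Lk)}$ for the generating series of the two $W$-algebras inside $\mc H$, defined by Corollary \ref{Co:Fateev-Lukyanov} using $\alpha_0$ and $\alpha_0^{L}=-\alpha_0$ respectively. Applying $\sigma$ to the defining identity for $\tilde W_j^{({}^Lk)}$ and using that $\sigma$ commutes with $\partial_z$ and scalars,
\begin{align*}
 \sum_{j}\sigma\bigl(\tilde W_j^{({}^Lk)}\bigr)(-\alpha_0\partial_z)^{n-j}
 &=\,:\!\prod_{i=1}^{n}\bigl(-\alpha_0\partial_z-b_i(z)\bigr)\!:\\
 &=(-1)^{n}\,:\!\prod_{i=1}^{n}\bigl(\alpha_0\partial_z+b_i(z)\bigr)\!:
 \;=\;(-1)^{n}\sum_{j}\tilde W_j^{(k)}(\alpha_0\partial_z)^{n-j}.
\end{align*}
Comparing the coefficient of $(\alpha_0\partial_z)^{n-j}$ on both sides yields $\sigma\bigl(\tilde W_j^{({}^Lk)}\bigr)=(-1)^{j}\tilde W_j^{(k)}$. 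Hence $\sigma$ maps the strong generators of $\W^{{}^Lk}(\mf{sl}_n)$ onto nonzero scalar multiples of the strong generators of $\W^{k}(\mf{sl}_n)$, and since $\sigma$ is an involutive automorphism of $\mc H$ it restricts to the desired isomorphism $\W^{{}^Lk}(\mf{sl}_n)\isomap \W^{k}(\mf{sl}_n)$.

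The only nontrivial point is the last display: one must verify that the heuristic manipulation of the ``pseudo-differential operator'' $:\!\prod_i(c\partial_z+b_i(z))\!:$ is meaningful and that $\sigma$ really commutes with $\partial_z$ inside it; this follows from the standard formalism of ordered products in vertex algebras, where $\partial_z$ is realized as the translation $T$, under which $\sigma$ is equivariant because it is a vertex algebra automorphism. Everything else is bookkeeping with signs.
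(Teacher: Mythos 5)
Your proposal is correct and is essentially the argument the paper intends: the corollary is presented as an immediate consequence of Corollary \ref{Co:Fateev-Lukyanov} together with the injectivity of the Miura map (Theorem \ref{thm:Miura}), since the Fateev--Lukyanov generating identity for level ${}^Lk$ is obtained from that for level $k$ by $\alpha_0\mapsto-\alpha_0$, which the sign-flip $b_i\mapsto -b_i$ absorbs up to harmless scalars $(-1)^j$ on the strong generators. Your explicit bookkeeping with the automorphism $\sigma$ just makes transparent what the paper leaves implicit (equivalently, one may say the two realizations differ only by the choice of branch of $\sqrt{k+n}$).
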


 \begin{exm}
Let $\fing=\mf{sl}_2$,
$k\ne -2$.
Set $b(z)=\sqrt{2}b_1(z)=-\sqrt{2}b_2(z)$,
so that
 $[b_{\lam}b]=\lam$.
Then the right-hand-side of the formula in Corollary \ref{Co:duality}
becomes
\begin{align*}
& :(\alpha_0\partial_z+\frac{1}{\sqrt{2}}b(z))(\alpha_0\partial_z-\frac{1}{\sqrt{2}}b(z)):
\\&=\alpha_0^2\partial_z^2-L(z),
\end{align*}
where 
$$L(z)=\frac{1}{2}:b(z)^2:+\frac{\alpha_0 }{\sqrt{2}}\partial_z b(z).
$$
It is well-known and is straightforward to check that 
the field  generates the Virasoro algebra of central charge $1-6(k+1)^2/(k+2)$.
Thus
$\W^k(\mf{sl}_2)$,
$k\ne -2$, is isomorphic to the universal Virasoro vertex algebra of central charge $1-6(k+1)^2/(k+2)$.
 \end{exm}

In the case that $\kappa=\kappa_c:=-\frac{1}{2}\kappa_{\fing}$,
then it follows from Theorem \ref{thm:Miura} that 
 $\W^{\kappa_c}(\mf{gl}_n)$ is commutative
since $V^{(\kappa_c)_{\finb}}(\finh^*)$ is commutative.
In fact the following fact is known:
Let $Z(V^{\kappa}(\fing))=\{z\in V^{\kappa}(\fing)\mid 
[z_{(n)},a_{(n)}]=0\}$,
the center of $V^{\kappa}(\fing)$.
 \begin{Th}[\cite{FeiFre92}]\label{Th:FFcenter}
We have the isomorphism
$$Z(V^{\kappa_c}(\fing))\isomap \W^{\kappa_c}(\fing),
\quad z\mapsto [z\*1].$$
 \end{Th}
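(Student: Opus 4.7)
The plan is to check (a) that $z \mapsto [z \otimes \vac]$ is a well-defined vertex algebra homomorphism, and (b) that it is bijective by comparing associated graded spaces. For (a), note first that $z \in Z(V^{\kappa_c}(\fing))$ is equivalent to $z$ being $\fing[t]$-invariant, by the commutator formula \eqref{eq:com-formula}. Decomposing $\hat Q(w) = \sum_\alpha x_\alpha(w)\psi_\alpha^*(w) + \sum_\alpha \chi(x_\alpha)\psi_\alpha^*(w) - \tfrac{1}{2}\sum c^\gamma_{\alpha,\beta}\psi_\alpha^*(w)\psi_\beta^*(w)\psi_\gamma(w)$ and taking the residue at $w=0$, the character piece contributes $\sum_\alpha \chi(x_\alpha)\psi_{\alpha,1}^*$, which kills $\vac$ by the defining relations of $\Fock$; the cubic piece is the fermionic Chevalley differential, whose value on $\vac = 1\otimes 1 \in \Lambda^0(\finn^*[t^{-1}])\otimes \Lambda^0(\finn[t^{-1}]t^{-1})$ is zero; the first piece expands as $\sum_{\alpha,n} x_{\alpha,(n)}\otimes \psi_{\alpha,-n}^*$, and on $z\otimes\vac$ either $x_{\alpha,(n)}z=0$ (for $n\geq 0$, by $\fing[t]$-invariance) or $\psi_{\alpha,-n}^*\vac=0$ (for $n\leq -1$, by the Fock relations). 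Hence $\affQ(z\otimes\vac)=0$. That $z\mapsto [z\otimes\vac]$ respects all vertex algebra operations follows because every mode of $z\otimes\vac$ commutes with everything in $\Caff(\fing)$: on the $V^{\kappa_c}(\fing)$-factor by centrality of $z$, and on $\Fock$ trivially.

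For (b), filter $Z(V^{\kappa_c}(\fing))$ by its intersection with Li's canonical filtration on $V^{\kappa_c}(\fing)$ and filter $\W^{\kappa_c}(\fing)$ by the canonical filtration inherited from $\Caff(\fing)$; the map respects these filtrations. The induced map on associated graded spaces fits into
\begin{equation*}
\gr Z(V^{\kappa_c}(\fing)) \hookrightarrow \C[J\fing^*]^{J\fing} \longrightarrow \C[J\mc{S}] \cong \gr \W^{\kappa_c}(\fing),
\end{equation*}
where the first inclusion holds because the principal symbol $\sigma(z)$ of a central element is Poisson-annihilated by $J\fing=\fing[[t]]$, the final isomorphism is Theorem \ref{thm:vanishing-W}, and the middle arrow is restriction to the Kostant arc slice. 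The middle arrow is itself an isomorphism: applying the jet functor $J$ to Theorem \ref{them:Kostant} gives $JN\times J\mc{S} \isomap J(\chi+\finn^\bot)$, so the action map $JN\times J\mc{S}\to J\fing^*$ is dominant by Lemma \ref{Lem:dominant-jet}, and the argument of the proof of Proposition \ref{prp:miura-classical} then yields $\C[J\fing^*]^{J\fing}\isomap \C[J\mc{S}]$. Injectivity of $Z(V^{\kappa_c}(\fing))\to \W^{\kappa_c}(\fing)$ follows from injectivity on associated graded together with separatedness of the filtrations. Surjectivity rests on the classical statement $\gr Z(V^{\kappa_c}(\fing)) = \C[J\fing^*]^{J\fing}$; once this is granted, the associated graded map becomes an isomorphism and so does the original.

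The main obstacle is precisely this classical identification $\gr Z(V^{\kappa_c}(\fing)) = \C[J\fing^*]^{J\fing}$: the inclusion $\subseteq$ is immediate, but the reverse inclusion amounts to lifting sufficiently many classical $\fing[[t]]$-invariants to quantum central elements of $V^{\kappa_c}(\fing)$. This is the genuine depth of Feigin--Frenkel's theorem, and is usually proved by constructing generators via the Sugawara field and its higher-degree analogs, or equivalently by transporting generators from the commutative Heisenberg vertex algebra $V^{(\kappa_c)_\finh}(\finh)$---into which $Z(V^{\kappa_c}(\fing))$ maps by an affine Harish-Chandra homomorphism---through the Miura embedding $\hat\Mi$ of Theorem \ref{thm:Miura}, and then matching graded characters on both sides. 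Once this ingredient is available, every other step above is a formal consequence of the BRST machinery already developed in the paper.
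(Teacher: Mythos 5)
The paper does not actually prove Theorem \ref{Th:FFcenter}; it is quoted from \cite{FeiFre92}, so there is no internal proof to compare against. Judged on its own terms, your reduction is well organized: part (a) (that $z\otimes\vac$ is $\hat Q_{(0)}$-closed for $z$ in the center, i.e.\ for $\fing[t]$-invariant $z$) is correct, and the strategy of passing to associated graded and comparing with $\gr\W^{\kappa_c}(\fing)\cong\C[J\mc{S}]$ from Theorem \ref{thm:vanishing-W} is the right one. One local slip: $JN\times J\mc{S}\to J\fing^*$ cannot be dominant, since its image lies in the proper closed subscheme $J(f+\finb)$; Lemma \ref{Lem:dominant-jet} also does not apply to the action map $G\times\mc{S}\to\fing$, which is not bijective onto an open set (its fibres are centralizers). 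What you actually need is that $JG\cdot J\mc{S}$ is dense in $J\fing^*$, which follows because $G\times\mc{S}\to G\cdot\mc{S}$ is smooth and surjective onto a dense open set (Proposition \ref{prp:transversality}), so arcs lift; alternatively, surjectivity of $\C[J\fing^*]^{J\fing}\to\C[J\mc{S}]$ is immediate from the elements $T^m p_i$, whose restrictions freely generate $\C[J\mc{S}]$.

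The genuine gap is the one you yourself flag: surjectivity of $Z(V^{\kappa_c}(\fing))\to\W^{\kappa_c}(\fing)$, equivalently the identification $\gr Z(V^{\kappa_c}(\fing))=\C[J\fing^*]^{J\fing}$ (or at least surjectivity of $\gr Z\to\C[J\mc{S}]$). This is not a formality that can be deferred: it is the entire content of the Feigin--Frenkel theorem, namely that the critical-level center is large, and nothing in the BRST machinery of the paper produces a single nontrivial central element beyond the Segal--Sugawara vector. The two routes you gesture at are both substantive: ``higher Sugawara fields'' do not exist by naive normal ordering (their construction is exactly the problem), and the Miura/Harish-Chandra route requires the Wakimoto realization, the screening-operator description of the image, and a character count --- none of which is developed in these notes. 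So what you have is a correct proof of injectivity and well-definedness, together with a correct reduction of surjectivity to the hard input, but not a proof of the theorem.
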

This is a chiralization of Kostant's Theorem \ref{Thm:whittaker-model}
in the sense that 
we recover  Theorem \ref{Thm:whittaker-model} from Theorem \ref{Th:FFcenter}
by considering the induced map
between Zhu's algebras of both sides.
The statement of Theorem \ref{Th:FFcenter}
holds
for any simple Lie algebra $\fing$ (\cite{FeiFre92}).

 \begin{rmk}
 For a general simple Lie algebra $\fing$,
 the image of the Miura map  for a generic $k$
 is described in terms of {\em screening operators},
 see \cite[15.4]{FreBen04}.
 Theorem \ref{thm:image-of-Miura}
 for $\fing=\mf{gl}_n$
also follows 
from this description (the proof reduces to the case
$\fing=\mf{sl}_2$).
An important application of this realization is 
the {\em Feigin-Frenkel duality} which states
 \begin{align*}
\W^k(\fing)\cong \W^{{}^Lk}({}^L\fing),
\end{align*}
where ${}^L\fing$ is the Langlands dual Lie algebra of $\fing$,
$r^{\vee}(k+h^{\vee})({}^Lk+{}^Lh^{\vee})=1$.
Here $r^{\vee}$ is the maximal number of the edges of the Dynking diagram of $\fing$
and ${}^Lh^{\vee}$  is the dual Coxeter number of ${}^L\fing$.
In \cite{FeiFre92,FreBen04} this isomorphism was stated only  for a generic  $k$,
but it is not too difficult to see the isomorphism remains valid for an arbitrary $k$
using the injectivity of the Miura map.

 The Miura map is  defined   \cite{KacRoaWak03}
 for the $W$-algebra $\W^k(\fing,f)$  associated with an arbitrary $f$,
 which is injective as well since the proof of Theorem \ref{thm:Miura}
 applies.
 Recently Naoki Genra \cite{Genra}
 has obtained 
 the description of the image 
by the Miura map
 in terms of screening operators
for the $W$-algebra $\W^k(\fing,f)$  associated with an arbitrary nilpotent element $f$.
  \end{rmk}
 
 \subsection{Classical $W$-algebras}
 Since the Poisson structure of 
$\C[\mc{S}]$ is trivial,
we can give $\gr \W^\kappa(\fing)$ a Poisson vertex algebra structure by the formula
\eqref{eq:further}.
The Poisson structure of $R_{V^{k+n}(\finh)}=\C[\finh]$ is also trivial,
hence 
$\gr V^{\kappa_\finh}(\finh)=\C[J\finh^*]$
is equipped with the Poisson vertex algebra structure by the formula
\eqref{eq:further} as well.
Then the map $\gr \hat \Mi:\gr \W^\kappa(\fing)\hookrightarrow \gr V^{\kappa_{\finh}}(\finh)$
is a homomorphism of Poisson vertex algebras with respect to these structures.
Set
 $\kappa=k\kappa_0$, $k\in \C$,
 and
 consider its restriction 
$\gr \hat \Mi:\gr \W^k(\mf{sl}_n)\hookrightarrow \gr V^{\kappa_{\finh}}(\finh')$,
where $\finh'$ is the Cartan subalgebra of $\mf{sl}_n$.

In $\gr V^{\kappa_\finh}(\finh')$ 
we have
 \begin{align*}
\{h_{\lam}h'\}=\kappa_\finh(h,h')=(k+n)\kappa_0(h,h'),
\end{align*}
and this uniquely determines
the $\lam$-bracket of $\gr V^{\kappa_\finh}(\finh')$.
Hence it is independent of $k$ provided that $k\ne -n$.
Since the image of $\gr \W^k(\mf{sl}_n)$
is
strongly generated by
elements of $\C[(\finh')^*]^W$,
it follows that
the Poisson vertex algebra structure of 
$\gr \W^k(\mf{sl}_n)$, $k\ne-n$,
is independent of $k$.
We denote this Poisson vertex algebra 
by
$\W^{cl}(\mf{sl}_n)$.

The Poisson vertex algebra $\W^{cl}(\mf{sl}_n)$ is called the {\em classical $W$-algebra} associated with $\mf{sl}_n$,
which appeared in the works of Adler \cite{Adl78},
Gelfand-Dickey \cite{GelDik78}
and Drinfeld-Sokolov \cite{DriSok84}.
Thus,
the $W$-algebra $\W^k(\mf{sl}_n)$, $k\ne -n$,
is a deformation of $\W^{cl}(\mf{sl}_n)$.

On the other hand the $\W$-algebra $\W^{-n}(\mf{sl}_n)$ at the critical level 
can be identified with the space of the {\em $\mf{sl}_n$-opers} \cite{BeiDri05} on the disk $D$.
We refer to \cite{FreBen04,Fre07} for more on this subject.

\section{Representations of $W$-algebras}
\label{sec:rep-theory}
From now on we set $\fing=\mf{sl}_n$
and study the representations of $\W^k(\fing)$ (see \eqref{eq:sln-W}).
\subsection{Poisson modules}
Let $R$ be a Poisson algebra.
Recall that a {\em Poisson
$R$-module} is
a $R$-module  $M$ in the usual associative sense equipped with
a bilinear map
\begin{align*}
R\times M\ra M,\quad (r,m)\mapsto \ad r(m)=\{r,m\},
\end{align*}
which makes $M$ a Lie algebra module over $R$
satisfying 
\begin{align*}
 \{r_1,r_2 m\}=\{r_1,r_2\}m+r_2\{r_1,m\},\quad
\{r_1 r_2,m\}=r_1\{r_2,m\}+r_2\{r_1,m\}
\end{align*}
for $r_1,r_2\in R$, $m\in M$.
Let $R\on{-PMod}$ be the category of 
 Poisson modules
over $R$.

 \begin{lem}\label{lem:C[g]-modules}
A Poisson module over $\C[\mf{g}^*]$ is the same as 
a $\C[\mf{g}^*]$-module $M$ in the usual associative sense equipped with
a Lie algebra module 
structure $\mf{g}\ra \End M$,
$x\mapsto \ad(x)$,
 such that
\begin{align*}
\ad(x)(fm)=\{x,f\}.m+ f.\ad(x)(m)
\end{align*}
for $x\in \mf{g}$,
$f\in \C[\mf{g}^*]$,
$m\in M$.
 \end{lem}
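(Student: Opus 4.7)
The plan is to verify an equivalence of two kinds of structure on a vector space $M$: a Poisson $\C[\mf{g}^*]$-module structure on one hand, and a pair (commutative $\C[\mf{g}^*]$-module, Lie $\mf{g}$-module) satisfying the stated compatibility on the other. Since $\C[\mf{g}^*]=S(\mf{g})$ is generated as a commutative algebra by $\mf{g}$, one direction amounts to restriction to generators and the other amounts to extension by the Leibniz rule.

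For the forward direction, given a Poisson $\C[\mf{g}^*]$-module $M$, I set $\ad(x)(m):=\{x,m\}$ for $x\in\mf{g}\subset\C[\mf{g}^*]$. The Jacobi axiom of a Poisson module, together with $\{x,y\}=[x,y]$ in $\C[\mf{g}^*]$, gives $\ad([x,y])=[\ad(x),\ad(y)]$, so $M$ becomes a Lie $\mf{g}$-module. The first Leibniz axiom $\{r_1,r_2 m\}=\{r_1,r_2\}m+r_2\{r_1,m\}$ specialized to $r_1=x\in\mf{g}$ and $r_2=f\in\C[\mf{g}^*]$ is precisely the required compatibility.

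For the reverse direction, given the commutative module structure and the compatible $\mf{g}$-action, I need to construct a bracket $\{\cdot\,,\cdot\}:\C[\mf{g}^*]\times M\to M$. I define it on monomials by
\[
\{x_1x_2\cdots x_k,m\}:=\sum_{i=1}^k (x_1\cdots\widehat{x_i}\cdots x_k)\cdot\ad(x_i)(m),
\]
and extend linearly. Well-definedness (independence of the ordering of the $x_i$) is automatic from the commutativity of $\C[\mf{g}^*]$ acting on $M$. By construction the multiplicative Leibniz rule $\{r_1r_2,m\}=r_1\{r_2,m\}+r_2\{r_1,m\}$ holds on monomials and hence on all of $\C[\mf{g}^*]$. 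The module-multiplication Leibniz rule $\{r,fm\}=\{r,f\}m+f\{r,m\}$ holds when $r\in\mf{g}$ by the given hypothesis, and then extends to arbitrary $r\in\C[\mf{g}^*]$ by induction on monomial degree using the previous identity.

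The one step that requires actual work is the Jacobi identity $\{r_1,\{r_2,m\}\}-\{r_2,\{r_1,m\}\}=\{\{r_1,r_2\},m\}$. For $r_1,r_2\in\mf{g}$ it is exactly the statement that $\mf{g}\to\End M$ is a Lie algebra homomorphism, since $\{x,y\}=[x,y]$. To pass from generators to arbitrary $r_1,r_2\in\C[\mf{g}^*]$, I argue by induction on the total monomial degree, using the two Leibniz rules established above together with the Leibniz property of the Poisson bracket on $\C[\mf{g}^*]$ itself; this is the main (though still routine) obstacle, and is a standard derivation-extension argument, so I would only sketch one representative case and leave the remaining bookkeeping to the reader.
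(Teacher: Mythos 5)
Your proof is correct; the paper in fact states this lemma without proof, and your argument is exactly the standard one that is being suppressed: restriction to the generators $\mf{g}\subset\C[\mf{g}^*]=S(\mf{g})$ in one direction, and extension by the Leibniz rule $\{r_1r_2,m\}=r_1\{r_2,m\}+r_2\{r_1,m\}$ (which also shows the two constructions are mutually inverse) in the other. The deferred induction for the Jacobi identity does go through as you describe --- the key input being the antisymmetry and Leibniz property of the bracket on $\C[\mf{g}^*]$ itself --- so the sketch is at an appropriate level of detail for this statement.
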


\subsection{Poisson vertex modules}
 \begin{Def}
A {\em Poisson vertex module} over 
a Poisson vertex algebra $V$ is a 
$V$-module $M$ as a vertex algebra equipped with 
 a linear map
\begin{align*}
 V\mapsto (\End M)[[z\inv]]z\inv,\quad a\mapsto Y_-^M(a,z)=\sum_{n\geq 0}a^M_{(n)}z^{-n-1},
\end{align*}
satisfying
\begin{align}
& a_{(n)}^Mm=0\quad\text{for }n\gg 0,   
\\
&(T a)_{(n)}^M=-na^M_{(n-1)},\\
 &a_{(n)}^M(b v)=(a_{(n)}b)  v+b (a_{(n)}^Mv),\\
&[a^M_{(m)},b^M_{(n)}]=\sum_{i\geq 0}\begin{pmatrix}
				  m\\ i
				 \end{pmatrix}(a_{(i)}b)^M_{(m+n-i)},
\label{eq:commutatoer-pv}
\\
&(ab)^M_{(n)}=\sum_{i=0}^{\infty}(a_{(-i-1)}^Mb_{(n+i)}^M+b_{(-i-1)}^Ma_{(n+i)}^M)
\label{eq:associativity-VPA}
\end{align}
for all $a, b\in V$, $m, n\geq 0$, $v\in M$.

\end{Def}

A  Poisson vertex algebra $R$ is naturally a Poisson vertex module over
itself.

 \begin{exm}
Let  $M$ be a Poisson vertex module over 
 $\C[J\fing^*]$.
Then
by \eqref{eq:commutatoer-pv},
the assignment 
\begin{align*}
xt^n\mapsto x^M_{(n)}\quad x\in \fing\subset \C[\fing^*]\subset \C[J\fing^*],\
n\geq 0,
\end{align*}
defines a $J\fing=\fing[[t]]$-module structure on $M$.
In fact, a Poisson vertex module over $\C[J\fing^*]$
is the same as a 
$\C[J\fing^*]$-module $M$
in the usual associative sense
equipped with an action of the Lie algebra $J\fing$
such that
$(xt^n) m=0$ for $n\gg 0$,
$x\in \fing$,
$m\in M$,
and 
\begin{align*}
(xt^n)\cdot (a m)=(x_{(n)}a) m+a (x t^n)\cdot m
\end{align*}
for $x\in \fing$, $n\geq 0$,
$a\in \C[J\fing^*]$,
$m\in M$.

 \end{exm}

Below we often write $a_{(n)}$ for $a^{M}_{(n)}$.

The proofs of
the following assertions are straightforward.
 \begin{Lem}\label{Lem:VPA-induced}
Let $R$ be a Poisson algebra,
$E$ a Poisson module over $R$.
There is a unique
 Poisson vertex $J R$-module structure 
on $
J R\*_R E
$
such that
\begin{align*}
 a_{(n)}(b\* m)=(a_{(n)}b)\*
 m+\delta_{n,0}b\* \{a,m\}
\end{align*}
for
$n\geq 0$, 
$a\in R\subset J R$, $b\in J R$.
$m\in E$
(Recall that $JR=\C[J\Spec R]$.)
 \end{Lem}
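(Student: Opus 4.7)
The plan is to handle uniqueness first and then construct the module structure by extending the given formula from $R$ to all of $JR$. For uniqueness: the differential algebra $JR$ is generated by the image of $R$, so any Poisson vertex $JR$-module structure on $M=JR\otimes_R E$ extending the prescribed formula is forced on every element. Indeed, sesquilinearity $(Ta)^M_{(n)}=-n\,a^M_{(n-1)}$ propagates the action from $r\in R$ to $T^k r$, and the non-commutative Wick/associativity axiom \eqref{eq:associativity-VPA} propagates it to arbitrary products.

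For existence, I would first verify that the formula $r_{(n)}^M(b\otimes m)=(r_{(n)}b)\otimes m+\delta_{n,0}\,b\otimes\{r,m\}$ descends to a well-defined operator on $JR\otimes_R E$. For $n=0$, comparing $r_{(0)}^M(br'\otimes m)$ with $r_{(0)}^M(b\otimes r'm)$ for $r'\in R$ yields equal expressions once one invokes the Leibniz rule for $\{r,br'\}$ in $JR$ and the Poisson module Leibniz rule $\{r,r'm\}=\{r,r'\}m+r'\{r,m\}$ in $E$. For $n\geq 1$ one uses that $r_{(n)}r'=0$ in $JR$ for $r,r'\in R$, because the $\lambda$-bracket on $R$ has no positive $\lambda$-coefficients. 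One then extends to all $a\in JR$ by the recipe dictated by uniqueness and verifies the Poisson vertex module axioms; sesquilinearity holds by construction, while the two Leibniz-type axioms and the commutator formula \eqref{eq:commutatoer-pv} reduce, via the corresponding axioms for $JR$ applied on the first tensor factor, to statements involving only $a,b\in R$ acting on $E$, where they follow from the defining properties of $E$ as a Poisson $R$-module.

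The delicate point -- and the main obstacle I would expect -- is the consistency of the extension to $JR$: one must verify that the operators $a_{(n)}^M$ obtained via sesquilinearity and \eqref{eq:associativity-VPA} do not depend on how $a$ is written as a differential polynomial in $R$, and that the resulting operators still satisfy the commutator formula \eqref{eq:commutatoer-pv} for arbitrary $a,b\in JR$. Ultimately this reduces to the Jacobi identity for the Poisson vertex algebra $JR$ combined with the Jacobi identity for $E$ as a Poisson $R$-module, but organising the induction on the differential/multiplicative complexity of $a$ and $b$ is the main bookkeeping step -- which is presumably why the author leaves this check to the reader, in parallel with the analogous verification for the Poisson vertex algebra $JR$ itself stated earlier in the section.
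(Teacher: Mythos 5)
Your proposal is correct and amounts to exactly the routine verification the paper has in mind: the paper offers no written proof (it declares the assertion straightforward, in parallel with the analogous well-definedness check for the Poisson vertex structure on $JR$ itself), and your argument -- uniqueness from the fact that $JR$ is differentially generated by $R$ together with sesquilinearity and \eqref{eq:associativity-VPA}, existence by checking that the formula descends through the relations $br'\otimes m=b\otimes r'm$ (using the Leibniz rules and $r_{(n)}r'=0$ for $n\geq 1$) and then extending -- is the standard way to carry it out. You also correctly identify and honestly flag the remaining bookkeeping (consistency of the extension and the commutator formula for general $a,b\in JR$, reducing to the Jacobi identities), so there is no gap of substance.
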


 \begin{Lem}\label{Lem:universality-of-induced-VPA-module}
Let $R$ be a Poisson algebra,
$M$  a Poisson vertex module over $JR$.
Suppose that
there exists a
 $R$-submodule $E$ of $M$
(in the usual commutative sense)
such that
$a_{(n)}E=0$  for $n>0$, $a\in R$,
and $M$ is generated by $E$
(in the usual commutative sense).
Then there exists a surjective homomorphism
\begin{align*}
 JR\*_{R}E\twoheadrightarrow M
\end{align*}
of Poisson vertex modules.
 \end{Lem}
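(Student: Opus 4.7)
The plan is to construct an explicit surjective map $\phi \colon JR \otimes_R E \to M$ and verify it intertwines the Poisson vertex module structures. Define $\phi$ on simple tensors by $b \otimes m \mapsto b \cdot m$, where the dot denotes the action of $b \in JR$ on $m \in E \subset M$ via the underlying commutative $JR$-module structure on $M$ (i.e.\ $b^M_{(-1)} m$). Since $E$ is an $R$-submodule of $M$ in the commutative sense, the assignment is $R$-balanced and descends to the tensor product; it is manifestly a homomorphism of $JR$-modules in the commutative sense, and surjectivity follows immediately from the assumption that $E$ generates $M$ as a commutative $JR$-module.

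The substantive step is to check that $\phi(a_{(n)}x) = a^M_{(n)}\phi(x)$ for all $a \in JR$, $n \geq 0$, and $x \in JR \otimes_R E$. First verify this for $a \in R$. For $b \in JR$ and $m \in E$, the defining formula of Lemma \ref{Lem:VPA-induced} gives
\begin{align*}
\phi\bigl(a_{(n)}(b \otimes m)\bigr) = (a_{(n)} b) \cdot m + \delta_{n, 0}\, b \cdot \{a, m\},
\end{align*}
while the Poisson vertex module axiom applied inside $M$ gives
\begin{align*}
a^M_{(n)} \phi(b \otimes m) = a^M_{(n)}(b \cdot m) = (a_{(n)} b) \cdot m + b \cdot (a^M_{(n)} m).
\end{align*}
For $n > 0$ the last term vanishes by the hypothesis $a_{(n)}^M E = 0$; for $n = 0$ it reads $b \cdot \{a, m\}$, matching the first computation.

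To promote the identity from $R$ to all of $JR$, introduce $S = \{a \in JR : \phi(a_{(n)}x) = a^M_{(n)} \phi(x) \text{ for all } n \geq 0,\ x \in JR \otimes_R E\}$. The set $S$ is closed under the derivation $T$ by the sesquilinearity relation $(Ta)_{(n)} = -n a_{(n-1)}$, and closed under the commutative product by the quasi-associativity formula \eqref{eq:associativity-VPA}, which expresses $(ab)_{(n)}$ as a finite sum of compositions of commutative actions with $\lambda$-bracket modes of $a$ and $b$, all of which $\phi$ already intertwines. Since $R$ generates $JR$ as a differential algebra (Remark \ref{Rem:universal property of JR}), this forces $S = JR$ and completes the proof.

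I do not expect a serious obstacle: the entire argument is a formal universal-property verification, and the hypothesis $a_{(n)}E = 0$ for $n > 0$ is engineered so that the defining relation of $JR \otimes_R E$ in Lemma \ref{Lem:VPA-induced} matches the intrinsic action on $M$ term-by-term. The only place one must be mildly careful is ensuring that the induced Poisson $R$-module structure on $E$ (coming from $a_{(0)}^M$ on $M$) actually lands in $E$, so that Lemma \ref{Lem:VPA-induced} is applicable; this is implicit in the interpretation of the hypothesis.
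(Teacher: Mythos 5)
Your proof is correct and is precisely the straightforward verification the paper omits (it only remarks that the proof is "straightforward"): define $\phi(b\otimes m)=b\cdot m$, check the intertwining identity for $a\in R$ using the hypothesis $a_{(n)}^ME=0$ ($n>0$) and the Leibniz axiom, then propagate to all of $JR$ via sesquilinearity and \eqref{eq:associativity-VPA} since $R$ generates $JR$ as a differential algebra. Your closing caveat is also well placed: for Lemma \ref{Lem:VPA-induced} to be applicable one must read the hypothesis as saying $E$ is a Poisson $R$-submodule, i.e.\ $a_{(0)}^ME\subset E$, which is indeed how the lemma is used later (e.g.\ for $E=M/\Gamma^1M$).
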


\subsection{Canonical filtration of modules over vertex algebras}
Let $V$ be a vertex algebra graded by a Hamiltonian $H$.
A {\em compatible filtration} 
of a $V$-module $M$ is a decreasing filtration
\begin{align*}
M=\Gamma^0M\supset \Gamma^1M\supset \cdots
\end{align*}
such that
\begin{align*}
&
 a_{(n)}\Gamma^q M\subset \Gamma^{p+q-n-1}M\quad \text{for }
a\in F^p V,\ \forall n\in \Z,
\nonumber \\
&
 a_{(n)}\Gamma^qM\subset  \Gamma^{p+q-n}M\quad \text{for }
 a\in
F^p V,
\ n\geq 0,
\\
& H. \Gamma^p M\subset \Gamma^p M\quad \text{for all }p\geq 0,\\
&\bigcap_p \Gamma^p M=0.
\end{align*}
For a compatible filtration
$\Gamma^{\bullet}M$
the associated graded space
\begin{align*}
\gr^\Gamma M=\bigoplus_{p\geq 0} \Gamma^p M/\Gamma^{p+1}M
\end{align*}
is
naturally a graded vertex Poison module over the graded vertex Poisson algebra
$\gr^F V$,
and hence,
it is a 
graded vertex Poison module over $ JR_V=\C[\tilde{X}_V]$
by Theorem \ref{thm:surj-poisson}.

The vertex Poisson
$JR_V$-module
structure 
of $\gr^\Gamma M$
restricts to 
the Poisson $R_V$-module structure
of $M/\Gamma^1 M=\Gamma^0M /\Gamma^1M$,
and
$a_{(n)}(M/\Gamma^1 M)=0$
 for $a\in R_V\subset JR_V$,
$n>0$.
It follows that
there is a  homomorphism
\begin{align*}
 JR_V\*_{R_V}(M/\Gamma^1 M)\ra \gr^{\Gamma}M,
\quad a\* \bar m\mapsto a\bar m,
\end{align*}
of vertex Poisson modules
by Lemma \ref{Lem:universality-of-induced-VPA-module}.

Suppose that
$V$ is positively graded and so
is a $V$-module $M$.
We denote by $F^\bullet M$  
the Li filtration \cite{Li05}  
of $M$,
which 
is 
 defined 
 by
 \begin{align*}
F^pM=\haru_{\C}\{a^1_{(-n_1-1)}\dots a^r_{(-n_r-1)}m\mid a^i\in V,\ m\in M,\ n_1+\dots +n_r\geq p  \}.
\end{align*}
It is a compatible filtration of $M$,
and in fact is
 the finest compatible filtration of $M$,
that is,
$F^p M\subset \Gamma^p M$ for all $p$
for any compatible filtration 
$\Gamma^\bullet M$ of $M$.
The subspace $F^1 M$ is spanned by the
vectors $a_{(-2)}m$ with $a\in V$, $m\in M$,
which is often denoted by
$C_2(M)$ in the literature.
Set
\begin{align}
 \bar M=M/F^1 M(=M/C_2(M)),
\end{align}
which 
 is a 
Poisson module over $R_V=\bar V$.
By \cite[Proposition 4.12]{Li05},
the vertex Poisson module homomorphism
\begin{align*}
 JR_V\*_{R_V}\bar M\ra \gr^F M
\end{align*}
is surjective.

Let $\{ a^i; i\in I\}$ be elements 
of $V$ such that their images generate  $R_V$
in usual commutative sense,
and let $U$ be a subspace of
$M$ such that $M=U+F^1 M$.
The surjectivity of the above map is equivalent to that
\begin{align}
 &F^p M\label{eq:span-FpM}
\\
=\haru_{\C}&\{a_{(-n_1-1)}^{i_1}
\dots a_{(-n_r-1)}^{i_r}m\mid m\in U,\ n_i\geq 0,
n_1+\dots +n_r\geq p,\
 i_1,\dots,i_r\in I\}.
\nonumber
\end{align}

 \begin{lem}
Let $V$ be a vertex algebra, $M$ a $V$-module.
The Poisson vertex algebra module structure of $\gr^F M$
restricts to the Poisson module structure
of $\bar M:=M/F^1 M$ over $R_V$, that is,
$\bar M$ is a Poisson $R_V$-module by
\begin{align*}
\bar  a\cdot \bar m=\overline{a_{(-1)}m},\quad \ad(\bar a)(\bar m)=\overline{a_{(0)}m}.
\end{align*}
 \end{lem}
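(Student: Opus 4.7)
The plan is to observe that the lemma is the restriction, to the bottom graded piece, of the Poisson vertex module structure of $\gr^F M$ over $\gr^F V$ recalled just before the statement: $R_V=F^0V/F^1V$ and $\bar M=F^0M/F^1M$ occupy the bottom slot of $\gr^F V$ and $\gr^F M$ respectively, and the two binary operations $\bar a\cdot\bar m$ and $\ad(\bar a)(\bar m)$ are obtained from the $(-1)$-product and the $(0)$-bracket of that Poisson vertex module structure.

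First I would check well-definedness. The compatible filtration bounds $a_{(-1)}F^qM\subset F^{p+q}M$ and $a_{(0)}F^qM\subset F^{p+q}M$ (both for $a\in F^pV$) immediately force $a_{(-1)}m$ and $a_{(0)}m$ into $F^1M$ whenever either $a$ moves into $F^1V$ or $m$ into $F^1M$, so both formulas descend to $R_V\times\bar M\to\bar M$. Commutativity, associativity and the unitality of $|0\rangle$ for the $R_V$-module structure $\bar a\cdot\bar m=\overline{a_{(-1)}m}$ are inherited directly from the commutative $\gr^F V$-module structure on $\gr^F M$ restricted to degree zero, i.e.\ from standard vertex-module identities modulo $F^1M$.

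For the Poisson axioms I would use three instances of the Borcherds identity \eqref{eq:com-formula}. The derivation property $\{r_1,r_2 m\}=\{r_1,r_2\}m+r_2\{r_1,m\}$ follows from $[a_{(0)},b_{(-1)}]=(a_{(0)}b)_{(-1)}$, and the Jacobi identity $\ad([\bar a,\bar b])=[\ad(\bar a),\ad(\bar b)]$ from $[a_{(0)},b_{(0)}]=(a_{(0)}b)_{(0)}$; both hold verbatim as operator identities on $M$. The remaining Leibniz rule $\{r_1 r_2,m\}=r_1\{r_2,m\}+r_2\{r_1,m\}$ comes from the vertex-module associativity
\[
(a_{(-1)}b)_{(0)}m=\sum_{i\geq 0}\bigl(a_{(-i-1)}b_{(i)}m+b_{(-i-1)}a_{(i)}m\bigr),
\]
after observing that every term with $i\geq 1$ lies in $F^iM\subset F^1M$, so that only the $i=0$ contribution $a_{(-1)}b_{(0)}m+b_{(-1)}a_{(0)}m$ survives modulo $F^1M$.

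The main (and only) obstacle is the filtration bookkeeping in this last step, verifying that the tail of the associativity sum vanishes modulo $F^1M$; this is directly parallel to the argument establishing the Poisson algebra structure on $R_V$ already given in the excerpt, so it requires no new idea beyond the classical-limit formalism already set up.
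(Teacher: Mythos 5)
Your proposal is correct and follows exactly the route the paper intends: the lemma is stated there without proof, as the degree-zero restriction of the Poisson vertex module structure on $\gr^F M$, and your verification (well-definedness from the compatible-filtration bounds for $n=-1,0$, the two Leibniz rules and Jacobi from the Borcherds identities, with the tail of the associativity sum killed because $a_{(-i-1)}M\subset F^iM\subset F^1M$ for $i\geq 1$) is precisely the routine check being left to the reader. No gaps.
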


A $V$-module $M$ is called {\em finitely strongly generated}
if $\bar M$ is finitely generated as a $R_V$-module in the usual
associative sense.

\subsection{Associated varieties of modules over affine vertex algebras}
 A $\affg$-module $M$ of level $k$ is called {\rm smooth} if 
$x(z)$ is a field on $M$ for $x\in \fing$,
that is, $xt^nm=0$ for $n\gg 0$, $x\in \fing$, $m\in M$.
 Any $V^{k}(\fing)$-module $M$ is naturally a {smooth} $\affg$-module of level $k$.
Conversely, any smooth $\affg$-module of level $k$ can be regarded as  a $V^k(\fing)$-module.
It follows that a $V^k(\fing)$-module is the same as a smooth $\affg$-module of level $k$.

For a  $V=V^k(\fing)$-module $M$,
or equivalently, a smooth $\affg$-module of level $k$,
we have
\begin{align*}
 \bar M=M/\fing[t^{-1}]t^{-2}M,
\end{align*}
and the Poisson $\C[\fing^*]$-module structure is given by
\begin{align*}
x\cdot \bar m=\overline{xt^{-1}m},\quad \ad(x)\bar m=\overline{xm}.
\end{align*}
For a $\fing$-module $E$ let
\begin{align*}
 V_{E}^k:=U(\affg)\*_{U(\fing[t]\+ \C K)}E,
\end{align*}
where $E$ is considered as a $\fing[t]\+ \C K$-module 
on which $\fing[t]$ acts  via the projection
$\fing[t]\ra \fing$ and $K$ acts as multiplication by $k$.
Then
\begin{align*}
 \overline{V_{E}^k}\cong \C[\fing]\* E,
\end{align*}
where the Poisson $\C[\fing^*]$-module structure is given by
\begin{align*}
f\cdot g\* v=(fg)\* v,\quad 
\ad x (f\*v)=\{x,f\}\*v+f\* xv,
\end{align*}
for $f,g\in \C[\fing^*]$, $v\in V$.

Let $\mc{O}_k$ be the category $\mc{O}$ of $\affg$ of level $k$ (\cite{Kac74}),
$\KL_k$ the full subcateogory of $\mc{O}_k$ consisting of modules $M$ which are
integrable over $\fing$.
Note that
$V_{E}^k$ is a object of $\KL_k$ for a finite-dimensional representation $E$ of $\fing$.
Thus, $V^k(\fing)=V_{\C}^k$ and its simple quotient $V_k(\fing)$ are also objects of $\KL_k$.

Both $\mc{O}_k$  and $\KL_k$ can be regarded as full subcategories of the category of $V^k(\fing)$-modules.
 \begin{lem}
For $M\in \KL_k$ the following conditions are equivalent.
\begin{enumerate}
 \item  $M$ is finitely strongly generated as  a $V^k(\fing)$-module,
\item $M$ is finitely generated as a $\fing[t^{-1}]t^{-1}$-module,
\item $M$ is finitely generated as a $\affg$-module.
\end{enumerate}

 \end{lem}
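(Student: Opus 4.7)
The plan is to prove $(2)\Leftrightarrow(3)$ via PBW combined with the category $\mc{O}_k$ structure, and $(1)\Leftrightarrow(2)$ via the Li filtration of $M$. The implication $(2)\Rightarrow(3)$ is immediate since $U(\fing[t^{-1}]t^{-1})\subset U(\affg)$. For $(3)\Rightarrow(2)$, suppose $M$ is generated by $v_1,\dots,v_N$ as an $\affg$-module; since $M\in\mc{O}_k$ is Cartan-semisimple, the $v_i$ may be taken to be weight vectors. The PBW theorem gives $M=U(\fing[t^{-1}]t^{-1})\cdot U(\fing[t]\oplus\C K)\{v_1,\dots,v_N\}$, so it suffices to show that $W:=U(\fing[t]\oplus\C K)\{v_1,\dots,v_N\}$ is finite-dimensional. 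The central element $K$ acts as the scalar $k$, and using PBW once more $U(\fing[t])=U(\fing)\cdot U(\fing[t]t)$; since $\fing[t]t\subset\affn_+$ acts locally nilpotently on $M\in\mc{O}_k$, the subspace $U(\fing[t]t)v_i$ is finite-dimensional for each $i$, and applying $U(\fing)$ to this produces a finite-dimensional $\fing$-submodule because $\fing$ acts integrably on $M\in\KL_k$. Hence $W$ is finite-dimensional, and a basis of $W$ yields finitely many $U(\fing[t^{-1}]t^{-1})$-generators of $M$.

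For $(1)\Leftrightarrow(2)$, I use the Poisson $R_V$-module structure on $\bar M=M/F^1M$ over $R_V=\C[\fing^*]$, in which $x\cdot\bar m=\overline{xt^{-1}m}$ for $x\in\fing$. The implication $(2)\Rightarrow(1)$ is direct: if $M=U(\fing[t^{-1}]t^{-1})\{v_1,\dots,v_N\}$, then by \eqref{eq:span-FpM} applied to the strong generators $\{xt^{-1}\vac\}$ of $V^k(\fing)$, modulo $F^1M$ only the monomials $(x^{i_1}t^{-1})\cdots(x^{i_r}t^{-1})v_j$ contribute, and these are precisely $(x^{i_1}\cdots x^{i_r})\cdot\bar v_j$, so $\bar v_1,\dots,\bar v_N$ generate $\bar M$ as a $\C[\fing^*]$-module. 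For $(1)\Rightarrow(2)$, lift generators $\bar u_1,\dots,\bar u_N$ of $\bar M$ to homogeneous vectors $u_i\in M$ and set $N':=U(\fing[t^{-1}]t^{-1})\{u_1,\dots,u_N\}$. The same computation gives $N'+F^1M=M$. To iterate, the key tool is Lemma \ref{Lem:universality-of-induced-VPA-module}: the surjection $\C[J\fing^*]\otimes_{\C[\fing^*]}\bar M\twoheadrightarrow\gr^F M$ shows that each graded piece $F^pM/F^{p+1}M$ is spanned, as a $\C[J\fing^*]$-module, by images of monomials $(x^{i_1}t^{-k_1})\cdots(x^{i_r}t^{-k_r})u_j$ with $\sum(k_s-1)=p$, all of which already lie in $N'$. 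Thus $N'+F^pM=M$ for every $p\geq0$. By Lemma \ref{lem:G=F}, $F^pM_\Delta=G_{\Delta-p}M_\Delta=0$ once $p$ exceeds the difference between $\Delta$ and the lowest conformal weight, so on each conformal weight space the filtration terminates and $N'=M$.

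The main obstacle will be this last induction step in $(1)\Rightarrow(2)$: one must translate finite generation of the \emph{classical} Poisson module $\bar M$ over $\C[\fing^*]$ into finite generation of the \emph{quantum} module $M$ over the noncommutative algebra $U(\fing[t^{-1}]t^{-1})$. The two ingredients that make this lifting work are the isomorphism $\gr V^k(\fing)\cong\C[J\fing^*]$ from \eqref{eq:iso-gr-affineVA}, which guarantees that lifts of classical generators actually exhaust $M$ at the associated-graded level, and Lemma \ref{lem:G=F} comparing the Li filtration with the conformal weight grading, which provides the finiteness on each weight space $M_\Delta$ needed to terminate the iteration. This dependence on conformal grading is precisely why the hypothesis $M\in\KL_k$ is used: it supplies a bounded-below grading via the Sugawara construction together with the integrability over $\fing$.
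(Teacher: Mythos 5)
The paper states this lemma without proof, so there is nothing to compare against; judged on its own terms, your argument is correct and is built exactly from the tools the text sets up (PBW plus the category-$\mc{O}$ weight condition for $(2)\Leftrightarrow(3)$, and the Li filtration, the surjection $JR_V\otimes_{R_V}\bar M\twoheadrightarrow \gr^F M$, and the $F$-versus-$G$ comparison for $(1)\Leftrightarrow(2)$). Two small remarks. First, in $(3)\Rightarrow(2)$ the finite-dimensionality of $U(\fing[t]t)v_i$ does not follow from local nilpotence of individual elements alone; what you actually need is the defining condition of $\mc{O}_k$ that the weights of $M$ lie in a finite union of cones $\lambda-\hat Q_+$ together with finite-dimensionality of weight spaces — the conclusion is standard, but state the right reason. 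Second, your iteration in $(1)\Rightarrow(2)$ is more work than necessary: once you take $U$ to be the span of the vectors $(x^{i_1}t^{-1})\cdots(x^{i_r}t^{-1})u_j$ (so that $M=U+F^1M$ and $U\subset N'$), the spanning statement \eqref{eq:span-FpM} applied at $p=0$ already gives $M=U(\fing[t^{-1}]t^{-1})U\subset N'$ in one step, with no need to descend through the filtration; of course \eqref{eq:span-FpM} itself rests on the surjectivity of $JR_V\otimes_{R_V}\bar M\to\gr^FM$ from \cite{Li05}, which is proved by essentially your descent, so your longer route has the virtue of being self-contained. Finally, at the critical level one should grade by $-d$ rather than by the Sugawara $L_0$, but this changes nothing in the argument.
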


For a finitely strongly generated $V^k(\fing)$-module $M$ define its {\em associated variety}
$X_M$ by
\begin{align*}
 X_M=\on{supp}_{R_V}(\bar M)\subset X_V,
\end{align*}
equipped with a reduced scheme structure.

 \begin{exm}
$X_{V_{E}^k}=\fing^*$ for a  finite-dimensional representation $E$ of $\fing$.
 \end{exm}
\subsection{Ginzburg's correspondence}
Let $\overline{\HC}$ be the full subcategory of the category of Poisson
$\C[\fing^*]$-modules on which the Lie algebra $\fing$-action (see Lemma
\ref{lem:C[g]-modules}) is
integrable.
 \begin{lem}\label{lem:barHC}
For $M\in \KL_k$,
the Poisson $\C[\fing^*]$-module $\bar M$ belongs to $\overline{\HC}$.
 \end{lem}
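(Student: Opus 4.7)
The plan is to unpack the definition of $\overline{\HC}$ and reduce the question to transferring integrability from $M$ to the quotient $\bar{M}=M/\fing[t^{-1}]t^{-2}M$. Since $\bar{M}$ is already a Poisson $\C[\fing^*]$-module by the general construction, the only content is to verify that the $\fing$-action $\ad(x):\bar{m}\mapsto \overline{xm}$ (with $x\in\fing$ acting on $M$ via $xt^0\in\affg$) is integrable. Thus the proof has two steps: show that this action is well-defined on $\bar{M}$, and then propagate integrability through the surjection $M\twoheadrightarrow \bar{M}$.

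For well-definedness, I would verify that the subspace $\fing[t^{-1}]t^{-2}M$ is stable under $xt^0$. This is immediate from the commutation relation in $\affg$: for $y\in\fing$, $n\geq 2$, $m\in M$,
\begin{align*}
(xt^0)\cdot(yt^{-n})m=(yt^{-n})(xt^0)m+[x,y]t^{-n}\cdot m,
\end{align*}
the central cocycle vanishing because $n\neq 0$. Both summands lie in $\fing[t^{-1}]t^{-2}M$ since $n\geq 2$, so the $\fing$-action descends to $\bar M$, and the formula $\ad(x)\bar{m}=\overline{xm}$ makes sense.

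For integrability, pick any $\bar{m}\in\bar{M}$ and a lift $m\in M$. The hypothesis $M\in\KL_k$ means precisely that the $\fing$-action on $M$ (via $x\mapsto xt^0$) is integrable, so $m$ is contained in a finite-dimensional $\fing$-submodule $E\subset M$ on which the action exponentiates to an algebraic $G$-action. The image $\bar{E}\subset\bar{M}$ of $E$ is then a finite-dimensional $\fing$-stable subspace containing $\bar{m}$; since the $\fing$-action on $\bar{E}$ is the quotient of the $\fing$-action on $E$ and the latter exponentiates to a $G$-action, so does the former. Hence every vector of $\bar{M}$ lies in a finite-dimensional $\fing$-submodule whose action integrates to a rational $G$-action, which is exactly the integrability condition defining $\overline{\HC}$.

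There is essentially no serious obstacle here: the argument is a direct verification, and the only point requiring any care is the well-definedness step, where one must note that the central extension plays no role because the bracket $[xt^0,yt^{-n}]$ has trivial cocycle for $n\geq 2$. The rest is a straightforward transfer of local finiteness and rationality of the $G$-action through the surjection $M\twoheadrightarrow\bar{M}$.
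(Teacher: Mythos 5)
Your argument is correct, and it is the routine verification the paper leaves implicit (the lemma is stated without proof): the Poisson-module structure on $\bar M$ is already given by the general construction, the central cocycle indeed vanishes on $[\,xt^0, yt^{-n}\,]$ for $n\geq 2$ so the $\fing$-action descends, and integrability passes from $M$ to its quotient $\bar M$ since $\KL_k$ consists by definition of the $\fing$-integrable objects of $\mc{O}_k$.
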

By Lemma \ref{lem:barHC}
we have a right exact functor
\begin{align*}
 \KL_k\ra \overline{\HC},\quad M\mapsto \bar M.
\end{align*}

For $M\in \overline{\HC}$,
$M\* \overline{Cl}$
 is naturally a Poisson module over $\bar
C(\fing)=\C[\fing^*]\* \overline{Cl}$.
(The notation of Poisson modules natural extends to the Poisson supralgebras.)
Thus,
$(M\* \overline{Cl}, \ad \bar Q)$
is a differential graded Poisson module over 
the differential graded Poisson module
$(\bar C(\fing),\ad \bar Q)$.
In particular
it cohomology
$H^{\bullet}(\bar M\* \overline{Cl},\ad \bar Q)$ is a Poisson module
over
$H^{\bullet}(\bar C(\fing),\ad \bar Q)=\C[\mc{S}]$.
So we get a functor
\begin{align*}
 \overline{\HC}\ra \C[\mc{S}]\on{-Mod},\quad M\mapsto
 H^0(M):=H^0(M\*\overline{Cl},\ ad \bar Q).
\end{align*}

The following assertion is a restatement of a result of Ginzburg \cite{Gin08}
(see \cite[Theorem 2.3]{A2012Dec}).
 \begin{thm}\label{Th:classocial-Harish-Chandra1}
Let $M\in \overline{\HC}$.
Then
 $H^i(M)=0$ for $i\ne 0$,
and we have an isomorphism
\begin{align*}
 H^0(M)\cong (M/\sum_{i}\C[\fing^*](x_i-\chi(x_i))M
 )^{N}.
\end{align*}
  In particular
  if $M$ is finitely generated $H^0(M)$ is finitely generated over $\C[\mc{S}]$ and 
  \begin{align*}
   \on{supp}_{\C[\mc{S}]}H^0(M)=(\on{supp}_{\C[\fing^*]}M)\cap \mc{S}.
  \end{align*}
 \end{thm}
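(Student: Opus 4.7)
The plan is to mimic the spectral sequence proof of Theorem \ref{thm:KS-classical}, which is the special case $M=\C[\fing^*]$, carrying $M$ along as coefficients. Set $\bar{C}(M):=M\otimes\overline{Cl}$ with the bigrading
\[ \bar{C}^{i,j}(M)=M\otimes\Lambda^i(\finn^*)\otimes\Lambda^{-j}(\finn), \]
so that $\bar{C}(M)=\bigoplus_{i\geq 0,\,j\leq 0}\bar{C}^{i,j}(M)$. As in \eqref{eq:dpm} the differential splits as $\ad\bar Q=d_++d_-$ with $d_-(\bar C^{i,j}(M))\subset\bar C^{i,j+1}(M)$ and $d_+(\bar C^{i,j}(M))\subset\bar C^{i+1,j}(M)$; the two pieces commute and square to zero because $M$ is a Poisson module over $\bar C(\fing)$. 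This yields a spectral sequence $E_r\Rightarrow H^\bullet(\bar C(M),\ad\bar Q)$ with $E_1^{\bullet,q}=H^q(\bar C(M),d_-)$ and $E_2^{p,q}=H^p(H^q(\bar C(M),d_-),d_+)$.

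Now $(\bar C(M),d_-)$ is precisely the Koszul complex on $M$ for the sequence $x_1-\chi(x_1),\dots,x_N-\chi(x_N)$, tensored with $\Lambda(\finn^*)$. The technical heart of the argument is that this sequence is \emph{regular} on every $M\in\overline{\HC}$. This uses Proposition \ref{prp:transversality}, which says $\chi$ is a regular value of $\mu$, together with the integrability of the $\fing$-action: the contracting $\C^*$-action $\rho$ of \eqref{eq:C-star-action} combined with the $G$-equivariance of $M$ permits one to reduce to a neighborhood of $\chi$ in $\finn^*$ where $\C[\mu^{-1}(\chi)]$ is flat over $\C[\finn^*]$, which forces the Koszul sequence to be regular on any integrable coherent $M$. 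Granting this, $H^i(\bar C(M),d_-)\cong \delta_{i,0}\,(M/I_\chi M)\otimes\Lambda(\finn^*)$ with $I_\chi=\sum_i\C[\fing^*](x_i-\chi(x_i))$. Next, $(H^0(\bar C(M),d_-),d_+)$ is the Chevalley complex computing $H^\bullet(\finn,M/I_\chi M)$. By Theorem \ref{them:Kostant}, $\mu^{-1}(\chi)\cong N\times\mc{S}$, so $M/I_\chi M$ is a $\C[N]\otimes\C[\mc{S}]$-module; integrability of the $N$-action splits it as $M/I_\chi M\cong \C[N]\otimes(M/I_\chi M)^N$ as $N$-modules, and the standard computation $H^i(\finn,\C[N])=\delta_{i,0}\C$ (Shapiro's lemma, since $\C[N]$ is coinduced from the trivial $0$-module) gives $H^i(\finn,M/I_\chi M)=\delta_{i,0}\,(M/I_\chi M)^N$. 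Thus the spectral sequence collapses at $E_2=E_\infty$, proving (i) and (ii).

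For (iii), if $M$ is finitely generated over $\C[\fing^*]$ then $M/I_\chi M$ is finitely generated over $\C[\mu^{-1}(\chi)]=\C[N]\otimes\C[\mc{S}]$, so the decomposition $M/I_\chi M\cong \C[N]\otimes(M/I_\chi M)^N$ forces $H^0(M)=(M/I_\chi M)^N$ to be finitely generated over $\C[\mc{S}]$. The inclusion $\on{supp}_{\C[\mc{S}]}H^0(M)\subset \on{supp}_{\C[\fing^*]}M\cap\mc{S}$ is immediate: quotienting by $I_\chi$ restricts the support to $\on{supp} M\cap\mu^{-1}(\chi)$, taking $N$-invariants does not enlarge it, and under $\mu^{-1}(\chi)\cong N\times\mc{S}$ this projects into $\mc{S}$. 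The reverse inclusion follows from the freeness of the $N$-action on $\mu^{-1}(\chi)$: the functor of $N$-invariants on rational $N$-modules over $\C[N]\otimes\C[\mc{S}]$ is exact and commutes with localization on $\mc{S}$, so no point of $\on{supp} M\cap \mc{S}$ is lost after passing to $N$-invariants. The main obstacle throughout is precisely the regularity statement for the sequence $\{x_i-\chi(x_i)\}$ on an arbitrary $M\in\overline{\HC}$; once this input from Kostant-Ginzburg theory is in hand, the remaining steps are formal applications of the double-complex spectral sequence and Shapiro's lemma.
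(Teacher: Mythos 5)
The paper does not actually prove this theorem: it is stated as ``a restatement of a result of Ginzburg [Gin08]'' (see also [A7, Theorem 2.3]), so there is no in-text proof to compare with. Your skeleton --- the coefficient version of the double-complex spectral sequence used for Theorem \ref{thm:KS-classical} --- is the natural one, and two of its three steps are sound: the $E_2$-computation via descent along the free $N$-action on $\mu^{-1}(\chi)\cong N\times\mc{S}$ (giving $M/I_\chi M\cong \C[N]\otimes(M/I_\chi M)^N$ and then $H^i(\finn,\C[N]\otimes V)=\delta_{i,0}V$ since $\C[N]$ is coinduced), and the deduction of the support identity from that decomposition together with the $N$-stability of $\on{supp}M$.

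The gap is exactly at the step you call the technical heart, and the justification you offer for it is not correct. The $E_1$-computation needs $\on{Tor}^{\C[\finn^*]}_{j}(\C_\chi,M)=0$ for $j>0$, i.e.\ exactness of the Koszul complex of $x_1-\chi(x_1),\dots,x_N-\chi(x_N)$ on $M$. Your appeal to ``flatness of $\C[\mu^{-1}(\chi)]$ over $\C[\finn^*]$'' cannot work: $\mu$ maps all of $\mu^{-1}(\chi)$ to the single point $\chi$, so $\C[\mu^{-1}(\chi)]$ is as far from flat over $\C[\finn^*]$ as possible, and in any case a statement about the structure sheaf says nothing about the derived fibre of an arbitrary $M$. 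Nor does the transversality of Proposition \ref{prp:transversality} suffice: correct codimension of $\on{supp}M\cap\mu^{-1}(\chi)$ implies Koszul exactness only for Cohen--Macaulay modules, and objects of $\overline{\HC}$ need not be Cohen--Macaulay. That the vanishing nevertheless holds for every $M\in\overline{\HC}$ is precisely the nontrivial content of Ginzburg's theorem, and it must use the Poisson-module/integrability hypothesis in an essential way --- note for instance that the skyscraper at $f$ has $\on{Tor}^{\C[\finn^*]}_{j}(\C_\chi,\cdot)\ne 0$ for all $j\le N$, and is ruled out only because it admits no Poisson module structure. As written, your argument assumes its central claim; you should either supply Ginzburg's argument for the Koszul vanishing or cite it explicitly as an external input rather than derive it from flatness.
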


 \begin{cor}
  The functor $\overline{\HC}\ra \C[\mc{S}]\on{-Mod}$,
  $M\mapsto H^0(M)$, is exact.
 \end{cor}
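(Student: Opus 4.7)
The plan is to deduce the exactness of $M \mapsto H^0(M)$ from the vanishing statement $H^i(M)=0$ for $i\ne 0$ contained in Theorem \ref{Th:classocial-Harish-Chandra1}, via the standard long exact sequence argument. First I would take an arbitrary short exact sequence
\begin{align*}
 0\ra M'\ra M\ra M''\ra 0
\end{align*}
in $\overline{\HC}$. Since $\overline{Cl}$ is a free $\C$-module (finite-dimensional as a super vector space, freely generated by the exterior algebra basis), tensoring with $\overline{Cl}$ is exact, so we get a short exact sequence of differential graded $\bar C(\fing)$-Poisson modules
\begin{align*}
 0\ra M'\*\overline{Cl}\ra M\*\overline{Cl}\ra M''\*\overline{Cl}\ra 0,
\end{align*}
where each term carries the differential $\ad\bar Q$ and the maps commute with $\ad\bar Q$ (since $\ad\bar Q$ acts through the $\overline{Cl}$ factor via $\bar\rho$ and through $\C[\fing^*]$ via $\mu^*-\chi$, and the $\C[\fing^*]$-module maps $M'\to M\to M''$ respect both structures).

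Next I would invoke the long exact sequence in cohomology associated to this short exact sequence of complexes:
\begin{align*}
\dots\ra H^{i-1}(M'')\ra H^i(M')\ra H^i(M)\ra H^i(M'')\ra H^{i+1}(M')\ra\dots
\end{align*}
By the vanishing part of Theorem \ref{Th:classocial-Harish-Chandra1}, $H^i(M')=H^i(M)=H^i(M'')=0$ for all $i\ne 0$. Thus the long exact sequence collapses to
\begin{align*}
 0\ra H^0(M')\ra H^0(M)\ra H^0(M'')\ra 0,
\end{align*}
which is the desired exactness of $H^0$ as a functor $\overline{\HC}\ra \C[\mc{S}]\on{-Mod}$.

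There is essentially no obstacle here once Theorem \ref{Th:classocial-Harish-Chandra1} is in hand; the only point that requires a brief check is that the connecting homomorphism and the induced maps in cohomology are $\C[\mc{S}]$-linear, which is automatic because $\C[\mc{S}]\cong H^0(\bar C(\fing),\ad\bar Q)$ acts on each $H^{\bullet}(M\*\overline{Cl},\ad\bar Q)$ through the differential graded Poisson module structure, and this action commutes with the morphisms of complexes induced by $M'\to M\to M''$. The main content, therefore, lies entirely in the vanishing theorem established earlier; the corollary is a formal consequence.
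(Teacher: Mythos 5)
Your proposal is correct and is exactly the intended argument: the paper states this corollary without proof as an immediate consequence of the vanishing $H^i(M)=0$ for $i\ne 0$ in Theorem \ref{Th:classocial-Harish-Chandra1}, via the long exact sequence attached to the short exact sequence of complexes $0\ra M'\*\overline{Cl}\ra M\*\overline{Cl}\ra M''\*\overline{Cl}\ra 0$. Your additional checks (exactness of $-\*\overline{Cl}$ and $\C[\mc{S}]$-linearity of the induced maps) are the right points to verify and are handled correctly.
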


 Denote by
 $\mc{N}$ the set of nilpotent elements of $\fing$,
 which equals to 
 the zero locus of the augmentation ideal $\C[\fing^*]^G_+$ of $\C[\fing^*]^G$
 under the identification $\fing=\fing^*$ via $(~|~)$.
 Since the element $f$ (defined in \eqref{eq:f}).
) is regular (or principal),
 the orbit 
 \begin{align*}
\mb{O}_{prin}:=G.f\subset \fing=\fing^*
 \end{align*}
 is dense in $\mc{N}$:
 \begin{align*}
  \mc{N}=\overline{\mb{O}_{prin}}.
 \end{align*}
The   transversality of $\mc{S}$ implies that
\begin{align*}
 \mc{S}\cap \mc{N}=\{f\}.
\end{align*}

 \begin{thm}[\cite{Gin08}]\label{Th:classocial-Harish-Chandra2}
  Let $M$ be a finitely generated object in $ \overline{\HC}$.
  \begin{enumerate}
   \item  $H^0(M)\ne 0$ if and only if
	  $\mc{N}\subset \on{supp}_{\C[\fing^*]}M$.
\item $H^0(M)$ is nonzero and finite-dimensional if  $\on{supp}_{\C[\fing^*]}M=\mc{N}$.
  \end{enumerate}
 \end{thm}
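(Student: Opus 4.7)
Both parts will be extracted from the support identity
\[
 \on{supp}_{\C[\mc{S}]} H^0(M) = V \cap \mc{S}, \qquad V := \on{supp}_{\C[\fing^*]} M,
\]
furnished by Theorem \ref{Th:classocial-Harish-Chandra1}, combined with the transversality fact $\mc{S}\cap\mc{N}=\{f\}$ recorded just before the statement.

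For part (2), if $V=\mc{N}$ then $V\cap\mc{S}=\{f\}$, a single closed point of $\mc{S}$. By Theorem \ref{Th:classocial-Harish-Chandra1}, $H^0(M)$ is finitely generated over $\C[\mc{S}]$, and being supported on one closed point it is therefore finite-dimensional; since its support is nonempty, it is nonzero.

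For part (1), the implication $\mc{N}\subset V\Rightarrow H^0(M)\neq 0$ is immediate from $f\in\mc{N}\cap\mc{S}\subset V\cap\mc{S}$. The substance is the converse. Given $V\cap\mc{S}\neq\emptyset$, pick $y\in V\cap\mc{S}$; the plan is to show $f\in V$, after which $G$-invariance and closedness of $V$ force $\mc{N}=\overline{G\cdot f}\subset V$. The tool is the contracting $\C^*$-action $\rho$ from \eqref{eq:C-star-action}, which stabilizes $\mc{S}$ and satisfies $\lim_{t\to 0}\rho(t)z=f$ for every $z\in\mc{S}$ (recall that $\rho$ contracts $\mc{S}$ to $f$). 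If $V$ is $\rho$-invariant, then $\rho(t)y\in V\cap\mc{S}$ for all $t\in\C^*$, and passing to the limit $t\to 0$ inside the closed set $V$ gives $f\in V$.

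The only non-formal step is therefore establishing the $\rho$-invariance of $V$. Since $\rho(t)=t^2\Ad\gamma(t)$ factors through the $G$-action and the scalar dilation $x\mapsto t^2 x$, and the former preserves $V$ by $\fing$-integrability, the task reduces to showing that $V$ is conic. I would deduce this by choosing a good filtration on the finitely generated Poisson module $M$ compatible with the standard polynomial grading of $\C[\fing^*]$; the associated graded is a homogeneous Poisson $\C[\fing^*]$-module with the same support as $M$, so $V$ is scaling-invariant. This conicity check is the main obstacle; once it is in hand, both parts of the theorem follow almost mechanically from Theorem \ref{Th:classocial-Harish-Chandra1}.
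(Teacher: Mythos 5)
Your overall strategy coincides with the paper's: both parts are read off from the support identity of Theorem \ref{Th:classocial-Harish-Chandra1} together with $\mc{S}\cap\mc{N}=\{f\}$ and the contracting action $\rho$, and your treatment of part (2) and of the easy implication in part (1) is exactly what the paper does. You have also correctly isolated the one non-formal point, namely the $\rho$-stability of $V=\on{supp}_{\C[\fing^*]}M$ (equivalently, given $G$-invariance, the conicity of $V$); the paper simply asserts the corresponding $\C^*$-invariance of $\on{supp}_{\C[\mc{S}]}H^0(M)$ without justification.

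However, the argument you propose for conicity does not work. If you choose a filtration of $M$ compatible with the polynomial grading of $\C[\fing^*]$ and pass to $\gr M$, the support in general \emph{changes}: $\on{supp}\,\gr M$ is a degeneration of $\on{supp}M$ to a conic variety and agrees with $\on{supp}M$ only when the latter was already conic. (Already for $R=\C[x]$ and $M=R/(x-1)$, any compatible exhaustive filtration has one-dimensional $\gr M$ on which $x$ acts by zero, so $\gr M\cong R/(x)$ is supported at the origin, not at $1$.) So the step ``the associated graded has the same support as $M$'' is false, and conicity cannot be obtained this way. Indeed, conicity genuinely fails for finitely generated objects of $\overline{\HC}$ as literally defined: for a regular semisimple $\lambda$, the coordinate ring $\C[G\cdot\lambda]$ of the closed orbit is a finitely generated Poisson $\C[\fing^*]$-module with integrable $\fing$-action whose support $G\cdot\lambda$ is not conic, meets $\mc{S}$, and does not contain $\mc{N}$. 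The statement is really meant for \emph{graded} ($\C^*$-equivariant) objects of $\overline{\HC}$ --- which is what occurs in every application in these notes, since the modules fed into the theorem arise as associated graded objects ($\gr_F M$ for $M\in\HC$, or $\bar M$ for $M\in\KL_k$) and hence automatically have conic support. With that hypothesis added, your proof closes up and is the paper's proof; without it, the conicity step is a genuine gap (one the text itself glosses over).
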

 \begin{proof}
(1)  Note that $\on{supp}_{\C[\mc{S}]}H^0(M)$
  is invariant under the $\C^*$-action
  \eqref{eq:C-star-action} on $\mc{S}$,
  which contracts the point $\{f\}$,
 Hence
  $   \on{supp}_{\C[\mc{S}]}H^0(M)=(\on{supp}_{\C[\fing^*]}M)\cap \mc{S}$ is nonempty if and only if
  $f\in \on{supp}_{\C[\mc{S}]}H^0(M)$.
  The assertion follows since
  $\on{supp}_{\C[\mc{S}]}H^0(M)$ is $G$-invariant and closed.
  (2) Obvious since the assumption implies that 
  $\on{supp}_{\C[\mc{S}]}H^0(M)=\{f\}$.
 \end{proof}

  \subsection{Losev's correspondence}
  Let $\HC$ be the 
  category of {\em Harish-Chandra bimodules}, that is,
  the
  full subcategory of the category of $U(\fing)$-bimodules on which the adjoint action of $\fing$ is integrable.

  \begin{lem}
   Every finitely generated object $M$ of $\HC$   admits a good filtration,
   that is, an increasing filtration
   $0=F_0M\subset F_1M\subset \dots$
   such that
   $M=\bigcup F_p M$,
\begin{align*}
U_p(\fing)\cdot F_qM\cdot U_r(\fing)\subset F_{p+q+r}M,\quad[U_p(\fing),F_pM]\subset F_{p+q-1}M,
\end{align*}
   and $\gr^F M=\bigoplus_p F_pM/F_{p-1}M$ is finitely generated over $\C[\fing^*]$.
  \end{lem}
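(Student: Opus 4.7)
The strategy is to build the good filtration from a finite-dimensional $\ad\fing$-stable generating subspace and then to derive every axiom from the single identity $v\cdot x = x\cdot v-(\ad x)(v)$. First, since $M$ is finitely generated as a $U(\fing)$-bimodule, choose bimodule generators $m_1,\ldots,m_s$. The crucial use of the Harish-Chandra hypothesis comes here: because the adjoint action of $\fing$ on $M$ is integrable, hence locally finite, the subspace
\begin{align*}
V_0:=\sum_{i=1}^{s}U(\fing)\cdot_{\ad} m_i\subset M
\end{align*}
is finite-dimensional, $\ad\fing$-stable, and still generates $M$ as a bimodule. I then set $F_0M:=0$ and $F_pM:=U_{p-1}(\fing)\cdot V_0$ for $p\geq 1$.

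The heart of the verification is the \emph{straightening identity}: for $v\in V_0$ and $x\in\fing$,
\begin{align*}
v\cdot x=x\cdot v-(\ad x)(v)\in U_1(\fing)\cdot V_0,
\end{align*}
since $(\ad x)(v)\in V_0$ by $\ad\fing$-stability. An immediate induction on $r$ gives $V_0\cdot U_r(\fing)\subset U_r(\fing)\cdot V_0$, which in turn yields
\begin{align*}
U_p(\fing)\cdot F_qM\cdot U_r(\fing)=U_p(\fing)\cdot U_{q-1}(\fing)\cdot V_0\cdot U_r(\fing)\subset U_{p+q+r-1}(\fing)\cdot V_0=F_{p+q+r}M,
\end{align*}
proving the multiplicative compatibility; the exhaustion $M=\bigcup_pF_pM$ follows because $V_0$ generates $M$ as a bimodule. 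For the commutator bound, the same induction also shows $[U_p(\fing),V_0]\subset U_{p-1}(\fing)\cdot V_0$, and combining this with the standard PBW estimate $[U_a(\fing),U_b(\fing)]\subset U_{a+b-1}(\fing)$ via Leibniz gives
\begin{align*}
[U_p(\fing),\,U_{q-1}(\fing)\cdot V_0]\subset[U_p(\fing),U_{q-1}(\fing)]\cdot V_0+U_{q-1}(\fing)\cdot[U_p(\fing),V_0]\subset U_{p+q-2}(\fing)\cdot V_0,
\end{align*}
which is the required containment $[U_p(\fing),F_qM]\subset F_{p+q-1}M$.

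Finally, the finite generation of $\gr^{F}\!M$ over $\C[\fing^*]=\gr U(\fing)$ is essentially tautological from the construction: the obvious surjection $\gr_{p-1}U(\fing)\otimes V_0\twoheadrightarrow F_pM/F_{p-1}M$ exhibits $\gr^{F}\!M$ as generated by the (finite-dimensional) image of $V_0$. The only nontrivial step is the very first one, namely enlarging a bimodule generating set to an $\ad\fing$-stable finite-dimensional one, and this is exactly the content of the Harish-Chandra hypothesis; the remainder of the argument is a formal consequence, so I do not anticipate any serious obstacle.
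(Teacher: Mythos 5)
Your proof is correct, and it is the standard argument that the paper implicitly relies on (the lemma is stated there without proof): integrability of the adjoint action lets you enlarge the bimodule generators to a finite-dimensional $\ad\fing$-stable subspace $V_0$, and the identity $v\cdot x=x\cdot v-(\ad x)(v)$ then reduces everything to one-sided filtrations $F_pM=U_{p-1}(\fing)\cdot V_0$. All the verifications (the straightening $V_0\cdot U_r(\fing)\subset U_r(\fing)\cdot V_0$, the commutator estimate, and the surjection from $\gr U(\fing)\otimes V_0$ giving finite generation of $\gr^FM$ over $\C[\fing^*]$) are carried out correctly.
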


  If $M\in \HC$  and $F_{\bullet}M$ is a good filtration,
  then $\gr^F M$ is naturally a Poisson module over $\C[\fing^*]$.
  Therefore, it is an object of $\overline{\HC}$.

Let $M$ be a finitely generated object in $\HC$.  It is known since Bernstein that
  \begin{align*}
   \on{Var}(M):=\on{supp}_{\C[\fing^*]}(\gr^F M)\subset \fing^*
  \end{align*}
  in independent of the choice of a good filtration $F_{\bullet}M$ of $M$.

  For $M\in \HC$,
  $M\* Cl$ is naturally a bimodule over  $C(\fing)=U(\fing)\* Cl$.
  Thus,
  $(M\* Cl,\ad Q)$ is a differential graded bimodule over $C(\fing)$,
  and its cohomology
  \begin{align*}
   H^{\bullet}(M):=H^{\bullet}(M\* Cl,\ad Q) 
  \end{align*}
  is naturally a module over $H^0(C(\fing),\ad Q)$ that is identified with $Z(\fing)$  by
  Theorem \ref{Thm:whittaker-model}.
  Thus,
  we have a functor
  \begin{align}
   \HC\ra Z(\fing)\on{-Mod},\quad M\mapsto H^0(M).
\label{eq:functor-classical}
  \end{align}

  Let $M\in \HC$ be finitely generated,
  $F_{\bullet}M$ a good filtration.
  Then $F_p (M\* Cl):=\sum_{i+j=p}F_i M\* Cl_j$ defines a good filtration of $M\* Cl$,
  and the associated graded space
$\gr_F (M\* Cl)=\sum_{i}F_p (M\* Cl)/F_{p-1}(M\* Cl)=(\gr_F M)\*
  \overline{ Cl}$ is a Poisson module over $\gr C(\fing)=\bar C(\fing)$.

The filtration $F_{\bullet}(M\* Cl)$ induces 
a filtration $F_{\bullet}H^\bullet(M)$ on $H^\bullet(M)$,
and $\gr_F H^\bullet(M)=\bigoplus_p F_p H^{\bullet}(M)/F_{p-1}H^0(M)$ is
  a module over $\gr Z(\fing)=\C[\mc{S}]$.
  
  For a finitely generated $\mc{Z}(\fing)$-module $M$,
  set $\on{Var}(M)=\on{supp}_{\C[\mc{S}]}(\gr M)$,
  $\gr M$ is the associated graded $M$ with respect to a good filtration of $M$.
  
  The following assertion follows from Theorems \ref{Th:classocial-Harish-Chandra1}
  and \ref{Th:classocial-Harish-Chandra2}.
   \begin{thm}[\cite{Gin08,Los11}]
\begin{enumerate}
 \item    We have $H^i(M)=0$ for all $i\ne 0$, $M\in \HC$.
Therefore the functor \eqref{eq:functor-classical}
is exact.
\item 
 Let $M$ be a finitely generated object of $\HC$,
$F_{\bullet} M$ a good filtration.
Then $\gr _FH^0(M)\cong H^0(\gr_F M)$.
In particular
 $H^0(M)$ is finitely generated, $F_{\bullet}H^0(M)$ is a good filtration
      of $H^0(M)$.
      \item  For a finitely generated object $M$ of $\HC$,
     $ \on{Var}(H^0(M))= \on{Var}(M)\cap \mc{S}$.
\end{enumerate}
   \end{thm}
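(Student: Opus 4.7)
The strategy is to deduce all three statements simultaneously from a single spectral sequence argument, parallel to the proof of Theorem \ref{Thm:whittaker-model}. Fix a finitely generated $M\in \HC$ together with a good filtration $F_\bullet M$. Equip $M\otimes Cl$ with the Kazhdan-type filtration obtained by combining $F_\bullet M$ with the Clifford filtration $Cl_\bullet$ and the weight grading of $C(\fing)[j]$ introduced in \S\ref{subsection:Quantized Hamiltonian reduction}; explicitly, set $K_p(M\otimes Cl)=\sum_{i+j-k\leq p}(F_iM\otimes Cl_j)[k]$, where $[k]$ denotes the $\theta_0(h)$-weight component. This is an exhaustive increasing filtration, bounded below on each fixed $\theta_0(h)$-weight space because $F_\bullet M$ is a good filtration.

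The key computation is that $K_\bullet(M\otimes Cl)$ is stable under $\ad Q$, the associated graded $\gr_K(M\otimes Cl)$ equals $(\gr_F M)\otimes \overline{Cl}$ as a Poisson module over $\bar C(\fing)$, and the induced differential on $\gr_K$ coincides with $\ad \bar Q$. This is a direct extension of the Kazhdan-filtration calculation that was already carried out for $C(\fing)$ itself. Since $\gr_F M$ is a finitely generated object of $\overline{\HC}$, Theorem \ref{Th:classocial-Harish-Chandra1} gives
\[
H^i(\gr_F M\otimes \overline{Cl},\ad\bar Q)=0 \quad (i\neq 0),
\qquad H^0(\gr_F M\otimes \overline{Cl},\ad\bar Q)\ \text{is f.g.\ over}\ \C[\mc{S}],
\]
and its support equals $(\Var M)\cap \mc{S}$.

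Now I would invoke the spectral sequence $E_r \Rightarrow H^\bullet(M\otimes Cl,\ad Q)$ attached to $K_\bullet$. On each fixed $\theta_0(h)$-weight space the filtration is bounded and exhaustive, so the spectral sequence converges there; summing over weights gives convergence of the whole sequence (the point where one must be a bit careful, since $K_\bullet$ is unbounded above before taking weights). By the classical vanishing statement the $E_1$ page is concentrated in cohomological degree zero, so the sequence collapses at $E_1=E_\infty$. This yields $H^i(M)=0$ for $i\neq 0$, proving (1); the induced filtration $F_\bullet H^0(M)$ comes from a good filtration on $M\otimes Cl$ and satisfies $\gr_F H^0(M)\cong H^0(\gr_F M)$, proving (2); and the support statement of (3) is immediate from the corresponding classical statement in Theorem \ref{Th:classocial-Harish-Chandra1}.

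The main obstacle will be verifying convergence of the spectral sequence rigorously and checking that the Kazhdan-type filtration on $M\otimes Cl$ is indeed preserved by $\ad Q$ with associated graded differential $\ad \bar Q$; once these bookkeeping facts are in place, everything else follows cleanly from the already-established classical results of Ginzburg. Exactness of $H^0\colon \HC\to \mc{Z}(\fing)\text{-Mod}$ is then an immediate corollary of (1) via the long exact sequence in cohomology.
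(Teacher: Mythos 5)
Your proposal is correct and is essentially the argument the paper intends: the paper simply asserts that the theorem follows from Theorems \ref{Th:classocial-Harish-Chandra1} and \ref{Th:classocial-Harish-Chandra2}, and the implicit mechanism is exactly your Kazhdan-type filtration on $M\otimes Cl$ with associated graded $(\gr_F M)\otimes\overline{Cl}$ and a collapsing spectral sequence, parallel to the proof of Theorem \ref{Thm:whittaker-model}. The only point you leave implicit is extending the vanishing in (1) from finitely generated $M$ to arbitrary $M\in\HC$ (e.g.\ by writing $M$ as a direct limit of finitely generated sub-bimodules), which is routine.
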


   \subsection{Frenkel-Zhu's bimodules}
   Recall that for a graded vertex algebra $V$,
   Zhu's algebra $\Zhu(V)=V/V\circ V$ is defined.
   There is a similar construction for modules due to Frenkel and Zhu \cite{FreZhu92}.
   For a $V$-module $M$ set 
\begin{align*}
 \Zhu(M)=M/V\circ M,
\end{align*}
where 
$V\circ M$ is the subspace of $M$ spanned by the vectors
\begin{align*}
 a\circ m=\sum_{i\geq 0}\begin{pmatrix}
			 \Delta_a\\ i
			\end{pmatrix}a_{(i-2)}m
\end{align*}
for  $a\in V_{\Delta_a}$, $\Delta_a\in \Z$, and $m\in M$.
 \begin{prp}[\cite{FreZhu92}]
$\Zhu (M)$ is a bimodule over $\Zhu (V)$ by the multiplications
\begin{align*}
 a* m=\sum_{i\geq 0}\begin{pmatrix}
		     \Delta_a\\i
		    \end{pmatrix}a_{(i-1)}b,\quad 
m* a=\sum_{i\geq 0}\begin{pmatrix}
		     \Delta_a-1\\i
\end{pmatrix}a_{(i-1)}m
\end{align*}
for $a\in V_{\Delta_a}$, $\Delta_a\in \Z$, and $m\in M$.
 \end{prp}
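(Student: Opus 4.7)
The goal is to verify the two requirements of a bimodule structure on $\Zhu(M)$: (i) that the formulas for $a\ast m$ and $m\ast a$ descend to well-defined linear maps $\Zhu(V)\otimes \Zhu(M)\to \Zhu(M)$ and $\Zhu(M)\otimes \Zhu(V)\to \Zhu(M)$, i.e.\ that $V\circ M$ is preserved on either side by $\ast$-multiplication by $V$ and that $(V\circ V)\ast M$, $M\ast (V\circ V)\subset V\circ M$; and (ii) that the three associativity relations $(a\ast b)\ast m\equiv a\ast(b\ast m)$, $(a\ast m)\ast b\equiv a\ast(m\ast b)$, and $(m\ast a)\ast b\equiv m\ast(a\ast b)$ hold modulo $V\circ M$.

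The plan is to re-encode everything via generating functions. For $a\in V_{\Delta_a}$ and $m\in M$,
\begin{align*}
a\ast m=\res_z\frac{(1+z)^{\Delta_a}}{z}Y^M(a,z)m,\ \ m\ast a=\res_z\frac{(1+z)^{\Delta_a-1}}{z}Y^M(a,z)m,\ \ a\circ m=\res_z\frac{(1+z)^{\Delta_a}}{z^2}Y^M(a,z)m.
\end{align*}
In these terms $V\circ M$ is precisely the span of residues whose test function has a double pole at $z=0$; the distinction between $\ast$ and $\circ$ is the order of the pole. The central tool is the Borcherds/Jacobi identity for the pair $(V,M)$,
\begin{align*}
\res_{z-w}Y^M(Y(a,z-w)b,w)m\,\iota_{w,z-w}F=\res_z Y^M(a,z)Y^M(b,w)m\,\iota_{z,w}F-\res_z Y^M(b,w)Y^M(a,z)m\,\iota_{w,z}F,
\end{align*}
applied to test functions of the shape $F(z,w)=(1+z)^{\Delta_a}(1+w)^{\Delta_b}/(z^\alpha w^\beta)$ with $\alpha,\beta\in\{1,2\}$.

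The critical combinatorial input is the identity $(1+z)^{\Delta_a}=(1+w)^{\Delta_a}\bigl(1+\tfrac{z-w}{1+w}\bigr)^{\Delta_a}$, which lets one reassemble the nested product $a\ast(b\ast m)$ into a single residue whose test function has weight $\Delta_a+\Delta_b=\Delta_{a_{(-1)}b}$. Choosing $F=(1+z)^{\Delta_a}(1+w)^{\Delta_b}/(zw)$ yields the left associativity $(a\ast b)\ast m\equiv a\ast(b\ast m)$; choosing $F=(1+z)^{\Delta_a}(1+w)^{\Delta_b-1}/(zw)$ yields the mixed associativity $(a\ast m)\ast b\equiv a\ast(m\ast b)$; taking $\alpha=2$ or $\beta=2$ places the error terms inside $V\circ M$, giving well-definedness. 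The skew-symmetry $Y(a,z)b=e^{zT}Y(b,-z)a$, applied under the residue, converts a formula for $a\ast m$ into one for $m\ast a$ and reduces the right-module axioms to the left-module case, exactly as Lemma \ref{lem:commu-ZHu} does for $\Zhu(V)$.

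The main obstacle is purely combinatorial bookkeeping: the two asymmetric exponents $\Delta_a$ (for left) versus $\Delta_a-1$ (for right) must be threaded through the generating-function identity above, and the grading shift $a_{(n)}b\in V_{\Delta_a+\Delta_b-n-1}$ must be matched against the binomial reindexing. Once this setup is in place the argument is a direct parallel of Zhu's original verification of the algebra structure on $\Zhu(V)=V/V\circ V$, and no genuinely new vertex-algebraic phenomenon intervenes in passing from the algebra case to the module case.
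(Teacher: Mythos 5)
Your proposal is correct and is essentially the standard residue-calculus argument of Frenkel--Zhu: the paper itself gives no proof of this proposition, deferring entirely to the citation \cite{FreZhu92}, and your plan (rewriting $\ast$ and $\circ$ as residues against test functions $(1+z)^{\Delta_a}/z^{\alpha}$, invoking the Jacobi identity, and using skew-symmetry to reduce the right action to the left) is exactly how that reference and Zhu's original paper proceed. The only point to make explicit when carrying out the bookkeeping is the auxiliary lemma that residues against $(1+z)^{\Delta_a+j}/z^{2+k}$ with $j,k\geq 0$ also land in $V\circ M$, which is what makes your "double pole" characterization of $V\circ M$ usable.
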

Thus, we have a right exact functor
\begin{align*}
 V\on{-Mod}\ra \Zhu(V)\on{-biMod},\quad M\mapsto \Zhu(M).
\end{align*}

 \begin{lem}
Let $M=\bigoplus_{d\in h+\Z_+}M_d$ be a positive energy representation
  of a  $\Z_+$-graded vertex algebra $V$.
Define an increasing filtration $\{\Zhu_p(M)\}$ on $\Zhu(V)$ by 
\begin{align*}
 \Zhu_p(M)=\im (\bigoplus_{d=h}^{h+p}M_p\ra \Zhu(M)).
\end{align*}
\begin{enumerate}
 \item 
We have
\begin{align*}
&\Zhu_p (V)\cdot \Zhu_{q}(M)\cdot \Zhu_r(V)\subset \Zhu_{p+q+r}(M),\\
&
[\Zhu_p(V),\Zhu_q(M)]\subset \Zhu_{p+q-1}(M).
\end{align*}
Therefore $\gr \Zhu(M)=\bigoplus_{p}\Zhu_p(M)/\Zhu_{p-1}(M)$
is a Poisson $\gr \Zhu(V)$-module,
and hence is a Poisson $R_V$-module through the homomorphism
$\eta_V: R_V\twoheadrightarrow \gr \Zhu(V)$.
\item
There is a natural surjective homomorphism
\begin{align*}
 \eta_M:\bar M(=M/F^1 M)\ra \gr \Zhu(M)
\end{align*}
of Poisson $R_V$-modules.
This is an isomorphism
if $V$ admits a PBW basis and $\gr M$ is free over $\gr V$.
\end{enumerate}
 \end{lem}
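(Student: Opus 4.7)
I plan to verify both filtration inclusions by direct conformal-weight counts using the explicit formulas for $\ast$. For homogeneous $a \in V$ with $\Delta_a \leq p$ and $m \in M_d$ with $d - h \leq q$, each summand $a_{(i-1)}m$ of $a\ast m$ (or of $m\ast a$) lies in $M_{d+\Delta_a-i}$ with $d+\Delta_a-i-h \leq p+q$ for $i \geq 0$, giving $\Zhu_p(V)\ast\Zhu_q(M)\subseteq\Zhu_{p+q}(M)$ and likewise on the right. For the bracket, a Pascal-rule computation gives
\[
a\ast m - m\ast a \;=\; \sum_{j\geq 0}\binom{\Delta_a-1}{j}\,a_{(j)}m \quad\text{in }\Zhu(M),
\]
and each $a_{(j)}m$ has weight $\leq h+p+q-1$ since $j \geq 0$. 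The Poisson $\gr\Zhu(V)$-module axioms on $\gr\Zhu(M)$ then follow formally from associativity and the filtration bounds (the commutator bound makes $\gr\Zhu(V)$ commutative, so left and right actions coincide at top weight, yielding a genuine Poisson module); composing with $\eta_V : R_V\twoheadrightarrow\gr\Zhu(V)$ from Lemma \ref{lem:Zhu-vs-ZhusC2} supplies the Poisson $R_V$-module structure.

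\textbf{Part (2): construction, well-definedness, surjectivity.} I define $\eta_M$ by $\bar m \mapsto [m]\in\Zhu_{d-h}(M)/\Zhu_{d-h-1}(M)$ for homogeneous $m \in M_d$. Well-definedness reduces to showing that each generator $a_{(-n-1)}m'$ of $F^1M$ with $n\geq 1$ has zero image. For $n=1$, the identity
\[
a\circ m' \;=\; a_{(-2)}m' + \sum_{i\geq 1}\binom{\Delta_a}{i}a_{(i-2)}m' \;\in\; V\circ M
\]
expresses $a_{(-2)}m'$ modulo $V\circ M$ as a sum of terms of strictly smaller conformal weight, so its class lands in $\Zhu_{d-h-1}(M)$. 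The cases $n\geq 2$ reduce to $n=1$ via $a_{(-n-1)}m' = \tfrac{1}{n!}(T^{n-1}a)_{(-2)}m'$, a consequence of $(Ta)_{(k)}=-k\,a_{(k-1)}$. The homomorphism property is a top-weight comparison: the $i=0$ term of $a\ast m$ is $a_{(-1)}m$, matching $\bar a\cdot\bar m$, and that of $[a,m]$ is $a_{(0)}m$, matching $\ad(\bar a)\bar m$. Surjectivity is immediate from $\Zhu_p(M)=\im(M_{\leq h+p}\to\Zhu(M))$.

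\textbf{Part (2): the isomorphism under PBW plus freeness, and main obstacle.} Mirroring the proof of Theorem \ref{t:10}, I will show $\ker \eta_M = 0$ by proving that the top-weight component of every nonzero $a\circ m\in V\circ M$ already lies in $F^1M$. The candidate leading term is $a_{(-2)}m=(Ta)_{(-1)}m$, whose image in $\gr M$ is the $\gr V$-module action $\overline{Ta}\cdot\bar m$. Freeness of $\gr M$ over $\gr V$ forbids zero divisors, so vanishing of this product forces either $Ta=0$ in $\gr V$ or $\bar m=0$ in $\gr M$; the PBW hypothesis identifies the $T$-kernel in $V$ with $\C\vac$, and then $a\in\C\vac$ makes $a\circ m = 0$, so the problematic case never arises. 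I expect the main obstacle to be precisely this zero-divisor step: one must confirm that the ``top weight'' of $a_{(-2)}m$ inside $\gr M$ genuinely coincides with $\overline{Ta}\cdot\bar m$ under the identification of the conformal-weight gradation with the Li filtration via Lemma \ref{lem:G=F}, so that the freeness hypothesis really prevents $a_{(-2)}m$ from being absorbed into lower-weight corrections. All remaining verifications are routine filtration bookkeeping.
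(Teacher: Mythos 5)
The paper states this lemma without proof, and your argument is correct: it is precisely the bimodule analogue of the paper's own proofs of Lemma \ref{lem:Zhu-vs-ZhusC2} and Theorem \ref{t:10}, down to the key steps — the conformal-weight count and the Pascal identity of Lemma \ref{lem:commu-ZHu} for part (1), the congruence $a\circ m\equiv a_{(-2)}m$ modulo strictly lower conformal weight for well-definedness and surjectivity of $\eta_M$, and, for injectivity, the identity $a_{(-2)}m=(Ta)_{(-1)}m$ combined with torsion-freeness of the free module $\gr M$ over the domain $\gr V$ and the PBW consequence $\ker T=\C\vac$ (note $\vac\circ m=0$ since $\vac_{(-2)}=0$). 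The one step you share with the paper's proof of Theorem \ref{t:10} that is left implicit in both places is the reduction of $\gr(V\circ M)\subset F^1M$ to the single generators $a\circ m$ — i.e.\ the possibility that top-weight terms cancel in a sum $\sum_k a^k\circ m^k$ so that its symbol is not a sum of symbols of the summands — together with the remark that ``no zero divisors'' must be applied to the symbols $\sigma_p(Ta)$, $\sigma_q(m)$ at the correct Li-filtration levels and combined with separatedness of $F^\bullet M$ to conclude $Ta=0$ or $m=0$; your closing paragraph shows you have identified exactly this as the delicate point, so your proof matches the paper's level of detail.
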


 \begin{exm}
Let $M=V_{E}^k$.
Since $\gr V_{E}^k$ is free over $\C[J\fing^*]$,
we have the isomorphism
\begin{align*}
 \eta_{V_{E}^k}: \overline{V_{E}^k}=E\* \C[\fing^*]\isomap \gr \Zhu (V_{E}^k).
\end{align*}
On the other hand, 
there is a 
$U(\fing)$-bimodule homomorphism
\begin{align}
 \begin{split}
E\* U(\fing)&\ra \Zhu (V_{E}^k),\\ v\*x_1\dots x_r&\mapsto 
(1\*v)* (x_1t^{-1})* (x_1t^{-1})+ V^k(\fing)\circ V_{E}^k
 \end{split}\label{eq;weyl}
\end{align}
which respects the filtration.
Here the $U(\fing)$-bimodule structure of $U(\fing)\* E$ is  given by
\begin{align*}
 x (v\*u)=(xv)\* u+v\* xu,\quad (v\* u)x=v\* (ux),
\end{align*}
and the filtration
of 
$U(\fing)\* E$ is given by $\{U_i(\fing)\* E\}$.
Since the induced homomorphism between associated graded  spaces
\eqref{eq;weyl} coincides with $\eta_{V_{E}^k}$,
\eqref{eq;weyl}
is an isomorphism.
 \end{exm}
 \begin{lem}
For $M\in \KL_k$ we have $\Zhu(M)\in \HC$.
If $M$ is finitely generated, then so is $\Zhu(M)$.
 \end{lem}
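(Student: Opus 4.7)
The statement has two parts, both of which reduce to elementary consequences of the multiplication formulas for $\Zhu(M)$ together with the example $\Zhu(V_E^k)\cong E\otimes U(\fing)$ established just above.

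For the first part, I would transport structures: since $\Zhu(V^k(\fing))\cong U(\fing)$, the preceding proposition equips $\Zhu(M)$ with the structure of a $U(\fing)$-bimodule, and it remains only to check that the resulting adjoint action of $\fing$ is integrable. To this end, take $x\in\fing=V^k(\fing)_1$ so that $\Delta_x=1$; the bimodule formulas give
\[
x*\bar m=\overline{x_{(-1)}m+x_{(0)}m},\qquad \bar m*x=\overline{x_{(-1)}m},
\]
whence $\ad(x)(\bar m)=x*\bar m-\bar m*x=\overline{x_{(0)}m}$. Thus the adjoint $\fing$-action on $\Zhu(M)$ is induced from the zero-mode action on $M$, i.e. from the embedding $\fing\hookrightarrow\affg$ as $\fing\otimes t^{0}$. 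Since $M\in\KL_k$ means exactly that this $\fing$-action on $M$ is integrable, the induced action on the quotient $\Zhu(M)=M/V^k(\fing)\circ M$ is integrable as well, so $\Zhu(M)\in\HC$.

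For the second part, suppose $M\in\KL_k$ is finitely generated. By the lemma stated just before, $M$ is finitely generated as an $\affg$-module; using integrability of the $\fing$-action, any finite $\affg$-generating set may be enlarged to a finite-dimensional $\fing$-submodule $E\subset M$ which still generates $M$ over $\affg$. The universal property of $V_E^k$ then furnishes a surjection $V_E^k\twoheadrightarrow M$ in $\KL_k$. Since $\Zhu$ is right exact, applying it yields a surjection $\Zhu(V_E^k)\twoheadrightarrow \Zhu(M)$. By the preceding example, $\Zhu(V_E^k)\cong E\otimes U(\fing)$ is finitely generated as a $U(\fing)$-bimodule — in fact generated by the finite-dimensional subspace $E\otimes 1$ — and hence so is $\Zhu(M)$.

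Neither step poses a genuine obstacle: once one has the Frenkel–Zhu bimodule formulas and the explicit computation $\Zhu(V_E^k)\cong E\otimes U(\fing)$, the verification is purely formal. The only mild subtlety is the correct identification of the bimodule adjoint action with the zero-mode action via the formulas of Lemma \ref{lem:commu-ZHu} specialized to $\Delta_x=1$, which makes the reduction to "integrability on $M$ implies integrability on any quotient" work.
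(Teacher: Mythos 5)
Your first part is correct and complete: the computation $\ad(\bar x)(\bar m)=x*\bar m-\bar m*x=\overline{x_{(0)}m}$ for $x\in\fing=V^k(\fing)_1$ is exactly right, so the adjoint $\fing$-action on $\Zhu(M)$ is the one induced from the zero modes $xt^0$ on $M$, and integrability passes to the quotient $M\twoheadrightarrow \Zhu(M)$. Hence $\Zhu(M)\in\HC$.

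The second part has a genuine gap at the step ``the universal property of $V_E^k$ then furnishes a surjection $V_E^k\twoheadrightarrow M$.'' By definition $V_E^k=U(\affg)\otimes_{U(\fing[t]\oplus\C K)}E$ with $\fing[t]$ acting on $E$ \emph{through the projection} $\fing[t]\to\fing$, so Frobenius reciprocity gives $\Hom_{\affg}(V_E^k,M)\cong\Hom_{\fing[t]\oplus\C K}(E,M)$ and the inclusion $E\hookrightarrow M$ induces a map $V_E^k\to M$ only if $E$ is annihilated by $\fing\otimes t\C[t]$ inside $M$. A finite-dimensional $\fing$-submodule containing a generating set need not have this property: the operators $xt^n$, $n>0$, lower the energy and generally act nontrivially on generators sitting above the lowest energy level, so $M$ need not be generated by $M^{\fing[t]t}$ and need not be a quotient of any single $V_E^k$ (nor, without further argument, of a finite direct sum of them). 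The conclusion is still true, and can be reached by either of two repairs. (a) Filtration: using that energies of a finitely generated object of $\KL_k$ are bounded below with finite-dimensional graded pieces, build a finite chain $0=M^{(0)}\subset\cdots\subset M^{(s)}=M$ of submodules in which each $M^{(j+1)}/M^{(j)}$ is generated by its lowest-energy component, which \emph{is} killed by $\fing[t]t$, hence is a quotient of some $V_{E_j}^k$; then right exactness of $\Zhu$ and your computation $\Zhu(V_{E_j}^k)\cong E_j\otimes U(\fing)$ give finite generation of $\Zhu(M)$ by induction, since finitely generated bimodules are closed under quotients and extensions. (b) More directly: by the lemma preceding the statement there is a surjection $\eta_M:\bar M\twoheadrightarrow\gr\Zhu(M)$ of $R_{V^k(\fing)}=\C[\fing^*]$-modules; $M$ finitely generated is equivalent to $\bar M$ being finitely generated over $\C[\fing^*]$, so $\gr\Zhu(M)$ is finitely generated over $\gr\Zhu(V^k(\fing))\cong\C[\fing^*]$, and the standard filtered--graded argument (the filtration $\Zhu_\bullet(M)$ is exhaustive) shows $\Zhu(M)$ is finitely generated over $U(\fing)$.
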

\subsection{Zhu's two functors commute with BRST reduction}
For a smooth $\affg$-module $M$ over level $k$,
$C(M):=M\* \mc{F}$ is naturally a module over $C^k(\fing)=V^k(\fing)\* \mc{F}$.
Thus, $(C(M),Q_{(0)})$ is a cochain complex, 
and its cohomology $H^{\bullet}(M):=H^{\bullet}(C(M),Q_{(0)})$ is a module over 
$\W^k(\fing)=H^{\bullet}(C^k(\fing),Q_{(0)})$.
Thus 
we have a functor 
\begin{align*}
V^k(\fing)\on{-Mod}\ra \W^k(\fing)\on{-Mod},\quad
M\mapsto H^0(M).
\end{align*}
Here $V\on{-Mod}$ denotes the category of modules over a vertex algebra $V$.

\begin{Th}\label{Th:vanising}
\begin{enumerate}
\item (\cite{FreGai07,Ara09b}) We have $H^i(M)=0$ for $i\ne 0$, $M\in \KL_k$.
In particular 
the functor 
\begin{align*}
\KL_k\ra \W^k(\fing)\on{-Mod},\quad M\mapsto  H^0(M),
\end{align*}
is exact.
\item  (\cite{Ara09b})
For a finitely generated object $M$ of $\KL$,
$$\overline{H^0(M)}\cong H^0(\bar M)$$ as Poisson modules over $R_{W^k(\fing)}=\C[\mc{S}]$.
In particular $H^0(M)$ is finitely strongly generated and 
 $$X_{H^0(M)}=X_{M}\cap \mc{S}.$$
 \item (\cite{A2012Dec}) For a finitely generated object $M$ of $\KL$,
$$\on{Zhu}(H^0(M))\cong H^0(\on{Zhu}(M))$$
 as bimodules over 
 $\on{Zhu}(\W^k(\fing))=\mc{Z}(\fing)$.
\end{enumerate}
\end{Th}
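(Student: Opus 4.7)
The plan is to deduce all three parts from a single filtration argument that reduces the BRST cohomology of $M$ to the classical Hamiltonian reduction of $\bar M$, at which point Ginzburg's Theorems~\ref{Th:classocial-Harish-Chandra1} and~\ref{Th:classocial-Harish-Chandra2} supply the conclusions.

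\textbf{Part (1).} First I carry over the decomposition of Lemma~\ref{lem-decom} and Proposition~\ref{prp:subcomp} to the module level: writing $C(M) = M \otimes \mc{F}$, there are subcomplexes $C(M)_\pm$ with $C(M) \cong C(M)_- \otimes C(M)_+$, and the Koszul computation of Proposition~\ref{prp:subcomp} still gives $H^i(C(M)_+, \hat Q_{(0)}) = \delta_{i,0}\C$. Hence $H^\bullet(C(M)) = H^\bullet(C(M)_-,\hat Q_{(0)})$, and because $C(M)_-$ lives in non-positive cohomological degrees with finite-dimensional weight spaces for $H_\W$ (as in \eqref{eq:grading-Cnew}), the Li filtration $F^\bullet M$ induces a compatible, convergent filtration on $C(M)_-$. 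The $E_1$-page of the resulting spectral sequence is the Chevalley cohomology $H^\bullet(\finn[t], \gr^F M \otimes \Lam(\finn^\ast[t^{-1}]))$; using the arc-space Kostant isomorphism \eqref{eq:jets-of-product} together with Lemmas~\ref{Lem:VPA-induced}--\ref{Lem:universality-of-induced-VPA-module}, which realize $\gr^F M$ as a quotient of the induced Poisson vertex module $\C[J\fing^*] \otimes_{\C[\fing^*]} \bar M$, this computation reduces to the jet-space classical Hamiltonian reduction of $\bar M$ and, by an arc-space extension of Theorem~\ref{Th:classocial-Harish-Chandra1}, vanishes in positive cohomological degrees.

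\textbf{Part (2).} Since the spectral sequence of Part~(1) collapses at $E_1 = E_\infty$, one has $\gr^F H^0(M) \cong H^0(\gr^F C(M)_-)$. Restricting to the lowest Li-degree piece yields the Poisson $\C[\mc{S}]$-module isomorphism $\overline{H^0(M)} \cong H^0(\bar M \otimes \overline{Cl}, \ad \bar Q) = H^0(\bar M)$, which is the asserted identification. The support statement is then immediate from the last clause of Theorem~\ref{Th:classocial-Harish-Chandra1}:
\begin{equation*}
X_{H^0(M)} = \on{supp}_{\C[\mc{S}]} \overline{H^0(M)} = \on{supp}_{\C[\fing^*]}(\bar M) \cap \mc{S} = X_M \cap \mc{S}.
\end{equation*}

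\textbf{Part (3).} I apply Zhu's functor (with respect to $H_\W$) to the BRST complex. Since $\Zhu(V^k(\fing)) = U(\fing)$, $\Zhu(\mc{F}) = Cl$, and the differential $\hat Q_{(0)}$ descends to the finite BRST differential $\ad Q$ on $\Zhu(C(M)) \cong \Zhu(M) \otimes Cl$, one obtains a natural bimodule map $\Zhu(H^0(M)) \to H^0(\Zhu(M) \otimes Cl, \ad Q)$ over $\Zhu(\W^k(\fing)) = \mc{Z}(\fing)$. Equipping both sides with good filtrations inherited from the Kazhdan filtration on $C(M)$, the induced map on associated gradeds is identified with the classical comparison of Part~(2) applied to the Poisson $\C[\fing^*]$-module $\bar M$, and both associated gradeds equal $H^0(\bar M)$ by Part~(2) on the left and by the Ginzburg--Losev exactness theorem recalled in \S\ref{sec:rep-theory} on the right. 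Since the filtrations are exhaustive and separated, the comparison map is an isomorphism.

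The principal technical obstacle is Part~(1): extending the finite-dimensional Ginzburg Theorem~\ref{Th:classocial-Harish-Chandra1} to Poisson vertex modules over the non-Noetherian algebra $\C[J\fing^*]$, which requires Koszul-type vanishing along the arc-space defining sequence of $J\mu^{-1}(\chi)$. For finitely generated $M \in \KL_k$ this is tractable because the induction formula of Lemma~\ref{Lem:VPA-induced} reduces the full filtration of $\gr^F M$ to the finitely generated Harish-Chandra module $\bar M$, but verifying the uniform convergence of the spectral sequence and promoting the classical arc-space vanishing from modules of the form $\C[J\fing^*]\otimes_{\C[\fing^*]}\bar M$ to arbitrary quotients is the delicate step.
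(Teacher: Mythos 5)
The paper itself does not prove Theorem \ref{Th:vanising}; it quotes it from \cite{FreGai07,Ara09b,A2012Dec}. Your sketch imitates the paper's internal proof of the vacuum case (Theorem \ref{thm:vanishing-W}), and that is exactly where it breaks. The decomposition $\Caff(\fing)\cong \Caff(\fing)_-\otimes\Caff(\fing)_+$ is a statement about the vertex algebra $\Caff(\fing)$ itself, obtained from the generators $J_a$, $J_\alpha$, $\psi_\alpha$, $\psi_\alpha^*$; for a general $M\in\KL_k$ the complex $C(M)=M\otimes\mc{F}$ is only a \emph{module} over $\Caff(\fing)_-\otimes\Caff(\fing)_+$, and a module over a tensor product of vertex algebras need not factor as $C(M)_-\otimes C(M)_+$. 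Consequently you lose both the Kunneth reduction to a ``minus'' subcomplex and the finiteness property \eqref{eq:grading-Cnew} that made the Li-filtration spectral sequence converge in the vacuum case; the convergence of your $E_1$-page is therefore unjustified. The actual arguments in the cited references avoid this: Arakawa's proof of (1) runs the two spectral sequences attached to the bigrading $\hat Q_{(0)}=\hat d_++\hat d_-$ directly on $C(M)$, using integrability of $M$ over $\finn\subset\fing$ (the defining property of $\KL_k$) to kill the $\hat d_+$-cohomology in positive degrees and the Koszul structure of $\hat d_-$ to kill negative degrees, while Frenkel--Gaitsgory treat Weyl modules by semi-infinite induction. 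You never use the hypothesis $M\in\KL_k$ in an essential way, which is a sign something is missing: the vanishing is false for general objects of $\mc{O}_k$, as the paper itself emphasizes in \S\ref{sec:irrep}.

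Parts (2) and (3) are closer to the strategies of \cite{Ara09b} and \cite{A2012Dec} respectively, but each still rests on unproved steps: for (2) you need $\gr^F H^0(M)\cong H^0(\gr^F C(M))$ \emph{and} control of the kernel of the surjection $\C[J\fing^*]\otimes_{\C[\fing^*]}\bar M\twoheadrightarrow \gr^F M$ from Lemma \ref{Lem:universality-of-induced-VPA-module} before you can isolate $H^0(\bar M)$ in lowest Li degree (you flag this yourself, but flagging is not proving); for (3) you need the identification $\Zhu(M\otimes\mc{F})\cong\Zhu(M)\otimes Cl$ at the module level, the comparison of the two Hamiltonians $H$ and $H_\W$ (cf.\ the paper's discussion of $\Zhu_{new}$ versus $\Zhu_{old}$), and separatedness of the induced filtration on $\Zhu(H^0(M))$. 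As it stands the proposal is a correct statement of the intended reduction to Ginzburg's classical theorems, but Part (1) as written would fail, and the remaining parts depend on it.
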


Let $\W_k(\fing)$ denote the unique simple graded quotient of $\W^k(\fing)$.
Then $X_{\W_k(\fing)}$ is a $\C^*$-invariant subvariety of $\mc{S}$.
Therefore $X_{\W^k(\fing)}$ is lisse if and only if $X_{\W_k(\fing)}=\{f\}$
since the $\C^*$-action on $ \mc{S}$ contracts to the point $f$.
\begin{Co}\label{Co:lisse}
\begin{enumerate}
\item $H^0(V_k(\fing))$ is a quotient of $\W^k(\fing)=H^0(V^k(\fing))$.
In particular $\W_k(\fing)$ is a quotient of $H^0(V_k(\fing))$
if $H^0(V_k(\fing))$ is nonzero.
\item $H^0(V_k(\fing))$ is nonzero if and only if $X_{V_k(\fing)}\supset \overline{G.f}=\mc{N}$.
\item The simple $W$-algebra $\W_k(\fing)$ is lisse if $X_{V_k(\fing)}={\overline{G.f}}=\mc{N}$.
\end{enumerate}
\end{Co}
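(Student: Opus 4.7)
The plan is to derive the three parts in order from Theorem~\ref{Th:vanising} combined with the transversality and $\C^{*}$-contraction properties of the Kostant slice $\mc{S}$ developed in Section~\ref{sec:Review of Kostant's results}.

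For part~(1), I apply the exact functor $H^{0}:\KL_k\to\W^{k}(\fing)\text{-Mod}$ (Theorem~\ref{Th:vanising}(1)) to the short exact sequence $0\to N_k\to V^{k}(\fing)\to V_k(\fing)\to 0$, where $N_k$ is the maximal proper $\affg$-submodule. This yields a surjection of vertex algebras $\W^{k}(\fing)=H^{0}(V^{k}(\fing))\twoheadrightarrow H^{0}(V_k(\fing))$. If the target is nonzero, its kernel is a proper graded ideal of $\W^{k}(\fing)$, hence contained in the unique maximal graded ideal whose quotient is by definition $\W_k(\fing)$; this exhibits $\W_k(\fing)$ as a further quotient of $H^{0}(V_k(\fing))$.

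For part~(2), Theorem~\ref{Th:vanising}(2) identifies $X_{H^{0}(V_k(\fing))}=X_{V_k(\fing)}\cap\mc{S}$. Since $V_k(\fing)$ is positively graded with finite-dimensional graded components, so is $H^{0}(V_k(\fing))$, and its canonical filtration is separated; thus $H^{0}(V_k(\fing))\ne 0$ iff $R_{H^{0}(V_k(\fing))}\ne 0$ iff $X_{V_k(\fing)}\cap\mc{S}\ne\emptyset$. The implication from $\mc{N}\subset X_{V_k(\fing)}$ is then immediate since $f\in\mc{S}\cap\mc{N}$. For the converse, I use that $X_{V_k(\fing)}$ is both conic (its defining ideal in $\C[\fing^{*}]$ is graded for the PBW grading) and $G$-invariant (the radical of a Poisson ideal is Poisson-stable by a Leibniz-type argument as in Lemma~\ref{lem:radical} and the proof of the ``if'' part of Theorem~\ref{t:6}, and the resulting $\fing$-action integrates to $G$ by connectedness). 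Hence $X_{V_k(\fing)}$ is preserved by the $\C^{*}$-action $\rho(t)=t^{2}\Ad\gamma(t)$ of \eqref{eq:C-star-action}, which contracts $\mc{S}$ to $f$. Picking $x\in X_{V_k(\fing)}\cap\mc{S}$ and letting $t\to 0$ forces $f=\lim_{t\to 0}\rho(t)x\in X_{V_k(\fing)}$ by closedness, and then $G$-invariance gives $\mc{N}=\overline{G.f}\subset X_{V_k(\fing)}$.

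For part~(3), assuming $X_{V_k(\fing)}=\mc{N}$, part~(2) supplies $H^{0}(V_k(\fing))\ne 0$, and the transversality relation $\mc{S}\cap\mc{N}=\{f\}$ yields $X_{H^{0}(V_k(\fing))}=\{f\}$. Part~(1) provides a surjection $H^{0}(V_k(\fing))\twoheadrightarrow\W_k(\fing)$, inducing $R_{H^{0}(V_k(\fing))}\twoheadrightarrow R_{\W_k(\fing)}$, so that $X_{\W_k(\fing)}$ embeds as a closed subvariety of $\{f\}$. Hence $\dim X_{\W_k(\fing)}=0$, proving that $\W_k(\fing)$ is lisse. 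The step demanding the most care is the conicality and $G$-invariance of $X_{V_k(\fing)}$ used in the nontrivial direction of (2); while essentially standard, it requires tracking that the ideal cutting out $X_{V_k(\fing)}$ from $\C[\fing^{*}]$ is both graded and Poisson-stable, and then exponentiating the resulting $\fing$-action to $G$.
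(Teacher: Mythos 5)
Your proof is correct and follows essentially the same route as the paper: part (1) from the exactness of $H^0$ on $\KL_k$, part (2) from $X_{H^0(M)}=X_M\cap\mc{S}$ together with the contracting $\C^*$-action $\rho$ on $\mc{S}$ and the $G$-invariance and closedness of $X_{V_k(\fing)}$, and part (3) from $\mc{S}\cap\mc{N}=\{f\}$ plus the surjection of part (1). The only difference is that you spell out the conicality and $G$-invariance of $X_{V_k(\fing)}$ (which the paper takes for granted), and these details are right.
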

\begin{proof}
(1) follows from the exactness statement of Theorem \ref{Th:vanising}.
(2) $H^0(V_k(\fing))$ is nonzero if and only of  $X_{H^0(M)}=X_{M}\cap \mc{S}$ is non-empty.
This happens if and only if $f\in X_{M}$
since $ X_{H^0(M)}$ is $\C^*$-stable.
The assertion follows since $X_{M}$ is $G$-invariant and closed.
(3) If $X_{V_k(\fing)}={\overline{G.f}}$,
$X_{H^0(V_k(\fing))}=X_{M}\cap \mc{S}=\{f\}$, and thus,
$H^0(V_k(\fing))$ is lisse,
and thus, so its quotient $\W_k(\fing)$.
\end{proof}

\begin{Rem}\label{Rem:minimal}
\begin{enumerate}
\item The above results hold for $W$-algebras associated with any $\fing$ and any $f\in \mc{N}$
without any restriction on the level $k$
(\cite{Ara09b,A2012Dec}).
In particular 
we have the vanishing result
\begin{align}
H^i_{f}(M)=0\quad \text{for }i\ne 0,\ M\in \KL_k,
\label{eq;vanishing-general}
\end{align}
for the BRST cohomology $H^i_{f}(M)$
of
 the quantized Drinfeld-Sokolov reduction functor
associated with $f$ in the coefficient in an object $M$ of $\KL_k$.
Thus the functor
\begin{align*}
\KL_k\ra \W^k(\fing,f)\on{-Mod},\quad M\mapsto H^0_{f}(M),
\end{align*}
is exact,
and  moreover, 
\begin{align*}
X_{H^0_f(V_k(\fing))}=X_{V_k(\fing)}\cap \mc{S}_f,
\end{align*}
where $S_f$ is the  Slodowy slice at $f$ (see \S \ref{subsection:Generalization to finite $W$-algebras}).
In particular
\begin{align}
H^0_f(V_k(\fing))\ne 0 \iff X_{V_k(\fing)}\supset \overline{G.f}.
\label{eq:cirterion}
\end{align}
\item 
In the case that $f=f_{\theta}$,
a 
 minimal nilpotent element of $\fing$,
 then we also have the following 
 result \cite{Ara05}:
 \begin{align*}
H_{f_{\theta}}(V_k(\fing))=\begin{cases}
\text{$\W_k(\fing,f_{\theta})$}&\text{if }k\not \in \Z_+,\\
0&\text{if }k\in \Z_+.
\end{cases}
\end{align*}
Here $\W_k(\fing,f_{\theta})$
is the simple quotient of $\W^k(\fing,f_{\theta})$.
Together with  \eqref{eq:cirterion},
this proves 
 the ``only if'' part of Theorem \ref{t:6}.
 Indeed, 
 if $V_{k}(\fing)$ is  lisse, then
  $H_{f_{\theta}}(V_k(\fing))= 0$ by  \eqref{eq:cirterion},
 and hence, $k\in \Z_+$.
\end{enumerate}
\end{Rem}

\section{Irreducible representations of $W$-algebras}
\label{sec:irrep}
In this section we quickly review results obtained in \cite{Ara07}.

Since 
$\on{Zhu}(\W^k(\fing))\cong \mc{Z}(\fing)$,
by Zhu's theorem irreducible positive energy representations of $\W^k(\fing)$ are parametrized by
central characters of $\mc{Z}(\fing)$.
For a central character $\gamma:\mc{Z}(\fing)\ra \C$,
let $\mathbb{L}(\gamma)$ be the corresponding irreducible positive energy representations of $\W^k(\fing)$.
This is a simple quotient of the {\em Verma module} $\mathbb{M}(\gamma)$ 
of $\W^k(\fing)$ with highest weight $\gamma$,
which has the character 
\begin{align*}
\ch \mathbb{M}(\gamma):=\on{tr}_{\mathbb{M}(\gamma)}q^{L_0}=\frac{q^{\frac{\gamma(\Omega)}{2(k+h^{\vee})}}}{\prod_{j\geq 1}(1-q^j)^{\on{rk}\fing}}
\end{align*}
in the case that $k$ is non-critical,
where $\Omega$ is the Casimir element of $U(\fing)$.

In Theorem \ref{Th:vanising}
we showed that the functor 
$\KL_k\ra \W^k(\fing)\on{-Mod}$,
$M\mapsto  H^0(M)$, is exact.
However in order to 
 obtain all the irreducible positive energy representation 
 we need to extend this functor to the whole category $\mc{O}_k$.
However 
 the functor 
$\mc{O}_k\ra \W^k(\fing)\on{-Mod}$,
$M\mapsto  H^0(M)$, is {\em not} exact in general except for the case $\fing=\mf{sl}_2$ (\cite{Ara05}).
Nevertheless, we can \cite{FKW92}  modify the functor to obtain the following result.
\begin{Th}[\cite{Ara07}]\label{Th:invent}
There exists an exact functor 
\begin{align*}
\mc{O}_k\ra \W^k(\fing)\on{-Mod},\quad M\mapsto  H^0_-(M)
\end{align*}
(called the {^^ ^^ $-$"}-reduction functor in \cite{FKW92}), 
which enjoys the following properties.
\begin{enumerate}
\item $H^0_-(M(\lam))\cong \mathbb{M}(\gamma_{\bar \lam})$,
where $M(\lam)$ is the Verma module of $\affg$ with highest weight $\lam$,
and $\gamma_{\bar \lam}$ is the evaluation of $\mc{Z}(\fing)$ at the Verma module
$M_{\fing}(\bar \lam)$ of $\fing$ with highest weight $\bar \lam$.
\item $H^0_-(L(\lam))\cong \begin{cases}\mathbb{L}(\gamma_{\bar \lam})&\text{if }\bar \lam\text{ is anti-dominant
(that is, $M_{\fing}(\bar \lam)$ is simple)},\\
0&\text{otherwise.}\end{cases}$
\end{enumerate}
\end{Th}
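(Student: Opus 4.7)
The plan is to define $H^0_-$ as a variant of the quantized Drinfeld--Sokolov reduction in which the charged fermion Fock space $\mc{F}=\mc{F}_{\finn}$ of Example~\ref{eq:free-fermions} is replaced by the ``opposite'' $\hat{Cl}$-module $\mc{F}^-$, whose vacuum is annihilated by $\psi_{\alpha,m}$ for $m\geq 1$ and by $\psi_{\alpha,k}^*$ for $k\geq 0$ (so that the creation/annihilation polarization on $\finn[t,t^{-1}]\oplus \finn^*[t,t^{-1}]$ is reversed relative to Example~\ref{eq:free-fermions}).  The same field $\hat Q(z)$ acts on $C^-(M):=M\otimes\mc{F}^-$ for any $V^k(\fing)$-module $M$, and the operator $\hat Q^-_{(0)}$ still squares to zero; one then sets $H^\bullet_-(M):=H^\bullet(C^-(M),\hat Q^-_{(0)})$.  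A parallel version of Theorems~\ref{them:vanishing-associated-graded-BRST} and~\ref{thm:vanishing-W} for $\mc{F}^-$ identifies $H^0_-(V^k(\fing))$ with $\W^k(\fing)$ as vertex algebras, so that $H^0_-$ naturally takes values in $\W^k(\fing)$-modules.

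\textbf{Exactness, the main obstacle.} The crucial step, and the reason the ``$-$''-reduction gains exactness on all of $\mc{O}_k$, is to show $H^i_-(M)=0$ for $i\ne 0$ and every $M\in\mc{O}_k$.  I would argue by a Kazhdan-filtration spectral sequence, in the spirit of the proof of Theorem~\ref{thm:vanishing-W}.  After passing to $\gr C^-(M)$ the complex becomes a Chevalley-type complex computing a semi-infinite cohomology of $\finn[t,t^{-1}]$ with the reversed polarization; by a standard Koszul-style comparison this is equivalent to ordinary Lie algebra cohomology with respect to a subalgebra of $\finn[t,t^{-1}]$ essentially of the form $\finn[[t]]t\oplus\finn$, rather than all of $\finn[[t]]$ as in the ``$+$''-case.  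This smaller subalgebra acts locally nilpotently on every object of $\mc{O}_k$ because of the highest-weight condition alone, with no integrability needed; hence its cohomology vanishes in positive degree and the spectral sequence collapses.  This is the delicate technical heart of the theorem: one must choose the polarization so that exactness extends from $\KL_k$ to the full category $\mc{O}_k$ while preserving compatibility with the $\W^k(\fing)$-action established in Step~1.

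\textbf{Verma modules and irreducibles.} To identify $H^0_-(M(\lambda))$, I would exploit the PBW factorization $M(\lambda)\cong U(\widehat{\finn}_-)\otimes \C_\lambda$ together with the analogue of the decomposition $\Caff(\fing)\cong \Caff(\fing)_-\otimes \Caff(\fing)_+$ from Lemma~\ref{lem-decom}; after the acyclic factor is killed, the cohomology is generated over the image of the Miura map $\hat\Mi$ (Theorem~\ref{thm:Miura}) by a single vector of $\finh$-weight $\bar\lambda$, so the resulting $\W^k(\fing)$-module agrees with the Verma module $\mb{M}(\gamma_{\bar\lambda})$.  The identification of its central character follows from the Zhu-algebra compatibility of $H^0_-$ (the ``$-$''-analogue of Theorem~\ref{Th:vanising}(3), which is proved in the same way) together with the fact that $\gamma_{\bar\lambda}$ is by construction the action of $\mc{Z}(\fing)$ on $M_\fing(\bar\lambda)$.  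For part (2), I would apply the now-exact functor $H^0_-$ to a BGG-type resolution of $L(\lambda)$ by Verma modules $M(\mu)$ with $\mu$ in the linkage class of $\lambda$ in $\mc{O}_k$; by part (1) this becomes a complex of $\mb{M}(\gamma_{\bar\mu})$'s.  When $\bar\lambda$ is anti-dominant, the Kac--Kazhdan linkage analysis collapses all $\gamma_{\bar\mu}$ to $\gamma_{\bar\lambda}$ and the resulting complex is a resolution of $\mb{L}(\gamma_{\bar\lambda})$; otherwise the Vermas in the resolution pair off into subquotients whose images cancel by Euler--Poincar\'e, forcing $H^0_-(L(\lambda))=0$.
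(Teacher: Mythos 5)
The paper does not actually prove this theorem; it is quoted from \cite{Ara07}, so I am measuring your sketch against the argument there. Your definition of the functor is correct: the ``$-$''-reduction is indeed obtained by reversing the polarization of the Clifford module exactly as you describe, and you have correctly isolated the two hard points (exactness on all of $\mc{O}_k$, and the computation on simples). But the argument you give for the exactness does not work. The step ``this subalgebra acts locally nilpotently on every object of $\mc{O}_k$ \dots hence its cohomology vanishes in positive degree'' is false as a general principle: local nilpotence of a Lie algebra action gives no vanishing of higher Chevalley--Eilenberg cohomology (already $H^{\bullet}(\finn,L)$ for a finite-dimensional simple $\fing$-module $L$ is nonzero in every degree $\ell(w)$, $w\in W$, by Kostant's theorem, and $H^{\bullet}(\finn[t]t,\C)$ is an exterior algebra on infinitely many generators). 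Moreover the subalgebra you name, $\finn[[t]]t\oplus\finn$, is literally $\finn[[t]]$ again; with the reversed polarization the degree-zero copy of $\finn$ and the support of $\chi$ both sit on the \emph{homology} side, the cohomology side being $\finn[t]t$, and semi-infinite cohomology does not reduce to ordinary Lie algebra cohomology of one half unless the module is (co)free over the complementary half. The actual proof of the vanishing $H^{i}_-(M)=0$ ($i\ne 0$, $M\in\mc{O}_k$) in \cite{Ara07} is genuinely two-sided: one computes $H^{\bullet}_-$ explicitly on modules with Verma flags and with dual Verma flags, verifies that the complex is cohomologically bounded on each weight space, and then kills $H^{>0}_-$ and $H^{<0}_-$ separately by dimension shifting along (co)resolutions by such objects inside the truncated subcategories of $\mc{O}_k$. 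This is the technical heart of the theorem and it is absent from your proposal.

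For part (2), the BGG-type resolution of $L(\lam)$ by Verma modules that you invoke does not exist for a general weight $\lam\in\affh^*$: such (two-sided) resolutions are available for integrable and for admissible weights (this is the content of \cite{A-BGG}), not for arbitrary simple objects of $\mc{O}_k$; and even where a resolution exists, an Euler--Poincar\'e cancellation controls only the alternating sum of characters, not the vanishing of the single group $H^0_-(L(\lam))$. The argument in \cite{Ara07} instead runs through the exactness already established: $H^0_-(L(\lam))$ is a quotient of $H^0_-(M(\lam))\cong\mathbb{M}(\gamma_{\bar\lam})$ and maps to the reduction of the dual Verma module, hence is either $0$ or the simple quotient $\mathbb{L}(\gamma_{\bar\lam})$; which of the two occurs is detected on the top weight space, where the Zhu-algebra compatibility reduces the question to Kostant's theorem that the Whittaker reduction of $L_{\fing}(\bar\lam)$ is nonzero precisely when $M_{\fing}(\bar\lam)$ is simple, i.e.\ when $\bar\lam$ is anti-dominant. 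Your sketch of part (1) (splitting off an acyclic tensor factor, identifying the remaining piece via the Miura map, and reading off the central character from the Zhu-algebra compatibility) is essentially the right mechanism.
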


\begin{Co}\label{Co:character}
Write $\ch L(\lam)=\sum_{\mu}c_{\lam,\mu}\ch M(\mu)$ with $c_{\lam,\mu}\in \Z$.
If  $\bar \lam$ is anti-dominant,
we have
\begin{align*}
\ch \mathbb{L}(\gamma_{\bar \lam})=\sum_{\mu}c_{\lam,\mu}\ch \mathbb
{M}(\gamma_{\bar \mu}).
\end{align*}
\end{Co}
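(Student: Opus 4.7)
The plan is to lift the given character identity from the level of characters to an identity in the (suitably completed) Grothendieck group of $\mc{O}_k$ and then transport it through the exact functor $H^0_-$ supplied by Theorem~\ref{Th:invent}.

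First, I would note that the relation $\ch L(\lam)=\sum_{\mu}c_{\lam,\mu}\ch M(\mu)$ refines to an identity
\[
[L(\lam)]=\sum_{\mu}c_{\lam,\mu}[M(\mu)]
\]
in the completed Grothendieck group of $\mc{O}_k$; here the partial order on highest weights and the usual weight-space finiteness guarantee that in each weight space of $\affh^*$ only finitely many summands contribute, so the sum is well-defined and the coefficients $c_{\lam,\mu}$ are pinned down uniquely by the linear independence of Verma characters. Next, because Theorem~\ref{Th:invent} provides that $H^0_-:\mc{O}_k\to \W^k(\fing)\on{-Mod}$ is \emph{exact}, it descends to a well-defined homomorphism on the (completed) Grothendieck groups, so applying it termwise to the above identity and using part (1) of Theorem~\ref{Th:invent} yields
\[
[H^0_-(L(\lam))]=\sum_{\mu}c_{\lam,\mu}[\mathbb{M}(\gamma_{\bar\mu})].
\]

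Now I invoke the anti-dominance hypothesis: by part (2) of Theorem~\ref{Th:invent}, $\bar\lam$ being anti-dominant gives $H^0_-(L(\lam))\cong \mathbb{L}(\gamma_{\bar\lam})$, whence
\[
[\mathbb{L}(\gamma_{\bar\lam})]=\sum_{\mu}c_{\lam,\mu}[\mathbb{M}(\gamma_{\bar\mu})],
\]
and taking characters of both sides produces exactly the asserted identity.

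The one genuinely delicate point that I would need to check is the passage to a completed Grothendieck group: in contrast to the original identity in $\mc{O}_k$, after applying $H^0_-$ the map $\lam\mapsto \gamma_{\bar\lam}$ may collapse many weights, so one must verify that the resulting formal sum of $\ch\mathbb{M}(\gamma_{\bar\mu})$ still converges coefficient-wise. This reduces to checking, for each fixed conformal weight $\Delta$, that only finitely many $\mu$'s with $c_{\lam,\mu}\ne 0$ contribute to the $L_0$-weight $\Delta$ component of the right-hand side; this follows from the explicit formula for $\ch\mathbb{M}(\gamma)$ recalled just before Theorem~\ref{Th:invent}, together with the fact that $H^0_-$ shifts conformal weights in a uniform way determined by the Drinfeld-Sokolov gradation used to build it. Modulo this bookkeeping, the argument is purely formal: exactness of $H^0_-$ plus the two identifications (1) and (2) of Theorem~\ref{Th:invent} do all of the work.
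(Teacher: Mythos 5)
Your argument is correct and is precisely the intended one: the paper states this as an immediate corollary of Theorem \ref{Th:invent}, obtained by passing the identity $[L(\lam)]=\sum_{\mu}c_{\lam,\mu}[M(\mu)]$ through the exact functor $H^0_-$ and using parts (1) and (2). Your extra remark on coefficient-wise convergence after the collapse $\lam\mapsto\gamma_{\bar\lam}$ is a reasonable piece of bookkeeping but does not change the substance of the proof.
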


In the case that $k$ is non-critical,  then 
it is known by Kashiwara and Tanisaki  \cite{KasTan00} that
the coefficient $c_{\lam,\mu}$ is expressed in terms of Kazhdan-Lusztig polynomials.
Since any central character of $\mc{Z}(\fing)$ can be written as $\gamma_{\bar \lam}$ with anti-dominant $\bar \lam$,
Corollary \ref{Co:character} determines the character of {\em all} the irreducible positive energy representations
of $\W^k(\fing)$ for all non-critical $k$.

On the other hand,
in the case that $k$ is critical, 
all $\mathbb{L}(\gamma_{\bar \lam})$ are one-dimensional since $\W^{-n}(\fing)$ is commutative.
This fact with Theorem \ref{Th:invent}
 can be used in the study of the critical level representations of $\affg$, see \cite{AraFie08}.
 
 The results in this section hold for arbitrary simple Lie algebra $\fing$.

\begin{Rem}
The condition $\bar \lam\in \finh^*$ is anti-dominant does not imply that
$\lam\in \widehat{\finh}^*$ is anti-dominant.
In fact this condition is satisfied by all {\em non-degenerate admissible weights} $\lam$ (see below) which are regular dominant.
\end{Rem}

\begin{Rem}
Theorem \ref{Th:invent} has been generalized in \cite{Ara08-a}.
In particular 
the character of all the simple ordinary representations (=simple positive energy representations with finite-dimensional
homogeneous spaces) has been determined  for $W$-algebras associated with  all nilpotent elements $f$ in type $A$.
\end{Rem}
\section{Kac-Wakimoto admissible representations and Frenkel-Kac-Wakimoto  conjecture}
\label{sec:KW}
We continue to assume that $\fing=\mf{sl}_n$,
but the results in this section holds for arbitrary simple Lie algebra $\fing$ as well
with appropriate modification unless otherwise stated.

\subsection{Admissible affine vertex algebras}

  Let $\affh$ be the Cartan subalgebra $\finh\+ \C K$ of $\affg$,
  $\tilde{\finh}=\finh\+ \C K \+ \C D$ the extended Cartan subalgebra,
  $\widehat{\Delta}$ the set of roots of $\affg$ in 
  $\tilde{\finh}^*=\finh^*\+ \C \Lam_0\+ \C \delta$,
  where $\Lam_0(K)=1=\delta(d)$,
  $\Lam_0(\finh+ \C D)=\delta(\finh\+ \C K)=0$,
  $\widehat{\Delta}_+$ the set of positive roots.
  $\widehat{\Delta}^{re}\subset\widehat{\Delta} $ the set of real roots,
$\widehat{\Delta}^{re}_+=\widehat{\Delta}^{re}\cap \widehat{\Delta}_+$.  
Let $\widehat{W}$ be the affine Weyl group of $\affg$.
  
 \begin{Def}[\cite{KacWak89}]
A weight $\lam\in \affh^*$ is called {\em admissible} if 
\begin{enumerate}
 \item $\lam$ is regular dominant, that is,
\begin{align*}
 \bra \lam+\rho,\alpha^{\vee}\ket \not\in -\Z_+\quad\text{for all
 }\alpha\in \widehat{\Delta}^{re}_+,
\end{align*}
 \item $\Q \widehat{\Delta}(\lam)=\Q \widehat{\Delta}^{re}$,
       where $\widehat{\Delta}(\lam)=\{\alpha\in \widehat{\Delta}^{re}\mid
       \bra \lam+\rho,\alpha^{\vee}\ket\in \Z\}$.
\end{enumerate}
 \end{Def}
The irreducible highest weight representation $L(\lam)$ of $\affg$ with highest weight $\lam\in \affh^*$ is called {\em admissible}
 if $\lam$ is admissible.
 Note that an irreducible integrable representations of $\affg$ is admissible.

Clearly, integrable representations of $\affg$ are admissible.

 For an admissible representation $L(\lam)$
 we have \cite{KacWak88}
 \begin{align}
  \on{ch} L(\lam)=\sum_{w\in \widehat{W}(\lam)}(-1)^{\ell_{\lam}(w)} \ch M(w\circ \lam)
  \label{ch:Weyl-Kac}
 \end{align}
 since $\lam$ is regular dominant,
 where $\widehat{W}(\lam)$ is the {\em integral Weyl group} (\cite{KasTan98,MooPia95}) of $\lam$,
 that is,
 the subgroup of $\widehat{W}$ generated by the reflections $s_{\alpha}$
 associated with $\alpha\in \widehat{\Delta}$
 and $w\circ \lam=w(\lam+\rho)-\rho$.
 Further  the condition (2) implies that
 $\ch L(\lam)$ is written in terms of certain theta functions.
Kac and Wakimoto  \cite{KacWak89} showed that 
admissible representations are {\em modular invariant}, that is,
the characters of admissible representations form an $SL_2(\Z)$ invariant subspace.

 Let $\lam$,
 $\mu$  be distinct admissible weights.
 Then the condition (1) implies that
 \begin{align*}
  \on{Ext}_{\affg}^1(L(\lam),L(\mu))=0.
 \end{align*}
 Further,  the following fact is known by Gorelik and Kac \cite{GorKac0905}.
 \begin{Th}[\cite{GorKac0905}]
  Let $\lam$ be admissible.
  Then $\on{Ext}_{\affg}(L(\lam),L(\lam))=0$.
 \end{Th}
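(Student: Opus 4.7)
The plan is to show that every short exact sequence
\[
0 \longrightarrow L(\lambda) \longrightarrow M \longrightarrow L(\lambda) \longrightarrow 0
\]
in the category $\mathcal{O}_k$ of $\widehat{\mathfrak{g}}$-modules splits. The strategy is to produce a singular vector in $M$ lifting the highest weight vector of the quotient, and then use that $L(\lambda)$ appears with multiplicity one in any composition series of $M(\lambda)$ to force a splitting. Admissibility plays an auxiliary role through condition~(1), which guarantees that $\lambda$ is dominant and regular and in particular that the Verma module $M(\lambda)$ has $L(\lambda)$ as its unique composition factor at the top weight.

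First I would use the $\widehat{\mathfrak{h}}$-weight decomposition of $M \in \mathcal{O}_k$ and the short exact sequence on the $\lambda$-weight space
\[
0 \longrightarrow L(\lambda)_{\lambda} \longrightarrow M_{\lambda} \longrightarrow L(\lambda)_{\lambda} \longrightarrow 0
\]
to choose $v \in M_{\lambda}$ projecting to the highest weight vector of the quotient. For each $\alpha \in \widehat{\Delta}_{+}$ and $x_{\alpha} \in \widehat{\mathfrak{g}}_{\alpha}$, the vector $x_{\alpha} v$ lies in $M_{\lambda+\alpha}$, and by the analogous exact sequence on the $(\lambda+\alpha)$-weight space it suffices to note that $L(\lambda)_{\lambda+\alpha} = 0$: since every weight of $L(\lambda)$ is of the form $\lambda - \sum n_{i} \alpha_{i}$ with $n_{i} \in \mathbb{Z}_{\geq 0}$ and $\alpha_{i}$ ranging over the simple roots of $\widehat{\mathfrak{g}}$, while $\alpha$ itself is a non-negative combination of simple roots, the weight $\lambda+\alpha$ cannot occur. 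Hence $\widehat{\mathfrak{n}}_{+} v = 0$, so $v$ is a singular vector of weight $\lambda$.

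Next, the submodule $N := U(\widehat{\mathfrak{g}}) v \subseteq M$ is a highest weight module with highest weight vector $v$, hence is a quotient of $M(\lambda)$. Since $L(\lambda) \subseteq M$ is irreducible, $N \cap L(\lambda)$ is either $0$ or $L(\lambda)$. If $N \cap L(\lambda) = L(\lambda)$, then $L(\lambda) \subseteq N$, and combined with the fact that $N$ surjects onto the quotient $L(\lambda) = M/L(\lambda)$ (since $v$ maps to a generator of the quotient), this gives $N = M$. Then $M$ is a quotient of $M(\lambda)$, so $[M : L(\lambda)] \leq [M(\lambda) : L(\lambda)] = 1$; but the extension shows $[M : L(\lambda)] = 2$, a contradiction. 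The key multiplicity bound $[M(\lambda) : L(\lambda)] = 1$ follows because the maximal proper submodule of $M(\lambda)$ has all of its composition factors of highest weight strictly below $\lambda$. Therefore $N \cap L(\lambda) = 0$, and since $M = N + L(\lambda)$ we conclude $M = N \oplus L(\lambda)$, with $N \cong L(\lambda)$ via the composition $N \hookrightarrow M \twoheadrightarrow L(\lambda)$.

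The step I expect to be most delicate is the finite-multiplicity argument, which requires care because Verma modules at non-integral level can have infinite composition length in general; one must work with the local multiplicities $[M(\lambda) : L(\mu)]$ at each weight, which are well-defined in $\mathcal{O}_{k}$ because weight spaces are finite-dimensional. The regularity clause in admissibility also enters when one wants to extend the same argument to higher $\operatorname{Ext}^{i}$ groups or to deduce complete reducibility of the whole admissible block in conjunction with the vanishing $\operatorname{Ext}^{1}_{\widehat{\mathfrak{g}}}(L(\lambda), L(\mu)) = 0$ for distinct admissible $\lambda, \mu$.
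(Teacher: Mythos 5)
The paper offers no proof of this theorem --- it only cites Gorelik--Kac --- so I can only judge your argument on its own terms, and there is a genuine gap. What you have written is the standard proof that $\operatorname{Ext}^1_{\mathcal{O}}(L(\lambda),L(\lambda))=0$ \emph{inside the category of weight modules}; notice that it never invokes admissibility and in fact works verbatim for every highest weight $\lambda$. That should be a warning: the content of the Gorelik--Kac theorem, and the way it is used in these notes (it is what allows the text to conclude that admissible representations form a semisimple full subcategory of the category of \emph{all} $\widehat{\mathfrak{g}}$-modules, and it is applied in Theorem \ref{thm:AMconj} to modules on which $\widehat{\mathfrak{h}}$ is only assumed to act locally finitely), is the vanishing of self-extensions in a category where $\widehat{\mathfrak{h}}$-semisimplicity is \emph{not} given. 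There your very first step fails: the middle term $M$ of a self-extension need not be $\widehat{\mathfrak{h}}$-diagonalizable. The generalized weight space $M_{[\lambda]}$ is two-dimensional, but the honest weight space $M_{\lambda}$ may be one-dimensional, in which case no weight vector lifts the highest weight vector of the quotient; the best available $v$ satisfies $hv=\lambda(h)v+\phi(h)v_{1}$ for some nonzero $\phi\in\widehat{\mathfrak{h}}^{*}$, with $v_{1}$ the highest weight vector of the submodule. One still gets $\widehat{\mathfrak{n}}_{+}v=0$ by your weight argument, but $U(\widehat{\mathfrak{g}})v$ is then a quotient of the module induced from the two-dimensional non-semisimple $\widehat{\mathfrak{b}}$-module $\mathbb{C}v\oplus\mathbb{C}v_{1}$, which has $[\,\cdot\,:L(\lambda)]=2$, so the multiplicity-one contradiction evaporates.

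Such non-diagonalizable self-extensions really occur for non-admissible weights: whenever $M(\lambda)=L(\lambda)$ is a simple Verma module, the first-order deformation $M(\lambda+t\mu)$ over $\mathbb{C}[t]/(t^{2})$ along any nonzero $\mu\in\widehat{\mathfrak{h}}^{*}$ is a non-split self-extension of $L(\lambda)$ in the category of all $\widehat{\mathfrak{g}}$-modules. The missing idea is therefore precisely the exclusion of non-$\widehat{\mathfrak{h}}$-semisimple self-extensions for admissible $\lambda$; this is where the hypothesis enters and where the real work of Gorelik and Kac lies (via the Casimir operator and a Jantzen-type analysis), and it is not recovered by your argument. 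The part you flagged as delicate --- local multiplicities $[M(\lambda):L(\lambda)]=1$ despite possibly infinite length --- is in fact fine; the delicacy is elsewhere.
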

 Therefore admissible representations  form a semisimple fullsubcategory
 of the category of $\affg$-modules.

 Recall that
 the simple affine vertex algebra
$V_k(\fing)$ is isomorphic 
to $L(k\Lam_0)$ as an $\affg$-module.
  \begin{lem}The following conditions are equivalent.
   \begin{enumerate}
    \item    $k\Lam_0$ is admissible.
    \item $k\Lam_0$ is regular dominant and $k\in \Q$.
	  \item $k+h^{\vee}=p/q$, $p,q\in \N$, $(p,q)=1$, $p\geq h^{\vee}=n$.
   \end{enumerate}
If this is the case, the level $k$ is called admissible for $\affg$,
and $V_k(\fing)$ is called an admissible affine vertex algebra.
  \end{lem}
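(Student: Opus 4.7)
The plan is to compute the pairing $\langle k\Lambda_0 + \rho, \beta^\vee\rangle$ for every positive real root $\beta$ and read off both admissibility conditions directly. Since $\fing = \mf{sl}_n$ is simply-laced, a real root $\beta = \alpha + m\delta$ (with $\alpha\in\Delta$, $m\in\Z$) has coroot $\beta^\vee = \alpha^\vee + mK$, and writing $\rho = \rho_{\mathrm{fin}} + h^\vee\Lambda_0$ gives the basic identity
\begin{align*}
\langle k\Lambda_0 + \rho,\, (\alpha + m\delta)^\vee\rangle = \langle \rho_{\mathrm{fin}}, \alpha^\vee\rangle + (k+h^\vee)m.
\end{align*}
Note that $\langle \rho_{\mathrm{fin}}, \alpha^\vee\rangle = \mathrm{ht}(\alpha) \in \Z$, and for the finite root system of $\mf{sl}_n$ every integer in $\{1, 2, \dots, h^\vee - 1\}$ arises as the height of some positive root. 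These two facts drive the whole argument.

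First I would show (1) $\Rightarrow$ (2). Regular dominance is part of (1). To get $k\in\Q$, use the second admissibility axiom: $\Q\widehat{\Delta}(k\Lambda_0) = \Q\widehat{\Delta}^{re}$ forces the presence of an integral real root $\alpha + m\delta$ with $m\neq 0$, and the displayed identity combined with $\langle\rho_{\mathrm{fin}},\alpha^\vee\rangle\in\Z$ gives $(k+h^\vee)m\in\Z$, hence $k\in\Q$. For (2) $\Rightarrow$ (3), assume $k\in\Q$ and write $k+h^\vee = p/q$ in lowest terms with $q\geq 1$. I would analyze regular dominance by cases on $\beta=\alpha+m\delta\in\widehat{\Delta}^{re}_+$:
\begin{itemize}
\item $m=0$, $\alpha\in\Delta_+$: the pairing equals $\mathrm{ht}(\alpha)\geq 1$, never in $-\Z_+$.
\item $m>0$, $\alpha\in\Delta_+$: when $k+h^\vee>0$ this pairing is positive; when $k+h^\vee\leq 0$ choosing $m=qm'$ with $m'$ large forces the pairing arbitrarily negative, so this rules out $p\leq 0$.
\item $m>0$, $\alpha=-\gamma$ with $\gamma\in\Delta_+$: if $q\nmid m$ the pairing is a non-integer, hence not in $-\Z_+$; if $m=qm'$ the pairing equals $-\mathrm{ht}(\gamma)+pm'$, which lies in $-\Z_+$ precisely when $pm'\leq\mathrm{ht}(\gamma)$.
\end{itemize}
Since $\mathrm{ht}(\gamma)$ ranges over $\{1,\dots,h^\vee-1\}$, avoiding the bad set for all $\gamma$ and $m'\geq 1$ is equivalent to $p\geq h^\vee$. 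This establishes (2) $\Leftrightarrow$ (3) and, along the way, furnishes the explicit description
$\widehat{\Delta}(k\Lambda_0) = \{\alpha + qm'\delta : \alpha\in\Delta,\, m'\in\Z\}$,
whose $\Q$-span is all of $\Q\widehat{\Delta}^{re}$; this gives (3) $\Rightarrow$ (1) and completes the circle.

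The main obstacle is the bookkeeping in the third bullet: one must carefully track which combinations $(q\mid m,\ \mathrm{ht}(\gamma))$ produce a pairing in $\{0,-1,-2,\dots\}$, and then invoke the specific feature of type $A$ that every height between $1$ and $h^\vee-1$ is attained, so that the extremal constraint reduces cleanly to the single inequality $p\geq h^\vee$. Everything else is essentially bookkeeping once the identity for $\langle k\Lambda_0+\rho,\beta^\vee\rangle$ is written down.
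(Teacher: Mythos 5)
Your proof is correct. The paper states this lemma without proof (it is the standard characterization going back to Kac--Wakimoto), and your argument is exactly the expected direct verification: the identity $\langle k\Lambda_0+\rho,(\alpha+m\delta)^\vee\rangle=\operatorname{ht}(\alpha)+(k+h^\vee)m$, the observation that rationality is forced by the second admissibility axiom and that the only dangerous positive real roots are $-\gamma+qm'\delta$ with $\gamma\in\Delta_+$, and the reduction of the resulting constraint $pm'\geq\operatorname{ht}(\gamma)+1$ to $p\geq h^\vee$ via the maximal height $h^\vee-1$. The case analysis is complete and the explicit description of $\widehat{\Delta}(k\Lambda_0)$ correctly closes the loop for (3) $\Rightarrow$ (1).
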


  For an admissible number $k$
  let $Pr_k$ be the set of admissible weights of $\affg$ of level $k$.
  (For $\fing=\mf{sl}_n$, $Pr_k$ is  the same as the set of {\em principal admissible weights} of level $k$.)
  
\subsection{Feigin-Frenkel Conjecture and Adamovi\'{c}-Milas Conjecture}
  The following fact was conjectured by Feigin and Frenkel
  and proved for the case that $\fing=\mf{sl}_2$ by  Feigin and Malikov \cite{FeiMal97}.
      \begin{thm}[\cite{Ara09b})]\label{thm:FFconf}
    The associated variety
   $X_{V_k(\fing)}$ is contained in $\mc{N}$ if  $k$ is admissible.
     \end{thm}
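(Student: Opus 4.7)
The plan is to study the defining ideal $I_k := \ker(\C[\fing^*] \twoheadrightarrow R_{V_k(\fing)})$, which equals the image of the maximal $\affg$-submodule $N_k \subset V^k(\fing)$ in $R_{V^k(\fing)} = \C[\fing^*]$. Under the identification $\fing \cong \fing^*$ via $(~|~)$, the nilpotent cone $\mc{N}$ is cut out by the augmentation ideal $\C[\fing^*]^G_+$ of positive-degree $G$-invariants, so it suffices to prove the inclusion $\C[\fing^*]^G_+ \subset \sqrt{I_k}$.

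The first preparatory step is to note that $I_k$ is $\fing$-stable: $N_k$ is an $\affg$-submodule and the projection $V^k(\fing) \twoheadrightarrow \C[\fing^*]$ is $\fing$-equivariant. By Lemma \ref{lem:radical}, $\sqrt{I_k}$ is $\fing$-stable as well, hence $G$-invariant, so $X_{V_k(\fing)}$ is a closed $G$-invariant subvariety of $\fing^*$. One therefore only needs to produce $G$-invariant consequences of positive degree inside $\sqrt{I_k}$.

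The core of the argument is to extract enough explicit elements of $I_k$ from singular vectors of $V^k(\fing)$. The admissibility hypothesis $k+h^{\vee}=p/q$, $(p,q)=1$, together with the Kac-Kazhdan theorem, identifies $N_k$ as generated by the singular vectors $v_\beta$ attached to the simple reflections of the integral Weyl group $\widehat{W}(k\Lam_0)$; by admissibility condition~(2) the latter has the same rank as $\widehat{W}$, so this generating set is large. These $v_\beta$ are given by Malikov-Feigin-Fuchs-type formulas generalising the integrable case $(e_\theta t^{-1})^{k+1}\vac$; in particular, for the ``affine'' reflection $s_{\theta-q\delta}$ the relevant singular vector has, with respect to the canonical filtration $F^\bullet V^k(\fing)$, a top term whose image in $\C[\fing^*]$ is a nonzero power of $e_\theta$, yielding $e_\theta \in \sqrt{I_k}$. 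Together with the $G$-saturation of the top terms of the remaining $v_\beta$, one obtains enough elements to conclude $\C[\fing^*]^G_+ \subset \sqrt{I_k}$.

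The main obstacle is the explicit analysis of the Malikov-Feigin-Fuchs singular vectors at admissible (generally non-integer) level: one must track the Kazhdan/canonical filtration carefully to isolate top terms whose projections to $\C[\fing^*]$ are nontrivial polynomials, since at non-integer $k$ the naive formula does not terminate in a polynomial expression. A secondary difficulty is the final geometric step — deducing from the resulting collection of top terms and their $G$-translates that $V(\sqrt{I_k})$ lies in $\mc{N}$; the natural route is to show these top terms, together with $G$-invariance, cut out a subvariety disjoint from the regular semisimple locus of $\fing$, which by closedness and $G$-invariance must then be contained in the nilpotent cone.
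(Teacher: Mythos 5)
Your key claim fails. You assert that the singular vector attached to the affine reflection $s_{\theta-q\delta}$ has symbol in $\C[\fing^*]$ equal to a nonzero power of $e_\theta$, so that $e_\theta\in\sqrt{I_k}$. If that were true, then since $\sqrt{I_k}$ is $\ad\fing$-stable (Lemma \ref{lem:radical}) and $\fing$ is simple, the argument used for the ``if'' part of Theorem \ref{t:6} would give $\fing\subset\sqrt{I_k}$, i.e.\ $X_{V_k(\fing)}=\{0\}$ and $V_k(\fing)$ lisse. This contradicts Theorem \ref{Th:admissible-orbits}: for admissible $k$ with denominator $q\geq 2$ one has $X_{V_k(\fing)}=\overline{\mathbb{O}_q}\ne\{0\}$. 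Already for $\fing=\mf{sl}_2$ the image of the generating singular vector is, by the Malikov--Feigin--Fuks projection formula combined with Kostant's separation theorem, of the form $e^m\Omega^n$ with $\Omega=ef+fe+\tfrac{1}{2}h^2$ and $n\geq 1$ when $q\geq 2$; the factor $\Omega^n$ is precisely what prevents $e\in\sqrt{I_k}$, and it is also what makes the correct conclusion $X_{V_k(\fing)}\subset\mc{N}$ (rather than $=\{0\}$) come out: if $\lam\in X_{V_k(\fing)}\setminus\mc{N}$ then $\Omega(\lam)\ne 0$ forces $e(\lam)=0$, whence by $G$-invariance $x(\lam)=0$ for every nilpotent $x$ and so $\lam=0$, a contradiction.

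The second gap is that for general $\fing$ your strategy needs (i) that $N_k$ be generated by singular vectors and (ii) an explicit control of the symbols of MFF-type singular vectors at fractional level in arbitrary rank; neither is available, and the paper deliberately avoids both. It carries out the singular-vector analysis only for $\mf{sl}_2$ and handles general $\fing$ by a different device: for each simple root it applies the semi-infinite cohomology functor $H^{\frac{\infty}{2}+0}(\mf{m}_i[t,t^{-1}],-)$ and uses the embedding $V_{k_i}(\mf{sl}_2)\hookrightarrow H^{\frac{\infty}{2}+0}(\mf{m}_i[t,t^{-1}],V_k(\fing))$ of Theorem \ref{Th:semi-infinite-restriction} to transport the nilpotency of $\Omega$ in $R_{V_{k_i}(\mf{sl}_2)}$ into the relation $h_i^N\equiv 0 \pmod{\finn_+R_{V_k(\fing)}+\finn_-R_{V_k(\fing)}}$ for every $i$, which yields $X_{V_k(\fing)}\cap\finh^*=\{0\}$ and hence, by $G$-invariance and closedness, $X_{V_k(\fing)}\subset\mc{N}$. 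This reduction to $\widehat{\mf{sl}_2}$ is the essential idea missing from your proposal. (A minor further point: showing your locus is disjoint from the regular semisimple elements would not suffice; the correct criterion for a closed $G$-stable subset is that its intersection with $\finh^*$ be $\{0\}$.)
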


     In fact the following holds.
 \begin{thm}[\cite{Ara09b}] \label{Th:admissible-orbits}
  Let $k$ be admissible, and let $q\in N$ be the denominator of $k$,
  that is, $k+h^{\vee}=p/q$, $p\in N$, $(p,q)=1$.
  Then
  \begin{align*}
   X_{V_k(\fing)}=\{x\in \fing\mid (\ad x)^{2q}=0\}=\overline{\mathbb{O}_{q}},
  \end{align*}
  where $\mathbb{O}_q$ is the nilpotent orbit corresponding to the partition
\begin{align*}
 \begin{cases}
     (n)&\text{if }q\geq n,\\ (q,q,\dots,q,s)\quad (0\leq s\leq n-1)&\text{if }q<n.
     \end{cases}
\end{align*}  
 \end{thm}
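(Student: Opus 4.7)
The plan is to prove the two inclusions $X_{V_k(\fing)}\subseteq \overline{\mathbb{O}_q}$ and $X_{V_k(\fing)}\supseteq \overline{\mathbb{O}_q}$ separately. By Theorem \ref{thm:FFconf} we already know $X_{V_k(\fing)}\subseteq \mc{N}$, and since the associated variety is $G$-stable (the adjoint action of $\fing$ on $R_{V_k(\fing)}=\C[\fing^*]/I_k$ descends from that on $\C[\fing^*]$, and by Lemma \ref{lem:radical} its integrated form preserves $\sqrt{I_k}$), $X_{V_k(\fing)}$ is a finite union of nilpotent orbit closures. Thus the game is to determine exactly which orbit closures appear.

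For the inclusion $X_{V_k(\fing)}\supseteq \overline{\mathbb{O}_q}$, I will invoke the criterion \eqref{eq:cirterion}: it suffices to pick $f\in \mathbb{O}_q$ and exhibit nonvanishing of $H^0_f(V_k(\fing))$. The idea is to build a nontrivial simple positive-energy module over $\W^k(\fing,f)$ directly from an admissible representation of $\affg$ via the generalization of Theorem \ref{Th:invent} to the $f$-reduction; the resulting simple quotient is a ``minimal series'' $W$-algebra whose existence forces $H^0_f(V_k(\fing))\ne 0$. Concretely, one uses that the Kac--Wakimoto character \eqref{ch:Weyl-Kac} of a well-chosen admissible $L(\lam)\in \mc{O}_k$ yields, after applying the exact functor $H^0_{f,-}$, a nonzero positive-energy $\W^k(\fing,f)$-module whose top space is a finite-dimensional module over $\Zhu(\W^k(\fing,f))=U(\fing,f)$; by exactness, this module arises as a subquotient of $H^0_f(V_k(\fing))$.

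For the reverse inclusion $X_{V_k(\fing)}\subseteq \overline{\mathbb{O}_q}$, I will identify an explicit element of $\sqrt{I_k}$ whose $G$-saturation cuts out $\overline{\mathbb{O}_q}$. The maximal submodule $N_k\subset V^k(\fing)$ contains a Malikov--Feigin--Fuchs singular vector whose weight, for admissible $k=-h^\vee+p/q$, is dictated by the integrable reflection of $k\Lam_0$ along the affine integral root system $\widehat{\Delta}(k\Lam_0)$. I will compute the image of this singular vector under the projection $V^k(\fing)\twoheadrightarrow R_{V^k(\fing)}=\C[\fing^*]$: using the Kazhdan filtration and the explicit form of the singular vector (built from iterated actions of $(e_{\theta})_{(-1)},(f_0)_{(-1)},\ldots$), the leading symbol turns out to be a scalar multiple of $e_{\theta}^{\,m}$ for an integer $m$ depending linearly on $p$ and $q$. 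Saturating this element under the adjoint $G$-action (using that $\sqrt{I_k}$ is $G$-invariant by Lemma \ref{lem:radical}) yields a radical $G$-invariant ideal contained in $\sqrt{I_k}$, and a classical computation in invariant theory identifies its zero locus with $\{x\in \fing\mid (\ad x)^{2q}=0\}=\overline{\mathbb{O}_q}$. Combined with the lower bound, this gives equality.

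The main obstacle will be the third step, namely the explicit identification of the image of the Malikov--Feigin--Fuchs singular vector in $\C[\fing^*]$ and the precise matching of the exponent $m$ with the denominator $q$. The singular vector itself is only defined by a non-constructive formula in the universal category $\mc{O}_{-h^\vee+p/q}$, so extracting its top-degree component with respect to the Kazhdan filtration (after which one reads off the ideal on $\fing^*$) requires either a limiting argument from generic to admissible $k$ or a direct degree-count using the Weyl--Kac character formula to pin down the unique possible $G$-invariant polynomial of the correct degree. Once that computation is in place, the identification with the orbit closure of type $(q,\ldots,q,s)$ follows from the standard dimension and nilpotency-order description of nilpotent orbits in $\mf{sl}_n$.
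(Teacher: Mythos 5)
Your overall frame---reduce via Theorem \ref{thm:FFconf} to deciding which nilpotent orbit closures lie in $X_{V_k(\fing)}$, and use the criterion \eqref{eq:cirterion} for the inclusion $X_{V_k(\fing)}\supseteq\overline{\mathbb{O}_q}$---agrees with the paper. But your argument for the reverse inclusion has a genuine error. You claim the image in $R_{V^k(\fing)}=\C[\fing^*]$ of the relevant singular vector has leading symbol a scalar multiple of $e_\theta^{\,m}$, and that $G$-saturating it cuts out $\overline{\mathbb{O}_q}$. If $e_\theta^{\,m}\in I_k$ then $e_\theta\in\sqrt{I_k}$, and since $\sqrt{I_k}$ is stable under the adjoint action (Lemma \ref{lem:radical}) and the adjoint representation is irreducible, this forces $\fing\subset\sqrt{I_k}$, i.e.\ $X_{V_k(\fing)}=\{0\}$ --- exactly the argument used in the proof of the ``if'' part of Theorem \ref{t:6} for integrable levels. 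For a non-integrable admissible level ($q<n$) this contradicts the very statement you are proving, so the leading symbol cannot be a power of $e_\theta$ alone, and no single root-vector power can cut out $\overline{\mathbb{O}_q}$. (There is also the secondary problem that for $n>2$ the maximal submodule of $V^k(\mf{sl}_n)$ at admissible level is not controlled by one singular vector, and extracting Kazhdan-leading terms of MFF vectors is not available in the generality you need.)

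The paper avoids singular vectors entirely and obtains \emph{both} inclusions from one computation: for each nilpotent $f$, the vanishing theorem \eqref{eq;vanishing-general} makes the Euler--Poincar\'e character of the complex computing $H^\bullet_f(V_k(\fing))$ equal to the character of $H^0_f(V_k(\fing))$; the character of $V_k(\fing)$ is known explicitly from \eqref{ch:Weyl-Kac}, and the resulting expression is nonzero precisely when $\overline{G.f}\subset\overline{\mathbb{O}_q}$. Combined with \eqref{eq:cirterion} this simultaneously yields the lower bound (nonvanishing for $f\in\mathbb{O}_q$) and the upper bound (vanishing for $f\notin\overline{\mathbb{O}_q}$, hence $\overline{G.f}\not\subset X_{V_k(\fing)}$). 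Your lower-bound sketch via the ``$-$''-reduction of a well-chosen admissible $L(\lam)$ is plausible but unnecessary once the Euler--Poincar\'e computation is in place; I would recommend replacing your third step by that character computation rather than trying to repair the singular-vector approach.
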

     
     The following fact was conjectured by Adamovi{\'c} and Milas \cite{AdaMil95}.
      \begin{thm}[\cite{A12-2}]\label{thm:AMconj}
       Let $k$ be admissible.
       Then an irreducible highest weight representation $L(\lam)$ is a $V_k(\fing)$-module if and only if $k\in Pr_k$.
       Hence if $M$ is a finitely generated $V_k(\fing)$-module on which
       $\affn_+$ acts locally nilpotently and $\affh$ acts locally finitely then
       $M$ is a direct sum of $L(\lam)$ with $\lam\in Pr_k$.
      \end{thm}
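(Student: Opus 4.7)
My plan is to address the biconditional separately in each direction and then derive the decomposition statement from a semisimplicity result.

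For the \textbf{``if'' direction}, note that $L(\lam)$ is a $V_k(\fing)$-module precisely when the maximal proper submodule $N_k \subset V^k(\fing)$ annihilates the highest weight vector $v_\lam$ of $L(\lam)$. For admissible $k = -h^\vee + p/q$, the generating singular vectors of $N_k$ can be written explicitly via Malikov--Feigin--Fuchs / Kac--Kazhdan type formulas, and a direct calculation shows they annihilate $v_\lam$ whenever $\lam \in Pr_k$; alternatively, one organises this via the BGG-type resolution underlying the character formula \eqref{ch:Weyl-Kac}, using the fact that $V_k(\fing) = L(k\Lam_0)$ with $k\Lam_0 \in Pr_k$.

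For the \textbf{``only if'' direction}, suppose $L(\lam)$ is a $V_k(\fing)$-module. Since $L(\lam)$ is a positive-energy $V_k(\fing)$-module with top space $L_\fing(\bar\lam)$, the latter is a simple $\Zhu(V_k(\fing))$-module; writing $\Zhu(V_k(\fing)) = U(\fing)/I_k$, one has $I_k \subseteq J := \on{Ann}_{U(\fing)} L_\fing(\bar\lam)$. By Lemma \ref{lem:Zhu-vs-ZhusC2} combined with Theorem \ref{Th:admissible-orbits},
\begin{align*}
\on{Var}(U(\fing)/J) \subseteq X_{V_k(\fing)} = \overline{\mathbb{O}_q},
\end{align*}
so by Joseph's theorem on associated varieties of primitive ideals, the associated variety of $L_\fing(\bar\lam)$ lies in $\overline{\mathbb{O}_q}$, constraining $\bar\lam$. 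To upgrade this finite-weight constraint to admissibility of $\lam$, apply the Drinfeld--Sokolov reduction functor $H^0_-$ of Theorem \ref{Th:invent}: the output is either zero or $\mathbb{L}(\gamma_{\bar\lam})$, a $\W^k(\fing)$-module; Theorem \ref{Th:vanising}(2) propagates the associated variety constraint through the reduction, and a character analysis using Corollary \ref{Co:character} together with a classification of which simple $\W^k(\fing)$-modules arise from $\KL_k$ forces $\lam \in Pr_k$.

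For the \textbf{decomposition statement}, any finitely generated $V_k(\fing)$-module $M$ with $\affn_+$ locally nilpotent and $\affh$ locally finite lies in $\mc{O}_k$, and by the biconditional all composition factors of $M$ have the form $L(\lam)$ with $\lam \in Pr_k$. By the Gorelik--Kac theorem, $\on{Ext}^1_{\affg}(L(\lam), L(\lam)) = 0$ for admissible $\lam$, and regular dominance gives $\on{Ext}^1_{\affg}(L(\lam), L(\mu)) = 0$ for distinct $\lam, \mu \in Pr_k$; hence $M$ is a direct sum of admissible $L(\lam)$'s.

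The \textbf{main obstacle} lies in the ``only if'' direction, specifically in upgrading the associated-variety constraint on $\bar\lam$ (which is all that Zhu's algebra directly sees) to the full admissibility condition on $\lam$, which involves the affine level. Bridging this gap requires the passage through Drinfeld--Sokolov reduction together with delicate character bookkeeping on the $W$-algebra side.
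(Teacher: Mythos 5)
Your derivation of the decomposition statement (Ext-vanishing from regular dominance plus the Gorelik--Kac theorem) is the intended one, but both directions of the biconditional have genuine problems.

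The serious gap is in the ``only if'' direction. Every tool you invoke there --- Zhu's algebra, associated varieties of annihilators, and the reduction functor $H^0_-$ --- sees only coarse information about the finite part $\bar\lam$. The constraint $\on{Var}(\on{Ann}_{U(\fing)}L_{\fing}(\bar\lam))\subseteq \overline{\mathbb{O}_q}$ is satisfied by continuous families of $\bar\lam$ and so cannot single out the discrete set $Pr_k$; and $H^0_-(L(\lam))$ is either $0$ or $\mathbb{L}(\gamma_{\bar\lam})$, hence is determined by the central character $\gamma_{\bar\lam}$ and cannot detect the genuinely affine part of admissibility (regular dominance with respect to roots $-\alpha+n\delta$ and the condition $\Q\widehat{\Delta}(\lam)=\Q\widehat{\Delta}^{re}$). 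Worse, the ``classification of which simple $\W^k(\fing)$-modules arise'' that you appeal to is essentially Theorem \ref{Th:rational}, which the paper deduces \emph{from} the present theorem, so your bridge is circular. The paper's actual argument is entirely different: it applies the semi-infinite cohomology functors $H^{\frac{\infty}{2}+\bullet}(\mf{m}_i[t,t^{-1}],-)$ along the nilradicals of the minimal parabolics, uses Theorem \ref{Th:semi-infinite-restriction} to see that the output is a module over the admissible affine vertex algebra $V_{k_i}(\mf{sl}_2)$, invokes the known $\widehat{\mf{sl}}_2$ case due to Adamovi\'c--Milas to force these cohomologies to be sums of admissible $\widehat{\mf{sl}}_2$-modules, and extracts admissibility of $\lam$ from these constraints. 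This reduction to $\widehat{\mf{sl}}_2$ retains the affine weight information that your route discards.

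The ``if'' direction as you sketch it is also not a proof: for $\fing=\mf{sl}_n$ with $n>2$ the maximal submodule $N_k$ is not controlled by a single explicitly computable singular vector, and the ``direct calculation'' showing that the generating singular vectors kill $v_\lam$ for all $\lam\in Pr_k$ is not available. The paper instead quotes Frenkel--Malikov for the admissible weights that are integrable over $\fing$ and then extends to arbitrary admissible weights by an affine analogue of the Duflo--Joseph lemma.
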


  \subsection{Outline of proofs of
  Theorems \ref{thm:FFconf}, \ref{Th:admissible-orbits} and \ref{thm:AMconj} }
  The idea of the proofs of Theorem  \ref{thm:FFconf} and Theorem \ref{thm:AMconj}
  is to reduce to the $\widehat{\mf{sl}_2}$-cases.

  Let $\mf{sl}_{2,i}\subset \fing$ be the copy of
  $\mf{sl}_2$ spanned by $e_i:=e_{i,i+1}$, $h_i:=e_{i,i}-e_{i+1,i+1}$, $f_i:=e_{i+1,i}$,
  and let $\mf{p}_i=\mf{sl}_{2,i}+\finb\subset \fing$, the associated minimal parabolic subalgebra.
  Then
  \begin{align*}
   \mf{p}_i=\mf{l}_i\+ \mf{m}_i,
  \end{align*}
  where $\mf{l}_i$ is the Levi subalgebra $\mf{sl}_{2,i}+\finh$,
  and $\mf{m}_i$ is the nilradical $\bigoplus\limits_{1\leq p<q\leq n \atop (p,q)\ne (i,i+1) }\C e_{p,q}$.
  
  Consider the semi-infinite cohomology $H^{\frac{\infty}{2}+0}(\mf{m}_i[t,t^{-1}],M)$.
  It is defined as a cohomology of Feigin's complex $(C(\mf{m}_i[t,t^{-1}],M),d)$ (\cite{Feu84}).
There is a natural vertex algebra homomorphism
 \begin{align}
V^{k_i}(\mf{sl}_2)\ra H^{\frac{\infty}{2}+0}(\mf{m}_i[t,t^{-1}],M),
\label{eq:va-hom}
\end{align}
where $k_i=k+n-2$, see, e.g. \cite{HosTsu91}.
Note that if $k$ is an admissible number for $\affg$ then 
$k_i$ is an admissible number for $\widehat{\mf{sl}_2}$.
 \begin{Th}[\cite{A-BGG}]\label{Th:semi-infinite-restriction}
 Let $k$ be an admissible number.
 The map \eqref{eq:va-hom}
 factors through the vertex algebra embedding
 $$V_{k_i}(\mf{sl}_2)\hookrightarrow H^{\frac{\infty}{2}+0}(\mf{m}_i[t,t^{-1}], V_k(\fing)).$$
 \end{Th}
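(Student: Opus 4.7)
The plan is to factor the statement as: (a) the natural vertex algebra homomorphism $V^{k_i}(\mf{sl}_2)\to H^{\frac{\infty}{2}+0}(\mf{m}_i[t,t^{-1}],V_k(\fing))$, obtained by composing \eqref{eq:va-hom} for $V^k(\fing)$ with the map induced on semi-infinite cohomology by the projection $V^k(\fing)\twoheadrightarrow V_k(\fing)$, contains the maximal proper submodule $N_{k_i}\subset V^{k_i}(\mf{sl}_2)$ in its kernel; and (b) the induced map out of the simple quotient $V_{k_i}(\mf{sl}_2)$ is injective.

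For (a), the key input is that $k_i=k+n-2$ is an admissible level for $\widehat{\mf{sl}_2}$ whenever $k$ is admissible for $\affg$: if $k+n=p/q$ with $(p,q)=1$ and $p\geq n$, then $k_i+2=p/q$ presents $k_i$ as a rational admissible level. Consequently $N_{k_i}$ is generated by a single Malikov--Feigin--Fuchs singular vector $v_{\mathrm{sing}}$. I would then try to show that the $\widehat{\mf{sl}_{2,i}}$-module $H^{\frac{\infty}{2}+0}(\mf{m}_i[t,t^{-1}],V_k(\fing))$ has all its composition factors admissible irreducibles at level $k_i$, so that by the semisimplicity of admissible representations (Gorelik--Kac) any vertex algebra map from $V^{k_i}(\mf{sl}_2)$ into it must kill $N_{k_i}$. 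The admissibility claim would be attacked by passing to the associated graded with respect to the canonical filtration on $V_k(\fing)$ and the induced filtration on the semi-infinite complex: the $E_1$-page of the resulting spectral sequence should be controlled by $X_{V_k(\fing)}$, and Theorem \ref{Th:admissible-orbits}, combined with an analysis of how the moment map for the $\mf{m}_i[t,t^{-1}]$-action on $J\fing^*$ interacts with this variety, would then yield the required constraint on the associated variety of the cohomology, which in turn forces admissibility.

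For (b), the simplicity of $V_{k_i}(\mf{sl}_2)$ as a vertex algebra reduces injectivity to the nonvanishing of the image of the vacuum. In the semi-infinite complex, this image is represented by $\vac_{V_k(\fing)}\otimes\vac_{\mathrm{gh}}$, which a bigrading/charge argument identifies as a cocycle of minimal cohomological degree that cannot be a coboundary.

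The principal obstacle is the admissibility-of-composition-factors assertion in (a). A fully direct proof that $v_{\mathrm{sing}}$ is a coboundary in $H^{\frac{\infty}{2}+0}(\mf{m}_i[t,t^{-1}],V_k(\fing))$ appears intractable in arbitrary rank; the associated-graded/spectral-sequence route above reduces it to a concrete Poisson-geometric statement, but carrying this out rigorously — in particular verifying convergence of the spectral sequence in the non-Noetherian vertex-algebra setting, in the spirit of the proof of Theorem \ref{thm:vanishing-W} — is where the main technical work lies.
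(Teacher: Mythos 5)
The paper does not prove this theorem; it is quoted from \cite{A-BGG}, where the argument is entirely different from yours: one constructs a two-sided BGG resolution of the admissible representation $L(k\Lam_0)=V_k(\fing)$ by Wakimoto modules, applies $H^{\frac{\infty}{2}+\bullet}(\mf{m}_i[t,t^{-1}],-)$ term by term (Wakimoto modules have semi-infinite cohomology concentrated in degree zero, where they give $\widehat{\mf{sl}}_2$-Wakimoto modules or vanish), and reads off the degree-zero cohomology explicitly as a direct sum of admissible level-$k_i$ modules. Your step (b) is fine: simplicity of $V_{k_i}(\mf{sl}_2)$ reduces injectivity to nonvanishing of the vacuum class, and the conformal-weight-zero part of cohomological degree $-1$ is zero, so the vacuum cocycle is not a coboundary. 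Your reduction of (a) to semisimplicity-plus-admissibility of the target is also sound as far as it goes: a highest weight module that embeds in a semisimple module is irreducible, so the kernel would have to be $N_{k_i}$.

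The genuine gap is the route you propose to the admissibility of the composition factors of $H^{\frac{\infty}{2}+0}(\mf{m}_i[t,t^{-1}],V_k(\fing))$. Associated-variety data cannot deliver this: admissibility of a highest weight $\widehat{\mf{sl}}_2$-module is an arithmetic condition on the highest weight (rationality plus regular dominance), whereas the associated variety is a coarse geometric invariant that does not distinguish admissible from non-admissible weights at the same level; no constraint of the form $X_M\subset(\text{something small})$ forces composition factors to be admissible. The only standard mechanism that does force it is the Adamovi\'{c}--Milas classification of modules over the \emph{simple} vertex algebra $V_{k_i}(\mf{sl}_2)$ --- but invoking that presupposes the cohomology is already a $V_{k_i}(\mf{sl}_2)$-module, i.e.\ exactly the factoring you are trying to prove (this is precisely the direction in which the paper uses the theorem in the proof of Theorem \ref{thm:AMconj}). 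So the difficulty you flag as ``the main technical work'' is not a convergence issue for a spectral sequence; it is that the proposed reduction is either circular or rests on an implication that is false. To close the gap one needs an independent computation of the cohomology, which is what the BGG-resolution argument of \cite{A-BGG} provides.
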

\begin{proof}[Outline of proof of  Theorem \ref{thm:FFconf}]
First, consider the case that $\fing=\mf{sl}_2$. 
Let $N_k$ be the maximal submodule of $V^k(\fing)$,
and let $I_k$ be the image of $N_k$ in $R_{V^k(\fing)}=\C[\fing^*]$,
so that $R_{V_k(\fing)}=\C[\fing^*]/I_k$.
It is known by Kac and Wakimoto \cite{KacWak88}
that $N_k$ is generated by a singular vector, say $v_k$.
The projection formula \cite{MalFeuFuk86}
implies that the image $[v_k]$ of $v_k$ in $I_k$ is nonzero.
Since $[v_k]$ is a singular vector of $\C[\fing^*]$ with respect to the adjoint action of $\fing$,
Kostant's Separation Theorem implies that
$$[v_k]=e^m \Omega^n$$
for some $m,n\in \N$ up to constant multiplication,
where $\Omega=ef+fe+\frac{1}{2}h^2$.
Now suppose that $X_{V_k(\fing)}\not\subset \mc{N}$
and let
  $\lam\in X_{V_k(\fing)}\backslash \mc{N}$,
  so that
$\Omega(\lam)\ne 0$.
Then
$e(\lam)=0$.
Since $X_{V_k(\fing)}$ is $G$-invariant this implies that $x(\lam)=0$ for any nilpotent element $x$ of $\fing$.
Because any element of $\fing$ can be written as a sum of nilpotent elements we get that $\lam=0$.
Contradiction.

Next, consider the case that $\fing$ is general.
Note that since $X_{V_k(\fing)}$ is $G$-invariant and closed,
the condition $X_{V_k(\fing)}\subset \mc{N}$ is equivalent to that
$X_{V_k(\fing)}\cap \finh^*=\{0\}$.
Now the  complex 
structure of $C(\mf{m}_i[t,t^{-1}],V_k(\fing))$
induces the complex structure on
Zhu's $C_2$-algebra 
$R_{C(\mf{m}_i[t,t^{-1}],V_k(\fing))}$.
The embedding in Theorem \ref{Th:semi-infinite-restriction}
induces a homomorphism
\begin{align*}
R_{V_{k_i}(\mf{sl}_2)}\ra H^0(R_{C(\mf{m}_i[t,t^{-1}],V_k(\fing))},d)
\end{align*}
of Poisson algebra.
Since $\Omega$ is nilpotent in 
$R_{V_{k_i}(\mf{sl}_2)}$,
so is its image $\Omega_i=e_if_i+f_ie_i+\frac{1}{2}h_i^2$
in $H^0(R_{C(\mf{m}_i[t,t^{-1}],V_k(\fing))},d)$.
It follows that $h_i^N\equiv 0\pmod{\finn_+ R_{V_k(\fing)}+\finn_- R_{V_k(\fing)}}$
in $R_{V_k(\fing)}$ for all $i=1,\dots, n-q$,
and we get that $X_{V_k(\fing)}\cap \finh^*=\{0\}$
as required.
\end{proof}
 \begin{proof}[Outline of proof of  Theorem \ref {Th:admissible-orbits}]
 The proof is done by determining
 the variety $X_{V_k(\fing)}$.
 By Theorem \ref{thm:FFconf},
  $X_{V_k(\fing)}$ is a finite union of nilpotent orbits.
  Thus it is enough to know which nilpotent element 
  orbits is contained in $X_{V_k(\fing)}$.
  On the other hand,  \eqref{eq:cirterion} says
  $ X_{V_k(\fing)}\supset \overline{G.f}$
  if and only $H^0_f(V_k(\fing))\ne 0$.
  Thus,
  it is sufficient to compute the  character of 
   $H^0_f(V_k(\fing))$.
   This is in fact possible since we know the explicit formula \eqref{ch:Weyl-Kac} of the character of 
   $V_k(\fing)$,
   and thanks of the vanishing theorem \eqref{eq;vanishing-general}
and the Euler-Poincar\'{e} principle.
 \end{proof}
 \begin{proof}[Outline of proof of
Theorem \ref{thm:AMconj}]
Let $L(\lam)$ be a $V_k(\fing)$-module.
Then,  
the space 
$H^{\frac{\infty}{2}+i}(\mf{m}_i[t,t^{-1}],L(\lam))$,
$i\in \Z$, is naturally a  $H^{\frac{\infty}{2}+i}(\mf{m}_i[t,t^{-1}],V_k(\fing))$-module.
By Theorem \ref{Th:semi-infinite-restriction},
this means that 
$H^{\frac{\infty}{2}+i}(\mf{m}_i[t,t^{-1}],L(\lam))$ is in particular a module over 
the admissible affine vertex algebra $V_{k_i}(\mf{sl}_2)$.
Therefore 
Theorem \ref{thm:AMconj} for $\fing=\mf{sl}_2$ that was established by  Adamovi{\'c} and Milas \cite{AdaMil95} implies 
that $H^{\frac{\infty}{2}+i}(\mf{m}_i[t,t^{-1}],L(\lam))$  must be a direct sum of 
admissible representations of $\widehat{\mf{sl}}_2$. This information is sufficient to 
conclude that $L(\lam)$ is admissible.

Conversely,
suppose that   $L(\lam)$ is an admissible representation of level $k$.
If $L(\lam)$ is integrable over $\fing$,
then 
it has been already proved by
Frenkel and Malikov \cite{FreMal97}
that $L(\lam)$ is a $V_k(\fing)$-module.
But then
 an affine analogue of Duflo-Joseph Lemma \cite[Lemma 2.6]{A12-2}
 implies that
 this is true for a general admissible representation as well.
\end{proof}
 \subsection{Lisse property of $W$-algebras}
 An admissible number $k$ is called   {\em non-degenerate} if 
 $X_{V_k(\fing)}=\mc{N}$.
 By Theorem \ref{Th:admissible-orbits},
this condition is equivalent to that 
\begin{align*}
k+n=\frac{p}{q},\ p,q\in \N,\ (p,q)=1,\ p\geq n,\ q\geq n.
\end{align*}

The following assertion follows immediately from Corollary \ref{Co:lisse}.
 \begin{thm}[\cite{Ara09b}]\label{Th:lisse}
 Let $k$ be a non-degenerate admissible number.
 Then the $W$-algebra $\W_k(\fing)$ is lisse.
 \end{thm}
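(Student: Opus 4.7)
The plan is to observe that this is an almost immediate consequence of Corollary \ref{Co:lisse}(3) combined with the very definition of non-degeneracy. First I would unpack the definitions: by hypothesis $k$ is non-degenerate admissible, which by definition means $X_{V_k(\fing)} = \mc{N}$. Since $\fing = \mf{sl}_n$ and $f$ is the principal nilpotent element chosen in \S\ref{sec:Review of Kostant's results}, the $G$-orbit $G.f = \mb{O}_{prin}$ is dense in $\mc{N}$, so $\overline{G.f} = \mc{N}$. Thus the hypothesis $X_{V_k(\fing)} = \overline{G.f} = \mc{N}$ of Corollary \ref{Co:lisse}(3) is satisfied, and the conclusion gives that $\W_k(\fing)$ is lisse.

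For readability I would also make explicit the chain of reasoning hidden inside Corollary \ref{Co:lisse}: by Theorem \ref{Th:vanising}(2), $X_{H^0(V_k(\fing))} = X_{V_k(\fing)} \cap \mc{S} = \mc{N} \cap \mc{S} = \{f\}$ (using the transversality statement that $\mc{S} \cap \mc{N} = \{f\}$ noted just before Theorem \ref{Th:classocial-Harish-Chandra2}). Hence $H^0(V_k(\fing))$ is lisse. Since $\W_k(\fing)$ is a quotient of $H^0(V_k(\fing))$ by Corollary \ref{Co:lisse}(1) (and this quotient is nonzero by (2), as $\mc{N} \subset X_{V_k(\fing)}$), the associated variety $X_{\W_k(\fing)}$ is a closed $\C^*$-invariant subvariety of $X_{H^0(V_k(\fing))} = \{f\}$, hence equals $\{f\}$, and thus $\W_k(\fing)$ is lisse.

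There is essentially no obstacle here, since all the heavy lifting has already been done: the deep input is Theorem \ref{thm:FFconf} (which guarantees $X_{V_k(\fing)} \subset \mc{N}$ for admissible $k$) together with Theorem \ref{Th:admissible-orbits} (which identifies $X_{V_k(\fing)}$ with $\overline{\mb{O}_q}$ and thereby translates the numerical condition $p, q \geq n$ into the geometric condition $X_{V_k(\fing)} = \mc{N}$), and finally the vanishing and Poisson-module results packaged in Theorem \ref{Th:vanising}. The only thing worth double-checking in the write-up is that the non-degeneracy condition stated in the paper—$p, q \geq n$—really is equivalent to $X_{V_k(\fing)} = \mc{N}$; this follows because by Theorem \ref{Th:admissible-orbits} the variety is $\overline{\mb{O}_q}$, and $\mb{O}_q = \mb{O}_{prin}$ precisely when $q \geq n$, while the admissibility already forces $p \geq n$.
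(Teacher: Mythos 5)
Your proof is correct and follows exactly the route the paper takes: the paper states that Theorem \ref{Th:lisse} "follows immediately from Corollary \ref{Co:lisse}," and your unpacking of the definition of non-degeneracy as $X_{V_k(\fing)}=\mc{N}=\overline{G.f}$ followed by an application of Corollary \ref{Co:lisse}(3) is precisely that argument. The extra detail you supply (tracing through Theorem \ref{Th:vanising}(2) and the transversality $\mc{S}\cap\mc{N}=\{f\}$) just makes explicit what the corollary already packages.
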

 
 \subsection{Minimal models of $W$-algebras}
 A vertex algebra $V$ is called {\em rational} if any $V$-module is completely reducible.
 To a lisse and rational conformal vertex algebra $V$ one can associate {\em rational $2d$ conformal field theory},
 and
in particular,
 the category
 $V\on{-Mod}$ of $V$-modules  forms \cite{Hua08rigidity} a {\em modular tensor category} \cite{Bakalov:2001kq},
 as in the case of the category of integrable representation of $\affg$ at a  positive level
and the category of {\em minimal series representations} \cite{BPZ84} of the Virasoro algebra.
 
 An admissible weight $\lam$ is called {\em non-degenerate} if
 $\bar \lam$ is anti-dominant.
 Let $Pr_{k}^{non-deg}$
 be the set of non-degenerate admissible weights of level $k$ of $\affg$.
  It is known \cite{FKW92} that $Pr_{k}^{non-deg}$ is non-empty if and only if $k$ is non-degenerate.
 
By Theorem \ref{Th:invent},
for $\lam\in Pr^k$,
$H^0_-(L(\lam))$ is a (non-zero) simple $\W^k(\fing)$-module if and only of $\lam\in Pr_{k}^{non-deg}$,
and $H^0_-(L(\lam))\cong H^0_-(L(\mu))$ if and only if $\mu\in W\circ \lam$ for $\lam,\mu\in Pr_{k}^{non-deg}$.
 
 Let $[Pr_{k}^{non-deg}]=Pr_{k}^{non-deg}/\sim$ ,
 where $\lam\sim \mu\iff \mu\in W\circ \lam$.
 It is known \cite{FKW92} that
 we have a bijection
 \begin{align*}
(\widehat{P}_+^{p-n}\times \widehat{P}_+^{q-n})/\Z_n\isomap [Pr_{k}^{non-deg}],
\quad [(\lam,\mu)]\mapsto [\bar \lam-\frac{p}{q}(\bar \mu+\rho)+k\Lam_0].
\end{align*}
Here $k+n=p/q$ as before, 
$\widehat{P}_+^{k}$ is the set of integral dominant weights of level $k$ of $\affg$,
 the cyclic group $\Z_n$ acts diagonally on $\widehat{P}_+^{p-n}\times \widehat{P}_+^{q-n}$ as the Dynkin automorphism,
 and $\rho=\frac{1}{2}\sum_{\alpha\in \Delta_+}\alpha$.

 The following assertion was conjectured by Frenkel, Kac and Wakimoto \cite{FKW92}.
 \begin{thm}[\cite{A2012Dec}]\label{Th:rational}
  Let $k$ be a non-degenerate admissible number.
   Then the simple $W$-algebra $\W_k(\fing)$ is rational,
   and $\{\mathbb{L}(\gamma_{\bar \lam})=H^0_-(L(\lam))\mid \lam\in [Pr_{k}^{non-deg}]\}$
   forms the complete set of isomorphism classes of simple $\W_k(\fing)$-modules.
  \end{thm}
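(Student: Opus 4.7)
The plan is to combine three main ingredients: the lisse property of $\W_k(\fing)$ (Theorem \ref{Th:lisse}), the Adamovi\'c--Milas theorem on admissible representations of $V_k(\fing)$ (Theorem \ref{thm:AMconj}), and the exactness of the ``$-$''-reduction functor $H^0_-$ (Theorem \ref{Th:invent}). Lisseness reduces the classification of simple $\W_k(\fing)$-modules to one of simples of the finite-dimensional Zhu algebra $\Zhu(\W_k(\fing))$, while the semisimplicity of admissible representations will be transported through BRST reduction to yield the rationality.

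\textbf{Construction and distinctness of the simples.} For each $\lambda \in Pr_k^{non-deg}$, Theorem \ref{thm:AMconj} ensures $L(\lambda)$ is a $V_k(\fing)$-module, and Theorem \ref{Th:invent} supplies the nonzero simple positive-energy $\W^k(\fing)$-module $\mathbb{L}(\gamma_{\bar\lambda})=H^0_-(L(\lambda))$. Functoriality of $H^0_-$ applied to the surjection $V^k(\fing)\twoheadrightarrow V_k(\fing)$, combined with the fact that $L(\lambda)$ factors through $V_k(\fing)$, forces $\mathbb{L}(\gamma_{\bar\lambda})$ to descend to a module over the simple quotient $\W_k(\fing)$. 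The explicit Kazhdan--Lusztig-type character formula of Corollary \ref{Co:character} makes the resulting characters linearly independent across $W$-orbits, so distinct classes in $[Pr_k^{non-deg}]$ yield non-isomorphic $\W_k(\fing)$-modules.

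\textbf{Exhaustion.} By Theorem \ref{Th:lisse} and \cite{AbeBuhDon04,Dong:1998kq,MatNagTsu05}, $\W_k(\fing)$ has only finitely many simple modules, all positive-energy, parametrized by $\mathrm{Irr}(\Zhu(\W_k(\fing)))$. Since $\Zhu(\W_k(\fing))$ is a quotient of $\Zhu(\W^k(\fing))\cong \mc{Z}(\fing)=\C[\finh^*]^W$, each simple $\W_k(\fing)$-module determines a central character in $\Spec\mc{Z}(\fing)$. To match this character with some $\gamma_{\bar\lambda}$, $[\lambda]\in[Pr_k^{non-deg}]$, I would exploit the compatibility $\Zhu(H^0(M))\cong H^0(\Zhu(M))$ from Theorem \ref{Th:vanising}(3) together with the explicit identification $X_{V_k(\fing)}=\overline{\mathbb{O}_q}$ (Theorem \ref{Th:admissible-orbits}); coupling these with the Losev correspondence for Harish-Chandra bimodules cuts down the spectrum of $\Zhu(\W_k(\fing))$ to the admissible characters, supplying the required matching bound.

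\textbf{Rationality and the main obstacle.} For complete reducibility the strategy is to lift: admissible representations form a semisimple category (Theorem \ref{thm:AMconj} with the Gorelik--Kac $\on{Ext}$-vanishing), so if every $\W_k(\fing)$-module extension $0\to \mathbb{L}(\gamma_{\bar\mu})\to E\to\mathbb{L}(\gamma_{\bar\lambda})\to 0$ arises as $H^0_-$ of a short exact sequence of admissible $V_k(\fing)$-modules, then $E$ is forced to split because $\on{Ext}^1_{V_k(\fing)}(L(\lambda),L(\mu))=0$ and $H^0_-$ is exact. The main obstacle is precisely this Ext-transfer: $H^0_-$ is only defined and exact on $\mc{O}_k$, so one must show that a priori arbitrary $\W_k(\fing)$-module extensions can be realized in the image of $H^0_-$. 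This will require a spectral sequence comparison of $\on{Ext}^\bullet$-groups on both sides, vanishing of $H^i_-$ for $i\neq 0$ on the relevant class of modules, and careful use of Theorem \ref{Th:vanising}(3) so that the $\mc{Z}(\fing)$-bimodule structure on Zhu's side can be bootstrapped to exhibit the required lift. This lifting step is the delicate heart of the argument.
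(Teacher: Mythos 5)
Your toolbox is the right one (lisseness, Adamovi\'c--Milas, the two reduction functors and Theorem \ref{Th:vanising}(3)), but there are two genuine gaps. First, the descent/exhaustion step is missing the key input that the paper takes from \cite{Ara07}: $H^0(V_k(\fing))\cong \W_k(\fing)$, i.e.\ the ``$+$''-reduction of the \emph{simple} affine vertex algebra is already the \emph{simple} $W$-algebra. Without this, Corollary \ref{Co:lisse}(1) only tells you that $H^0(V_k(\fing))$ is some quotient of $\W^k(\fing)$ surjecting onto $\W_k(\fing)$, so Theorem \ref{Th:vanising}(3) only computes $\Zhu(H^0(V_k(\fing)))=H^0(\Zhu(V_k(\fing)))$ and not $\Zhu(\W_k(\fing))$; your appeal to ``functoriality of $H^0_-$ applied to $V^k(\fing)\twoheadrightarrow V_k(\fing)$'' does not force $\mathbb{L}(\gamma_{\bar\lambda})$ to descend to $\W_k(\fing)$, since $H^0_-$ is a functor on $\mc{O}_k$ with values in $\W^k(\fing)$-modules and does not interact with the vertex-algebra quotient in the way you assert. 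Once $H^0(V_k(\fing))\cong\W_k(\fing)$ is in hand, $\Zhu(\W_k(\fing))=H^0(\Zhu(V_k(\fing)))$ is identified exactly, Adamovi\'c--Milas (Theorem \ref{thm:AMconj}) pins down $\Zhu(V_k(\fing))$, and both the descent of the $\mathbb{L}(\gamma_{\bar\lambda})$ and the completeness of the list follow; this is the paper's route.

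Second, your strategy for rationality --- realizing an arbitrary extension $0\to\mathbb{L}(\gamma_{\bar\mu})\to E\to\mathbb{L}(\gamma_{\bar\lambda})\to 0$ of $\W_k(\fing)$-modules as $H^0_-$ of an extension of admissible $\affg$-modules --- is not just delicate, it is the wrong direction: there is no reason for $H^0_-$ to be essentially surjective on extension classes, and you give no mechanism to produce the lift (indeed, by Gorelik--Kac the relevant $\on{Ext}^1$ upstairs vanishes, so any extension in the image of $H^0_-$ is automatically split --- which says nothing about extensions not in the image). The paper avoids lifting altogether: Theorem \ref{Th:invent}(1) says $H^0_-$ sends Verma modules of $\affg$ exactly to Verma modules $\mathbb{M}(\gamma)$ of $\W^k(\fing)$, and exactness then transports the BGG/linkage structure of $\mc{O}_k$ downstairs. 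The resulting linkage principle for $\W^k(\fing)$ bounds which pairs $\gamma_{\bar\lambda},\gamma_{\bar\mu}$ can admit nontrivial $\on{Ext}^1$, and for non-degenerate admissible weights one checks these linkage classes contribute no extensions among the modules in the list. That argument, carried out entirely on the $W$-algebra side, is what you need in place of the Ext-transfer you flagged as the main obstacle.
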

   In the case that $\fing=\mf{sl}_2$,
   Theorems \ref{Th:lisse} and \ref{Th:rational} have been proved in \cite{BeiFeiMaz,Wan93},
and
  the above representations are exactly the minimal series representations of the Virasoro algebra.
 
 The representations  $$\{\mathbb{L}(\gamma_{\bar \lam})\mid  \lam\in [Pr_{k}^{non-deg}]\}$$
are called the {\em minimal series representations} of $\W^k(\fing)$,
and if $k+n=p/q$, $p,q\in \N$, $(p,q)=1$, $p,q,\geq n$,
then the rational $W$-algebra $\W_k(\fing)$ is called the {\em $(p,q)$-minimal model} of  $\W^k(\fing)$.
Note that the $(p,q)$-minimal model and the $(q,p)$-minimal model are isomorphic due to
the duality, see Corollary \ref{Co:duality}.

\begin{proof}[Outline of the proof of Theorem \ref{Th:rational}]
Let $k$ be a non-degenerate admissible number.
We have
 $$H^0(V_k(\fing))\cong \W_k(\fing)$$
 by \cite{Ara07}.
 Hence by Theorem \ref{Th:vanising} (3)
 $$\on{Zhu}(\W_k(\fing))=\on{Zhu}(H^0(V_k(\fing)))=H^0(\on{Zhu}(V_k(\fing)).$$
From this together with Theorem \ref{thm:AMconj},
it is not too difficult to obtain the classification is the simple $\W_k(\fing)$-modules
as stated in Theorem  \ref{Th:rational}.
One sees that the extensions between simple modules are trivial using
the linkage principle that follows from Theorem \ref{Th:invent}.
\end{proof}

  \begin{Rem}
  \begin{enumerate}
\item  We have $\W_k(\fing)=\mathbb{L}(\gamma_{-(k+n)\rho})$ for a non-degenerate admissible number  $k$.
(Note that $k\Lam_0\not\in Pr_k^{non-deg}$.)
\item Let $\lam\in Pr_k$.
From Corollary \ref{Co:character}
and \eqref{ch:Weyl-Kac},
we get
\begin{align}
\on{ch}\mathbb{L}(\gamma_{\bar \lam})=\sum_{w\in \widehat{W}(\lam)} \epsilon(w)\ch \mathbb{M}(\gamma_{\overline{w\circ \lam}}).
\label{eq:FKW-ch-formula}
\end{align}
This  was conjectured by \cite{FKW92}.
\item  When it is trivial (that is, equals to $\C$),
$\W_k(\fing)$ is obviously lisse
and rational.
    This happens 
    if and only if 
    $\W_k(\fing)$ is the $(n,n+1)$-minimal model
   (=the $(n+1,n)$-minimal model). 
In this case the character formula \eqref{eq:FKW-ch-formula}
for  $\W_k(\fing)=\mathbb{L}(\gamma_{\bar \lam})$,
$\lam=-(k+n)\rho+k\Lam_0$, gives the following {\em denominator formula}:
  \begin{align*}
\sum_{w\in \widehat{W}(\lam)}\epsilon(w)q^{\frac{(\overline{w\circ \lam},\overline{w\circ \lam}+2\rho)}{2(k+n)}}
=\prod_{j=1}^{n-1}(1-q^j)^{n-1}.
\end{align*}
In the case that $\fing=\mf{sl}_2$,
we get the denominator formula for the Virasoro algebra, which is  identical to  {\em Euler's pentagonal identity}.
\item 
As a generalization of the GKO construction \cite{GodKenOli85}
it has been conjectured \cite{KacWak90} that 
the $(p,q)$-minimal model of $\W^k(\fing)$, with $p>q$, is isomorphic to
the commutant of $V_{l+1}(\fing)$ inside $V_{l}(\fing)\* V_1(\fing)$,
where $l+n=q/(p-q)$.
(Note that $V_{l}(\fing)$ and $V_{l+1}(\fing)$ are admissible.)
This conjecture has been proved in \cite{ALY16} for the special case that $(p,q)=(n+1,n)$.

A similar conjecture exists in the case that $\fing$ is simply laced.
\item  The existence of rational and lisse $W$-algebras
 has been conjectured for general $W$-algebras $\W^k(\fing,f)$ by Kac and Wakimoto \cite{KacWak08}.
This has been proved in \cite{ArationalII} in part including all the cases in type $A$.
See \cite{Kawa15,AM15}
for a recent development in the classification problem of rational and lisse $W$-algebras.
 
\end{enumerate}
  \end{Rem}


\end{document}